\theoremstyle{plain}
\newtheorem{theorem}{Theorem}[section]
\newtheorem{lemma}[theorem]{Lemma}
\newtheorem*{lemma*}{Lemma}
\newtheorem{claim}[theorem]{Claim}
\newtheorem{proposition}[theorem]{Proposition}
\newtheorem{corollary}[theorem]{Corollary}
\theoremstyle{definition}
\newtheorem{definition}[theorem]{Definition}
\newtheorem{remark}[theorem]{Remark}
\newtheorem{example}[theorem]{Example}
\theoremstyle{remark}
\numberwithin{equation}{section}
\def\R{\mathbb{R}}
\def\Z{\mathbb{Z}}
\def\N{\mathbb{N}}
\def\C{\mathbb{C}}
\def\T{\mathbb{T}}
\def\S{\mathbb{S}}
\newcommand{\diff}{\mathop{}\mathopen{}\mathrm{d}}
\newcommand\eps{\varepsilon}
\newcommand{\be}{\begin{equation}}
\newcommand{\ee}{\end{equation}}
\newcommand{\nd}{\noindent}
\newcommand{\f}{\varphi}
\title{A DeGiorgi type conjecture for minimal solutions to a nonlinear Stokes equation}
\author{
{\Large Radu Ignat}
\footnote{Institut de Math\'ematiques de Toulouse \& Institut Universitaire de France, UMR 5219, Universit\'e de Toulouse, CNRS, UPS
IMT, F-31062 Toulouse Cedex 9, France. Email: Radu.Ignat@math.univ-toulouse.fr} 
\and {\Large Antonin Monteil }\footnote{Institut de Recherche en Math\'ematique et Physique, Universit\'e catholique de Louvain, \'Ecole de Math\'ematique, Chemin du Cyclotron 2, bte L7.01.02, 1348 Louvain-la-Neuve, Belgium. Email: Antonin.Monteil@uclouvain.be
}
}
\begin{document}
\maketitle
\begin{abstract}
We study the one-dimensional symmetry of solutions to the nonlinear Stokes equation
\[
\begin{cases}
-\Delta u+\nabla W(u)=\nabla p&\text{in }\R^d,\\
\nabla\cdot u=0&\text{in }\R^d,\\
\end{cases}
\]
which are periodic in the $d-1$ last variables (living on the torus $\T^{d-1}$) and globally minimize the corresponding energy in $\Omega=\R\times \T^{d-1}$, i.e.,  
$$E(u)=\int_{\Omega} \frac12 |\nabla u|^2+W(u)\, dx, \quad \nabla\cdot u=0.$$
Namely, we determine a class of nonlinear potentials $W\geq 0$ 
such that any global minimizer $u$ of $E$ connecting two zeros of $W$ as $x_1\to\pm\infty$ is one-dimensional, i.e., $u$ depends only on the $x_1$ variable.
In particular, this class includes in dimension $d=2$  the nonlinearities $W=w^2$ with $w$ being an harmonic function or a solution to the wave equation, while in dimension $d\geq 3$, this class contains a perturbation of the Ginzburg-Landau potential as well as potentials $W$ having $d+1$ wells with prescribed transition cost between the wells. For that, we develop a theory of calibrations relying on the notion of entropy (coming from scalar conservation laws). We also study the problem of the existence of global minimizers of $E$ for general potentials $W$ providing in particular compactness results for uniformly finite energy maps $u$ in $\Omega$ connecting two wells of $W$ as $x_1\to\pm\infty$.
\end{abstract}
\tableofcontents

\section{Introduction}

Let $d\geq 2$ and $\Omega=\R\times \T^{d-1}$ be an infinite cylinder, where $\T=\R/\Z$ is the flat torus.  
The purpose of this paper is to investigate 
the one-dimensional symmetry of divergence-free periodic solutions $u:\Omega\to\R^d$ to the nonlinear Stokes problem
\begin{equation}
\label{stokes}
\begin{cases}
-\Delta u+\nabla W(u)=\nabla p&\text{on }\Omega,\\
\nabla\cdot u=0&\text{on }\Omega,\\
\end{cases}
\end{equation}
where $W:\R^d\to\R_+$ is a nonnegative potential and $p:\Omega\to \R$ is a pressure. 
A solution $u$ of \eqref{stokes} is a critical point of the functional 
\begin{equation}\label{EE}
E(u)= \int_\Omega \frac 12|\nabla u|^2 + W(u) \diff x, \quad u\in\dot{H}_{div}^1(\Omega,\R^d),
\end{equation}
where $|\cdot|$ is the euclidean norm 
and 
\begin{equation*}
\dot{H}_{div}^1(\Omega,\R^d)=\left\{u\in H^1_{loc}(\Omega,\R^d)\;:\;\nabla u\in L^2(\Omega,\R^{d\times d})\text{ and }\nabla\cdot u=0\text{ in }\Omega\right\}.
\end{equation*}
As boundary condition at $x_1=\pm \infty$, we impose that our configurations $u$ connect two wells $u^\pm\in\R^d$ of $W$ where $u^+_1=u^-_1=a\in \R$ (due to the divergence constraint on $u$). Namely, we impose that the $x'$-average of $u$ is a function in $x_1\in \R$ having the following limit at infinity:
\begin{equation}\label{BC}
\lim\limits_{x_1\to \pm \infty} \int_{\T^{d-1}} u(x_1, x')\, \diff x'=u^\pm, \quad W(u^\pm)=0,
\end{equation}
where $x'=(x_2,\dots,x_d)$ denotes the $d-1$ last variables in $\T^{d-1}$.

\medskip

Our aim is to analyze the following De Giorgi type problem for global minimizers of $E$ on $\dot{H}_{div}^1(\Omega,\R^d)$ under the boundary condition \eqref{BC}:

\medskip
\nd \textbf{Question 1 (one-dimensional symmetry of global minimizers):} {\it under which conditions on the potential $W$, is it true that every global minimizer $u$ of $E$ over the set of divergence-free maps satisfying the boundary condition \eqref{BC} is one-dimensional, i.e., $u=u(x_1)$?}

\medskip

We will also discuss the more general existence problem:

\medskip
\nd \textbf{Question 2 (existence of global minimizers):} {\it under which conditions on $W$, does there exist a global minimizer of $E$ on $\dot{H}_{div}^1(\Omega,\R^d)$ 
under the boundary condition \eqref{BC}?}
\medskip

Our study of the one-dimensional symmetry of minimizers uses a kind of calibration method which proves on the one hand that any optimal one-dimensional transition layer $u$ 
connecting $u^\pm$ at $\pm\infty$ is a global minimizer of $E$, and on the other hand that any global minimizer of $E$ on $\dot{H}_{div}^1(\Omega,\R^d)$ under \eqref{BC} is one-dimensional. Thus, this method solves both Question 1 and Question~2. However, the one-dimensional symmetry of global minimizers requires strong assumptions on $W$ whereas the general existence problem in Question 2 only requires generic assumptions on $W$. For this reason, Question 1 and Question 2 are studied independently.

\subsection{Motivation}
The above questions arise naturally in the study of certain phase transition models, in particular, in the theory of liquid crystals or micromagnetics. 

\bigskip

i) In dimension $d=2$, for the Ginzburg-Landau potential $W(u)=\frac14(1-|u|^2)^2$, the system \eqref{stokes} is well known as the Aviles-Giga model (see \cite{Aviles:1987}) that can be seen as a ``baby" model for the stable states in smectic liquid crystals. In the last twenty years, there has been an intensive research on the asymptotic behavior of the rescaled energy  
$$u_\eps\in\dot{H}_{div}^1(\Omega,\R^2) \mapsto \int_\Omega \frac \eps2|\nabla u_\eps|^2 + \frac1 \eps W(u_\eps) \diff x,\quad \Omega\subset \R^2,$$
in the limit $\eps\downarrow 0$. More precisely, it is expected that the limit configurations write $u=(-\partial_2 \f, \partial_1 \f)$ where $\f$ solves the eikonal equation, while the limit energy concentrates on the jump singularities of $u$ and is proportional to the cubic cost $|u^+-u^-|^3$ of each jump of $u$.
This study was carried within the framework of the $\Gamma$-convergence in a series of papers proving compactness results (see \cite{Ambrosio:1999, DKMOcomp, Jabin_perthame}), lower bounds (see \cite{Aviles:1999, Jin:2000}) and upper bounds for $BV$ limit configurations (see \cite{ContiDelellis, Polia}). However, the question of proving the upper bound for general limit configurations $u$ of  finite energy (not necessarily in $BV$) is still open. A challenging problem is the understanding of the structure properties of the limit configurations (see \cite{DeLellis_Otto}) as well as the lower semicontinuity of the limit energy functional for general potentials $W$ (see \cite{Ambrosio:1999,Aviles:1999, Bochard:2015, IgnatMerlet:2012}).

In this theory, a key point relies on the one-dimensional symmetry of minimizing transition layers $u_\eps$ connecting two limit states $u^-, u^+\in \mathbb{S}^1$ (i.e., $W(u^\pm)=0$). A partial result was proved by Jin-Kohn in \cite{Jin:2000}: the optimal one-dimensional transition layer is a global minimizer of the $2D$ energy $E$ on $\dot{H}_{div}^1(\Omega,\R^2)$ under the boundary constraint \eqref{BC}. In this paper, we will prove the complete result, namely, every global $2D$ minimizer of $E$ over $\dot{H}_{div}^1(\Omega,\R^2)$ within \eqref{BC} 
is one-dimensional, meaning that no $2D$ microstructure is expected to nucleate between $u^-$ and $u^+$.  

\bigskip
 
ii) In dimension $d=3$, the system \eqref{stokes} can be seen as a reduced model in micromagnetics in the regime where the so-called stray-field energy is strongly penalized favoring the divergence constraint $\nabla \cdot u=0$ of the magnetization $u$ (the unit-length constraint on $u$ being relaxed in our system \eqref{stokes}). One of the main issues consist in understanding the behavior of the magnetization $u$ describing the stable states of the energy $E$, the potential $W$ playing the role of the anisotropy favoring certain directions of $u$ (i.e., the zeros of $W$ are the ``easy axes" of the magnetization).  A rich pattern formation is expected for the magnetization, the generic state consisting in large uniformly magnetized $3D$ regions (called magnetic
domains) separated by narrow transition layers (called domain walls) where the magnetization
varies very rapidly between two directions $u^-$ and $u^+$ (see e.g. \cite{DKMOoverview, Hubert:1998} for more details). In this theory, a challenging question concerns the symmetry of domain walls. Indeed, much effort has been devoted lately to identifying on the one hand, the domain walls that have one-dimensional symmetry, such as the so-called symmetric N\'eel and symmetric Bloch walls (see e.g. \cite{DKO, IO, Ignat:2011}), and on the other hand, the domain walls involving microstructures, such as the so-called cross-tie walls (see e.g., \cite{Alouges:2002,Riviere:2001}), the zigzag walls (see e.g., \cite{Ignat:2012, Moser_zigzag}) or the asymmetric N\'eel / Bloch walls 
(see e.g. \cite{Doring:2013, DI}). Our paper aims to give a general approach in identifying the anisotropy potentials $W$ for which the domain walls are one-dimensional in the system \eqref{stokes}.

\subsection{Relation with the famous De Giorgi conjecture}
If one removes the divergence constraint in our model \eqref{stokes}, the problem reduces to the study of one-dimensional symmetry of solutions $u:\R^d\to \R^N$ of the equation
\be
\label{nodiv}
-\Delta u+\nabla W(u)=0 \quad \text{ in }\R^d,
\ee
for a nonnegative potential $W:\R^N\to \R_+$. 

\bigskip

i) In the scalar case $N=1$ and $W(u)=\frac14(1-u^2)^2$, the long standing De Giorgi conjecture predicts that any bounded solution $u$ that is monotone in the $x_1$ variable is one-dimensional in dimension $d\leq 8$, i.e., 
the level sets $\{u=\lambda\}$ of $u$ are hyperplanes. The conjecture has been solved in dimension $d=2$ by Ghoussoub-Gui \cite{Ghoussoub:1998}, using a Liouville-type theorem and monotonicity formulas. Using similar techniques, Ambrosio-Cabr\'e \cite{AmbrosioCabre:2000} extended these results to dimension $d=3$, while Ghoussoub-Gui \cite{ghoussoub2003giorgi} showed that the conjecture is true for $d=4$ and $d=5$ under some antisymmetry condition on $u$. The conjecture was finally proved by Savin  \cite{Savin:2009} in dimension $d\leq 8$ under the additional condition $\lim_{x_1\to\pm\infty}u(x_1,x')=\pm 1$ pointwise in $x'\in\R^{d-1}$, the proof being based on fine regularity results on the level sets of $u$. Lately, Del Pino-Kowalczyk-Wei  \cite{del2011giorgi} gave a counterexample to the De Giorgi conjecture in dimension $d\geq 9$, which satisfies the condition $\lim_{x_1\to\pm\infty}u(x_1,x')=\pm1$. 

Finally, we mention that if the convergence $\lim_{x_1\to\pm\infty}u(x_1,x')=\pm 1$ is uniform in $x'$ and \(|u|\le 1\), then the one-dimensional symmetry holds without the monotonicity assumption $\partial_1 u>0$ and there is no restriction on the dimension $d$ (this is the so-called Gibbons' conjecture), see e.g.  
\cite{BBG,BHM,F2,CafCor}.

\bigskip

ii) Less results are available for the vector-valued case $N\geq 2$. In the case $N=2$ and $W(u_1,u_2)=\frac 12 u_1^2u_2^2$, the system \eqref{nodiv} corresponds to a phase separation model arising in a binary mixture of Bose-Einstein condensates; one-dimensional symmetry of solutions has been shown in \cite{berestycki2013phase, berestycki2013entire} in dimension $d=2$ under some monotonicity / growth / stability conditions on solutions $u$. Several extensions, eventually in higher dimensions $d\geq 2$ can be found in \cite{farina2013some, farina2017monotonicity,wang2014giorgi}. In the case of more general potentials $W$ and $N\geq 1$, one dimensional symmetry for solutions $u$ of \eqref{nodiv} has been proved in \cite{fazly2013giorgi} for low dimensions $d$ provided that each component $u_k$ of $u$ ($1\leq k\leq N$) is monotone.

\bigskip

As mentioned above, an important observation in the study of De Giorgi type problems is the relation between monotonicity of solutions (e.g., the condition $\partial_1 u>0$), stability (i.e., the second variation of the corresponding energy is nonnegative), and local minimization of the energy (in the sense that the energy does not decrease under compactly supported perturbations of $u$). We refer to \cite[section 4]{Alberti:2001} for a fine study of these properties. In particular, it is shown that the monotonicity condition in the De Giorgi conjecture implies that $u$ is a local minimizer of the energy (see \cite[Theorem 4.4]{Alberti:2001}).

In the context of our system \eqref{stokes}, we need a stronger condition: $u$ is assumed periodic in the $d-1$ last variables and $u$ globally minimizes the energy in $\Omega$. At our knowledge, there are no Liouville type theorems or monotonicity formulas for \eqref{stokes} in order to mimic the general strategy used for \eqref{nodiv}. In fact, the divergence constraint in \eqref{stokes} creates a phenomenological difference with respect to \eqref{nodiv}. For example, if $d=N=2$ and $W$ is the Ginzburg-Landau potential, the Aviles-Giga model \eqref{stokes} has a solution connecting two zeros $u^+\neq u^-$ of $W$ (with $u^+_1=u^-_1$) that is one-dimensional and globally minimizes the energy on $\dot{H}_{div}^1(\Omega\subset \R^2,\R^2)$ (see \cite{Jin:2000}), while the energy \eqref{EE} has no one-dimensional global minimizer $u$ under the boundary condition \eqref{BC} (no divergence constraint here) because $u^+$ and $u^-$ belong to the same connected component of zeros of $W$ and, obviously, $u$ cannot be constant.

\section{Main results} 

We start by presenting the results for Question 2, and then the results for Question 1.

\subsection{Existence of global minimizers for general potentials $W$}

For the existence of a global minimizer of $E$ under both the divergence constraint and the boundary condition \eqref{BC}, we will need some assumptions about the behavior of $W$ near the sets $\R^d_a$ and $S_a\subset\R^d_a$, defined by
\begin{equation}\label{slice}
\R^d_a:=\{z=(a,x')\in \R^{d}\;:\; x'\in \R^{d-1}\},\quad S_a:=\{z\in\R^d_a\;:\; W(z)=0\},
\end{equation}
where $a:=u_1^+=u_1^-$. Note that if $u$ is an admissible one-dimensional map, i.e. $u=u(x_1)$ and $\nabla\cdot u=u_1'(x_1)=0$, then $u_1$ is constant in $\R$. In particular, $u_1\equiv a$, that is $u(x_1)\in\R^d_a$ for all $x_1\in\R$. This partially justifies the fact that we mainly require assumptions on the restriction of $W$ to a neighborhood of $\R^d_a$ in $\R^d$. In particular, we assume:
\begin{description}
\item[(H1)]
$S_a$ is a finite set,
\item[(H2)]
$\liminf\limits_{z_1\to a, \, |z'|\to\infty} W(z_1,z')>0.$
\end{description}
First, we state the answer to Question 2 in the case where $W$ has only two zeros $u^-$ and $u^+$ on $\R^d_a$, and next, we present the general case where $S_a$ is an arbitrary finite set.

\paragraph{The case of two-well potentials $W$ in $\R^d_a$.} If $S_a=\{u^-,u^+\}$, this assumption together with {\textbf{(H2)}} are sufficient to prove existence in Question 2.

\bigskip

\begin{theorem}\label{thm1}
If $W:\R^d\to \R_+$ is a continuous function such that for some $a\in\R$,
\begin{description}
\item[(H1')]
$S_a$ has exactly two distinct elements $u^-$ and $u^+$,
\end{description}
and {\rm{\bf (H2)}} is fulfilled, then there exists a {solution of the minimization problem}
\[
\tag{$\mathcal{P}$}
\inf\left\{E(u)\;:\; u\in \dot{H}_{div}^1(\Omega,\R^d)\text{ with }\eqref{BC} \right\}.
\]
\end{theorem}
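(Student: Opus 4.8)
The plan is to use the direct method of the calculus of variations. First I would check that the infimum in $(\mathcal P)$ is finite: this requires constructing a single competitor $u_0\in\dot H^1_{div}(\Omega,\R^d)$ with finite energy satisfying \eqref{BC}. Since $u^\pm$ lie on the slice $\R^d_a$, one can take $u_0(x)=(a,\psi(x_1))$ for a smooth $\psi:\R\to\R^{d-1}$ interpolating between $u'^-$ and $u'^+$; this is automatically divergence-free (the first component is constant, the rest depend only on $x_1$), and $E(u_0)<\infty$ because $W(u_0)$ decays: by continuity $W$ is bounded near the segment joining $u^-$ to $u^+$, and one must be a bit careful that $\int_\R W(a,\psi(x_1))\,dx_1<\infty$ — arrange $\psi$ to be constant equal to $u'^\pm$ outside a compact set, where $W=0$. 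So the infimum is a finite number $m\ge 0$.

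Next I would take a minimizing sequence $(u_n)$ with $E(u_n)\to m$, so $\sup_n\|\nabla u_n\|_{L^2(\Omega)}<\infty$ and $\sup_n\int_\Omega W(u_n)<\infty$. The gradient bound gives, up to subsequence and after subtracting suitable constants on $\Omega$, weak convergence $\nabla u_n\rightharpoonup \nabla u$ in $L^2$ and $u_n\to u$ in $L^2_{loc}$ (and a.e.) for some $u\in\dot H^1_{loc}$; the divergence constraint $\nabla\cdot u=0$ passes to the weak limit, and lower semicontinuity of the Dirichlet term is standard while the potential term is handled by Fatou together with a.e. convergence, giving $E(u)\le m$. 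The real work is twofold. First, one must control the \emph{choice of normalizing constants}: because the energy is invariant under adding a constant to $u$, a naive minimizing sequence may escape to infinity; here assumptions \textbf{(H1')} and \textbf{(H2)} are what prevent this. Concretely I expect one shows that a finite-energy divergence-free map with the given energy bound cannot have its values drift to infinity in the $z'$-directions near $z_1=a$ — the coercivity \textbf{(H2)} forces $W(u_n)$ to be large on a set of controlled measure unless $u_n$ stays in a bounded region transversally, and the finiteness of $S_a$ plus continuity of $W$ rules out the limit ``splitting'' onto a non-compact connected zero set. Second, one must verify the limit $u$ still satisfies the boundary condition \eqref{BC}.

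The boundary condition is the main obstacle, and I would treat it via a slicing/energy-localization argument. Define $v_n(x_1):=\int_{\T^{d-1}}u_n(x_1,x')\,dx'$; by Jensen and the energy bound, $v_n\in H^1_{loc}(\R;\R^d)$ with $\int_\R|v_n'|^2<\infty$ uniformly and, by \textbf{(H2)} together with the potential bound, the ``transversal mass'' of $u_n$ near $z_1=a$ cannot escape, so one gets uniform bounds ensuring $v_n$ has well-defined limits $u^\pm$ at $\pm\infty$ in an averaged sense. Passing to the limit, $v(x_1)=\int_{\T^{d-1}}u(x_1,x')\,dx'$ inherits $\int_\R|v'|^2<\infty$, hence $v$ has limits $\ell^\pm$ at $\pm\infty$; one must show $\ell^\pm=u^\pm$. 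For this I would use: (i) $W(u)\in L^1$, so along a sequence $x_1^k\to+\infty$ one has $\int_{\T^{d-1}}W(u(x_1^k,x'))\,dx'\to 0$, forcing $u(x_1^k,\cdot)$ to concentrate near $S_a\cup\{|z'|=\infty\}$, (ii) \textbf{(H2)} excludes the part at infinity, and (iii) \textbf{(H1')}, i.e.\ $S_a=\{u^-,u^+\}$ is finite (two points), together with the $H^1$ control in $x_1$ — the Dirichlet energy between the two wells along any ``transition'' is bounded below by a positive constant, so only finitely many transitions occur and eventually $u(x_1,\cdot)$ is trapped near a single well; combined with the prescribed far-field average of the $u_n$'s, that well must be $u^+$ as $x_1\to+\infty$ and $u^-$ as $x_1\to-\infty$. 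This last trapping step — quantifying that a divergence-free map cannot oscillate infinitely often between two isolated wells with finite energy — is where most of the care goes, and it is essentially a one-dimensional ($x_1$) argument once the transversal compactness from \textbf{(H2)} is in place. Finally, $E(u)\le m$ and admissibility of $u$ give $E(u)=m$, so $u$ solves $(\mathcal P)$.
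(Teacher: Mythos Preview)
Your overall architecture is right --- direct method, lower semicontinuity via Fatou, and the main difficulty is preserving the boundary condition \eqref{BC} in the limit --- and your points (i)--(iii) about using $W(u)\in L^1$, \textbf{(H2)}, and a lower bound on the cost of each transition are essentially the ingredients of the paper's Proposition~\ref{cdW} and Lemma~\ref{closed_boundary}. But there is a genuine gap.

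The invariance you must break is not ``adding a constant to $u$'': the energy is \emph{not} invariant under $u\mapsto u+c$, because $W(u+c)\neq W(u)$ in general. The dangerous invariance is translation in the \emph{domain}, $u_n(x_1,x')\mapsto u_n(x_1+t_n,x')$, and you never translate. Without this, a minimizing sequence can have its transition occurring at $x_1\approx n$; then $u_n\rightharpoonup u^-$ (a constant) weakly in $\dot H^1$, every step of your argument still produces a limit $u$ with $\overline u(\pm\infty)\in S_a$, but both limits equal $u^-$ and \eqref{BC} fails. Your appeal to ``the prescribed far-field average of the $u_n$'s'' cannot fix this: the condition $\overline{u_n}(+\infty)=u^+$ is a statement at $x_1=+\infty$ for each fixed $n$, and it does not survive any limit $n\to\infty$ taken on compact sets of $x_1$.

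The paper closes this gap by Theorem~\ref{bndenergy}: one first finds $t_n\in\R$ (via Lemma~\ref{focus}, a combinatorial analysis of the oscillations of $\overline{u_n}$ between $B(u^-,\varepsilon)$ and $B(u^+,\varepsilon)$, or alternatively via Lemma~\ref{doring}) so that the transition of $u_n(\cdot+t_n,\cdot)$ is pinned near the origin --- concretely, $\overline{u_n}(t+t_n)\notin B(u^\mp,\varepsilon)$ for $\pm t\in[T,2^n]$. This information passes to the weak limit on compact sets, yielding $\overline u(t)\notin B(u^-,\varepsilon)$ for $t\ge T$ and symmetrically at $-\infty$; combined with Lemma~\ref{closed_boundary} (your points (i)--(ii)) and \textbf{(H1')}, this forces $\overline u(\pm\infty)=u^\pm$. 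So your proof becomes correct once you insert the translation step and justify the choice of $t_n$; your ``finitely many transitions'' idea is exactly the engine behind Lemma~\ref{focus}, but it must be applied to locate a centering point \emph{before} passing to the limit, not to identify the limit's boundary values afterwards.
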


\bigskip

The proof is based on the following general compactness result which is somehow reminiscent from \cite[Lemma 1.]{Doring:2013} and \cite[Lemma 4.4.]{Goldman:2015}. For that, we introduce the following notation: if $u \in \dot{H}^1(\Omega,\R^d)$, the $x'$-average of $u$ is a continuous function on $\R$ denoted by
\begin{equation}\label{average}
\overline{u}(x_1):=\int_{\T^{d-1}} u(x_1,x')\diff x', \quad x_1\in\R.
\end{equation}
If in addition $\nabla\cdot u=0$, then first component 
$\overline{u}\cdot e_1$ of the average $\bar u$ is constant in $\R$ (see Lemma~\ref{lem_aver}). The boundary condition \eqref{BC} will be denoted shortly by $\overline{u}(\pm \infty)=u^\pm$.

\begin{theorem}[Compactness of bounded energy sequences]\label{bndenergy}
Let $W:\R^d\to \R_+$ be a continuous function, $a\in\R$ and assume that {\rm \bf (H1')} and {\rm \bf(H2)} are satisfied. Let $(u_n)_{n\ge 1}$ be a sequence in $\dot{H}_{div}^1(\Omega,\R^d)$ such that $\overline{u_n}(\pm\infty)=u^\pm\in S_a$ for each $n\ge 1$ and
\[
\sup_{n\ge 1} E(u_n)=\sup_{n\ge 1} \int_\Omega \frac 12 |\nabla u_n|^2+W(u_n)\diff x<\infty.
\]
Then there exists a sequence $(t_n)_{n\geq 1}$ in $\R$ such that $(u_n(\cdot+t_n,\cdot))_{n\geq 1}$ weakly converges (up to a subsequence) in $\dot{H}^1(\Omega,\R^d)$ to a limit $u\in\dot{H}^1(\Omega,\R^d)$ satisfying
$$
E(u)\le \liminf_{n\to\infty}E(u_n),\quad \nabla\cdot u=0\quad\text{and}\quad\overline{u}(\pm\infty)=u^\pm.
$$
\end{theorem}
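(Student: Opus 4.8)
The plan is to exploit the one-dimensional structure coming from the $x'$-average together with the divergence constraint, which forces $\overline{u_n}\cdot e_1\equiv a$. First I would split the energy into the ``average part'' and the ``oscillation part'': writing $u_n=\overline{u_n}+v_n$ with $\int_{\T^{d-1}}v_n\,\diff x'=0$, the gradient term splits orthogonally as $\int_\Omega\frac12|\nabla u_n|^2=\int_\R\frac12|\overline{u_n}'|^2\,\diff x_1+\int_\Omega\frac12|\nabla v_n|^2$, and the Poincaré inequality on $\T^{d-1}$ controls $\|v_n\|_{L^2(\Omega)}$ by $\|\nabla' v_n\|_{L^2(\Omega)}\le C\sqrt{E(u_n)}$. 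The uniform energy bound then gives $\nabla u_n$ bounded in $L^2(\Omega)$, and $v_n$ bounded in $L^2(\Omega)$, but \emph{not} yet a bound on $u_n$ itself in $L^2$ — only on its oscillation — so the translations $t_n$ are needed precisely to fix the ``height'' of the average profile $\overline{u_n}$.

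The key step is to choose $t_n$ so that the translated averages $\overline{u_n}(\cdot+t_n)$ do not run off to infinity. Here is where \textbf{(H1')} and \textbf{(H2)} enter. Since $\overline{u_n}(\pm\infty)=u^\pm$ are the only two zeros of $W$ on $\R^d_a$, and by \textbf{(H2)} $W$ is bounded below by some $\delta>0$ outside a large ball $B_R\subset\R^d_a$ intersected with a neighborhood of the hyperplane $\{z_1=a\}$, the set $\{x_1:\ \overline{u_n}(x_1)\notin B_R\}$ — more precisely the set where $u_n(x_1,x')$ is far from $\{u^-,u^+\}$ on a large $x'$-measure — must have length bounded by $E(u_n)/\delta\le C/\delta$ (using that on such a slice the potential term contributes at least $\delta$ times that $x'$-measure; one also uses the gradient term to pass from pointwise control of $u_n$ to control of $\overline{u_n}$ via a $1$D Sobolev/interpolation argument along $x_1$). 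Combined with the fact that $\overline{u_n}$ interpolates between $u^-$ and $u^+$, a mean-value / intermediate-value argument produces $t_n$ such that $|\overline{u_n}(t_n)-u^+|$ and $|\overline{u_n}(t_n)-u^-|$ are both bounded away from $0$; after translating, $\overline{u_n}(0)$ lies in a fixed compact set. Then the bound on $\|\overline{u_n}'\|_{L^2(\R)}$ upgrades this to $\overline{u_n}$ bounded in $L^\infty_{loc}$, hence $u_n$ bounded in $L^2_{loc}(\Omega)$ with $\nabla u_n$ bounded in $L^2(\Omega)$.

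With these bounds, weak compactness in $\dot H^1(\Omega,\R^d)$ (locally in $L^2$, with $\nabla u_n\rightharpoonup\nabla u$ in $L^2(\Omega)$) follows along a subsequence; I keep writing $u_n$ for the translated subsequence. Lower semicontinuity of $\int_\Omega\frac12|\nabla u|^2$ is standard (weak $L^2$ lower semicontinuity of the norm), and lower semicontinuity of $\int_\Omega W(u_n)$ follows from $W\ge0$ continuous together with Fatou after passing to a further subsequence with $u_n\to u$ a.e. The constraint $\nabla\cdot u=0$ passes to the weak limit since it is a linear condition stable under weak convergence. The remaining point, and the main obstacle, is recovering the boundary condition $\overline{u}(\pm\infty)=u^\pm$: weak convergence does not see behavior at $x_1=\pm\infty$, so I would argue directly that the limit profile $\overline{u}$ must stabilize. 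Using again that the set $\{|x_1|>M:\ \overline{u_n}(x_1)\ \text{far from}\ \{u^\pm\}\}$ has length $\to 0$ as $M\to\infty$ uniformly in $n$ (from the energy bound and \textbf{(H2)}), together with $\overline{u_n}'$ bounded in $L^2$, one shows $\overline{u}(x_1)$ converges as $x_1\to\pm\infty$ to a zero of $W$ in $\R^d_a$, i.e.\ to $u^+$ or $u^-$; the correct matching of signs (that the $+\infty$ limit is $u^+$, not $u^-$) comes from a connectedness/continuity argument exploiting that each $\overline{u_n}$ realizes the \emph{same} pair of limits and that the ``transition region'' stays within a bounded $x_1$-window after the translation — this is essentially the content of the compactness lemmas cited from \cite{Doring:2013,Goldman:2015}, and adapting that bookkeeping to the present vector-valued, divergence-constrained setting is the delicate part.
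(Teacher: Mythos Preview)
Your overall architecture is right (translate, extract a weak $\dot H^1$ limit, pass the divergence constraint and the energy by lower semicontinuity), and you correctly isolate the boundary condition $\overline{u}(\pm\infty)=u^\pm$ as the crux. But the specific mechanism you propose for it has a genuine gap. The claim that ``the set $\{|x_1|>M:\ \overline{u_n}(x_1)\text{ far from }\{u^\pm\}\}$ has length $\to 0$ as $M\to\infty$ uniformly in $n$'' is not justified and is generally false. The energy bound (made rigorous through the averaged potential $V$ of Lemma~\ref{lemmaV}, which is what your parenthetical about ``using the gradient term'' is really pointing at) only gives that this set has \emph{bounded} total length; it does not force concentration near the origin after translation. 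Nothing prevents $\overline{u_n}$ from making a bounded number of excursions away from $\{u^\pm\}$ at $x_1$-locations drifting to $+\infty$ with $n$: each excursion costs a fixed positive amount (Proposition~\ref{cdW}), so there are finitely many, but their positions are uncontrolled. Relatedly, a bare intermediate-value choice of $t_n$ (``some point where $\overline{u_n}$ is away from both wells'') does not control the \emph{orientation} of the limit: if $\overline{u_n}$ makes several back-and-forth transitions, you might center on a backward one and end up with $\overline{u}(\pm\infty)=u^\mp$.

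The paper closes both issues differently, without any uniform tightness. Lemma~\ref{focus} does the bookkeeping you defer: it lists the maximal intervals on which $\overline{u_n}$ avoids $B_\pm$, bounds their number via $c_W$ (Proposition~\ref{cdW}), and then \emph{selects} two adjacent intervals whose lengths blow up and whose $B_+^c/B_-^c$ labels are in the correct order. After translation this yields $\overline{u_n}(\cdot+t_n)\notin B_-$ on $[T,2^n]$ and $\notin B_+$ on $[-2^n,-T]$, which passes to the weak limit as $\overline{u}(t)\notin B(u^-,\varepsilon)$ for $t\ge T$ and $\overline{u}(t)\notin B(u^+,\varepsilon)$ for $t\le -T$. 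Separately, the paper shows $E_V(\overline{u},\R)\le\liminf_n E_V(\overline{u_n},\R)<\infty$ and then invokes Lemma~\ref{closed_boundary}: a finite-$E_V$ curve in $\R^d_a$ has limits $z^\pm\in S_a$ at $\pm\infty$ (a total-variation argument in the geodesic distance $\mathrm{geod}_V^a$). Only now does \textbf{(H1')} enter: since $S_a=\{u^-,u^+\}$, the exclusion above forces $z^+=u^+$ and $z^-=u^-$. The alternative second proof replaces Lemma~\ref{focus} by Lemma~\ref{doring} applied to the scalar projection $v_n=(\overline{u_n}-\tfrac12(u^++u^-))\cdot(u^+-u^-)$, which builds the sign information directly into the choice of $t_n$; but it still relies on Lemma~\ref{closed_boundary}, not on tightness of the ``bad set''.
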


\paragraph{The case of multi-well potentials $W$ in $\R^d_a$.} The case where $S_a$ has three or more elements requires more attention. In this case, even the existence of global minimizers in the class of admissible one-dimensional maps $\gamma$, i.e.
\begin{equation}
\label{min1Dintro}
\inf \left\{\int_\R \frac 12 |\dot{\gamma}(t)|^2+W(\gamma(t))\diff t\;:\; \gamma\in \dot{H}^1(\R,\R^d_a)\text{ and }\gamma(\pm\infty)=u^\pm\right\},
\end{equation}
is not guaranteed without additional assumptions on $W$. For instance, in dimension $d=2$, $\gamma$ writes $\gamma(t)=(a,\gamma_2(t))\in\R^2_a$ and the Euler-Lagrange equation $\ddot{\gamma}_2=\partial_2 W(a,\gamma_2)$ has no solution if $W$ vanishes at three points $(a,u_2^-)$, $(a,b)$ and $(a,u^+_2)$ with $u^-_2<b<u^+_2$. 
In order to avoid this obstruction in our $d$-dimensional minimization problem $(\mathcal{P})$, we will impose the strict triangle inequality on the {\it transition cost} between the wells of $W$ in $S_a$. For that, we first introduce the energy functional restricted to an arbitrary interval $I\subset\R$: for every map $u\in \dot{H}^1_{div}(I\times\T^{d-1},\R^d)$, we set
\[
E(u,I):= 
 \int_{I\times\T^{d-1}} \frac 1 2 |\nabla u|^2 + W(u) \diff x.
\]
We define the {\it transition cost} $c_W:\R^d_a\times\R^d_a\to\R_+$ as follows: for all $z^-,\, z^+\in\R^d_a$,
{\begin{multline}
\label{cw}
c_W(z^-,z^+):=
\inf\Big\{E(u,I) \;:\; I=(t^-,t^+),\, u\in\dot{H}_{div}^1(I\times\T^{d-1},\R^d)\text{, } \overline{u}(t^\pm)=z^\pm,\\
\text{with }t^\pm\in \R\text{ if }W(z^\pm)>0, \text{ resp. }t^\pm=\pm\infty\text{ if }W(z^\pm)=0\Big\},
\end{multline}}
which means that the energy $E(u,I)$ is minimized 
\smallbreak
$\bullet$ on $I=\R$ if $W(z^-)=W(z^+)=0$; 

$\bullet$ on $I=\R_+$ if $W(z^-)>0$ but $W(z^+)=0$; 

$\bullet$ on $I=\R_-$ if $W(z^-)=0$ but $W(z^+)>0$; 

$\bullet$ over all bounded intervals $I\subset\R$ if $W(z^-)>0$ and $W(z^+)>0$.

\medbreak
\nd When minimizing the energy on a bounded domain $(t^-,t^+)\times\T^{d-1}$, one can actually impose a more general boundary condition $u(t^\pm,\cdot)=v^\pm$ (as a trace on $\{t^\pm\}\times \T^{d-1}$) for maps $v^-$ and $v^+$ belonging to the set \footnote{For technical reasons, we restrict to $H^1$ maps on the torus $\T^{d-1}$ instead of $H^{1/2}$ maps which is the natural space for the trace of our admissible configurations.}
$$
H^{1}_a(\T^{d-1},\R^d):=\left\{v\in H^{1}(\T^{d-1},\R^d)\;:\;\textstyle\int_{\T^{d-1}}v_1(x')\diff x'=a\right\};
$$
this yields a pseudo-distance $d_W$ on $H^1_a(\T^{d-1},\R^d)$ defined for all $v^\pm\in H^1_a(\T^{d-1},\R^d)$ by 
\footnote{ 
Note that $d_W$ might be infinite at some points because $W(u)$ is not necessarily $L^1$ for every $u\in \dot{H}^1(I\times\T^{d-1},\R^d)$ and every continuous $W$. 
}
\begin{align}
\label{dw}
d_W(v^-,v^+)&:=\inf\left\{E(u,I) \;:\; I=[t^-,t^+]\subset\R,\, u\in\dot{H}_{div}^1(I\times\T^{d-1},\R^d),\,u(t^\pm,\cdot)=v^\pm\right\}\\
\nonumber&\in [0,+\infty].
\end{align}
Obviously, if $v^\pm\equiv z^\pm$ with $z^\pm\in S_a$, then $d_W(v^-,v^+)\ge c_W(z^-,z^+)$, whereas the opposite inequality is more delicate (see Proposition~\ref{dWdist} below). 

Our answer to Question 2 in the case of multiple-well potentials $W$ inside $\R^d_a$ is the following:
\begin{theorem}
\label{thm2}
Let $W:\R^d\to \R_+$ be a continuous function such that for some $a\in\R$, $S_a$ contains at least two distinct wells $u^-$ and $u^+$, and the assumptions {\rm{\bf (H1)}} and {\rm{\bf(H2)}} are fulfilled. In addition, we assume
\begin{description}
\item[(H3)]
for all $z\in S_a\setminus{\{u^\pm\}},\ d_W(u^-,z)+d_W(z,u^+)>d_W(u^-,u^+),$
\item[(H4)]
$d_W$ is lower semicontinuous in $H^1$ on the set $S_a\times S_a$ in the following sense: for every $z^\pm\in S_a$ and for every sequences $(v^\pm_n)_{n\ge 1}$ in $H^{1}_a(\T^{d-1},\R^d)$ strongly converging to $z^\pm$ in $H^1(\T^{d-1})$, one has
$$
d_W(z^-,z^+)\le \liminf_{n\to\infty}d_W(v^-_n,v^+_n).
$$
\end{description}
Then the problem ($\mathcal{P}$) has a solution, i.e., there exists $u\in \dot{H}_{div}^1(\Omega,\R^d)$ such that $E(u)=c_W(u^-,u^+)$ and $\overline{u}(\pm\infty)=u^\pm$.
\end{theorem}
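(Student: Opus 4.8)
The plan is to run the direct method of the calculus of variations, taking a minimizing sequence for $(\mathcal P)$ and extracting a limit, exactly as in Theorem \ref{bndenergy}, but now with the extra care needed because $S_a$ may have more than two points. First I would fix a minimizing sequence $(u_n)_{n\ge1}$ in $\dot H^1_{div}(\Omega,\R^d)$ with $\overline{u_n}(\pm\infty)=u^\pm$ and $E(u_n)\to c_W(u^-,u^+)$; in particular $\sup_n E(u_n)<\infty$. The first issue is that $E$ restricted to one-dimensional maps need not attain $(\mathcal P)$ because the transition layer can ``split'' into two layers travelling to $\pm\infty$, the first connecting $u^-$ to some intermediate well $z\in S_a$ and the second connecting $z$ to $u^+$; hypothesis \textbf{(H3)} is precisely the quantitative statement that prevents this, since the total cost of such a split, $d_W(u^-,z)+d_W(z,u^+)$, strictly exceeds $d_W(u^-,u^+)\le c_W(u^-,u^+)$ (using $c_W(u^-,u^+)=d_W(u^-,u^+)$ for the two-well endpoints, or at worst $c_W\le d_W$ together with a matching one-dimensional competitor). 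So the strategy is: show that a minimizing sequence \emph{cannot} split, hence behaves as in the two-well case, and then invoke the compactness machinery.

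Concretely, the key steps in order are as follows. (i) A priori bound and compactness: by the uniform energy bound and \textbf{(H2)}, the argument of Theorem \ref{bndenergy} (or rather its proof) gives that, after translating by suitable $t_n\in\R$, $u_n(\cdot+t_n,\cdot)\rightharpoonup u$ weakly in $\dot H^1(\Omega,\R^d)$ with $\nabla\cdot u=0$, $E(u)\le\liminf E(u_n)=c_W(u^-,u^+)$, and $u$ connects \emph{some} pair of wells in $S_a$ as $x_1\to\pm\infty$; a priori these need not be $u^\pm$, because mass can escape to infinity. (ii) No-splitting / no-loss-of-mass: this is the heart of the proof. One splits the cylinder $\Omega$ into the regions $\{x_1<-R\}$, $\{|x_1|\le R\}$, $\{x_1>R\}$ and uses that on each end the energy of $u_n$ must be at least (up to $o(1)$ as $R\to\infty$) the transition cost $d_W$ between consecutive wells visited there — here one needs that $\overline{u_n}(x_1)$ spends most of its time near $S_a$ when $x_1$ is large, which follows from \textbf{(H1)}, \textbf{(H2)} and the finite energy bound via a slicing/Fubini argument combined with the one-dimensional lower bound $\int_I \tfrac12|\dot\gamma|^2+W(\gamma)\ge d_W(\gamma(t^-),\gamma(t^+))$ along generic slices. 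If the sequence visited an intermediate well $z$, additivity of the lower bounds plus \textbf{(H3)} would force $\liminf E(u_n)\ge d_W(u^-,z)+d_W(z,u^+)>d_W(u^-,u^+)=c_W(u^-,u^+)$, contradicting minimality; hence no splitting occurs, the limit $u$ genuinely connects $u^-$ to $u^+$, and no energy is lost at $x_1=\pm\infty$. (iii) Lower semicontinuity at the ends: to turn ``$u$ connects some wells'' into ``$\overline u(\pm\infty)=u^\pm$ and $E(u)=c_W(u^-,u^+)$'' one needs that the weak $\dot H^1$ limit really picks up the boundary data; this is where \textbf{(H4)} enters — it guarantees that the slice-wise traces $v^\pm_n=u_n(\mp R,\cdot)$ (which converge strongly in $H^1(\T^{d-1})$ along a subsequence of $R$'s to the relevant wells, by the finite energy bound on the ends) satisfy $d_W(z^-,z^+)\le\liminf d_W(v^-_n,v^+_n)$, so that no cost is ``hidden'' in the passage from the bounded-domain problem to the limiting wells. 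Combining (i)–(iii): $E(u)\le c_W(u^-,u^+)$ and $u$ is admissible for $(\mathcal P)$, so $E(u)=c_W(u^-,u^+)$ and $u$ is the desired minimizer.

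The main obstacle I expect is step (ii), the no-splitting argument, and more precisely making rigorous the claim that ``on each end the energy is at least the transition cost between consecutive wells.'' The subtlety is that $c_W$ and $d_W$ are defined through $d$-dimensional minimization on a cylinder, not through the one-dimensional problem \eqref{min1Dintro}, and a minimizing sequence could in principle develop genuinely $d$-dimensional oscillations on the ends that are cheap individually but whose bookkeeping is delicate. To handle this one should work directly with $d_W$: cut $\Omega$ at well-chosen slices $x_1=s_n^\pm$ (chosen, via the mean value / Chebyshev argument, so that $u_n(s_n^\pm,\cdot)$ is close in $H^1(\T^{d-1})$ to a well of $S_a$ and carries little energy on $\{x_1\sim s_n^\pm\}$), and then use the additivity $E(u_n)=E(u_n,(-\infty,s_n^-))+E(u_n,(s_n^-,s_n^+))+E(u_n,(s_n^+,\infty))$ together with the definitions of $d_W$ on each piece and \textbf{(H4)} to pass to the limit. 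The triangle-type inequality \textbf{(H3)} then closes the contradiction. A secondary technical point is the interplay between the $H^1$ versus $H^{1/2}$ trace spaces flagged in the paper's footnote, which forces one to be slightly careful in choosing the cutting slices so that the traces live in $H^1_a(\T^{d-1},\R^d)$; this is a routine but necessary adjustment.
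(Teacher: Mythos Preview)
Your proposal is correct and follows essentially the same route as the paper: take a minimizing sequence, use the compactness of Lemma~\ref{focus} (which only needs {\bf (H1)}--{\bf (H2)}) to get a weak limit $u$ with $\overline{u}(\pm\infty)=z^\pm\in S_a$, and then rule out $z^\pm\in S_a\setminus\{u^\pm\}$ by cutting at slices where the trace of $u_n$ is $H^1$-close to a well, applying the $d_W$ lower bound on each piece, passing to the limit via {\bf (H4)}, and deriving a contradiction with {\bf (H3)}. One small organizational point: in the paper {\bf (H4)} is used \emph{inside} the no-splitting step (your step (ii)) to pass the inequality $E(u_{n_k})\ge d_W(u_{n_k}(R_k^-,\cdot),u_{n_k}(t_k,\cdot))+d_W(u_{n_k}(t_k,\cdot),u_{n_k}(R_k^+,\cdot))$ to the limit, not as a separate step (iii); the fact that the limit $\overline{u}$ has well-defined endpoints in $S_a$ comes already from Lemma~\ref{closed_boundary} (using only {\bf (H1)}--{\bf (H2)}), and the paper's device for showing these endpoints are exactly $u^\pm$ is to bound $\sup_n\mathcal L^1(\overline{u_n}^{-1}(B(z,\delta)))$ for each $z\in S_a\setminus\{u^\pm\}$ (Lemma~\ref{time_estimate}) and then pass this to the limit by Fatou---but this is just a repackaging of your slicing argument.
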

In this multiple-well context, the triangle inequality \textbf{(H3)} is essential. It insures that, if the energy of an admissible function $u$ with $\overline{u}(\pm\infty)=u^\pm$ is almost minimal (i.e., $E(u)- c_W(u^-,u^+)$ is small enough), then the path $t\mapsto u(t,\cdot)$ cannot get too close in $H^1(\T^{d-1})$ to a zero of $W$ other than $u^-$ and $u^+$.
We also observe that, under the assumption {\rm \bf(H4)}, one has $d_W=c_W$ on $S_a\times S_a$:
 \begin{proposition}\label{dWcW}
 If $W:\R^d\to \R_+$ is a continuous function such that the assumption {\rm \bf(H4)} is fulfilled for some $a\in\R$, then one has $d_W(z^-,z^+)=c_W(z^-,z^+)$ for every $z^\pm\in S_a$.
 \end{proposition}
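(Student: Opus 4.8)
The plan is to prove the two inequalities separately. The inequality $c_W(z^-,z^+)\le d_W(z^-,z^+)$ for $z^\pm\in S_a$ is the easy one, already observed in the text: given any competitor $u$ for $d_W$ on a bounded interval $I=[t^-,t^+]$ with trace $u(t^\pm,\cdot)\equiv z^\pm$, one extends it by the constants $z^\pm$ on the two half-lines outside $I$; the extension is still divergence-free (the normal component is continuous across $\{t^\pm\}\times\T^{d-1}$), has the same energy since $W(z^\pm)=0$, and is admissible for $c_W$. So the real content is the reverse bound $d_W(z^-,z^+)\le c_W(z^-,z^+)$, which I would prove by a slicing/trace argument; one may of course assume $c_W(z^-,z^+)<\infty$.

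First I would fix $\eps>0$ and take a near-optimal competitor $u\in\dot{H}^1_{div}(\R\times\T^{d-1},\R^d)$ for $c_W(z^-,z^+)$, so that $\overline u(\pm\infty)=z^\pm$ and $E(u,\R)\le c_W(z^-,z^+)+\eps$. The key point is that, since $\int_\R\!\int_{\T^{d-1}}|\nabla_{x'}u|^2\,dx\le 2E(u,\R)<\infty$, there must exist sequences $t_n^\pm\to\pm\infty$ along which the transverse Dirichlet energy of the slice vanishes, i.e. $\|\nabla_{x'}u(t_n^\pm,\cdot)\|_{L^2(\T^{d-1})}\to 0$; choosing these among the (a.e.) points at which the slice lies in $H^1(\T^{d-1})$ — legitimate since $u\in H^1_{loc}$, hence $u\in C(\R;L^2(\T^{d-1}))\cap L^2_{loc}(\R;H^1(\T^{d-1}))$ — the traces $v_n^\pm:=u(t_n^\pm,\cdot)$ are well defined in $H^1(\T^{d-1})$. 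Then the Poincaré–Wirtinger inequality on the unit torus $\T^{d-1}$, together with the continuity of $x_1\mapsto\overline u(x_1)$ and $\overline u(\pm\infty)=z^\pm$, forces $\|v_n^\pm-z^\pm\|_{L^2(\T^{d-1})}\le C\|\nabla_{x'}v_n^\pm\|_{L^2}+|\overline u(t_n^\pm)-z^\pm|\to 0$; combined with $\nabla_{x'}v_n^\pm\to 0$ in $L^2$, this yields strong convergence $v_n^\pm\to z^\pm$ in $H^1(\T^{d-1})$. Note also that $v_n^\pm\in H^1_a(\T^{d-1},\R^d)$ since $\overline u\cdot e_1\equiv a$ by Lemma~\ref{lem_aver}.

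To conclude I would use that the restriction of $u$ to the bounded cylinder $[t_n^-,t_n^+]\times\T^{d-1}$ lies in $\dot{H}^1_{div}([t_n^-,t_n^+]\times\T^{d-1},\R^d)$ and has trace $v_n^\pm$ on $\{t_n^\pm\}\times\T^{d-1}$, hence is admissible in the definition \eqref{dw} of $d_W(v_n^-,v_n^+)$, so that $d_W(v_n^-,v_n^+)\le E(u,[t_n^-,t_n^+])\le E(u,\R)\le c_W(z^-,z^+)+\eps$. Applying the lower semicontinuity hypothesis {\rm\bf(H4)} along the sequences $v_n^\pm\to z^\pm$ gives $d_W(z^-,z^+)\le\liminf_n d_W(v_n^-,v_n^+)\le c_W(z^-,z^+)+\eps$, and letting $\eps\downarrow 0$ finishes the proof.

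The step I expect to require the most care is the slicing: making precise that good cross-sections $t_n^\pm$ exist at which the transverse gradient is small \emph{and} the slice is a genuine $H^1(\T^{d-1})$ map, and then upgrading the resulting $L^2$-convergence of the slices to strong $H^1$-convergence, which is exactly the regularity in which {\rm\bf(H4)} is phrased. Everything else — the gluing in the easy inequality and the energy bookkeeping — is routine.
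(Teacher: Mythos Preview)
Your proposal is correct and follows essentially the same approach as the paper. The paper packages your slicing argument as Lemma~\ref{BC_unif} (finding $R_n^\pm\to\pm\infty$ with $u(R_n^\pm,\cdot)\to z^\pm$ in $H^1(\T^{d-1})$ via integrability of $x_1\mapsto\|\nabla' u(x_1,\cdot)\|_{L^2}^2$ and Poincar\'e--Wirtinger), then applies {\bf(H4)} exactly as you do; the only cosmetic difference is that the paper takes the infimum over admissible $u$ at the end rather than introducing an auxiliary $\eps>0$.
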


The assumption {\textbf{(H4)}} always holds in dimension $d=2$ for a continuous potential $W$, while in higher dimensions $d\geq 3$, a growth condition on $W$ should be assumed in addition:
\begin{proposition}\label{dWdist}
Let $W:\R^d\to\R_+$ be a continuous function. If $d\geq 3$, assume the following growth condition: 
\footnote{With the convention $\frac{2d}{d-3}=+\infty$ if $d=3$.}
\begin{equation}\label{growthMultiwell}
\text{there exist } C>0 \text{ and }
q\in \big(1,\frac{2d}{d-3}\big) \text{ s.t. }\ W(z)\leq C(1+|z|^q)\quad\forall z\in\R^d.
\end{equation}
Then for every $a\in\R$, $z^\pm\in S_a$ and for every sequences $(v^\pm_n)_{n\in\N}\subset H^{1}_a(\T^{d-1},\R^d)$ converging strongly in $H^1$ to $z^\pm$, one has $d_W(z^-,z^+)=\lim_{n\to\infty}d_W(v^-_n,v^+_n)$. In particular, the assumption {\rm \bf(H4)} holds true.
\end{proposition}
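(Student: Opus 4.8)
The plan is to prove separately that $\limsup_{n}d_W(v^-_n,v^+_n)\le d_W(z^-,z^+)$ and that $\liminf_n d_W(v^-_n,v^+_n)\ge d_W(z^-,z^+)$, everything resting on one gluing statement, the \emph{collar lemma}: for each $z\in S_a$ there is a nondecreasing modulus $\Theta_z$ with $\Theta_z(0^+)=0$ such that, for every $v\in H^{1}_a(\T^{d-1},\R^d)$, there exists $w\in\dot H^1_{div}((0,1)\times\T^{d-1},\R^d)$ with $w(0,\cdot)=z$, $w(1,\cdot)=v$ (as traces on the end slices) and $E(w,(0,1))\le\Theta_z(\|v-z\|_{H^1(\T^{d-1})})$; note this is symmetric under $s\mapsto 1-s$, so collars of either orientation are available. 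Granting it, both halves are short. For the $\limsup$: take a competitor $u$ for $d_W(z^-,z^+)$ on some $[t^-,t^+]$ with $u(t^\pm,\cdot)=z^\pm$ and $E(u,[t^-,t^+])\le d_W(z^-,z^+)+\varepsilon$ (this is finite, e.g.\ by the affine path in $\R^d_a$), insert a collar from $z^-$ to $v^-_n$ on $[t^--1,t^-]$ and one from $z^+$ to $v^+_n$ on $[t^+,t^++1]$, and concatenate; the result is admissible for $d_W(v^-_n,v^+_n)$ with energy $\le d_W(z^-,z^+)+\varepsilon+\Theta_{z^-}(\|v^-_n-z^-\|_{H^1})+\Theta_{z^+}(\|v^+_n-z^+\|_{H^1})$, so $n\to\infty$ then $\varepsilon\to0$ gives the bound.

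For the $\liminf$, pick competitors $u_n$ for $d_W(v^-_n,v^+_n)$ on $[t^-_n,t^+_n]$ with $E(u_n)\le d_W(v^-_n,v^+_n)+1/n$ (we may assume $d_W(v^-_n,v^+_n)<\infty$), glue a collar from $z^-$ to $v^-_n$ on $[t^-_n-1,t^-_n]$ and from $z^+$ to $v^+_n$ on $[t^+_n,t^+_n+1]$, and extend by the constants $z^-$ and $z^+$ beyond; this produces a map in $\dot H^1_{div}(\R\times\T^{d-1},\R^d)$ with $\overline u(\pm\infty)=z^\pm$, hence of energy $\ge c_W(z^-,z^+)$ by definition of $c_W$ (recall $W(z^\pm)=0$). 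Since the two collar energies tend to $0$, this yields $\liminf_n d_W(v^-_n,v^+_n)\ge c_W(z^-,z^+)$, so it remains to identify $c_W(z^-,z^+)=d_W(z^-,z^+)$: the inequality $c_W\le d_W$ is immediate (extend a $d_W$-competitor by the boundary constants), and the converse follows by applying the same collar-gluing to a near-optimal $c_W$-competitor $u$ truncated at two levels $s^\pm_T\to\pm\infty$ chosen — by a mean-value argument for $s\mapsto\int_{\T^{d-1}}|\nabla u(s,\cdot)|^2\diff x'$, together with Poincar\'e on $\T^{d-1}$ and $\overline u(\pm\infty)=z^\pm$ — so that $\|u(s^\pm_T,\cdot)-z^\pm\|_{H^1(\T^{d-1})}\to0$. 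This in particular recovers Proposition~\ref{dWcW} under the present hypotheses.

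Thus everything reduces to the collar lemma, and this is where \eqref{growthMultiwell} is used. I would build $w$ as a divergence-free analogue of the Poisson extension of $\phi:=v-z$ (note $\int_{\T^{d-1}}\phi_1\,\diff x'=0$): in the Fourier variable $k\in\Z^{d-1}$ on $\T^{d-1}$, for each $k\neq0$ prescribe the three scalars $\widehat w_1(k,\cdot)$, $\widehat w'(k,\cdot)\cdot\tfrac{k}{|k|}$ and $\widehat w'(k,\cdot)_\perp$ on $[0,1]$ so that (i) they vanish at $s=0$ and match the corresponding components of $\widehat\phi(k)$ at $s=1$, (ii) $\partial_s\widehat w_1+2\pi i\,k\cdot\widehat w'=0$, and (iii) they decay like $e^{-c|k|(1-s)}$ for $s<1$; concretely one takes profiles of the form $(a_k+b_k|k|t)e^{-|k|t}$ in $t=1-s$ for the first two and a shifted $e^{-|k|(1-s)}$ for the orthogonal one, all the free constants being fixed by (i)–(ii); the mode $k=0$ involves only $\widehat w'(0,\cdot)$, interpolated linearly from $0$ to $\widehat\phi'(0)$, with $\widehat w_1(0,\cdot)\equiv 0$. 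A mode-by-mode computation (the only real calculation) gives $\|w-z\|_{H^1((0,1)\times\T^{d-1})}\lesssim\|\phi\|_{H^1(\T^{d-1})}$, controlling the Dirichlet part of $E(w,(0,1))$, and from (iii) plus Sobolev embedding on $\T^{d-1}$ one gets, for a.e.\ $s<1$, the smoothing estimate $\|w(s,\cdot)-z\|_{L^q(\T^{d-1})}\lesssim(1-s)^{-(\sigma_q-1)_+}\|\phi\|_{H^1(\T^{d-1})}$ with $\sigma_q:=(d-1)(\tfrac12-\tfrac1q)$. For the potential part, split $\int_{(0,1)\times\T^{d-1}}W(w)$ over $\{|w-z|<\varepsilon_0\}$, where $W(w)$ is at most the modulus of continuity $\omega_W(\varepsilon_0)$ of $W$ at $z$ (recall $W(z)=0$), and over $\{|w-z|\ge\varepsilon_0\}$, where \eqref{growthMultiwell} gives $W(w)\le C_{\varepsilon_0}|w-z|^q$; the second contribution is $\le C_{\varepsilon_0}\int_0^1\|w(s,\cdot)-z\|^q_{L^q(\T^{d-1})}\diff s\lesssim C_{\varepsilon_0}\|\phi\|^q_{H^1}\int_0^1(1-s)^{-q(\sigma_q-1)_+}\diff s$, and this last integral is finite precisely when $q(\sigma_q-1)<1$, i.e.\ when $q<\tfrac{2d}{d-3}$ (for $d=3$ one has $\sigma_q<1$ and there is nothing to check; for $d=2$, $H^1(\T)\hookrightarrow L^\infty$ makes the potential estimate trivial and no growth hypothesis is needed). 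Hence $\limsup_n E(w_n,(0,1))\le\omega_W(\varepsilon_0)$, and $\varepsilon_0\to0$ finishes the collar lemma, hence the proposition, $\textbf{(H4)}$ being exactly the $\liminf$ half.

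The hard point is precisely this potential estimate. The obstruction is that $v$ is only $H^1(\T^{d-1})$, hence may fail to lie in $L^q(\T^{d-1})$ once $q>\tfrac{2(d-1)}{d-3}$, so any admissible $w$ staying genuinely close to $v$ on a set of levels $s$ of positive measure has infinite potential energy; the construction is forced to ``smooth $v$ out'' instantaneously away from $s=1$, and must do so while staying divergence-free. This is why one uses the divergence-free Poisson-type extension rather than, say, a Bogovskii correction of a crude interpolation: the Poisson profiles gain a full power of $(1-s)^{-1}$ of $L^q$-smoothing per unit of Sobolev regularity of $\phi$, so exploiting $\phi\in H^1$ (not merely $\phi\in L^2$) improves the blow-up rate from $(1-s)^{-\sigma_q}$ to $(1-s)^{-(\sigma_q-1)}$, and the threshold making $\int_0^1(1-s)^{-q(\sigma_q-1)}\diff s<\infty$ is exactly the exponent in \eqref{growthMultiwell}. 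I expect the mode-by-mode $H^1$ bound together with the decay (iii) to be the only genuinely computational step; everything else is bookkeeping.
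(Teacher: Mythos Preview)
Your overall reduction --- prove a ``collar lemma'' (small-energy divergence-free interpolation from a well $z\in S_a$ to a nearby $v\in H^1_a(\T^{d-1})$) and then glue --- is exactly the paper's structure: the collar lemma is its Lemma~\ref{Main_est}, and the gluing step is its two triangle-type inequalities $J(z^-,v^-_n)+d_W(v^-_n,v^+_n)+J(z^+,v^+_n)\ge d_W(z^-,z^+)$ and the reverse, which give both $\limsup$ and $\liminf$ directly. Your $\liminf$ detour through $c_W$ and Proposition~\ref{dWcW} works but is unnecessary: the same collar gluing already gives $d_W(z^-,z^+)\le J+d_W(v^-_n,v^+_n)+J$ without passing through $c_W$.

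Where you genuinely diverge is in the construction of the collar. The paper takes the \emph{harmonic} extension $P\phi\in H^{3/2^-}((0,1)\times\T^{d-1})$ of $\phi=v-z$ and then corrects the divergence by Bogovski\u{\i} (invoking the Bourgain--Brezis $W^{1,p}$ estimate for $\nabla\cdot w=f$ with homogeneous Dirichlet data); this produces $u_n\to 0$ in $W^{1,p}$ for every $p<\tfrac{2d}{d-1}$, and the critical Sobolev exponent of that $p$ is precisely $\tfrac{2d}{d-3}$, matching \eqref{growthMultiwell}. Your construction is instead an explicit divergence-free Poisson-type lifting built mode by mode in Fourier on $\T^{d-1}$, and you read off the threshold from the smoothing rate $\|w(s,\cdot)-z\|_{L^q}\lesssim(1-s)^{-(\sigma_q-1)_+}\|\phi\|_{H^1}$ and the integrability condition $q(\sigma_q-1)<1$. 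Both routes land on the same exponent. The paper's approach is shorter to write and black-boxes the hard estimate into a citation; yours is more self-contained (no need for the nontrivial $L^p$ Bogovski\u{\i} theory) at the price of an explicit Fourier computation, and it makes transparent \emph{why} $\tfrac{2d}{d-3}$ appears. Note, incidentally, that the paper \emph{does} use a Bogovski\u{\i} correction --- just applied to the harmonic extension, not to a crude linear interpolation, so the regularity gain from $H^1$ boundary data to $H^{3/2^-}$ bulk plays the same role as your ``one extra derivative of smoothing'' from exploiting $\phi\in H^1$.

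One technical point to tidy in your write-up: your exponential profiles $(a_k+b_k|k|t)e^{-|k|t}$ do not literally vanish at $t=1$ (i.e.\ $s=0$); you need either to antisymmetrise (e.g.\ use $e^{-|k|t}-e^{-|k|(2-t)}$) or add a harmless lower-order correction so that the trace at $s=0$ is exactly $z$. This does not affect any of the estimates.
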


\subsection{One-dimensional symmetry of global minimizers\label{sym_intro}}

In order to prove symmetry of global minimizers of $(\mathcal P)$, we develop a calibration method for divergence-free maps in any dimension $d\geq 2$; this method is reminiscent in dimension $d=2$ from the couple (entropy, entropy-flux) used in scalar conservation laws (see e.g. \cite{Aviles:1999, Jin:2000}). More precisely, if $d\geq 2$, our key object is the so-called {\it entropy} designing a $\mathcal{C}^1$ map $\Phi:\R^d\to\R^d$ such that for every smooth divergence-free map $u:\Omega\to \R^d$ with $\nabla\cdot[\Phi(u)]$ being integrable on $\Omega$, one has
\begin{equation}
\label{entropy_intro}
\int_\Omega \nabla\cdot[\Phi(u)] \, \diff x\leq E(u)
\end{equation}
(see Definition~\ref{entropy_def} for a precise statement and comments). If $u$ satisfies the boundary condition $\overline{u}(\pm\infty)=u^\pm\in S_a$, then the LHS of \eqref{entropy_intro} is independent from $u$ since
\begin{equation}
\label{satur_intro}
\int_\Omega \nabla\cdot[\Phi(u)]\, \diff x=
\Phi_1(u^+)-\Phi_1(u^-).
\end{equation}
Next to the first condition \eqref{entropy_intro}, we impose the following second condition, called {\it saturation condition} for entropies $\Phi$:
\begin{equation}\label{saturation_condition}
\Phi_1(u^+)-\Phi_1(u^-)=E(u^{1D}),
\end{equation}
provided that there exists a minimizing one-dimensional transition layer $u^{1D}$ of \eqref{min1Dintro}.
Note that the existence of an entropy satisfying the saturation condition implies that $u^{1D}$ is a global minimizer of $(\mathcal P)$ (for details see Propositions \ref{minimal} and \ref{minimal_bndProp}). Moreover, if $u$ is another global minimizer of $(\mathcal P)$, then \eqref{entropy_intro} has to be an equality; this equality is the corner stone in our proofs for the one-dimensional symmetry of $u$ (see e.g. Proposition \ref{diagonal_entropy}).

The existence of an entropy $\Phi$ is a delicate issue and requires strong assumptions on $W$. Even if there is no general recipe, we will present three situations where we are able to determine potentials $W$ for which entropies $\Phi$ do exist.

\medskip
\nd {\bf Situation 1. Strong punctual condition ($\mathcal{E}_{strg}$)}. We look for potentials $W$ for which there exist $\mathcal{C}^1$ maps $\Phi:\R^d\to\R^d$ satisfying the saturation condition \eqref{saturation_condition} and the punctual estimate
\begin{equation}\label{strong_punct}
(\mathcal{E}_{strg}) \quad |\Pi_0\nabla\Phi(z)|^2\leq 2W(z)\quad\text{for a.e. }z\in\R^d,
\end{equation}
where $\Pi_0$ is the orthogonal projection onto the set of traceless matrices:
\begin{equation*}
\Pi_0 U=U-\frac{{\rm Tr}(U)}{d}I_d,\quad U\in \R^{d\times d},
\end{equation*}
and $I_d$ stands for the identity matrix in $\R^{d\times d}$. Then \eqref{strong_punct} is a sufficient condition for $\Phi$ to be an entropy. Indeed, for all smooth $u:\Omega\to \R^d$ with $\nabla\cdot u=0$, one has $\nabla u=(\partial_ju_i)_{i,j}\in{\rm Im}(\Pi_0)$ and therefore,
\begin{align}
\nonumber
\nabla\cdot[\Phi(u)]=\nabla\Phi(u):\Pi_0\nabla u^T&= \Pi_0\nabla\Phi(u):\nabla u^T\\
\label{entropy_strong_est} &\leq \frac 12(|\nabla u|^2+|\Pi_0\nabla\Phi(u)|^2)\stackrel{\eqref{strong_punct}}{\leq} \frac 12|\nabla u|^2+W(u),
\end{align}
where $U^T$ is the transpose of a matrix $U$ and the Euclidean scalar product on $\R^{d\times d}$ is denoted by
\[
U:V=\mathrm{Tr}(UV^T)=\sum_{i,j\in\{1,\dots,d\}}U_{ij}V_{ij}.
\]
{We refer to Section \ref{entropymethod} for more details.}

\medskip
{\nd \bf Situations 2 (resp. 3). Entropies with symmetric ($\mathcal{E}_{sym}$) (resp. antisymmetric ($\mathcal{E}_{asym}$)) Jacobian.}
Let $\Pi^+$ (resp. $\Pi^-$) be the projection of $\R^{d\times d}$ on the subspace of symmetric (resp. antisymmetric) matrices, that is 
$$\Pi^\pm U=\frac 12 (U\pm U^T) \quad \textrm{for every }  U\in\R^{d\times d}.$$ We want to find potentials $W$ for which there exist maps $\Phi\in\mathcal{C}^1(\R^d,\R^d)$ satisfying the saturation condition \eqref{saturation_condition} and 
\begin{align}
\label{ent_sym}
(\mathcal{E}_{sym})\quad  &
 \nabla\Phi(z) \, \textrm{ is symmetric and } \, |\Pi_0\nabla\Phi(z)|^2\leq 4W(z)\, \textrm{ for all } z\in\R^d,\\
\label{ent_asym}  \textrm{resp. } \quad  (\mathcal{E}_{asym})  \quad & \Pi_0\nabla\Phi(z) \,  \textrm{ is antisymmetric and }\,  |\Pi_0\nabla\Phi(z)|^2\leq 4W(z) \, \textrm{ for all } z\in\R^d.
\end{align}
Then \eqref{ent_sym} (resp. \eqref{ent_asym}) is a sufficient condition for $\Phi$ to be an entropy. Indeed, if $\Phi\in\mathcal{C}^1(\R^d,\R^d)$ satisfies $\nabla\Phi(z)\in\mathrm{Im}(\Pi^\pm)$ for every $z\in\R^d$, then for all smooth $u:\Omega\to \R^d$ with $\nabla\cdot u=0$, one has
\begin{align}
\label{11}
 \nabla\cdot[\Phi(u)]=\nabla\Phi(u):\nabla u^T=\Pi^\pm\nabla\Phi(u):\Pi_0\nabla u^T=\Pi_0\nabla\Phi(u):\Pi^\pm\nabla u^T.
\end{align}
By Young's inequality and \eqref{ent_sym} (resp. \eqref{ent_asym}), it yields
\begin{align*}
\int_\Omega \nabla\cdot[\Phi(u)]\diff x
&\leq \frac 12\Big(2\|\Pi^\pm\nabla u\|_{L^2(\Omega)}^2+\frac 12\|\Pi_0\nabla\Phi(u)\|_{L^2(\Omega)}^2\Big)\leq E(u)
\end{align*}
due to the following identities valid for $u\in \dot{H}_{div}^1(\Omega,\R^d)$ with $\bar u\in L^\infty(\R, \R^d)$ (see Proposition~\ref{antisymgradient})
\begin{equation}
\label{equiProjection}
\|\Pi^+\nabla u\|_{L^2(\Omega)}^2=\|\Pi^-\nabla u\|_{L^2(\Omega)}^2=\frac 12\|\nabla u\|_{L^2(\Omega)}^2.
\end{equation}

Following the criteria \eqref{strong_punct}, \eqref{ent_sym} or \eqref{ent_asym}, we will construct potentials $W$ for which all global minimizers of $(\mathcal P)$ are one-dimensional. We present these results in dimension $d=2$ and then in dimension $d\geq 3$. One important observation is that the existence of an entropy $\Phi$ satisfying the saturation condition \eqref{saturation_condition} and one of the conditions ($\mathcal{E}_{strg}$), ($\mathcal{E}_{sym}$) or ($\mathcal{E}_{asym}$) implies that any global minimizer of $(\mathcal P)$ satisfies the following first order PDE, which encodes the Euler-Lagrange equation \eqref{stokes}, the (second order) stability conditions and the equipartition of the energy density, i.e. \(\frac 12|\nabla u|^2=W(u)\) a.e. in $\Omega$:
\begin{description}
\item[1.] if $\Phi\in\mathcal{C}^1(\R^d,\R^d)$ satisfies ($\mathcal{E}_{strg}$), then any global minimizer $u$ of $(\mathcal{P})$ solves 
\begin{equation}\label{pdeStrong}
W(u)=\frac 12 |\Pi_0\nabla\Phi(u)|^2\quad\text{and}\quad
\nabla u^T=\Pi_0\nabla\Phi(u)\text{ a.e. in $\Omega$;}
\end{equation}
\item[2.] if $\Phi\in\mathcal{C}^1(\R^d,\R^d)$ satisfies ($\mathcal{E}_{asym}$), then any global minimizer $u$ of $(\mathcal{P})$ solves 
\begin{equation}\label{pdeAsym}
W(u)=\frac 14|\Pi_0\nabla\Phi(u)|^2\quad\text{and}\quad
2\Pi^-\nabla u^T=\Pi_0\nabla\Phi(u)\text{ a.e. in $\Omega$;}
\end{equation}
\item[3.] if $\Phi\in\mathcal{C}^1(\R^d,\R^d)$ satisfies ($\mathcal{E}_{sym}$), then any global minimizer $u$ of $(\mathcal{P})$ solves 
\begin{equation}\label{pdeSym}
W(u)=\frac 14|\Pi_0\nabla\Phi(u)|^2\quad\text{and}\quad
2\Pi^+\nabla u=\Pi_0\nabla\Phi(u)\text{ a.e. in $\Omega$}
\end{equation}
\end{description}
(see Proposition \ref{pde_opt}). 

\paragraph{Symmetry results in dimension $d=2$.} We prove one-dimensional symmetry in the minimization problem $(\mathcal P)$ for potentials  $W=\frac 12w^2$, where $w$ solves the Laplace equation or the wave equation and satisfies the following growth condition:
\begin{equation}
\label{growth_intro}
\textrm{there exist } \, C,\beta>0 \textrm{ such that } \quad
W(z)\leq C\, e^{\beta |z|^2} \quad \forall z\in\R^2.
\end{equation}

\begin{theorem} 
\label{thm:harm_wave}
Assume that $W= \frac12 w^2$ where $w\in\mathcal{C}^2(\R^2,\R)$ solves
\[
\Delta w=\partial_{11} w+\partial_{22} w=0  \quad \textrm{(resp.} \quad  \square w=\partial_{11} w-\partial_{22} w=0 \textrm{) \quad in } \, \R^2 
\] 
and let $u^\pm =(a,u_2^\pm)\in\R^2$ be two wells of $W$, i.e. $W(u^\pm)=0$, such that $w>0$ on the open segment $(u^-,u^+)$. Assume that $u$ is a global minimizer of $(\mathcal P)$ and either $u\in L^\infty$ or $W$ satisfies the growth condition \eqref{growth_intro}. Then $u$ is one-dimensional, i.e., $u=g(x_1)$ where $g$ is the unique minimizer in \eqref{min1Dintro}. 
\end{theorem}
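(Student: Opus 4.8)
The plan is to produce, in each of the two cases, a $\mathcal{C}^1$ entropy $\Phi\colon\R^2\to\R^2$ satisfying the saturation condition \eqref{saturation_condition} together with one of the punctual conditions of Section~\ref{sym_intro}: the antisymmetric one $(\mathcal{E}_{asym})$ in the harmonic case and the symmetric one $(\mathcal{E}_{sym})$ in the wave case. Granting such a $\Phi$, the machinery of that section applies: the optimal one-dimensional layer is a global minimizer of $(\mathcal P)$, any global minimizer satisfies the first order PDE \eqref{pdeAsym} (resp.\ \eqref{pdeSym}) coming from the equality case of \eqref{entropy_intro}, and Proposition~\ref{diagonal_entropy} then forces it to be one-dimensional and equal to $g$. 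First I would note that the one-dimensional layer exists and is essentially unique: since $w\in\mathcal{C}^2$, $w>0$ on the open vertical segment $(u^-,u^+)=\{a\}\times(u_2^-,u_2^+)$ and $w(u^\pm)=0$, the scalar ODE $\dot g_2=\pm w(a,g_2)$ has a strictly monotone solution running from $u_2^-$ to $u_2^+$ (attained only as $t\to\pm\infty$, because $w(a,\cdot)$ vanishes at worst linearly at the endpoints), and $g=(a,g_2)$ minimizes \eqref{min1Dintro} with $E(g)=\bigl|\int_{u_2^-}^{u_2^+}w(a,s)\,\diff s\bigr|$ by the usual equipartition lower bound.

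For the harmonic case I would use that $w$, being harmonic on the simply connected plane $\R^2\cong\C$, equals $\operatorname{Im}h$ for some holomorphic $h$ on $\C$; letting $f$ be a holomorphic primitive of $-h$ (with the sign of $f$ chosen so that $\Phi_1(u^+)>\Phi_1(u^-)$ below) and setting $\Phi:=(\operatorname{Re}f,\operatorname{Im}f)$, the Cauchy--Riemann equations give $\partial_{z_1}\Phi_1=\partial_{z_2}\Phi_2$ (so the traceless part of $\nabla\Phi$ is off-diagonal) and $\partial_{z_1}\Phi_2=\operatorname{Im}f'=-w$, hence $\partial_{z_2}\Phi_1=w$; therefore $\Pi_0\nabla\Phi$ is antisymmetric and $|\Pi_0\nabla\Phi|^2=2w^2=4W$, i.e.\ $(\mathcal{E}_{asym})$ holds with equality. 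For the saturation condition I would integrate along the segment $t\mapsto a+\mathrm{i}t$: since $\tfrac{\diff}{\diff t}\Phi_1(a,t)=\operatorname{Re}\bigl(\mathrm{i}f'(a+\mathrm{i}t)\bigr)=\operatorname{Re}\bigl(-\mathrm{i}h(a+\mathrm{i}t)\bigr)=\operatorname{Im}\bigl(h(a+\mathrm{i}t)\bigr)=w(a,t)$, one gets $\Phi_1(u^+)-\Phi_1(u^-)=\int_{u_2^-}^{u_2^+}w(a,s)\,\diff s=E(g)$.

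For the wave case, d'Alembert's formula (valid globally on $\R^2$) writes $w(z_1,z_2)=P(z_1+z_2)+Q(z_1-z_2)$ with $P,Q\in\mathcal{C}^2(\R)$. I would pick $A,B\in\mathcal{C}^2(\R)$ with $A''=P$ and $B''=-Q$ (signs again chosen so that $\Phi_1(u^+)>\Phi_1(u^-)$), set $\Psi(z_1,z_2):=A(z_1+z_2)+B(z_1-z_2)$ and take $\Phi:=\nabla\Psi$, so $\Phi_1=A'(z_1+z_2)+B'(z_1-z_2)$. Then $\nabla\Phi=\nabla^2\Psi$ is symmetric, and a computation in the characteristic variables gives $|\Pi_0\nabla^2\Psi|^2=2\bigl(A''(z_1+z_2)-B''(z_1-z_2)\bigr)^2=2\bigl(P(z_1+z_2)+Q(z_1-z_2)\bigr)^2=2w^2=4W$, i.e.\ $(\mathcal{E}_{sym})$ holds. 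The saturation condition is immediate from the fundamental theorem of calculus, since $\Phi_1(a,t)=A'(a+t)+B'(a-t)$ gives $\Phi_1(u^+)-\Phi_1(u^-)=\int_{u_2^-}^{u_2^+}\bigl(P(a+s)+Q(a-s)\bigr)\,\diff s=\int_{u_2^-}^{u_2^+}w(a,s)\,\diff s=E(g)$.

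I expect the hard part to be not these computations — the punctual inequalities hold with equality by construction and the saturation is a one-line integration — but justifying that the calibration identities \eqref{entropy_intro} and \eqref{satur_intro} are legitimate for an \emph{arbitrary} finite-energy global minimizer $u$. The integrability of $\nabla\cdot[\Phi(u)]$ is harmless, being dominated by $W(u)+\tfrac12|\nabla u|^2\in L^1(\Omega)$; but identifying the boundary term with $\Phi_1(u^+)-\Phi_1(u^-)$ requires $\Phi$ to have controlled growth so that $\int_{\T}\Phi_1(u(x_1,\cdot))\,\diff x'\to\Phi_1(u^\pm)$ as $x_1\to\pm\infty$. If $u\in L^\infty$ this is automatic; otherwise I would use \eqref{growth_intro} to bound $|P|,|Q|$ (resp.\ $|h|$, via a Borel--Carath\'eodory estimate) by $Ce^{\beta'|z|^2}$ and, integrating once or twice, $|\Phi(z)|\le Ce^{\beta''|z|^2}$ — exactly the growth needed in the general propositions' boundary analysis. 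Finally I would emphasize the one genuinely structural point, already visible above: harmonic potentials $w$ give rise precisely to antisymmetric-Jacobian entropies and wave-type potentials to symmetric-Jacobian ones, in both cases with the borderline constant $4W$, which is what makes the punctual inequality and the saturation identity compatible and dictates the use of $(\mathcal{E}_{asym})$, resp.\ $(\mathcal{E}_{sym})$; once the entropy is available, the passage from the first order system \eqref{pdeAsym}/\eqref{pdeSym} (together with $\nabla\cdot u=0$ and \eqref{BC}) to one-dimensionality is exactly Proposition~\ref{diagonal_entropy}.
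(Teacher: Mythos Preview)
Your construction of the entropies and the verification of the saturation condition are correct and match the paper's Lemma~\ref{lem_entropy_harmonic}. The wave case is fine: your entropy $\Phi=\nabla\Psi$ satisfies $(\mathcal{E}_{sym})$ together with \eqref{33} (indeed $\partial_{11}\Psi=\partial_{22}\Psi$), so Proposition~\ref{diagonal_entropy} applies and yields one-dimensionality exactly as you claim.

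The gap is in the harmonic case. Proposition~\ref{diagonal_entropy} is stated only for entropies satisfying $(\mathcal{E}_{strg})$ or $(\mathcal{E}_{sym})$, not $(\mathcal{E}_{asym})$, and this restriction is essential. Look at what \eqref{pdeAsym} actually says in dimension $2$: the equation $2\Pi^-\nabla u^T=\Pi_0\nabla\Phi(u)$ has a trivially zero diagonal on the left (an antisymmetric matrix has zero diagonal regardless of $u$), so the vanishing of the diagonal of $\Pi_0\nabla\Phi(u)$ gives you no information whatsoever about $\partial_1u_1$ or $\partial_2u_2$ individually. All you obtain is the curl equation $\partial_1u_2-\partial_2u_1=w(u)$ together with the divergence constraint $\partial_1u_1+\partial_2u_2=0$, and from these two equations alone you cannot conclude $\partial_1u_1=0$. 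The argument ``diagonal of $\Pi_0\nabla\Phi$ vanishes $\Rightarrow$ $\partial_iu_i=0$'' in the proof of Proposition~\ref{diagonal_entropy} works precisely because in the symmetric/strong cases the diagonal of the left-hand side \emph{is} $(\partial_1u_1,\partial_2u_2)$ (or twice it).

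What the paper does instead in the harmonic case (Theorem~\ref{symmetry_harmonic}) is to differentiate the system \eqref{defectPDE}: combining $\partial_2$ of the curl equation with $\nabla\cdot u=0$ yields the semilinear elliptic equation
\[
-\Delta u_1-\nabla^\perp w(u)\cdot\nabla u_1=0,
\]
after first establishing enough regularity on $u$ (via Moser--Trudinger if $u\notin L^\infty$, then elliptic bootstrap). One then applies the classical maximum principle on $[R_n^-,R_n^+]\times\T$ together with Lemma~\ref{BC_unif} to force $u_1\equiv a$, and concludes by Corollary~\ref{1Dcriterium} (or directly from $\partial_2u_2=-\partial_1u_1=0$). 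This maximum-principle step is the genuine extra ingredient you are missing in the antisymmetric case.
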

The proof of Theorem \ref{thm:harm_wave} is based on the construction of an entropy $\Phi$ such that
\begin{equation}
\label{ansatz_harmonic}
\nabla\Phi(z)=
\left(
\begin{matrix} 
-\alpha (z)& w(z)\\
\mp w(z)&-\alpha(z)
\end{matrix}
\right)
\quad\text{for all }z\in\R^2,
\end{equation}
for some scalar function $\alpha$ that solves the same equation as $w$. The sign $\mp=+$ in \eqref{ansatz_harmonic} corresponds to Situation 2 of an entropy $\Phi$ with symmetric Jacobian which applies if $w$ solves the wave equation, resp. the sign $\mp=-$ corresponds to Situation 3 of an entropy $\Phi$ with antisymmetric Jacobian (i.e., $\Phi$ is holomorphic on $\R^2$) which applies if $w$ is an harmonic function in $\R^2$. The one-dimensional symmetry will follow by investigating the equality in \eqref{entropy_intro} (in particular \eqref{pdeAsym} and \eqref{pdeSym}).

If $w$ is harmonic, due to the classical maximum principle, the set $\{W=0 \}=\{w=0\}$ cannot contain an isolated point or a closed curve. In fact, if $w$ is not constant, then $\{w=0\}$ is a union (possibly infinite) of noncompact smooth curves (without end-points). An example is given by $w(z_1,z_2)=z_1z_2$, where $\{w=0\}$ is the union of two orthogonal straight lines \footnote{The potential $W(u)=\frac12u_1^2u_2^2$ appears naturally in phase separation models, see Example \ref{exa:separ}.}. In the case of the wave equation, we recover the Ginzburg-Landau potential for 
$w(z)=1-|z|^2$ and therefore, the one-dimensional symmetry of global minimizers in the Aviles-Giga model:
\begin{corollary}\label{symmetry_ag}
Let $W(z)= \frac12 (1-|z|^2)^2$ and $u^\pm =(a,\pm b)\in\R^2$ be two wells of $W$ with $a^2+b^2=1$. Assume that $u$ is a global minimizer of $(\mathcal P)$. Then $u$ is one-dimensional, i.e. $u(x)=g(x_1)$ with $g\in\mathcal{C}^2(\R,\R^2)$ being the unique minimizer of \eqref{min1Dintro} (up to translation in $x_1$-variable).
\end{corollary}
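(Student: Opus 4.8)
The plan is to obtain Corollary~\ref{symmetry_ag} as a direct specialization of Theorem~\ref{thm:harm_wave} to $w(z)=1-|z|^2$, followed by an elementary identification of the optimal one-dimensional profile. First I would verify the hypotheses of Theorem~\ref{thm:harm_wave} with this choice of $w$, so that $W=\tfrac12 w^2$. Since $\partial_{11}w=\partial_{22}w=-2$, we have $\square w=\partial_{11}w-\partial_{22}w=0$, i.e.\ $w$ solves the wave equation (it is \emph{not} harmonic, as $\Delta w=-4$); thus the relevant branch of Theorem~\ref{thm:harm_wave} is Situation~2, namely the entropy \eqref{ansatz_harmonic} with the sign $\mp=+$ and $\alpha(z)=2z_1z_2$ (which also satisfies $\square\alpha=0$), for which $\Pi_0\nabla\Phi(z)$ is off-diagonal with both off-diagonal entries equal to $w(z)$, so that $|\Pi_0\nabla\Phi(z)|^2=2w(z)^2=4W(z)$ and $(\mathcal{E}_{sym})$ holds (with equality). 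On the open segment $(u^-,u^+)=\{(a,t):-b<t<b\}$ one has $w(a,t)=1-a^2-t^2=b^2-t^2>0$, and $W(z)=\tfrac12(1-|z|^2)^2$ grows polynomially in $|z|$, so the growth condition \eqref{growth_intro} holds for every $\beta>0$ with a suitable constant. Hence Theorem~\ref{thm:harm_wave} applies and yields that any global minimizer $u$ of $(\mathcal P)$ is one-dimensional, $u=g(x_1)$, with $g$ the minimizer of \eqref{min1Dintro}.

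It then remains to describe $g$ explicitly and record its regularity. An admissible one-dimensional competitor $\gamma\in\dot{H}^1(\R,\R^2_a)$ has $\gamma_1\equiv a$ (the divergence constraint being automatic), so \eqref{min1Dintro} reduces to the scalar heteroclinic problem
\[
\inf\Big\{\int_\R \tfrac12\,\dot\gamma_2(t)^2+\tfrac12\,\big(b^2-\gamma_2(t)^2\big)^2\diff t\;:\;\gamma_2(\pm\infty)=\pm b\Big\},
\]
a classical Allen--Cahn problem with double-well potential $V(s)=\tfrac12(b^2-s^2)^2$ vanishing exactly at $s=\pm b$. A minimizer solves $\ddot\gamma_2=V'(\gamma_2)$; multiplying by $\dot\gamma_2$ and using $\gamma_2(\pm\infty)=\pm b$ together with $\dot\gamma_2(\pm\infty)=0$ forces the conserved quantity $\tfrac12\dot\gamma_2^2-V(\gamma_2)$ to vanish identically, hence (as $V>0$ on $(-b,b)$ and, by ODE uniqueness, $\gamma_2$ cannot reach $\pm b$ in finite time) the first-order equation $\dot\gamma_2=b^2-\gamma_2^2$, whose solutions are $\gamma_2(t)=b\tanh(b(t-t_0))$ for $t_0\in\R$. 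Thus $g(x_1)=\big(a,\,b\tanh(b(x_1-t_0))\big)$ is, up to $x_1$-translation, the unique minimizer of \eqref{min1Dintro}, and it is of class $\mathcal{C}^\infty$, in particular $\mathcal{C}^2$.

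Since this corollary is a pure specialization, no genuine obstacle is expected. The only non-bookkeeping points are the immediate observation that $1-|z|^2$ solves the \emph{wave} equation and not the Laplace equation --- so that the symmetric-Jacobian branch of Theorem~\ref{thm:harm_wave} is the one to invoke, matching the discussion following that theorem --- and the routine one-dimensional analysis above, whose mildly delicate part is justifying the vanishing of the conserved energy of the scalar minimizer; this is standard for finite-energy heteroclinics joining two strict minima of a nonnegative potential, and everything else is a verification of the hypotheses of Theorem~\ref{thm:harm_wave}.
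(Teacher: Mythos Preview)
Your proposal is correct and follows exactly the paper's approach: the paper's proof of Corollary~\ref{symmetry_ag} is a single sentence invoking Theorem~\ref{symmetry_wave} (the wave-equation branch of Theorem~\ref{thm:harm_wave}) after noting that the growth condition \eqref{growth_intro} holds. You do precisely this, adding the explicit verifications that $\square(1-|z|^2)=0$, that $w>0$ on $(u^-,u^+)$, and the identification of the $\tanh$ profile; the latter is not written out in the paper's proof of the corollary but is standard and consistent with the paper's later computations.
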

We will provide a symmetry result also for more general potentials $W=\frac 12 w^2$, where $w$ solves the Tricomi equation.

\begin{theorem} 
\label{thm:tricomi}
Assume that $W= \frac12 w^2$ where $w\in\mathcal{C}^2(\R^2,\R)$ 
satisfies the Tricomi equation 
\begin{equation}
\label{tricomi_eq}
\partial_{11}w-f(z_1) \partial_{22}w=0\quad\textrm{for every } z=(z_1,z_2)\in \R^2,
\end{equation} for a $\mathcal{C}^1$ function $f:\R\to \R$ with $|f|\leq 1$ in $\R$.
Let $u^\pm =(a,u_2^\pm)\in\R^2$ be two wells of $W$, i.e. $W(u^\pm)=0$, such that $w>0$ on the open segment $(u^-,u^+)$. If $u$ is a global minimizer of $(\mathcal P)$ and either 
$u\in L^\infty$ or $|\nabla w|$ satisfies the growth condition \eqref{growth_intro}, then $u$ is one-dimensional, i.e., $u=g(x_1)$ where $g$ is the unique minimizer in \eqref{min1Dintro}. 
\end{theorem}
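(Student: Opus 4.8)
The plan is to produce an explicit entropy $\Phi\in\mathcal{C}^2(\R^2,\R^2)$ adapted to the Tricomi operator — one interpolating between the holomorphic and the symmetric-Jacobian ansatz of \eqref{ansatz_harmonic} — and then to read off the one-dimensional symmetry from the equality case of the induced calibration. For the construction I would look for $\Phi=(\Phi_1,\Phi_2)$ with
\[
\nabla\Phi(z)=\begin{pmatrix}-\alpha(z)&w(z)\\ f(z_1)\,w(z)&-\alpha(z)\end{pmatrix},\qquad z=(z_1,z_2)\in\R^2,
\]
where $\alpha$ is the function, unique up to an additive constant, determined by $\nabla\alpha=\bigl(-f(z_1)\,\partial_2 w,\,-\partial_1 w\bigr)$. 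Since $\R^2$ is simply connected, such an $\alpha$ exists precisely when the compatibility condition $\partial_2\bigl(-f(z_1)\partial_2 w\bigr)=\partial_1\bigl(-\partial_1 w\bigr)$ holds, and — using $\partial_2 f\equiv0$ — this is \emph{exactly} the Tricomi equation \eqref{tricomi_eq}. The two rows $(-\alpha,w)$ and $(f(z_1)w,-\alpha)$ are in turn gradients (their compatibility conditions $-\partial_2\alpha=\partial_1 w$ and $\partial_2(fw)=-\partial_1\alpha$ are just the two components of the identity defining $\nabla\alpha$), so $\Phi$ is well defined and, since $f\in\mathcal{C}^1$ and $w\in\mathcal{C}^2$, is $\mathcal{C}^2$. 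Note that $\partial_2\Phi_1=w$, and that $f\equiv-1$ (resp.\ $f\equiv1$) recovers the holomorphic (resp.\ symmetric) entropy of Theorem~\ref{thm:harm_wave}.

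Next I would check that $\Phi$ is an entropy satisfying the saturation condition \eqref{saturation_condition}. Since $\operatorname{Tr}(\nabla\Phi)=-2\alpha$, the traceless part $\Pi_0\nabla\Phi$ has null diagonal and off-diagonal entries $w$ and $fw$, so its symmetric and antisymmetric parts carry the coefficients $\tfrac12(1+f)w$ and $\tfrac12(1-f)w$; in particular, for genuinely non-constant $f$ this $\Phi$ satisfies none of the sufficient criteria $(\mathcal{E}_{strg})$, $(\mathcal{E}_{sym})$, $(\mathcal{E}_{asym})$, so the entropy inequality must be obtained by hand. For a smooth divergence-free $u:\Omega\to\R^2$, setting $\sigma:=\partial_1u_2+\partial_2u_1$ and $\delta:=\partial_1u_2-\partial_2u_1$, one computes $\nabla\cdot[\Phi(u)]=\tfrac12 w(u)\bigl[(1+f(u_1))\sigma+(1-f(u_1))\delta\bigr]$, and Young's inequality applied to each term separately (legitimate because $1\pm f(u_1)\ge0$ when $|f|\le1$) gives the pointwise bound $\nabla\cdot[\Phi(u)]\le\frac{1+f(u_1)}4\bigl(w(u)^2+\sigma^2\bigr)+\frac{1-f(u_1)}4\bigl(w(u)^2+\delta^2\bigr)$. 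Integrating over $\Omega$, the $w(u)^2$ terms add up to $\tfrac12\int_\Omega w(u)^2\diff x=\int_\Omega W(u)\diff x$, while the remaining terms are handled using $\sigma^2-\delta^2=4\,\partial_1u_2\partial_2u_1$ together with the \emph{weighted null-Lagrangian identity}
\[
\int_\Omega f(u_1)\bigl[(\partial_1u_1)^2+\partial_1u_2\,\partial_2u_1\bigr]\diff x=0,
\]
valid because $f(u_1)\det\nabla u=\det\nabla\bigl(F(u_1),u_2\bigr)$ (with $F'=f$) is a null Lagrangian and the map $(F(u_1),u_2)$ tends to $(F(a),u_2^\pm)$ as $x_1\to\pm\infty$; combined with its unweighted version $\int_\Omega\bigl[(\partial_1u_1)^2+\partial_1u_2\partial_2u_1\bigr]\diff x=0$, this yields
\[
\int_\Omega\frac{(1+f(u_1))\sigma^2+(1-f(u_1))\delta^2}4\diff x=\frac12\int_\Omega|\nabla u|^2\diff x-\int_\Omega\bigl(1+f(u_1)\bigr)(\partial_1u_1)^2\diff x\ \le\ \frac12\int_\Omega|\nabla u|^2\diff x,
\]
whence $\int_\Omega\nabla\cdot[\Phi(u)]\diff x\le E(u)$, i.e.\ \eqref{entropy_intro}. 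For the saturation condition, $\partial_2\Phi_1=w$ gives $\Phi_1(u^+)-\Phi_1(u^-)=\int_{u_2^-}^{u_2^+}w(a,s)\,\diff s$, which by equipartition and the positivity of $w$ on the segment $(u^-,u^+)$ equals $E(u^{1D})$, where $u^{1D}=g(x_1)$ is the one-dimensional profile solving $\dot g_2=w(a,g_2)$, $g_2(\pm\infty)=u_2^\pm$ (which exists and is unique up to translation under the standing hypotheses). Finally, $\Phi$ is an admissible entropy in the sense of Definition~\ref{entropy_def} — so that \eqref{satur_intro} applies to finite-energy configurations — trivially when $u\in L^\infty$, and otherwise because the growth assumption \eqref{growth_intro} on $|\nabla w|$ propagates, via $|\nabla\alpha|\le\sqrt{2}\,|\nabla w|$, to $|\Phi(z)|\lesssim e^{\beta'|z|^2}$, after which a Moser–Trudinger estimate gives the required integrability, exactly as in the proof of Theorem~\ref{thm:harm_wave}.

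By Propositions~\ref{minimal}/\ref{minimal_bndProp}, the two properties just established imply that $u^{1D}=g(x_1)$ minimizes $(\mathcal P)$ and $\inf(\mathcal P)=E(g)$; hence any global minimizer $u$ of $(\mathcal P)$ turns all the inequalities above into equalities. On the set $\{-1<f(u_1)<1\}$, equality in Young forces the two identities $\sigma=w(u)=\delta$, i.e.\ $\partial_2u_1=0$ and $\partial_1u_2=w(u)$, while equality in the last estimate forces $\int_\Omega(1+f(u_1))(\partial_1u_1)^2\diff x=0$, hence $\partial_1u_1=0$ a.e.\ on $\{f(u_1)>-1\}$; together with $\partial_2u_2=-\partial_1u_1$ and the finiteness of $E(u)$ on the cylinder $\R\times\T$ (which forbids any nonconstant dependence of $u_1$ on $x_2$), this gives $u\equiv(a,u_2(x_1))$ with $u_2'=w(a,u_2)$, i.e.\ $u=g$ — this is the first-order system of Proposition~\ref{pde_opt}, cf.\ \eqref{pdeSym}--\eqref{pdeAsym}. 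I expect the main obstacle to be the degenerate set $\{\,|f(u_1)|=1\,\}$, where only one of the two Young inequalities is tight and the pointwise control on $\partial_1u_1$ is lost: there one argues separately, using that $\nabla u_1=0$ a.e.\ on $u_1^{-1}(N)$ whenever $|N|=0$ and that on an interval where $f\equiv\pm1$ the equation degenerates to the Laplace/wave case already treated in Theorem~\ref{thm:harm_wave}. The other technical point — and the reason the growth hypothesis is imposed on $|\nabla w|$ rather than on $W$ — is the admissibility of $\Phi$ in the unbounded regime.
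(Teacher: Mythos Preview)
Your construction of the entropy $\Phi$ via the ansatz $\nabla\Phi=\begin{pmatrix}-\alpha&w\\ f(z_1)w&-\alpha\end{pmatrix}$ and the verification of the entropy inequality are correct and coincide with the paper's approach. Your $\sigma/\delta$ decomposition and the ``weighted null-Lagrangian'' identity $\int_\Omega f(u_1)\det\nabla u=0$ are algebraically equivalent to the paper's completion of squares leading to the identity
\[
\Phi_1(u^+)-\Phi_1(u^-)=E(u)-\tfrac12\!\int_\Omega(1-f(u_1)^2)|\nabla u_1|^2-\tfrac12\!\int_\Omega\bigl(w(u)-f(u_1)\partial_2u_1-\partial_1u_2\bigr)^2-\tfrac12\!\int_\Omega\bigl(\partial_2u_2-f(u_1)\partial_1u_1\bigr)^2,
\]
so the equality-case information you extract (namely $(1+f(u_1))(\partial_1u_1)^2=0$, $(1+f(u_1))(w(u)-\sigma)^2=0$, $(1-f(u_1))(w(u)-\delta)^2=0$ a.e.) is the same as the paper's three conditions. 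The saturation condition is also obtained identically.

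The genuine gap is in the symmetry step. Your pointwise argument gives $\nabla u_1=0$ only on $\{f(u_1)>-1\}$; on $\{f(u_1)=-1\}$ you have no control over $\partial_1u_1$ (only the single relation $\partial_1u_2-\partial_2u_1=w(u)$ survives), and your proposed fixes do not close this. The level-set argument ``$\nabla u_1=0$ a.e.\ on $u_1^{-1}(N)$ when $|N|=0$'' only helps if $\{f=-1\}$ is Lebesgue-null, which is not assumed (for $f\in\mathcal{C}^1$ with $|f|\le1$ this set can be any closed set). The suggestion ``on an interval where $f\equiv-1$ the equation degenerates to the Laplace case'' is not a reduction either: even in that case (Theorem~\ref{symmetry_harmonic}) the paper's argument is \emph{itself} an elliptic maximum-principle argument, and applying it only on the open set $u_1^{-1}(\mathrm{int}\{f=-1\})$ gives no boundary information to conclude globally.

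What the paper does instead is to package the three equality conditions into a single \emph{global} elliptic equation. Using the regularity $u\in W^{2,q}_{loc}\subset\mathcal{C}^1$ (Proposition~\ref{pro:reg} / Remark~\ref{rem_reg}), one first observes the key pointwise identity $\nabla u_1=f(u_1)^2\nabla u_1=f(u_1)\,\nabla[F(u_1)]$ (where $F'=f$), valid \emph{everywhere} because at each point either $f(u_1)^2=1$ or $\nabla u_1=0$. Then the two first-order relations $f(u_1)\partial_2u_1+\partial_1u_2=w(u)$ and $\partial_2u_2=f(u_1)\partial_1u_1$ yield $\nabla[F(u_1)]=(\partial_2u_2,\,w(u)-\partial_1u_2)$, whence
\[
-\Delta[F(u_1)]+f(u_1)\,\nabla^\perp w(u)\cdot\nabla[F(u_1)]=0\quad\text{in }\Omega.
\]
This is a linear uniformly elliptic equation with bounded drift; the maximum principle on $[R_n^-,R_n^+]\times\T$ with $R_n^\pm\to\pm\infty$ (Lemma~\ref{BC_unif}) forces $F(u_1)\equiv F(a)$, and then $\nabla u_1=f(u_1)\nabla[F(u_1)]=0$ gives $u_1\equiv a$. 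This is the missing idea: the degenerate set is not handled by a case distinction but absorbed into a scalar elliptic PDE for $F(u_1)$ that holds uniformly.
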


\begin{remark}
\label{rem:tric}
i) If $f=\pm1$ in \eqref{tricomi_eq}, we recover the case of $w$ satisfying the wave equation and the Laplace equation, respectively. Another example is given by the potential $W=\frac12 w^2$ with $w=1-\delta x_1^2-x_2^2$ and the constant $\delta\in (-1,1)\setminus \{0\}$ (here, $f=\delta$ in \eqref{tricomi_eq}).

ii) We point out that the symmetry result for global minimizers of $(\mathcal{P})$ fails in general for potentials $W(z)=\frac 12w^2 (z)$ where $w\in\mathcal{C}^2(\R^2,\R)$ satisfies the Tricomi equation for some large function $f$. Indeed, Jin-Kohn \cite{Jin:2000} proved that for $w=1-\delta x_1^2-x_2^2$ where $f=\delta\gg1$ is large, the one-dimensional transition layer between the wells $(0, \pm 1)$ in direction $x_1$ is no longer a global minimizer because two-dimensional microstructures are energetically less expensive.
\end{remark}

\paragraph{Symmetry results in dimension $d\geq 2$.} 
Following the criterium ($\mathcal{E}_{sym}$) in \eqref{ent_sym}, we can construct a family of potentials $W$ in any dimension $d\geq 3$ for which the answer to Question 1 is positive. These potentials are the generalization of the $2D$ case of potentials $W=\frac12w^2$ where $w$ is a solution to the wave equation. As in $2D$, we will impose a growth condition on the potential $W$:
\begin{equation}
\label{growth_W}
\textrm{there exist } \,  C,\beta>0 \textrm{ such that for all }\,  z\in\R^d,\quad
W(z)\leq 
\begin{cases}
C\, e^{\beta |z|^2}&\text{if }d=2,\\
C\, [1+|z|^{2^\ast}]&\text{if }d\geq 3,
\end{cases}
\end{equation}
where $2^\ast = \frac{2d}{d-2}$ is the critical Sobolev exponent for the Sobolev embedding $H^1(\R^d)\hookrightarrow L^{2^\ast}(\R^d)$.
Moreover, we need to assume the existence of an entropy $\Phi\in\mathcal{C}^1(\R^d,\R^d)$ satisfying the saturation condition \eqref{saturation_condition};  more precisely, the minimal energy of one-dimensional transition layers between two zeros $u^\pm\in S_a$ of $W$ (see \eqref{slice}) is given by the geodesic pseudo-distance $\mathrm{geod}^a_W$ between $u^\pm$ in the hyperspace $\R^d_a$ endowed with the pseudo-metric $2Wg_0$ (with $g_0$ being the standard euclidean metric), i.e., 
\be
\label{geo_inter}
\mathrm{geod}^a_{W}(u^-,u^+):=\inf\bigg\{\int_{-1}^1 \sqrt{2W(\gamma(t))}|\dot{\gamma}|(t)\diff t\;:
\; \gamma\in {\rm Lip}([-1,1],\R^d_a),\,\gamma(\pm1)=u^\pm\bigg\}.
\ee

\begin{theorem}\label{thm:rigid_sym}
Let $\Phi=(\Phi_1, \dots, \Phi_d)\in \mathcal{C}^1(\R^d,\R^d)$ be a map such that for every $z\in\R^d$, $\nabla\Phi(z)$ is symmetric and
\be
\label{33} 
\partial_1\Phi_1(z)=\dots=\partial_d\Phi_d(z).
\ee
Consider the potential
\begin{equation}\label{calibW}
W(z)=\frac 12\sum_{1\leq i<j\leq d}|\partial_i\Phi_j(z)|^2
\end{equation}
and two wells $u^\pm\in\R^d_a\cap\{W=0\}$ of $W$ for some $a\in \R$. If the saturation condition 
\be
\label{interm_sat}
\Phi_1(u^+)-\Phi_1(u^-)=\mathrm{geod}^a_{W}(u^-,u^+)
\ee
is satisfied and if $u$ is a global minimizer of $(\mathcal{P})$ such that
either ($u\in L^\infty(\Omega,\R^d)$ and $W\in\mathcal{C}^2(\R^d,\R_+)$) or $W$ satisfies the growth condition \eqref{growth_W}, then $u$ is one-dimensional, i.e. $u=u(x_1)$.
\end{theorem}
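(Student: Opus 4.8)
The plan is to recognize $\Phi$ as an entropy of ``Situation~2'' type — one satisfying $(\mathcal{E}_{sym})$ and the saturation condition \eqref{saturation_condition} — then to extract from the forced equality in the entropy inequality \eqref{entropy_intro} the first-order system \eqref{pdeSym} for the global minimizer $u$, and finally to deduce one-dimensional symmetry by splitting the energy and invoking a Killing-field rigidity on the flat torus. First I would verify that $\Phi$ fits $(\mathcal{E}_{sym})$: since $\nabla\Phi(z)$ is symmetric it lies in $\mathrm{Im}(\Pi^+)$, so the computation \eqref{11} applies; and since $\partial_1\Phi_1(z)=\dots=\partial_d\Phi_d(z)=:\alpha(z)$, we have $\mathrm{Tr}\,\nabla\Phi(z)=d\,\alpha(z)$, hence $\Pi_0\nabla\Phi(z)=\nabla\Phi(z)-\alpha(z)I_d$ has zero diagonal and, using the symmetry once more, $|\Pi_0\nabla\Phi(z)|^2=\sum_{i\neq j}|\partial_j\Phi_i(z)|^2=2\sum_{i<j}|\partial_i\Phi_j(z)|^2=4W(z)$, which is precisely \eqref{ent_sym} (with equality). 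Thus \eqref{entropy_intro} holds for $u$ via \eqref{11}--\eqref{equiProjection} (see \cref{entropy_def}). Moreover, by the reparametrization argument identifying the infimum of the one-dimensional energy \eqref{min1Dintro} with the geodesic pseudo-distance \eqref{geo_inter} (cf.\ \cref{minimal,minimal_bndProp}), the hypothesis \eqref{interm_sat} is exactly the saturation condition \eqref{saturation_condition} for $\Phi$.

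Next I would run the calibration. Under the standing hypotheses — either $u\in L^\infty$ with $W\in\mathcal{C}^2$, or the growth bound \eqref{growth_W} — one has $\bar u\in L^\infty(\R,\R^d)$, the fields $\Phi(u)$ and $\nabla\cdot[\Phi(u)]$ are integrable on $\Omega$, and \eqref{satur_intro} holds. Since $u$ is a global minimizer and every admissible one-dimensional map is a competitor,
\[
\mathrm{geod}^a_W(u^-,u^+)\overset{\eqref{interm_sat}}{=}\Phi_1(u^+)-\Phi_1(u^-)\overset{\eqref{satur_intro}}{=}\int_\Omega\nabla\cdot[\Phi(u)]\,\diff x\overset{\eqref{entropy_intro}}{\le}E(u)\le\mathrm{geod}^a_W(u^-,u^+),
\]
so every inequality is an equality. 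Tracking equality through the Young inequality behind \eqref{entropy_intro} (see \cref{pde_opt}), $u$ solves \eqref{pdeSym}, i.e.\ $2\Pi^+\nabla u=\Pi_0\nabla\Phi(u)$ a.e.\ in $\Omega$. Writing $h_{ij}:=\partial_j\Phi_i=\partial_i\Phi_j$ (so $W=\tfrac12\sum_{i<j}h_{ij}^2$) and reading \eqref{pdeSym} componentwise, the vanishing diagonal of $\Pi_0\nabla\Phi(u)$ forces $\partial_i u_i=0$ for $1\le i\le d$ and $\partial_i u_j+\partial_j u_i=h_{ij}(u)$ for $i\neq j$; taking squared norms in \eqref{pdeSym} gives $|\Pi^+\nabla u|^2=W(u)$ a.e., which together with \eqref{equiProjection} yields $\int_\Omega W(u)\,\diff x=\tfrac12E(u)=\tfrac12\,\mathrm{geod}^a_W(u^-,u^+)$, hence $\int_\Omega\sum_{i<j}h_{ij}(u)^2\,\diff x=\mathrm{geod}^a_W(u^-,u^+)$.

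Then I would exploit the divergence-free structure. From $\partial_1 u_1=0$ the component $u_1$ does not depend on $x_1$; since $\int_\Omega|\nabla'u_1|^2\,\diff x<\infty$ while $\Omega=\R\times\T^{d-1}$ has infinite $x_1$-extent, $\nabla'u_1\equiv0$ (here $\nabla'=(\partial_2,\dots,\partial_d)$), so $u_1$ is constant, equal to $a$ by \eqref{BC} and \cref{lem_aver}. With $u_1\equiv a$, the equations with $i=1$ reduce to $\partial_1 u_j=h_{1j}(u)$ for $j\ge 2$, whence $\partial_1[\Phi_1(u)]=\sum_{k\ge2}h_{1k}(u)\,\partial_1 u_k=\sum_{k\ge2}h_{1k}(u)^2$ a.e.; integrating over $\Omega$, using $\int_{\T^{d-1}}\partial_k[\Phi_k(u)]\,\diff x'=0$ for $k\ge2$ and \eqref{satur_intro}, gives
\[
\int_\Omega\sum_{k\ge2}h_{1k}(u)^2\,\diff x=\int_\Omega\nabla\cdot[\Phi(u)]\,\diff x=\Phi_1(u^+)-\Phi_1(u^-)=\mathrm{geod}^a_W(u^-,u^+).
\]
Subtracting this from the identity $\int_\Omega\sum_{i<j}h_{ij}(u)^2\,\diff x=\mathrm{geod}^a_W(u^-,u^+)$ of the previous step leaves $\int_\Omega\sum_{2\le i<j\le d}h_{ij}(u)^2\,\diff x=0$, so $h_{ij}(u)=0$ a.e.\ for all $2\le i<j\le d$. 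Feeding this back into the component equations, for a.e.\ $x_1$ the map $x'\mapsto(u_2,\dots,u_d)(x_1,x')$ on $\T^{d-1}$ has vanishing symmetrized gradient ($\partial_j u_j=0$ and $\partial_i u_j+\partial_j u_i=0$ for $2\le i<j\le d$); being a Killing field on the flat torus it is affine with antisymmetric linear part, hence constant by periodicity (a short Fourier argument on $\T^{d-1}$ works directly in $H^1$). Therefore $\nabla'(u_2,\dots,u_d)\equiv0$, and with $\nabla'u_1\equiv0$ we conclude $\nabla'u\equiv0$, i.e.\ $u=u(x_1)$.

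The hard part is not the structural argument above but the preliminary facts it relies on, particularly under the mere polynomial growth \eqref{growth_W}: the integrability of $\Phi(u)$ and of $\nabla\cdot[\Phi(u)]$, the bound $\bar u\in L^\infty$, the boundary identity \eqref{satur_intro}, the equipartition identity \eqref{equiProjection}, and the identification of the one-dimensional infimum with $\mathrm{geod}^a_W(u^-,u^+)$ (which in turn needs a reparametrization argument and control of $u(x_1,\cdot)$ as $x_1\to\pm\infty$). These are exactly the ingredients provided by \cref{antisymgradient,pde_opt,minimal,minimal_bndProp}; granting them, the proof reduces to the energy splitting and torus rigidity described here.
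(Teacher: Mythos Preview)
Your argument is correct and runs parallel to the paper's through the step $u_1\equiv a$: both use Proposition~\ref{pde_opt} to extract \eqref{pdeSym}, read off $\partial_i u_i=0$ from the vanishing diagonal of $\Pi_0\nabla\Phi$, and then conclude $u_1\equiv a$ (your argument via $\int_\Omega|\nabla' u_1|^2<\infty$ on the infinite cylinder is a clean variant of the paper's use of Lemma~\ref{BC_unif}).

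After that point the two routes diverge. The paper (Proposition~\ref{diagonal_entropy}) simply observes that $u_1\equiv a$ means $u(x)\in\R^d_a$ a.e.\ and invokes Corollary~\ref{1Dcriterium}: for \emph{any} global minimizer taking values in $\R^d_a$, a Fubini/Young argument along $x_1$-fibres gives $E(u)\ge \mathrm{geod}^a_W(u^-,u^+)+\tfrac12\int_\Omega|\nabla' u|^2$, whence $\nabla' u\equiv 0$. This uses only minimality and the geodesic characterisation of the $1D$ infimum; $\Phi$ plays no further role. Your route instead exploits the full system \eqref{pdeSym}: you derive the energy-splitting identity $\int_\Omega\sum_{k\ge2}h_{1k}(u)^2=\mathrm{geod}^a_W(u^-,u^+)$, subtract it from $\int_\Omega\sum_{i<j}h_{ij}(u)^2=\mathrm{geod}^a_W(u^-,u^+)$ to kill the ``transverse'' terms $h_{ij}(u)$ with $i,j\ge2$, and finish with Killing-field rigidity on $\T^{d-1}$. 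This is a nice structural argument, but there is one technical point worth flagging. Your key identity hinges on $\int_{\T^{d-1}}\partial_k[\Phi_k(u)]\,\diff x'=0$ for $k\ge 2$ (and on $\int_\Omega\partial_1[\Phi_1(u)]=\Phi_1(u^+)-\Phi_1(u^-)$), which presupposes the chain rule for $\Phi_k\circ u$, i.e.\ $\Phi_k(u)\in W^{1,1}_{loc}$. This is immediate when $u\in L^\infty$, but under \eqref{growth_W} alone the diagonal coefficient $\alpha=\partial_k\Phi_k$ is \emph{not} controlled by $W$, so neither $\Phi_k(u)\in L^1_{loc}$ nor the passage to the limit through the Lavrentiev approximations is covered by the cited preliminaries (the defect term $\alpha(u_n)\partial_k(u_n)_k$ need not vanish in the limit). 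The paper's route via Corollary~\ref{1Dcriterium} sidesteps this issue entirely, which is what makes it the cleaner finish.
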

In order to build explicit examples of potentials $W$ for which Theorem \ref{thm:rigid_sym} applies, we consider entropies $\Phi\in\mathcal{C}^1(\R^d,\R^d)$ of the form $\Phi(z)=\nabla\Psi (z)$ for all $z\in\R^d$, for some $\Psi\in \mathcal{C}^2(\R^d)$. Then, $\nabla\Phi=\nabla^2\Psi$ is symmetric and the condition \eqref{33} becomes in terms of $\Psi$:
\begin{equation}\label{wavePsi}
\partial_{11}\Psi =\dots =\partial_{dd}\Psi .
\end{equation}
By analogy with the wave equation in $\R^2$, solutions of \eqref{wavePsi} can be written
$$\Psi(z)=\sum_{\sigma\in\{\pm1\}^d}f_\sigma (\sigma\cdot z) ,$$
where $(f_\sigma)_\sigma$ is a family of scalar functions defined over $\R$. In particular, we recover the following extension of the $2D$ Ginzburg-Landau potential in any dimension $d\geq 3$:
$$W_d(z):=\frac 12(|z|^2-1)^2+2|z''|^2(z_1^2+z_2^2),\quad z=(z_1,z_2, z'')\in\R^d, \, z''=(z_3, \dots, z_d),$$
corresponding to $$\Psi_d (z)=-z_1z_2\left(\frac{z_1^2+z_2^2}{3}+|z''|^2-1\right),\quad z=(z_1,z_2, z'')\in\R^d.$$
We highlight that the saturation condition in Theorem \ref{thm:rigid_sym} is not always satisfied; in fact, we will see that for the perturbed Ginzburg-Landau potential $W_{3}$ in $3D$, the saturation condition fails for the wells $u^\pm=\pm e_3$ of $W$, i.e., $\Phi_1(u^+)-\Phi_1(u^-)=0<\mathrm{geod}^a_{W}(u^-,u^+)$ and in that case, there are no minimizers of $(\mathcal{P})$, see page \pageref{fail_satur}.

Following the criterium ($\mathcal{E}_{strg}$) in \eqref{strong_punct}, we present another situation where the answer to Question 1 is positive, consisting in a family of potentials $W$ with finite number of wells $X:=W^{-1}(\{0\})$. 
In this situation, the minimal energetic cost between two wells $u^\pm\in X$ is expected to be given by 
 the geodesic distance $\mathrm{geod}_W$ between $u^-$ and $u^+$ in $\R^d$ endowed with the metric $2Wg_0$ (recall that $g_0$ is the standard euclidean metric): \footnote{Note the difference with respect to the geodesic distance $\mathrm{geod}^a_W$ defined in \eqref{geo_inter}  where the curves $\gamma$ 
 are confined in the hyperspace $\R^d_a$.}
\begin{equation}
\label{geod_W_gen}
\mathrm{geod}_W(z^-,z^+):=\inf \left\{\int_{-1}^1 \sqrt{2W(\gamma)}|\dot{\gamma}| \;:\; \gamma\in\mathrm{Lip}([-1,1],\R^d),\, \gamma(\pm 1)=z^\pm\right\},\quad  z^-,z^+\in\R^d.
\end{equation}
We will prove that for any affine basis $X$ in $\R^d$ and any given metric $\delta$ on $X$, there exists a potential $W$ such that $\mathrm{geod}_W$ coincides with $\delta$ on $X$ and the optimal transition layers between any two wells of $X$ are one-dimensional. For that, we need to set some notation as the transition direction between two wells $u^\pm\in X$ is not necessarily $e_1$: if $u^\pm\in X$, $\nu\in\mathbb{S}^{d-1}$ with $\nu\cdot (u^+-u^-)=0$ and $R\in SO(d)$ such that $R\nu=e_1$, then we set $\Omega_R=R^{-1}\Omega$ and the energy on $\Omega_R$
\begin{equation}\label{minimalNu}
E_R(u):=\int_{\Omega_R} \frac 12|\nabla u|^2+W(u)\diff x,\quad \nabla \cdot u=0,
\end{equation}
where the minimisation of $E_R$ is considered
over divergence-free maps $u\in \dot{H}_{div}^1(\Omega_R,\R^d)$ that are periodic in the directions $R^{-1}e_k$, $k=2, \dots, d$
and satisfy the boundary condition
\begin{equation}
\label{constraintNu}
\lim\limits_{t\to\pm\infty}\int_{R^{-1}\T^{d-1}} u(t\nu+\sigma)\diff\mathcal{H}^{d-1}(\sigma)=u^\pm.
\end{equation}
\begin{theorem} 
\label{thm:finite_wells}
Let $X=\{x_0, \dots, x_d\}$ be an affine basis in $\R^d$ and let $\delta$ be a metric on $X$. Then there exists a Lipschitz potential $W:\R^d\to \R_+$ such that $W(z)=1$ for $|z|$ large enough, $X=W^{-1}(\{0\})$, $\mathrm{geod}_W=\delta$ on $X\times X$ and for every $u^\pm\in X$, $\nu\in\mathbb{S}^{d-1}$ with $\nu\cdot (u^+-u^-)=0$ and any $R\in SO(d)$ such that $R\nu=e_1$, if $u\in \dot{H}_{div}^1(\Omega_R,\R^d)$ is a global minimizer of $E_R$ in \eqref{minimalNu} over divergence-free configurations with the boundary condition \eqref{constraintNu}, then $u$ is one-dimensional, i.e., $u=g(x\cdot \nu)$ where $g\in \dot{H}^1(\R,\R^d)$ with $g(\pm\infty)=u^\pm$ and $E_R(u)=\delta(u^-, u^+)$.
\end{theorem}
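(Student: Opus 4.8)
The plan is to construct the potential $W$ explicitly by prescribing an entropy $\Phi$ of the strong type ($\mathcal{E}_{strg}$) adapted to the affine simplex $X = \{x_0,\dots,x_d\}$, and then invoke the calibration machinery (Propositions \ref{minimal}, \ref{pde_opt}) to get one-dimensionality. The starting point is that $X$ being an affine basis means that after an affine change of coordinates we may identify the $d+1$ points with the vertices of the standard simplex, so that each point $x_i$ is characterized by a barycentric-type affine function $\lambda_i$ with $\sum_i \lambda_i \equiv 1$ and $\lambda_i(x_j) = \delta_{ij}$. The key construction is a piecewise-affine "tent" building block: for each pair $i < j$ one wants a $\mathcal{C}^1$ (after mollification) entropy component whose gradient has size controlled by $\sqrt{2W}$ on the segment $[x_i,x_j]$, vanishes away from a neighbourhood of that segment, and whose boundary values $\Phi_1(x_j)-\Phi_1(x_i)$ equal the prescribed cost $\delta(x_i,x_j)$. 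Since the wells are affinely independent, the segments $[x_i,x_j]$ pairwise intersect only at shared vertices, so these building blocks can be superposed without interference away from the vertices.

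The main steps, in order, are: (1) reduce to the standard simplex via an affine map and record how $\mathrm{geod}_W$, the divergence constraint, and the energy transform — here one has to check that an affine change of variables in the target is compatible with the divergence-free constraint up to composing with a fixed linear map, which is why the statement allows a general rotation $R$ and reduces the transition direction to $e_1$; (2) for a single segment $[u^-,u^+]$, build a one-dimensional profile: choose $W$ along the segment so that the 1D geodesic cost is exactly $\delta(u^-,u^+)$, take the optimal 1D layer $g$, and define $\Phi$ near the segment so that $\nabla\Phi(z) = \nabla u^T$ holds along the image of $g$ and $|\Pi_0\nabla\Phi|^2 = 2W$ there, extending $\Phi$ by a cutoff so $(\mathcal{E}_{strg})$ holds globally with the cutoff region contributing controlled error; (3) patch the $\binom{d+1}{2}$ pieces together, using affine independence to ensure the supports of the "active" parts are disjoint except near vertices, and mollify to get $W$ Lipschitz and $\Phi \in \mathcal{C}^1$ while keeping $W=1$ for large $|z|$ and $X = W^{-1}(0)$; (4) verify the saturation condition $\Phi_1(u^+)-\Phi_1(u^-) = E(g) = \delta(u^-,u^+) = \mathrm{geod}_W(u^-,u^+)$ for every pair, and check $\mathrm{geod}_W = \delta$ on $X\times X$ (the lower bound $\mathrm{geod}_W \geq \delta$ being the triangle-inequality-type argument, the upper bound coming from the explicit segments); (5) apply the general calibration result: existence of an entropy satisfying ($\mathcal{E}_{strg}$) and saturation forces any global minimizer $u$ of $E_R$ to satisfy the first-order system \eqref{pdeStrong}, namely $\nabla u^T = \Pi_0\nabla\Phi(u)$ and $W(u) = \frac12|\Pi_0\nabla\Phi(u)|^2$ a.e., and then a short ODE/rigidity argument shows $u$ depends only on $x\cdot\nu$ and coincides with the 1D geodesic layer.

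The hard part will be step (3): making the local entropies $\Phi$ genuinely $\mathcal{C}^1$ and globally defined while simultaneously (a) keeping the punctual inequality $|\Pi_0\nabla\Phi(z)|^2 \le 2W(z)$ valid everywhere — in particular in the transition/cutoff zones between the segments and in the far field where $W \equiv 1$ — and (b) not accidentally creating extra zeros of $W$ or violating $W=1$ at infinity. The cutoff necessarily forces $\Phi$ to interpolate, and one must choose $W$ large enough in those regions to dominate $|\nabla\Phi|^2$ there; the delicate bookkeeping is that $W$ must still vanish \emph{exactly} on $X$ and the saturation cost along each segment must come out exactly $\delta(x_i,x_j)$ and not be cheapened by a detour through the interpolation region, which is where the affine-independence of $X$ (geometric separation of the segments) is essential. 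A secondary subtlety is the rigidity step (5) in dimension $d \ge 3$: from $\nabla u^T = \Pi_0\nabla\Phi(u)$ one must deduce that $u$ is transported along the 1D geodesic in $\R^d_a$ — this uses that $\Phi$ was built so that $\Pi_0\nabla\Phi$ vanishes outside the (thin) union of segments, forcing $\nabla u$ to vanish in directions transverse to the segment and reducing to the one-variable case, after which uniqueness of the 1D minimizer (from strict convexity of the 1D problem, as in \eqref{min1Dintro}) finishes the argument.
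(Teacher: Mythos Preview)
Your outline diverges substantially from the paper's proof, and while the broad strategy (construct $W$ and an ($\mathcal{E}_{strg}$)-entropy, then calibrate) is correct in spirit, two of your key steps have genuine gaps.

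\textbf{The paper's construction of $W$ is not edge-by-edge.} The paper does \emph{not} build $W$ segment by segment. Instead it invokes a convex-geometric fact (Lemma~\ref{extremalmetric} in the appendix): any pseudo-metric $\delta$ on $X$ decomposes as $\delta=\sum_{\emptyset\neq Y\subsetneq X}\lambda_Y\delta_Y$ where each $\delta_Y$ is a \emph{cut} pseudo-metric ($\delta_Y(x,y)=1$ if exactly one of $x,y$ lies in $Y$, else $0$). For each cut $Y$ there is a single scalar calibration $\varphi_Y\in\mathcal{C}^\infty_c(\R^d)$ with $|\varphi_Y(x)-\varphi_Y(y)|=\delta_Y(x,y)$ and $\nabla\varphi_Y$ collinear with the segment direction along every edge (Lemma~\ref{extremalcalibration}). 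Setting $w=\sum_Y\lambda_Y|\nabla\varphi_Y|$ and $W=\tfrac12 w^2$, optimality of every segment $[x_i,x_j]$ for $\mathcal L_w$ follows because it is simultaneously calibrated by each $\varphi_Y$. Your edge-indexed patching, by contrast, must produce a \emph{single} $W$ for which $\binom{d+1}{2}$ independent calibrations $\varphi^{(ij)}$ all satisfy $|\nabla\varphi^{(ij)}|\le\sqrt{2W}$ globally; your ``triangle-inequality-type argument'' for $\mathrm{geod}_W\ge\delta$ is not a proof --- the correct lower bound \emph{is} the calibration inequality, and you have not explained how the cutoff zones for different edges can coexist without one $\varphi^{(ij)}$ violating $|\nabla\varphi^{(ij)}|\le\sqrt{2W}$ near a foreign segment.

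\textbf{Your rigidity step (5) is incomplete, and the paper bypasses it.} From \eqref{pdeStrong} with an entropy of the form $\Phi=\varphi\,e_1$ you get $\partial_i u_j=0$ for $i\ge 2,\ j\neq i$, but $\partial_i u_i=-\tfrac{1}{d}\partial_1\varphi(u)$ for $i\ge2$, which is not zero unless $\partial_1\varphi\equiv 0$ (this is exactly condition \eqref{33}, which your cutoff construction does not obviously respect). The paper avoids this entirely: once the segments are shown to be $\mathrm{geod}_W$-geodesics, the hypothesis $\mathrm{geod}_W^a(u^-,u^+)=\mathrm{geod}_W(u^-,u^+)$ of Theorem~\ref{strongsym} is met, and that theorem's direct estimate
\[
\mathrm{geod}_W(u^-,u^+)+\tfrac12\int_\Omega\sum_{j\ge2}|\partial_j u|^2\le E(u)
\]
(obtained from the \emph{scalar} calibration $\varphi(z)=\mathrm{geod}_W(u^-,z)$, mollified) forces $\partial_j u=0$ for $j\ge2$ whenever $u$ is a minimizer. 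No first-order PDE system and no structural hypothesis on $\Phi$ beyond $|\nabla\varphi|\le\sqrt{2W}$ is needed. Finally, your step (1) ``affine reduction to the standard simplex'' is not innocuous: a non-orthogonal linear change of variables does not preserve $\tfrac12|\nabla u|^2$, so you cannot normalize $X$ this way; the paper works with the given $X$ throughout and only uses rotations to normalize the transition axis $\nu$.
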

The proof of Theorem \ref{thm:finite_wells} is based on the criterium ($\mathcal{E}_{strg}$) for the existence of entropies. More precisely, we construct a potential $W$ of the form $\frac12|\nabla \varphi|^2$ for a scalar function $\varphi:\R^d\to \R$ such that $|\varphi(u^+)-\varphi(u^-)|=\delta(u^-,u^+)$ for every two wells $u^\pm\in X$; the corresponding entropy $\Phi$ for a pair of wells $u^\pm\in X$ and a direction $\nu\in\mathbb{S}^{d-1}$ with $\nu\cdot (u^+-u^-)=0$ is given by $\Phi:=\varphi \nu$ which satisfies \eqref{strong_punct} and the saturation condition \eqref{interm_sat}.

In dimension $d\geq 3$, we will show that the criterium ($\mathcal{E}_{asym}$) in \eqref{ent_asym} is too rigid in order to construct entropies $\Phi$ leading to potentials $W$ for which the answer to Question 1 is positive (see Proposition \ref{rigidity_antisym} and Corollary \ref{cor_pde_opt}).

\paragraph{Approximation argument} In order to remove the smoothness condition on the admissible maps $u$ imposed in the definition of entropies \eqref{entropy_intro}, a key ingredient is the following regularization procedure that insures the avoidance of the so-called ``Lavrentiev gap'' with respect to the general set of admissible maps (not necessarily smooth) with finite energy $\{ u\in \dot{H}_{div}^1(\Omega,\R^d)\; :\; E(u)<\infty\}$. This is done under the growth condition \eqref{growth_W} imposed on the potential $W$.

\begin{lemma}\label{lavrentiev}
Let $W:\R^d\to\R_+$ be a continuous potential, $u^\pm\in S_a$ for some $a\in\R$ and $u\in \dot{H}_{div}^1(\Omega,\R^d)$ be a map such that $E(u)<\infty$ and $\overline{u}(\pm\infty)=u^\pm$. If either $W$ satisfies \eqref{growth_W} or ($u\in L^\infty(\Omega)$ and $W\in\mathcal{C}^2(\R^d,\R_+)$) then there exists a sequence $(u_k)_{k\geq 1}\subset \mathcal{C}^{\infty}\cap L^\infty \cap \dot{H}_{div}^1(\Omega,\R^d)$ such that $\overline{u_k}(\pm\infty)=u^\pm$ for each $k$, and
\[
u_k\underset{k\to\infty}{\longrightarrow} u\quad\text{in $\dot{H}^1(\Omega,\R^d)$}
,\quad e_{den}(u_k)\underset{k\to\infty}{\longrightarrow}e_{den}(u)\quad \text{in }L^1(\Omega),
\]
where $e_{den}(u)$ stands for the energy density, i.e. $e_{den}(u)=\frac 12 |\nabla u|^2+W(u)$. In the case where the growth condition \eqref{growth_W} holds true, then one can impose in addition that $u_k$ satisfies \(\overline{u_k}(\pm t)=u^\pm\) for all \(t\ge t_k\) with $t_k$ large, for every $k\in \N$.
\end{lemma}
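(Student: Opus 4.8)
The plan is to argue in two steps. First, replace $u$ by a divergence-free map that coincides with $u$ on a large finite cylinder and equals the constants $u^\pm$ identically outside it, up to an error tending to $0$ both in $\dot{H}^1(\Omega)$ and in the energy density. Second, mollify this truncated map. The genuine difficulty — and the reason for the growth condition \eqref{growth_W} — is passing to the limit in the nonlinear term $\int_\Omega W(\cdot)$ along the approximating sequence.

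\textbf{Step 1 (localisation at infinity).} Since $E(u)<\infty$, $\overline u(\pm\infty)=u^\pm$, and using the Poincar\'e inequality on $\T^{d-1}$ together with the elementary ODE inequality $\big|\tfrac{d}{dx_1}\|u(x_1,\cdot)-u^+\|_{L^2(\T^{d-1})}\big|\le\|\partial_1 u(x_1,\cdot)\|_{L^2(\T^{d-1})}$, one selects $T_n,S_n\to+\infty$ so that $\|u(T_n,\cdot)-u^+\|_{H^1(\T^{d-1})}+\|u(-S_n,\cdot)-u^-\|_{H^1(\T^{d-1})}\to0$, $\sup_{x_1\in[T_n,T_n+1]}\|u(x_1,\cdot)-u^+\|_{L^2(\T^{d-1})}\to0$ (and symmetrically at $-S_n$), and $\int_{\{x_1>T_n\}\cup\{x_1<-S_n\}}(|\nabla u|^2+W(u))\to0$. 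On the slab $[T_n,T_n+1]\times\T^{d-1}$ I glue the trace $u(T_n,\cdot)$ to the constant $u^+$ by the divergence-free field $u^{+}+\theta(x_1)\big(u-u^{+}\big)+\xi_n$, where $\theta$ is a fixed cut-off with $\theta(T_n)=1$ and $\theta(T_n+1)=\theta'(T_n)=\theta'(T_n+1)=0$, and $\xi_n(x_1,\cdot):=\big(0,\ \nabla_{x'}\Delta_{x'}^{-1}\!\big(-\theta'(x_1)(u_1(x_1,\cdot)-a)\big)\big)$ is the slice-wise solenoidal correction, well defined because $\int_{\T^{d-1}}(u_1(x_1,\cdot)-a)\,dx'=\overline u_1(x_1)-a\equiv 0$; a direct computation gives $\nabla\cdot\xi_n=-\theta'(x_1)(u_1-a)$, so the glued field is divergence-free and matches the trace of $u$ at $x_1=T_n$ and the constant $u^+$ at $x_1=T_n+1$. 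Using the divergence-free condition $\partial_1u_1+\sum_{j\ge2}\partial_j u_j=0$, both the $L^2$-norm of the gradient of this field and the $L^2$-norm of the field minus $u^+$ on the slab are controlled by $\|u(T_n,\cdot)-u^+\|_{H^1(\T^{d-1})}+\|u-u^+\|_{L^2([T_n,T_n+1]\times\T^{d-1})}+\|\nabla u\|_{L^2([T_n,T_n+1]\times\T^{d-1})}\to0$; moreover, in the second alternative it stays bounded by $C\|u\|_{L^\infty(\Omega)}$. Let $\widetilde u_n$ be $u$ on $[-S_n,T_n]\times\T^{d-1}$, the two glued fields on the adjacent slabs, and $u^\pm$ beyond; then $\widetilde u_n\in\dot{H}_{div}^1(\Omega,\R^d)$, it equals $u^\pm$ outside a compact set (so $\overline{\widetilde u_n}(\pm t)=u^\pm$ for $t$ large), and its energy converges to that of $u$: the Dirichlet part by the bounds above; the potential part on each slab tends to $0$ by splitting into $\{|\widetilde u_n-u^+|\le r\}$ and its complement, using $W(u^+)=0$ and continuity of $W$ on the first piece and either \eqref{growth_W} (through $L^{2^\ast}$-smallness of $\widetilde u_n-u^+$) or the $L^\infty$-bound on the second; the far region contributes $\int W(u^\pm)=0$; and $\int_{\{x_1>T_n\}\cup\{x_1<-S_n\}}W(u)\to0$. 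Hence $\widetilde u_n\to u$ in $\dot{H}^1(\Omega)$ and $e_{den}(\widetilde u_n)\to e_{den}(u)$ in $L^1(\Omega)$.

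\textbf{Step 2 (mollification).} For fixed $n$, set $v_{n,\varepsilon}:=\widetilde u_n\ast\rho_\varepsilon$ with $\rho_\varepsilon$ a standard mollifier on $\R\times\T^{d-1}$ of width $\varepsilon<1$. Convolution commutes with $\nabla$, so $v_{n,\varepsilon}$ is divergence-free; it is smooth, and being equal to $u^\pm$ off a fixed compact set it belongs to $\mathcal{C}^\infty\cap L^\infty\cap\dot{H}_{div}^1(\Omega,\R^d)$ with $\overline{v_{n,\varepsilon}}(\pm t)=u^\pm$ for $t$ large. As $\varepsilon\to0$: $\nabla v_{n,\varepsilon}=(\nabla\widetilde u_n)\ast\rho_\varepsilon\to\nabla\widetilde u_n$ in $L^2(\Omega)$, whence $\frac12|\nabla v_{n,\varepsilon}|^2\to\frac12|\nabla\widetilde u_n|^2$ in $L^1(\Omega)$; and $v_{n,\varepsilon}\to\widetilde u_n$ in $L^2_{loc}$, hence a.e.\ along a subsequence, so $W(v_{n,\varepsilon})\to W(\widetilde u_n)$ a.e. Since $v_{n,\varepsilon}=\widetilde u_n$ outside a fixed compact $K_n$, the $L^1(\Omega)$-convergence of $W(v_{n,\varepsilon})$ reduces to $L^1(K_n)$, which follows once $\{W(v_{n,\varepsilon})\}_\varepsilon$ is equi-integrable on $K_n$: immediate from $|v_{n,\varepsilon}|\le\|\widetilde u_n\|_{L^\infty}$ in the second alternative; under \eqref{growth_W} from $W(v_{n,\varepsilon})\le C(1+|v_{n,\varepsilon}|^{2^\ast})$ together with $|v_{n,\varepsilon}|^{2^\ast}\to|\widetilde u_n|^{2^\ast}$ in $L^1(K_n)$ when $d\ge3$ (mollification is continuous in $L^{2^\ast}$ and $\widetilde u_n\in L^{2^\ast}(K_n')$ for $K_n'\supset K_n$ by Sobolev embedding) via Vitali's theorem, and, when $d=2$, from a Moser--Trudinger bound $\sup_\varepsilon\int_{K_n}e^{\beta'|v_{n,\varepsilon}|^2}<\infty$ for some $\beta'>\beta$, obtained from $\|\nabla v_{n,\varepsilon}\|_{L^2(K_n)}\to\|\nabla\widetilde u_n\|_{L^2(K_n)}$ by splitting $\widetilde u_n$ into a bounded part plus a part of small Dirichlet energy. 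Hence $v_{n,\varepsilon}\to\widetilde u_n$ in $\dot{H}^1(\Omega)$ and $e_{den}(v_{n,\varepsilon})\to e_{den}(\widetilde u_n)$ in $L^1(\Omega)$ as $\varepsilon\to0$, and a diagonal extraction in $(n,\varepsilon)$ yields a sequence $(u_k)$ with $u_k\to u$ in $\dot{H}^1(\Omega)$ and $e_{den}(u_k)\to e_{den}(u)$ in $L^1(\Omega)$, equal to $u^\pm$ outside a compact set.

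\textbf{Main obstacle.} The crux is the limit in the nonlinear term $\int W(\cdot)$ for a map that is merely in $\dot{H}^1$, both in Step 2 and in the estimate of $W$ on the gluing slabs: this is precisely where the growth condition \eqref{growth_W} enters, as it places $W\circ u$ at the critical Sobolev scale ($2^\ast$ for $d\ge3$), resp.\ the Moser--Trudinger scale $e^{\beta|z|^2}$ for $d=2$, where mollification remains continuous in the relevant Lebesgue/Orlicz space and no Lavrentiev gap opens. A secondary technical point is arranging the divergence-free gluing at infinity so that it adds vanishing Dirichlet \emph{and} potential energy; this is the role of the slice-wise correction $\xi_n$, which exploits crucially that $\overline u_1\equiv a$ forces the source term in the divergence equation to have zero average on each slice, allowing a slice-by-slice solenoidal solve with the right $H^1$- (and, in the bounded case, $L^\infty$-) estimates.
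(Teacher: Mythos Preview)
Your proof is correct and follows the same overall two-step scheme (truncate at infinity, then mollify), but the implementation differs from the paper's in two noteworthy ways.

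\emph{Truncation.} The paper carries out the cut-off at $\pm\infty$ by invoking Lemma~\ref{Main_est}, which in turn rests on a harmonic extension together with the Bourgain--Brezis theorem for solving $\nabla\cdot w=f$ in $W^{1,p}$. Your construction is more explicit and elementary: you correct $\theta(x_1)(u-u^\pm)$ by the slice-wise field $\xi_n=(0,\nabla_{x'}\Delta_{x'}^{-1}(-\theta'(u_1-a)))$, exploiting $\overline{u_1}\equiv a$. This buys you, in addition to the $H^1$ smallness, an $L^\infty$ bound on the glued map when $u\in L^\infty$ (via elliptic $L^p$ regularity on the torus and Morrey), which the paper's black-box approach does not provide.

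\emph{The bounded case.} Precisely because of that $L^\infty$ control, you treat both alternatives uniformly: truncate, then mollify, then use dominated convergence on a fixed compact. The paper instead, lacking a truncation lemma without a growth hypothesis, skips the cut-off in the second alternative and mollifies $u$ directly on all of $\Omega$; the $L^1$-convergence of $W(u_k)$ is then obtained through a $\lambda$-convexity estimate $W(z_1)\ge W(z_2)+\nabla W(z_2)\cdot(z_1-z_2)-\lambda|z_1-z_2|^2$ on $\overline{B}(0,\|u\|_\infty)$, which is where the assumption $W\in\mathcal{C}^2$ is actually used. Your route avoids this altogether (so your argument in fact only needs $W$ continuous when $u\in L^\infty$), and as a bonus yields the ``constant near $\pm\infty$'' property in both cases, not only under \eqref{growth_W}.

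\emph{Mollification under the growth condition.} Here the paper's treatment is cleaner: it uses that the majorant $F(z)=Ce^{\beta|z|^2}$ (resp.\ $C(1+|z|^{2^\ast})$) is convex, so by Jensen $\int_A F(u_k)\le\int_\Omega\rho_k(y)\int_{A-y}F(u)\,dz\,dy$, giving equi-integrability in one stroke and in all dimensions. Your case-by-case argument (Vitali via $L^{2^\ast}$-continuity of mollification for $d\ge 3$; a Moser--Trudinger splitting for $d=2$) is correct but more laborious---you may wish to replace it by the Jensen/convexity trick.
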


\paragraph{Change of variables under rotation.} \label{pagina} The fact that we consider an infinite cylinder $\Omega$ oriented in $x_1$-direction, is an implicit way of fixing the direction of the transition between $u^-$ and $u^+$. Of course, one can always reduce to this case by rotation as follows. Let $\nu\in\mathbb{S}^{d-1}$ be a new transition axis and let $R\in SO(d)$ be a rotation such that $R\nu=e_1$. Define the rotated domain
\[
\Omega_R:=\{x_R=R^{-1}x\;:\;x\in\Omega\},
\]
which can be seen as a cylinder, infinite in the direction $\nu$. To be more precise, $\Omega_R$ is the set of equivalence classes in $\R^d$ endowed with the equivalence relation ($x\sim_R y$ iff $R(x-y)$ vanishes in $\Omega=\R\times\T^{d-1}$). Let us take an admissible configuration
\[
u:\Omega\to\R^d\quad \text{such that}\quad \nabla\cdot u=0,
\]
and define the rotated map $u_R$ by
\[
u_R:\Omega_R\to\R^d,\quad u_R(x_R)=R^{-1} u(R\, x_R),\quad x_R\in\Omega_R.
\]
Thus, $u_R$ is still divergence free thanks to the elementary computation,
\[
\nabla\cdot u_R(x_R)=\mathrm{Tr}(\nabla u_R(x_R))=\mathrm{Tr}(R^{-1}\nabla u(R\, x_R)R)=\nabla\cdot u(R\, x_R)=0,\quad x_R\in\Omega_R.
\]
Set $W_R:\R^d\to\R_+$ the new potential defined by
\[
W_R(z_R)=W(R\, z_R),\quad z_R\in\R^d.
\]
Then for any $u\in \dot{H}_{div}^1(\Omega,\R^d)$, one has
\[
E(u)=\int_\Omega \frac 12 |\nabla u|^2+W(u)\diff x= E_R (u_R)=\int_{\Omega_R}\frac 12 |\nabla u_R|^2+W_R(u_R)\diff x_R.
\]
Moreover, if $u^\pm$ are two wells of $W$ compatible with the divergence constraint and the boundary condition $\overline{u}(\pm\infty)=u^\pm$, i.e., $(u^+-u^-)\cdot e_1=0$, then the rotated wells,
\[
u^\pm_R:=R^{-1}u^\pm\in \{W_R=0\},
\]
are compatible with the divergence constraint $\nabla\cdot u_R=0$ and the new boundary condition:
\[
\lim\limits_{y\to\pm\infty}\int_{\{x_R\in\Omega_R\;:\; x_R\cdot\nu=y\}} u_R(x_R)\diff\mathcal{H}^{d-1} (x_R)=u^\pm_R.
\]
Moreover, if $\Phi:\R^d\to\R^d$ satisfies \eqref{entropy_intro} and \eqref{satur_intro}, then $\Phi_R$ defined by
\[
\Phi_R:\R^d\to\R^d,\quad \Phi_R(z_R)=R^{-1}\Phi (R\, z_R),\quad z_R\in\R^d,
\]
is an entropy adapted to the new energy functional $E_R$ and the rotated wells $u^\pm_R$, i.e. \eqref{entropy_intro} and \eqref{satur_intro} hold true for $(\Omega_R,\Phi_R,u_R,E_R)$ instead of $(\Omega,\Phi,u,E)$.

\medskip

From the previous analysis, we deduce that the study of the existence and the symmetry of global minimizers for a potential $W_R$ and a transition configuration $(\nu,u^+_R,u^-_R)$ (i.e. a transition axis $\nu$, and two wells $u_R^\pm$ of $W_R$ with $\nu\cdot(u^+_R-u^-_R)=0$) reduces to the same study for the potential $W(z)=W_R(R^{-1}z)$ and the transition configuration $(e_1,u^+,u^-)=(R\nu,Ru^+_R,Ru^-_R)$. For the existence of minimizers, the assumptions of Theorems \ref{thm1} and \ref{thm2} easily transpose to any configuration axis $\nu$. For the one-dimensional symmetry of global minimizers, note that the assumptions on $W=\frac12w^2$ in Theorem \ref{thm:harm_wave} (when $w$ is harmonic), Corollary \ref{symmetry_ag} and Theorem \ref{thm:finite_wells} are invariant by rotation, i.e. it remains true with $W_R$ instead of $W$. Indeed, the Laplace equation $\Delta w=0$ is invariant by rotation, while for the Ginzburg-Landau potential, $w(z)=1-|z|^2$ is radially symmetric. The previous analysis also implies that in Theorem \ref{thm:finite_wells}, it is enough to prove one-dimensional symmetry for the special transition axis $\nu=e_1$ (i.e. $R=I_d$ and $\Omega_R=\Omega$).

\subsection{Structure of the paper}
In Section \ref{existence}, we analyze Question 2 on the existence of global minimizers, proving Theorem~\ref{thm1} in Section \ref{two-wells} and Theorem~\ref{thm2} in Section \ref{MultipleWell}. Section \ref{one_dim_sym} is dedicated to Question 1, i.e., the study of one-dimensional symmetry of minimizers. We first make a quick analysis of the minimization problem \eqref{min1Dintro} in $1D$ in Section \ref{an_1Dprofile}. In Section \ref{sec:lav}, we prove the approximation argument in Lemma~\ref{lavrentiev}. In Section \ref{entropymethod}, we explain our main tool, the entropy method; as an immediate consequence, we present the structure of global minimizers as solutions to a first order PDE system in Section \ref{sec:structure}. Our main results on the symmetry of minimizers in $2D$ (i.e. Theorems~\ref{thm:harm_wave}, \ref{thm:tricomi} and Corollary~\ref{symmetry_ag}) are shown in Section \ref{one_dim_2D}. Finally, we extend our method to some situations in higher dimension by proving Theorems \ref{thm:rigid_sym} and \ref{thm:finite_wells} in Section \ref{higherdimension}. 

\section{Existence of global minimizers\label{existence}}

Due to the translation invariance in $x_1$-direction of the domain $\Omega$ and of the energy $E$, the existence of a global minimizer in ($\mathcal{P}$) under the boundary condition \eqref{BC} is not trivial. In order to overcome loss of compactness, we need a procedure that allows to concentrate the energy around the origin. This will be made possible by translating each element $u_n$ of a given minimizing sequence $(u_n)_{n\in\N}$, in such a way that the transition between $u^-$ and $u^+$ is roughly achieved in a fixed neighborhood of the origin in $\Omega$.

\subsection{Some preliminaries}

We denote by $(e_i)_{i=1,\dots d}$ the canonical basis of $\R^d$, and we recall the notations $\R^d_a$ and $S_a$ in \eqref{slice}.

\paragraph{About the boundary condition.} Given $u\in\dot{H}_{div}^1(I\times\T^{d-1},\R^d)$, where $I\subset\R$ is an interval, we recall that $\overline{u}$ is the $x'$-average of $u$ on $\T^{d-1}$ defined in \eqref{average}. The following observation will be useful in the sequel.
\begin{lemma}
\label{lem_aver}
If $I\subset\R$ is an interval and $u\in\dot{H}^1(I\times\T^{d-1},\R^d)$, then $\overline{u}\in \dot{H}^1(I,\R^d)\subset \mathcal{C}^{0,1/2}(I,\R^d)$. If in addition $\nabla\cdot u=0$, then
\[
\overline{u_1}(t):=\overline{u}(t)\cdot e_1\quad\text{is constant in $I$.}
\]
In particular, if $I=\R$ and $\overline{u}(\pm\infty)=u^\pm\in\R^d_a$ for some $a\in\R$, then $\overline{u_1}(t)\equiv a$, i.e. $\overline{u}(t)\in\R^d_a$ for all $t\in I$. 
\end{lemma}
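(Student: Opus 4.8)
Lemma~\ref{lem_aver} is an elementary but foundational observation, so the proof should be short and self-contained. The plan is to treat the three assertions in turn: (i) $\overline{u}\in\dot H^1(I,\R^d)$ together with the H\"older embedding, (ii) constancy of $\overline{u}\cdot e_1$ under the divergence constraint, and (iii) the consequence for the limit $\overline{u}(\pm\infty)=u^\pm$.

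For (i), I would note that differentiation under the integral sign gives $\frac{d}{dt}\overline{u}(t)=\int_{\T^{d-1}}\partial_1 u(t,x')\diff x'$ in the distributional sense; this is rigorous because $u\in\dot H^1(I\times\T^{d-1},\R^d)$ means $\partial_1 u\in L^2(I\times\T^{d-1})$, and by Fubini the map $t\mapsto\int_{\T^{d-1}}\partial_1 u(t,x')\diff x'$ is in $L^2(I)$ with $\|\overline{u}'\|_{L^2(I)}\le\|\partial_1 u\|_{L^2(I\times\T^{d-1})}$ by Jensen/Cauchy--Schwarz (using $|\T^{d-1}|=1$). Hence $\overline{u}\in\dot H^1(I,\R^d)$, and the one-dimensional Sobolev embedding $\dot H^1(I)\hookrightarrow\mathcal C^{0,1/2}_{loc}(I)$ gives the stated H\"older regularity. (If $I$ is bounded one gets $\mathcal C^{0,1/2}(I)$ directly; on unbounded $I$ the H\"older bound is local, which is all that is used later.)

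For (ii), I would test the divergence-free condition $\nabla\cdot u=0$ against functions of the form $\psi(x_1)$ with $\psi\in\mathcal C_c^\infty(\mathrm{int}\,I)$: since $\nabla\cdot u=\partial_1 u_1+\sum_{k=2}^d\partial_k u_k$ and each $\partial_k u_k$ with $k\ge2$ integrates to zero over $\T^{d-1}$ (periodicity in $x'$, no boundary term), integrating $\nabla\cdot u=0$ against $\psi(x_1)$ over $\Omega$ leaves $\int_I\psi(t)\,\overline{u_1}'(t)\diff t=0$ for all such $\psi$. Therefore $\overline{u_1}'=0$ as a distribution on the interior of $I$, so by part (i) the continuous representative $\overline{u_1}$ is constant on $I$. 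Part (iii) is then immediate: if $I=\R$ and $\overline{u}(x_1)\to u^\pm$ as $x_1\to\pm\infty$ with $u^\pm\in\R^d_a$, constancy of the first component forces $\overline{u_1}\equiv a$, i.e. $\overline{u}(t)\in\R^d_a$ for every $t$.

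The only mild subtlety, and the thing I would be most careful about, is justifying the interchange of $\partial_1$ with the $x'$-integral and the distributional identity $\frac{d}{dt}\overline{u}=\overline{\partial_1 u}$ at the level of $H^1_{loc}$ regularity rather than for smooth maps; this is handled cleanly by approximating $u$ on each slab $J\times\T^{d-1}$ ($J\Subset I$) by smooth maps in $H^1$, for which the identity is classical, and passing to the limit using the $L^2$ bound above. Everything else is routine. I would also remark that the $x'$-periodicity (torus, no boundary) is exactly what kills the contributions of $\partial_k u_k$, $k\ge2$, which is the structural reason the first component is the distinguished one.
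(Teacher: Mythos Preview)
Your proof is correct and follows essentially the same approach as the paper: Jensen's inequality to show $\overline u\in\dot H^1(I,\R^d)$ with the $L^2$ bound $\|\overline u'\|_{L^2}\le\|\nabla u\|_{L^2}$, the one-dimensional Sobolev embedding for the H\"older regularity, and the observation that periodicity on $\T^{d-1}$ kills $\int_{\T^{d-1}}\nabla'\cdot u'\,\diff x'$ so that $\frac{\diff}{\diff t}\overline{u_1}=0$. The paper computes this last step directly rather than by testing against $\psi(x_1)$, but the content is identical.
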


\begin{proof}
The fact that $\overline{u}\in\dot{H}^1(I, \R^d)$ immediately follows from the Jensen inequality:
\[
\int_I|\frac{\diff}{\diff x_1}\overline{u}(x_1)|^2\diff x_1=\int_I\left|\int_{\T^{d-1}}\partial_{1}u(x_1,x')\diff x'\right|^2\diff x_1\leq \int_{I\times\T^{d-1}}|\nabla u|^2\diff x.
\]
In particular, $\overline{u}\in \mathcal{C}^{0,1/2}(I,\R^d)$ by Sobolev embedding. When $\nabla\cdot u=0$, since $\T^{d-1}$ has no boundary, one has
\[
\frac{\diff}{\diff x_1}\overline{u_1}=\int_{\T^{d-1}}\partial_{1}u_1\diff x'=- 
\int_{\T^{d-1}} \nabla'\cdot u'\diff x' =0 \quad\mbox{a.e. in $I$,}
\]
where $$\textrm{$\nabla'=(\partial_2,\dots,\partial_d)$ and $u'=(u_2,\dots,u_d)$}.$$ This entails that $\overline{u_1}$ is constant.
\end{proof}

We will use the following standard result several times in the proofs of the one-dimensional symmetry of minimizers:
\begin{lemma}\label{BC_unif}
If $u\in \dot{H}^1(\Omega,\R^d)$ satisfies $\overline{u}(\pm\infty)=u^\pm$, then there exist two sequences $(R_n^+)_{n\in\N}$ and $(R_n^-)_{n\in\N}$ such that $(R_n^\pm)_{n\in\N}\to\pm\infty$ and
$$\| u(R_n^\pm,\cdot)- u^\pm\|_{H^1(\T^{d-1},\R^d)} \underset{n\to\infty}{\longrightarrow}0,$$
where $u(R,\cdot)$ stands for the trace of the Sobolev function $u$ at $x_1=R$, for every $R\in\R$. 
\end{lemma}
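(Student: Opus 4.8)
The plan is to exploit the fact that $\overline{u}(\pm\infty)=u^\pm$ forces the tail of the $H^1$-energy of $u$ restricted to half-cylinders $(R,\infty)\times\T^{d-1}$ to vanish, and then use a mean-value / Chebyshev argument to find slices $\{x_1=R_n^+\}$ on which the full $H^1(\T^{d-1})$-norm of the trace $u(R_n^+,\cdot)-u^+$ is small. I will treat the $+\infty$ side; the $-\infty$ side is identical.

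First I would record the energy decay. Since $u\in\dot H^1(\Omega,\R^d)$, we have $\int_\Omega|\nabla u|^2<\infty$, so for the tails
\[
\eta(R):=\int_{(R,\infty)\times\T^{d-1}}|\nabla u|^2\,\diff x\xrightarrow[R\to\infty]{}0.
\]
By Fubini, $\eta(R)=\int_R^\infty\Big(\int_{\T^{d-1}}|\nabla u(x_1,x')|^2\,\diff x'\Big)\diff x_1$, so the inner integral, call it $g(x_1):=\int_{\T^{d-1}}|\nabla u(x_1,\cdot)|^2$, is in $L^1(R,\infty)$ for every $R$. By a standard Chebyshev/averaging argument on the interval $(R,R+1)$, there is a set of positive measure of $x_1\in(R,R+1)$ with $g(x_1)\le 2\,\eta(R)$; moreover on a.e. such slice the trace $u(x_1,\cdot)$ lies in $H^1(\T^{d-1},\R^d)$ with $\|\nabla' u(x_1,\cdot)\|_{L^2(\T^{d-1})}^2\le g(x_1)\le 2\eta(R)$. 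Picking $R=R_n\to\infty$ and a corresponding slice $R_n^+\in(R_n,R_n+1)$ of this type, we get $\|\nabla' u(R_n^+,\cdot)\|_{L^2(\T^{d-1})}\to 0$, where $\nabla'$ denotes the tangential gradient on $\T^{d-1}$.

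Next I would control the zeroth-order part. The $x'$-average of the trace is $\overline{u}(R_n^+)$, and by Lemma~\ref{lem_aver}, $\overline{u}\in\mathcal{C}^{0,1/2}(\R,\R^d)$ with $\overline{u}(R_n^+)\to u^+$ since $R_n^+\to\infty$ (indeed $\overline{u}(\pm\infty)=u^\pm$ is exactly the hypothesis). By the Poincaré inequality on $\T^{d-1}$,
\[
\|u(R_n^+,\cdot)-\overline{u}(R_n^+)\|_{L^2(\T^{d-1})}\le C_{\T^{d-1}}\,\|\nabla' u(R_n^+,\cdot)\|_{L^2(\T^{d-1})}\xrightarrow[n\to\infty]{}0.
\]
Combining with $|\overline{u}(R_n^+)-u^+|\to 0$ (and $\T^{d-1}$ has finite measure) gives $\|u(R_n^+,\cdot)-u^+\|_{L^2(\T^{d-1})}\to 0$. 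Since $\nabla'(u(R_n^+,\cdot)-u^+)=\nabla' u(R_n^+,\cdot)\to 0$ in $L^2$ as well, we conclude $\|u(R_n^+,\cdot)-u^+\|_{H^1(\T^{d-1},\R^d)}\to 0$, as desired.

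The only genuinely delicate point is the justification that ``a.e. slice is a good slice'': one must make sure that for a.e.\ $x_1$ the restriction $u(x_1,\cdot)$ is a well-defined $H^1(\T^{d-1})$ function whose tangential gradient is the restriction of $\nabla' u$, and that the map $x_1\mapsto\|u(x_1,\cdot)\|_{H^1(\T^{d-1})}^2$ is measurable with the stated $L^1_{loc}$ bound. This is the classical slicing statement for Sobolev functions on a product domain (Fubini for Sobolev functions): $u\in H^1_{loc}(\Omega)$ implies $u(x_1,\cdot)\in H^1(\T^{d-1})$ for a.e.\ $x_1$, with $\partial'_j(u(x_1,\cdot))=(\partial_j u)(x_1,\cdot)$ a.e., and $x_1\mapsto\int_{\T^{d-1}}|\nabla u(x_1,\cdot)|^2$ equals $g(x_1)\in L^1_{loc}$. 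Granting this, the Chebyshev selection of $R_n^+$ and the Poincaré estimate above finish the argument. Everything transposes verbatim to the $-\infty$ side by replacing $(R,\infty)$ with $(-\infty,-R)$, giving the sequence $R_n^-\to-\infty$.
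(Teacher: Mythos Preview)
Your proof is correct and follows essentially the same approach as the paper: select slices $R_n^\pm\to\pm\infty$ on which the tangential gradient $\|\nabla' u(R_n^\pm,\cdot)\|_{L^2(\T^{d-1})}$ is small (the paper obtains this directly from integrability of $x_1\mapsto\|\nabla' u(x_1,\cdot)\|_{L^2}^2$ over $\R$, you via a Chebyshev/averaging step), then combine with $\overline{u}(R_n^\pm)\to u^\pm$ and the Poincar\'e--Wirtinger inequality on $\T^{d-1}$ to upgrade to $H^1$ convergence. Your added discussion of the Fubini-for-Sobolev slicing is a welcome point of rigor but not a methodological difference.
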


\begin{proof}
As the function $x_1\mapsto \|\nabla' u(x_1,\cdot)\|_{L^2(\T^{d-1})}^2$ is integrable over $\R$, one can find two sequences $(R_n^\pm)_{n\in\N}\to\pm\infty$ such that $\|\nabla' u(R_n^\pm,\cdot)\|_{L^2}$ tends to $0$ as $n\to \infty$. Moreover, by assumption, $\overline{u}(x_1)$ tends to $u^\pm$ as $x_1\to\pm\infty$. This finishes the proof as convergence of gradients and convergence in average implies convergence in $H^1(\T^{d-1})$, by the Poincar\'e-Wirtinger inequality.
\end{proof}
\begin{remark}\label{rem:BC}
If $f\in \dot{H}^1(\Omega)$ with $\bar{f}\in L^\infty(\R)$, then there exist two sequences $(R_n^\pm)_{n\in\N}\to\pm\infty$ such that the product 
$\| f(R_n^\pm,\cdot)\|_{L^2(\T^{d-1})} \|\partial_1 f(R_n^\pm,\cdot)\|_{L^2(\T^{d-1})}\to 0$ as $n\to +\infty$. The proof follows as above since by the Poincar\'e-Wirtinger inequality, the $L^2$ norm of $f(R^\pm_n,\cdot)$ is kept bounded.
\end{remark}

\paragraph{About the cost function $c_W$.} A fundamental observation in proving the existence of global minimizers is the following nondegeneracy property of $c_W$ defined in \eqref{cw}:
\begin{proposition}\label{cdW}
Let $W:\R^d\to\R_+$ be a continuous function and assume that {\rm \bf (H1)} and {\rm \bf(H2)} are satisfied for some $a\in\R$. Then, for all $\delta>0$, there exists $\varepsilon>0$ such that for all $x,y\in\R^d_a$,
\[
|x-y|\geq\delta \Longrightarrow c_W(x,y)\geq\varepsilon.
\]
\end{proposition}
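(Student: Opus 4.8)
The plan is to argue by contradiction. Suppose the statement fails: then there exists $\delta_0>0$ and sequences $(x_n)_{n\ge 1}$, $(y_n)_{n\ge 1}$ in $\R^d_a$ with $|x_n-y_n|\ge\delta_0$ but $c_W(x_n,y_n)\to 0$. The first step is to control the first components: since $x_n,y_n\in\R^d_a$, we have $(x_n)_1=(y_n)_1=a$, so $|x_n'-y_n'|\ge\delta_0$ where $x_n'=(x_{n,2},\dots,x_{n,d})$, etc. Using \textbf{(H2)}, I would argue that both $|x_n'|$ and $|y_n'|$ stay bounded: indeed, if (say) $|x_n'|\to\infty$ along a subsequence, then by \textbf{(H2)} there is $\eta>0$ and $\rho>0$ such that $W\ge\eta$ on $\{|z_1-a|\le\rho,\ |z'|\ge\rho\}$; a near-optimal competitor $u$ for $c_W(x_n,y_n)$ has $x'$-average $\overline u$ equal to $x_n$ at one end, so $\overline u$ must travel a long way, and on the portion of the cylinder where $|\overline u - x_n|$ is moderate one still has $|\overline u'|$ large, forcing $\int W(u)$ to be large — a quantitative lower bound that contradicts $c_W\to 0$. (One must be slightly careful because $W(u)$ controls the average only through Jensen in one direction; but the set where the average has large modulus is where $u$ itself is typically large, and a standard slicing/Chebyshev argument makes this rigorous.) So, up to a subsequence, $x_n\to x_\infty$ and $y_n\to y_\infty$ in $\R^d_a$ with $|x_\infty-y_\infty|\ge\delta_0$, hence $x_\infty\ne y_\infty$.

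Next I would extract a limiting transition layer. For each $n$, pick $u_n\in\dot H^1_{div}(I_n\times\T^{d-1},\R^d)$ with $\overline{u_n}$ equal to $x_n$ and $y_n$ at the two ends of the interval $I_n$ and $E(u_n,I_n)\le c_W(x_n,y_n)+1/n\to 0$. Extending each $u_n$ by the constant $x_n$ to the left of $I_n$ and by $y_n$ to the right (note $(x_n)_1=(y_n)_1=a$, so these extensions remain divergence-free and of the same energy, since $W(x_n),W(y_n)$ need not vanish — here is a subtlety), we would like to view $u_n$ as defined on all of $\Omega$. Actually $W(x_n)$ and $W(y_n)$ may be positive, so the constant extension has infinite energy; instead I would keep $u_n$ on the bounded (or half-infinite) interval $I_n$ and translate so that, say, the left endpoint is $0$. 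The key point is that $\|\nabla u_n\|_{L^2}^2\to 0$, so along a subsequence $u_n\rightharpoonup u_\infty$ weakly in $\dot H^1$ (after translation), with $\nabla u_\infty=0$, i.e. $u_\infty$ is a constant $c\in\R^d_a$. But the $x'$-averages of $u_n$ at the endpoints are $x_n\to x_\infty$ and $y_n\to y_\infty$; since $\overline{u_n}\to \overline{u_\infty}\equiv c$ in $C^{0,1/2}_{loc}$ (by Lemma~\ref{lem_aver} and the uniform $\dot H^1$ bound of the averages), one would like to conclude $x_\infty=c=y_\infty$, contradicting $x_\infty\ne y_\infty$.

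The main obstacle — and the step requiring the most care — is precisely the loss of compactness in $x_1$: the interval $I_n$ may have length tending to infinity, and the transition from $x_n$ to $y_n$ may ``spread out'', so that on any fixed window the oscillation of $\overline{u_n}$ is small even though the total oscillation is $\ge\delta_0$. To handle this I would not translate arbitrarily but rather fix a threshold $\delta_0/4$ and choose $t_n\in I_n$ to be the first time the average $\overline{u_n}$ has moved distance $\delta_0/4$ from $x_n$; centering at $t_n$. Then on a controlled neighborhood of $t_n$ the average makes an order-$\delta_0$ excursion, which by the Poincar\'e--Wirtinger inequality on $\T^{d-1}$ combined with $\int|\nabla u_n|^2\to 0$ is impossible: $\|\overline{u_n}(t)-\overline{u_n}(s)\|\le \big(\int_s^t\|\partial_1 u_n\|_{L^2(\T^{d-1})}^2\big)^{1/2}|t-s|^{1/2}$, and more quantitatively one bounds the excursion of the average on an interval where the full energy is small, using that $W\ge 0$ plus \textbf{(H2)} to prevent escape to infinity (which is where Proposition's two hypotheses enter). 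Making this ``an $\Omega$-cylinder cannot carry a macroscopic jump of the average with vanishing $\dot H^1$ energy'' statement precise — essentially a compactness-modulus estimate — is the technical heart; everything else is bookkeeping. Alternatively, and perhaps more cleanly, one can invoke Theorem~\ref{bndenergy}: if one first handles the uniform boundedness of $x_n,y_n$ as above and passes to $x_\infty\ne y_\infty$, then the sequence $u_n$ (suitably extended to have averages $\equiv x_\infty$, $\equiv y_\infty$ near $\pm\infty$, which is legitimate after a small modification since $x_\infty,y_\infty$ need not be zeros of $W$ — so this route needs $x_\infty,y_\infty\in S_a$, which they are \emph{not} in general) would have bounded energy and a weak limit $u$ with $E(u)=0$, forcing $u\equiv$const and again contradicting $\overline u(\pm\infty)=x_\infty\ne y_\infty$. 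Given that $x_\infty,y_\infty$ are not assumed to be wells, the self-contained modulus-of-continuity argument is the safer one, so that is the route I would write up.
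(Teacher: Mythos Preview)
Your argument has a genuine gap: hypothesis \textbf{(H1)} (finiteness of $S_a$) is never used, and without it the statement is false. If $W$ vanishes on a line segment $[x_\infty,y_\infty]\subset\R^d_a$, then the one-dimensional linear interpolation $u_L(t)=x_\infty+\frac{t}{L}(y_\infty-x_\infty)$ on $[0,L]$ has $W(u_L)\equiv 0$ and Dirichlet energy $|x_\infty-y_\infty|^2/(2L)\to 0$, so $c_W(x_\infty,y_\infty)=0$. Your ``modulus-of-continuity'' step relies only on the Dirichlet term $\int|\nabla u_n|^2\to 0$, and this is exactly the situation that defeats it: the average $\overline{u_n}$ can make a macroscopic excursion with vanishing $\dot H^1$ energy, provided the transition spreads over a long interval. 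Centering at the first hitting time of a $\delta_0/4$-sphere does not help: after centering, the weak limit is a constant $c$ with $|c-x_\infty|\le\delta_0/4$, and nothing forces $c=y_\infty$.

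What is missing is a mechanism that penalises the average for lingering away from $S_a$, and this is precisely where the potential term and \textbf{(H1)} enter. The paper's route is to introduce an \emph{averaged potential}
\[
V(z)=\inf\Big\{\int_{\T^{d-1}}\tfrac12|\nabla' v|^2+W(v)\;:\;v\in H^1(\T^{d-1},\R^d),\ \textstyle\int_{\T^{d-1}}v=z\Big\},
\]
show that $V$ is lower semicontinuous, vanishes exactly on $S_a$, and satisfies $\liminf_{|z|\to\infty}V(z)>0$ (this uses \textbf{(H1)}--\textbf{(H2)}); then by Jensen one has $E(u,I)\ge\int_I\frac12|\dot{\overline u}|^2+V(\overline u)$, hence $c_W(x,y)\ge \mathrm{geod}_V^a(x,y)$, the geodesic distance for the singular metric $2V g_0$ on $\R^d_a$. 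Because $V^{-1}(0)=S_a$ is finite, $\mathrm{geod}_V^a$ is a genuine distance with the coercivity $|x-y|\ge\delta\Rightarrow\mathrm{geod}_V^a(x,y)\ge\varepsilon$, and the proposition follows. If you want to salvage your contradiction approach, you would need to reinvent $V$ (or an equivalent device) to exploit $\int W(u_n)\to 0$ together with \textbf{(H1)}; as written, your sketch does not do this.
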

In order to prove Proposition~\ref{cdW}, we need to estimate the energy from below. This can be done by averaging in the $d-1$ last variables. Namely, given an interval $I\subset \R$ and $u\in\dot{H}^1(I\times \T^{d-1},\R^d)$, the Jensen inequality yields
\[
E(u,I)= \int_{I}\int_{\T^{d-1}}\left(\frac 12 |\partial_{1}u|^2+\frac 12|\nabla' u|^2+W(u)\right)\diff x'\diff x_1\geq \int_I \frac 12\Big|\frac{\diff}{\diff x_1}\overline{u}(x_1)\Big|^2+e(u(x_1,\cdot))\diff x_1,
\]
where the energy density $e$ is defined by 
$$e(v):=\int_{\T^{d-1}} \frac 12|\nabla' v|^2+W(v)\diff x' \quad \textrm{ for all }\, v\in H^1(\T^{d-1},\R^d).$$ Thus, if in addition $\nabla \cdot u=0$ and $\overline{u_1}\equiv a$ in $I$, one has
\be
\label{inegEV}
E(u,I)\geq \int_I \frac 12\Big|\frac{\diff}{\diff t}\overline{u}(t)\Big|^2+V(\overline{u}(t))\diff t,
\ee
where $V:\R^d_a\to\R_+$ is defined for all $z\in\R^d_a$ by 
\be
\label{defV}
V(z):=\inf\left\{ e(v)\;:\; v\in H^1(\T^{d-1},\R^d),\, \textstyle\int_{\T^{d-1}}v=z \right\}\geq 0.
\ee
This observation is the starting point in the proof of the following lemma:
\begin{lemma}\label{lemmaV}
Let $W:\R^d\to\R_+$ be a continuous function and assume that {\rm \bf (H1)} and {\rm \bf(H2)} are satisfied for some $a\in\R$. Then 
the function $V:\R^d_a\to\R_+$ defined in \eqref{defV} satisfies the following:
\begin{enumerate}
\item
$V$ is lower semicontinuous in $\R^d_a$,
\item
for all $z\in\R^d_a$, $V(z)\leq W(z)$, the infimum in \eqref{defV} is achieved and $\Big[V(z)=0\Leftrightarrow W(z)=0\Big]$,
\item
$V_\infty:=\liminf\limits_{z\in\R^d_a,\,|z|\to\infty}V(z)>0$,
\item
for all interval $I\subset\R$ and for all $u\in \dot{H}_{div}^1(I\times\T^{d-1},\R^d)$ such that $\overline{u}(t)\in\R^d_a$ for all $t\in I$, one has
\[
E(u,I)\geq E_V(\overline{u},I):=\int_I \frac 12\Big|\frac{\diff}{\diff t}\overline{u}(t)\Big|^2+V(\overline{u}(t))\diff t.
\]
\end{enumerate}
\end{lemma}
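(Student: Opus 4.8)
The plan is to treat the four assertions separately. Item~4 is immediate from the averaging computation \eqref{inegEV} recorded just above: for $u\in\dot{H}_{div}^1(I\times\T^{d-1},\R^d)$ with $\overline u(t)\in\R^d_a$ for all $t$, Lemma~\ref{lem_aver} gives $\overline{u_1}\equiv a$, Jensen's inequality in the $x'$-variable gives $\int_{\T^{d-1}}|\partial_1u(t,\cdot)|^2\diff x'\geq|\tfrac{\diff}{\diff t}\overline u(t)|^2$, and $e(u(t,\cdot))\geq V(\overline u(t))$ holds by the very definition of $V$ since $\int_{\T^{d-1}}u(t,\cdot)=\overline u(t)\in\R^d_a$; integrating in $t\in I$ yields $E(u,I)\geq E_V(\overline u,I)$. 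Items~1 and~2 will follow from the direct method of the calculus of variations on $H^1(\T^{d-1},\R^d)$, and item~3 from a compactness argument combined with \textbf{(H2)}; the last one is the only delicate point.

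For item~2, testing the infimum defining $V(z)$ with the constant competitor $v\equiv z$ gives $V(z)\leq e(v)=W(z)$. To get that the infimum is attained, I would take a minimizing sequence $(v_n)$ with $\int_{\T^{d-1}}v_n=z$: the bound on $e(v_n)$ controls $\|\nabla' v_n\|_{L^2(\T^{d-1})}$, and since the average is the fixed vector $z$, the Poincar\'e-Wirtinger inequality gives a uniform $H^1(\T^{d-1})$ bound; extracting a subsequence with $v_n\rightharpoonup v$ in $H^1$, $v_n\to v$ in $L^2$ and a.e. (Rellich), the constraint $\int_{\T^{d-1}}v=z$ passes to the limit, the Dirichlet term is weakly lower semicontinuous, and $\int_{\T^{d-1}}W(v)\leq\liminf_n\int_{\T^{d-1}}W(v_n)$ by Fatou (using $W\geq0$ continuous), so $v$ is a minimizer. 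The equivalence $V(z)=0\Leftrightarrow W(z)=0$ is then easy: if $W(z)=0$, the constant $z$ has zero energy; if $V(z)=0$, a minimizer $v$ has $\nabla' v\equiv0$, hence $v\equiv z$, and $W(v)\equiv0$, i.e. $W(z)=0$. Item~1 is the same argument with a moving target: for $z_n\to z$ in $\R^d_a$ with $\liminf_n V(z_n)<\infty$, take minimizers $v_n$ for $z_n$ (available by item~2), use the energy bound and Poincar\'e-Wirtinger to bound $v_n$ in $H^1(\T^{d-1})$, extract a weak limit $v$ with $\int_{\T^{d-1}}v=z$, and conclude $V(z)\leq e(v)\leq\liminf_n e(v_n)=\liminf_n V(z_n)$ by lower semicontinuity and Fatou.

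For item~3 I would argue by contradiction. Suppose $z_n=(a,z_n')\in\R^d_a$ with $|z_n'|\to\infty$ and $V(z_n)\to0$, and let $v_n$ be the corresponding minimizers (item~2); then $\|\nabla' v_n\|_{L^2(\T^{d-1})}\to0$ and $\int_{\T^{d-1}}W(v_n)\to0$. By the Poincar\'e-Wirtinger inequality, $\|v_n-z_n\|_{L^2(\T^{d-1})}\leq C\|\nabla' v_n\|_{L^2}\to0$, so $v_n\to z_n$ in $L^2$ and in measure; in particular the first component satisfies $v_{n,1}\to a$ in measure, while on $\{|v_n-z_n|\leq1\}$ the transverse part satisfies $|v_n'|\geq|z_n'|-1$. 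By \textbf{(H2)} there are $\varepsilon_0,\rho,M>0$ with $W(\zeta_1,\zeta')\geq\varepsilon_0$ whenever $|\zeta_1-a|<\rho$ and $|\zeta'|>M$. Hence, once $n$ is large enough that $|z_n'|-1>M$, on the set $A_n:=\{x'\in\T^{d-1}:|v_{n,1}(x')-a|<\rho,\ |v_n(x')-z_n|\leq1\}$ one has $W(v_n)\geq\varepsilon_0$, while $|A_n|\to1$ by Chebyshev (using $v_n\to z_n$ in $L^2$ and $z_{n,1}=a$). Therefore $\int_{\T^{d-1}}W(v_n)\geq\varepsilon_0|A_n|\to\varepsilon_0>0$, contradicting $\int_{\T^{d-1}}W(v_n)\to0$. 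The main obstacle is precisely this coupling: one must ensure that on a set of nearly full measure $v_n$ simultaneously keeps its first coordinate close to $a$ and its transverse part large, which is exactly what the $L^2$-closeness $v_n\to z_n$ (coming from the smallness of $\|\nabla' v_n\|_{L^2}$) provides.
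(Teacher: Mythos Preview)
Your proof is correct and follows essentially the same approach as the paper: items~1, 2, and 4 are handled identically via the direct method, Poincar\'e--Wirtinger, and Fatou. For item~3 you run the same contradiction (Poincar\'e--Wirtinger forces $v_n-z_n$ small, then \textbf{(H2)} gives a uniform lower bound on $W(v_n)$), the only difference being that you work with convergence in measure and Chebyshev whereas the paper extracts an a.e.-convergent subsequence of $w_n=v_n-z_n$ and applies Fatou---a cosmetic variation on the same idea.
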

\begin{proof}[Proof of Lemma \ref{lemmaV}] The claim {\it 4} follows from \eqref{inegEV}. We divide the rest of the proof in three steps. 

\medskip
\noindent\textsc{Step 1: proof of claim {\it 2}.} Clearly, for all $z\in\R^d_a$, one has $V(z)\leq e(z)=W(z)$. By the compact embedding $H^1(\T^{d-1})\hookrightarrow L^2(\T^{d-1})$, the continuity of $W$ and Fatou's lemma, the direct method in the calculus of variations implies that the infimum is achieved in \eqref{defV}. If $W(z)=0$, then $V(z)=0$ (as $V\leq W$ in $\R^d_a$). 
Conversely, if $V(z)=0$ with $z\in\R^d_a$, then a minimizer $v\in H^1(\T^{d-1},\R^d)$ in \eqref{defV} satisfies $V(z)=e(v)=0$ so that $v\equiv z$ and $W(z)=0$.

\medskip
\noindent\textsc{Step 2: $V$ is lower semicontinuous in $\R^d_a$.} Let $(z_n)_{n\geq 1}$ be a sequence converging to $z$ in $\R^d_a$. We need to show that
\[
V(z)\leq\liminf_{n\to\infty}V(z_n).
\] 
W.l.o.g.\footnote{Without loss of generality.}, one can assume that $(V(z_n))_{n\ge 1}$ is a bounded sequence that converges to $\liminf_{n\to\infty} V(z_n)$. By Step 1, for each $n\geq 0$, there exists $v_n\in H^1(\T^{d-1},\R^d)$ such that 
\[
\int_{\T^{d-1}}v_n=z_n\quad\text{and}\quad e(v_n)=V(z_n).
\]
Since $(z_n)_{n\ge 1}$ and $(e(v_n))_{n\ge 1}$ are bounded, $(v_n)_{n\geq 1}$ is bounded in $H^1(\T^{d-1},\R^d)$ by the Poincar\'e-Wirtinger inequality. Thus, up to extraction, one can assume that $(v_n)_{n\ge 1}$ converges weakly in $H^1$, strongly in $L^2$ and a.e. in $\T^{d-1}$ to a limit 
$v\in H^1(\T^{d-1},\R^d)$. In particular, $\int_{\T^{d-1}} v=z$. Since $W$ is continuous, by Fatou's Lemma and since the $L^2$ norm is lower semicontinuous in weak $L^2$-topology, we deduce that $e$ is lower semicontinuous in weak $H^1(\T^{d-1},\R^d)$-topology. Thus,
\[
V(z)\leq e(v)\leq \liminf\limits_{n\to\infty} e(v_n)= \liminf\limits_{n\to\infty} V(z_n).
\]
\noindent\textsc{Step 3: proof of claim {\it 3}.} Assume by contradiction that there exists a sequence $(z_n)_{n\geq 1}\subset\R^d_a$ such that $|z_n|\to\infty$ and $V(z_n)\to 0$ as $n\to\infty$. Then, there exists a sequence of maps $(w_n)_{n\geq 1}$ in $H^1(\T^{d-1},\R^d)$ satisfying
\[
\int_{\T^{d-1}}w_n(x')\diff x'=0\quad\text{for each $n\in\N$}\quad\text{and}\quad e(z_n+w_n)\underset{n\to\infty}{\longrightarrow} 0.
\]
By the Poincar\'e-Wirtinger inequality, we have that $(w_n)_{n\geq 1}$ is bounded in $H^1$. Thus, up to extraction, one can assume that it converges weakly in $H^1$, strongly in $L^1$ and a.e. to a function $w\in H^1(\T^{d-1},\R^d)$. We claim that $w$ is constant since
$$0=\liminf\limits_{n\to\infty}e(z_n+w_n)\geq \liminf\limits_{n\to\infty} \int_{\T^{d-1}} |\nabla w_n|^2\diff x'\geq \int_{\T^{d-1}} |\nabla w|^2\diff x'.$$
We deduce $w\equiv 0$ since $\int_{\T^{d-1}} w=\lim_{n\to\infty}\int_{\T^{d-1}} w_n=0$. Thus $w_n\to 0$ a.e and 
{\rm \bf{(H2)}} implies that for a.e.\ $x\in \T^{d-1}$,
$$\liminf_{n\to\infty}W(z_n+w_n(x))\geq \liminf\limits_{|z'|\to\infty ,\, z_1\to a} W(z_1,z')>0,$$
which contradicts the fact that $e(z_n+w_n)\to 0$.
\end{proof}

The following lemma provides an estimate from below of the energy by the geodesic distance $\mathrm{geod}_{V}^a$ in $\R^d_a$ endowed with the singular metric $2Vg_0$ (note that $V$ vanishes on $S_a$), $g_0$ being the standard euclidean metric in $\R^d_a$; this geodesic distance is defined for every $ x,y\in\R^d_a$ by
\begin{multline}
\label{estgeod}
\mathrm{geod}_{V}^a(x,y):=\inf\bigg\{\int^1_{ -1} \sqrt{2V(\sigma(t))}|\dot{\sigma}|(t)\diff t\;:\; \sigma\in \mathrm{Lip}([ -1,1],\R^d_a),\, \sigma( -1)=x,\, \sigma (1)=y\bigg\}.
\end{multline}
 
\begin{lemma}\label{cWgeqGeod}
Let $W:\R^d\to\R_+$ be a continuous function such that {\rm \bf (H1)} and {\rm \bf(H2)} are satisfied for some $a\in\R$ and let $V:\R^d_a\to\R_+$ be the function defined in \eqref{defV}. Then the function $\mathrm{geod}_{V}^a:\R^d_a\times\R^d_a\to\R_+$ is continuous, it defines a distance over $\R^d_a$ and
\[
c_W(x,y)\ge \mathrm{geod}_{V}^a(x,y)\quad\text{ for every $x,y\in\R^d_a$}.
\]
Moreover, for every $\delta>0$, there exists $\varepsilon>0$ such that for every $x,y\in \R^d_a$ with $|x-y|\ge \delta$, we have 
$\mathrm{geod}_{V}^a(x,y)\ge \varepsilon$.
\end{lemma}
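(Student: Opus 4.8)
The plan is to establish four distinct claims, in order of increasing dependency: (i) $\mathrm{geod}_V^a$ is a distance on $\R^d_a$; (ii) it is continuous; (iii) $c_W \ge \mathrm{geod}_V^a$; and (iv) the uniform nondegeneracy on pairs $\delta$-apart. The central conceptual input is the chain of inequalities $E(u,I) \ge E_V(\bar u, I) \ge \mathrm{geod}_V^a(\bar u(t^-), \bar u(t^+))$, where the first inequality is claim \emph{4} of Lemma \ref{lemmaV} and the second is the standard one-dimensional computation: for a Lipschitz curve $\sigma:[t^-,t^+]\to\R^d_a$, pointwise $\tfrac12|\dot\sigma|^2 + V(\sigma) \ge \sqrt{2V(\sigma)}\,|\dot\sigma|$ by the AM–GM inequality, so that integrating and then passing to the reparametrization-invariant infimum gives $\int_I \tfrac12|\tfrac{d}{dt}\bar u|^2 + V(\bar u)\,dt \ge \mathrm{geod}_V^a(\bar u(t^-),\bar u(t^+))$. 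Taking the infimum over admissible $u$ in the definition \eqref{cw} of $c_W$ then yields (iii); the only care needed is the handling of infinite intervals in the definition of $c_W$ (when $W(z^\pm)=0$), but there one simply notes $\bar u(\pm\infty) = z^\pm$ and takes a limit of the curves $\bar u|_{[-T,T]}$.

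For (i), symmetry and the triangle inequality are immediate from the definition \eqref{estgeod} by reversing and concatenating Lipschitz curves (after reparametrization). Nonnegativity is clear. Positivity — that $\mathrm{geod}_V^a(x,y) > 0$ whenever $x \ne y$ — is exactly the content of the ``moreover'' statement at the end of the lemma, so it suffices to prove that last assertion. Here is where I expect the main work. Fix $\delta > 0$. By \textbf{(H1)} the set $S_a = \{V = 0\}$ (using claim \emph{2} of Lemma \ref{lemmaV}) is finite, by claim \emph{3} of Lemma \ref{lemmaV} we have $V_\infty = \liminf_{|z|\to\infty} V(z) > 0$, and by lower semicontinuity of $V$ (claim \emph{1}) together with compactness, $V$ is bounded below by a positive constant outside any fixed neighborhood of $S_a$ and outside a large ball. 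Thus there exist $\rho \in (0,\delta/4)$ and $m > 0$ such that $V \ge m$ on $\R^d_a \setminus N_\rho(S_a)$, where $N_\rho(S_a)$ is the $\rho$-neighborhood of the finite set $S_a$. Now given $x,y$ with $|x-y|\ge\delta$, any competitor curve $\sigma$ must traverse an arclength at least $\delta/2$ in the region $\R^d_a\setminus N_\rho(S_a)$: indeed, since $|x-y|\ge\delta > 2\rho$, the curve cannot stay in a single ball $B_\rho(p)$, $p\in S_a$, and each time it leaves one such ball it must cross an annular gap — and because distinct points of $S_a$ are separated by some fixed $\eta > 0$, one arranges (shrinking $\rho$ if needed so that $4\rho < \eta$) that between consecutive visits to the $\rho$-balls the curve covers length at least... the argument is most cleanly phrased as: the portion of $\sigma$ lying in $\R^d_a\setminus N_\rho(S_a)$ has total length at least $\delta/2 - (\#S_a)\cdot 2\rho$ or, even more simply, one observes that $\sigma$ must connect $\partial B_\rho(x')$ to $\partial B_\rho(y')$ for the nearest wells, contributing length $\ge \delta - 2\rho - 2\rho \ge \delta/2$ outside $N_\rho(S_a)$ after a covering argument on the finitely many balls. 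On that portion the integrand $\sqrt{2V(\sigma)}|\dot\sigma| \ge \sqrt{2m}\,|\dot\sigma|$, so $\int \sqrt{2V(\sigma)}|\dot\sigma|\,dt \ge \sqrt{2m}\cdot(\delta/2) =: \varepsilon > 0$, uniformly in $x,y$ and in the competitor $\sigma$. Taking the infimum gives $\mathrm{geod}_V^a(x,y) \ge \varepsilon$.

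For (ii), continuity of $\mathrm{geod}_V^a$: upper semicontinuity follows by using straight-line segments as competitors together with local boundedness of $V$ (which holds since $V \le W$ and $W$ is continuous), giving $\mathrm{geod}_V^a(x,y) \le C_K|x-y|$ for $x,y$ in a compact set $K$, hence joint continuity via the triangle inequality once we know $\mathrm{geod}_V^a$ is finite — and it is finite everywhere by the same segment estimate. Actually this shows $\mathrm{geod}_V^a$ is locally Lipschitz with respect to the Euclidean distance, which gives continuity outright; no separate lower-semicontinuity argument is needed. Finally, combining (iii) with the nondegeneracy in (i)/(iv) gives $c_W(x,y) \ge \mathrm{geod}_V^a(x,y) \ge \varepsilon$ for $|x-y|\ge\delta$, which incidentally re-proves Proposition \ref{cdW}. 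The one delicate point to get right in writing this up carefully is the ``covering'' step in the nondegeneracy argument — ensuring the constant $\varepsilon$ genuinely does not depend on which pair $x,y$ one picks — and this is handled once and for all by fixing $\rho$ small relative to both $\delta$ and the minimal separation of points in the finite set $S_a$.
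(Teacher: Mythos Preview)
Your overall architecture matches the paper: the chain $c_W(x,y)\ge E_V(\overline{u},I)\ge\mathrm{geod}_V^a(x,y)$ via Lemma~\ref{lemmaV} and Young's inequality, continuity from the straight-segment competitor plus the triangle inequality, and positivity deduced from the uniform lower bound. The genuine gap is in your direct argument for (iv). You assert that any competitor $\sigma$ joining $x$ to $y$ with $|x-y|\ge\delta$ has arclength at least $\delta/2$ in $\R^d_a\setminus N_\rho(S_a)$, and you justify this only by a ``covering argument on the finitely many balls'' together with the choice $4\rho<\eta$. The trouble is that $\sigma$ may enter and exit the \emph{same} ball $B_\rho(p)$ countably many times, so neither the number of excursions into $N_\rho(S_a)$ nor the arclength spent there is bounded a priori; choosing $\rho$ small relative to $\delta$ and $\eta$ controls the diameter of each ball but says nothing about how long the curve lingers inside. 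None of the estimates you sketch (``$\delta/2 - (\#S_a)\cdot 2\rho$'', ``$\delta - 4\rho$ between nearest wells'') survives this objection as written.

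The bound $L\ge\delta-2(\#S_a)\rho$ is nevertheless correct. One clean way to see it: let $\pi$ be the $1$-Lipschitz orthogonal projection onto the line through $x$ and $y$, normalized so that $\pi(x)=0$ and $\pi(y)=|x-y|$, and set $h=\pi\circ\sigma$. Then $\pi(N_\rho(S_a))$ is covered by at most $\#S_a$ intervals of length $2\rho$, and for each $s\in[0,|x-y|]\setminus\pi(N_\rho(S_a))$ the level set $h^{-1}(s)$ is nonempty (intermediate value theorem) and lies entirely in $\sigma^{-1}\bigl(\R^d_a\setminus N_\rho(S_a)\bigr)$; the area formula for Lipschitz functions then gives
\[
L\;\ge\;\int_{\sigma^{-1}(\R^d_a\setminus N_\rho(S_a))}|h'|\;\ge\;|x-y|-2(\#S_a)\,\rho.
\]
The paper avoids this computation entirely and proves (iv) by contradiction: it first establishes a coercivity estimate $\mathrm{geod}_V^a(x,y)\ge C|x-y|$ whenever $|x|,|y|\ge R$ (using $V_\infty>0$ and the ring estimate \eqref{ringEstimate}), so that hypothetical sequences with $|x_n-y_n|\ge\delta$ and $\mathrm{geod}_V^a(x_n,y_n)\to 0$ must be bounded; extracting convergent subsequences and invoking continuity then contradicts the pointwise positivity of the distance. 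Your direct route, once the area-formula step is inserted, is more constructive and gives an explicit $\varepsilon$; the paper's route trades this for a shorter argument that recycles the already-established continuity and nondegeneracy.
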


\begin{proof}[Proof of Lemma \ref{cWgeqGeod}]
\noindent\textsc{Step 1: Proof of the inequality $c_W\ge \mathrm{geod}_{V}^a$.}
Indeed, by Lemma~\ref{lemmaV} (point 4.) and Young's inequality, one has for every $x,y\in\R^d_a$,
\begin{align*}
c_W(x,y)\geq \inf\bigg\{\int_I \sqrt{2V(\sigma(t))}|\dot{\sigma}|(t)\diff t&\;:\; I\subset\R\text{ interval,} \\ 
&\sigma\in \dot{H}^{1}(I,\R^d_a),\,\sigma(\inf I)=x,\, \sigma (\sup I)=y\bigg\}.
\end{align*}
Therefore, we only need to prove that the value of the above infimum remains unchanged if minimizing on a set of more regular curves, namely $\sigma\in\mathrm{Lip}([-1,1],\R^d_a)$. W.l.o.g. we assume that $I$ is an open interval; then, we take $\sigma\in \dot{H}^{1}(I,\R^d_a)\subset \dot{W}_{loc}^{1,1}(I,\R^d_a)$ and we define the arc-length $s:I\to J:=s(I)\subset\R$ by
\[
s(t):=\int_{t_0}^t|\dot{\sigma}|(t')\diff t',\quad t\in I,
\]
where $t_0\in I$ is some fixed instant. Then, the arc-length reparametrization of $\sigma$, i.e.
\[
\overline{\sigma}(s(t)):=\sigma(t),\quad t\in I,
\]
is well-defined and provides a Lipschitz curve $\overline{\sigma}:J\to\R^d_a$ with constant speed, i.e. $|\dot{\bar\sigma}|=1$ a.e., and such that $\overline{\sigma}(\inf J)=x$ and $\overline{\sigma}(\sup J)=y$. Moreover, the change of variables $s=s(t)$ yields
\[
\int_I \sqrt{2V(\sigma(t))}\, |\dot{\sigma}|(t)\diff t=\int_J \sqrt{2V(\overline{\sigma}(s))}\diff s = \int_J \sqrt{2V(\overline{\sigma}(s))}|\dot{\overline{\sigma}}|(s)\diff s.
\]
If \(J\) is unbounded, we take a small parameter $\varepsilon>0$ and we choose a compact interval $[a,b]\subset J$ such that
\begin{equation}
\label{boundaryCost}
|x-\overline{\sigma}(a)| \sup_{[x,\overline{\sigma}(a)]}\sqrt{2V}\ 
+ |\overline{\sigma}(b)-y| \sup_{[\overline{\sigma}(b),y]}\sqrt{2V} \le\varepsilon
\end{equation}
(here, we used the fact that $V$ is locally bounded in $\R^d_a$ as $V\leq W$ by Lemma \ref{lemmaV}) and we replace \(\overline{\sigma}\big|_{[\inf J,a]}\) (resp. \(\overline{\sigma}\big|_{[b,\sup J]}\))  by a constant-speed parametrization of the line segment $[x,\overline{\sigma}(a)]$ (resp. $[\overline{\sigma}(b),y]$). The resulting curve \(\Tilde{\sigma}:\Tilde{J}\subset\R\to\R^d_a\) still connects \(x\) to \(y\) and by \eqref{boundaryCost}, it satisfies
\begin{equation*}
\int_{\Tilde{J}} \sqrt{2V(\Tilde{\sigma}(s))}\, |\dot{\Tilde{\sigma}}|(s)\diff s\le  \int_{[a,b]}\sqrt{2V(\overline{\sigma}(s))}\, |\dot{\overline{\sigma}}|(s)\diff s+\varepsilon.
\end{equation*}
Last of all, by affine reparametrization, we can actually assume that $\Tilde{J}=[ -1,1]$; the desired inequality follows by arbitrariness of \(\varepsilon>0\).

\medskip
\noindent\textsc{Step 2: $\mathrm{geod}_{V}^a:\R^d_a\times\R^d_a\to\R_+$ defines a distance over $\R^d_a$}. The only non-trivial axiom to check is the non-degeneracy, i.e.,
$\mathrm{geod}_{V}^a(x,y)>0$ whenever $x\neq y$. Indeed, any continuous curve $\sigma:[0,1]\to\R^d_a$ such that $\sigma(0)=x$ and $\sigma(1)=y$ has to cross the ring
\[
\mathcal{C}_\eta(x):=\{z\in\R^d_a\;:\; \frac\eta 2\leq |z-x|\leq \eta\}
\] 
for any $\eta\in (0,|x-y|]$, thus implying the estimate
\begin{equation}
\label{ringEstimate}
\mathrm{geod}_{V}^a(x,y)\geq \frac \eta 2\inf_{z\in\mathcal{C}_\eta(x)} \sqrt{2V(z)}.
\end{equation}
Since Lemma~\ref{lemmaV} yields $V$ is lower semicontinuous and vanishes only on the finite set $S_a$ (by {\rm \bf (H1)}), one can find a small enough $\eta\in (0,|x-y|)$ such that $\mathcal{C}_\eta(x)\cap S_a=\emptyset$, so that $V$ is bounded from below by a positive constant on $\mathcal{C}_\eta(x)$ and thus, $\mathrm{geod}_{V}^a(x,y)>0$. 

\medskip
\noindent\textsc{ Step 3: There exist \(R,C\in (0,+\infty)\) such that for every $x,y\in\R^d_a$ with $|x|\ge R$ and $|y|\ge R$, one has
\begin{equation}
\label{coerciveGeod}
\mathrm{geod}_{V}^a(x,y)\ge C |x-y|.
\end{equation}} 
By Lemma \ref{lemmaV}, there exists \(R\in (0,+\infty)\) such that
\[
V(z)\ge \frac{V_\infty}{2}>0\quad\text{for every \(z\in\R^d_a\) with \(|z|\ge R/2\)}.
\]
We take \(x,y\in\R^d_a\) such that \(|x|\ge R\) and \(|y|\ge R\) and w.l.o.g., we assume \(|x|\ge |y|\) and $x\neq y$. Then we apply \eqref{ringEstimate} to \(\eta = \frac{|x-y|}{4}\in (0, \frac{|x|}{2}]\); noticing that for every \(z\in\mathcal{C}_\eta(x)\), one has \(|z|\ge |x|-\eta\ge R/2\), we obtain
\[
\mathrm{geod}^a_V(x,y)\ge \frac{|x-y|}{8}\inf_{|z|\ge \frac R2}\sqrt{2V(z)}\ge \frac{\sqrt{V_\infty}}{8}\, |x-y|.
\]
\noindent\textsc{Step 4: $\mathrm{geod}_{V}^a:\R^d_a\times\R^d_a\to\R_+$ is continuous.} Let $x,\tilde{x}, y, \tilde{y}\in \R^d_a\cap B$ for some ball $B\subset \R^d$. As $\mathrm{geod}_{V}^a$ is a distance on 
$\R^d_a$, then 
\begin{align*}
|\mathrm{geod}_{V}^a(x,y)-\mathrm{geod}_{V}^a(\tilde x,\tilde y)|&\leq |\mathrm{geod}_{V}^a(x,y)-\mathrm{geod}_{V}^a(\tilde x,y)|+|\mathrm{geod}_{V}^a(\tilde x,y)-\mathrm{geod}_{V}^a(\tilde x,\tilde y)|\\
&\leq \mathrm{geod}_{V}^a(x,\tilde x)+\mathrm{geod}_{V}^a(y,\tilde y).
\end{align*}
Letting the transition $\sigma$ be the segment $[x, \tilde x]$ in the definition \eqref{estgeod}, one gets $\mathrm{geod}_{V}^a(x,\tilde x)\leq \sup_B \sqrt{2V} \cdot |x-\tilde x|$ (idem when $(x, \tilde x)$ is replaced by 
$(y, \tilde y)$) and the conclusion follows since $V$ is locally bounded in $\R^d_a$ as $V\leq W$ by Lemma \ref{lemmaV}.

\medskip 
\noindent\textsc{Step 5: For every $\delta>0$, there exists $\varepsilon>0$ such that for every $x, y\in \R^d_a$, $|x-y|\ge\delta$ implies $\mathrm{geod}_V^a(x,y)\ge \varepsilon$.}
Assume by contradiction that there exist $\delta>0$ and two sequences $(x_n)_{n\ge 1}$ and $(y_n)_{n\ge 1}$ in $\R^d_a$ such that
\(
|x_n-y_n|\geq\delta
\)
for each $n\ge 1$ and 
\(
\lim_{n\to\infty}\mathrm{geod}_V^a(x_n,y_n)=0.
\) 
In particular, by \eqref{coerciveGeod}, the sequence \((\min\{|x_n|,|y_n|\})_{n\ge 1}\) is bounded; up to exchange \(x_n\) and \(y_n\), one can assume that \((x_n)_{n\ge 1}\) is bounded and up to extraction, one can assume that it has a subsequence converging to some \(x\in \R^d_a\). Fixing \(R,C>0\) such that \eqref{coerciveGeod} holds true and \(z_0\in\R^d_a\) such that \(|z_0|\ge R\), we obtain for every \(n\ge 1\) such that \(|y_n|\ge R\),
\[
C |y_n-z_0|\le \mathrm{geod}_{V}^a(y_n,z_0)\le\mathrm{geod}_{V}^a(y_n,x_n)+\mathrm{geod}_{V}^a(x_n,z_0)\underset{n\to\infty}{\longrightarrow} \mathrm{geod}_{V}^a(x,z_0).
\]
Thus, the sequence \((y_n)_{n\ge 1}\) is bounded as well so that it has a subsequence converging to some \(y\in\R^d_a\); by continuity of \(\mathrm{geod}^a_V\), we have \(\mathrm{geod}^a_V(x,y)=0\) and so \(x=y\), thus contradicting the fact that \(|x-y|=\lim_{n\to \infty}|x_n-y_n|\ge\delta\).
\end{proof}
\begin{proof}[Proof of Proposition~\ref{cdW}]
Proposition~\ref{cdW} immediately follows from Lemma \ref{cWgeqGeod}.
\end{proof}

We finish this preliminary section by the following lemma which will be useful in proving that the boundary constraint $\overline{u}(\pm\infty)=u^\pm$ is preserved by limits of minimizing sequences for $E$:
\begin{lemma}
\label{closed_boundary}
With the function $V$ and the energy $E_V$ given by Lemma~\ref{lemmaV}, assume that $\sigma \in \dot{H}^1(\R,\R^d_a)$ is a map with finite energy $E_V(\sigma,\R)<+\infty$. Then there exist $z^-,\, z^+\in S_a$ such that
$
\lim\limits_{t\to\pm\infty}\sigma(t)=z^\pm.
$
\end{lemma}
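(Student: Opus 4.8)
The statement asserts that a finite-energy curve $\sigma \in \dot H^1(\R,\R^d_a)$ for the reduced energy $E_V$ must converge, as $t\to\pm\infty$, to zeros of $W$ (equivalently of $V$, by Lemma~\ref{lemmaV}(2)). The plan is to argue by contradiction using the coercivity of $E_V$ away from the finite set $S_a$. First I would note that since $E_V(\sigma,\R)<\infty$, the kinetic part $\int_\R \frac12|\dot\sigma|^2 < \infty$, so $\sigma$ is uniformly continuous (indeed $\mathcal{C}^{0,1/2}$) on $\R$; moreover $\int_\R V(\sigma(t))\,dt<\infty$. Then I would establish that $\limsup_{t\to+\infty}|\sigma(t)| < \infty$: if not, since $V\ge V_\infty/2>0$ outside a large ball $B_R\cap\R^d_a$ (Lemma~\ref{lemmaV}(3)), the curve would have to cross the annulus $\{R\le|z|\le 2R\}$ infinitely often on time-intervals of length bounded below (by uniform continuity / the $\mathcal{C}^{0,1/2}$ bound with controlled Hölder seminorm), each crossing contributing a fixed positive amount to $\int V(\sigma)\,dt$ — or, more cleanly, one can use the lower bound by $\mathrm{geod}_V^a$ exactly as in Lemma~\ref{cWgeqGeod}: an unbounded excursion costs at least a fixed multiple of its diameter, contradicting finiteness of the energy. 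So $\sigma$ is bounded, say $|\sigma|\le M$ on $\R$.

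Next, I would show the $\omega$-limit set $\omega^+(\sigma):=\{z : \sigma(t_n)\to z \text{ for some } t_n\to+\infty\}$ is contained in $S_a$. Since $\int_\R V(\sigma)\,dt<\infty$, there is a sequence $t_n\to+\infty$ with $V(\sigma(t_n))\to 0$; by boundedness, up to a subsequence $\sigma(t_n)\to z_*$, and lower semicontinuity of $V$ (Lemma~\ref{lemmaV}(1)) forces $V(z_*)=0$, i.e.\ $z_*\in S_a$. More generally, for \emph{any} $z\in\omega^+(\sigma)$, I claim $V(z)=0$: pick $t_n\to+\infty$ with $\sigma(t_n)\to z$; if $V(z)=c>0$, then by lower semicontinuity $V\ge c/2$ on a neighborhood $U$ of $z$, and again using uniform continuity of $\sigma$ there is a fixed $\tau>0$ so that $\sigma([t_n,t_n+\tau])\subset U$ for all large $n$, giving $\int_{t_n}^{t_n+\tau}V(\sigma)\,dt\ge c\tau/2$ for infinitely many disjoint intervals — contradicting $\int_\R V(\sigma)<\infty$. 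Hence $\omega^+(\sigma)\subset S_a$.

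Finally, I would upgrade ``$\omega^+(\sigma)\subset S_a$'' to ``$\sigma(t)$ converges.'' Since $\sigma$ is bounded and continuous, $\omega^+(\sigma)$ is nonempty, compact, and connected (standard: it is the decreasing intersection of the connected closures $\overline{\sigma([T,\infty))}$). But $\omega^+(\sigma)\subset S_a$ and $S_a$ is finite (hypothesis \textbf{(H1)}; here only two-well \textbf{(H1')} would suffice too), hence totally disconnected, so $\omega^+(\sigma)$ is a single point $z^+\in S_a$. A nonempty, connected, singleton $\omega$-limit set for a bounded map with $\sigma$ uniformly continuous gives $\lim_{t\to+\infty}\sigma(t)=z^+$ (if $\sigma$ did not converge to $z^+$, one could extract $\sigma(s_n)$ staying outside a ball around $z^+$, producing a second point in $\omega^+(\sigma)$). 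The argument at $t\to-\infty$ is identical, yielding $z^-\in S_a$.

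\textbf{Main obstacle.} The delicate point is making the ``the curve spends a definite amount of time near any $\omega$-limit value'' argument rigorous and quantitative — i.e.\ converting finiteness of $\int_\R V(\sigma)\,dt$ together with the $\dot H^1$ (hence uniformly continuous) regularity of $\sigma$ into the statement that $V$ vanishes on the whole $\omega$-limit set, and likewise ruling out unbounded excursions. The cleanest route is probably to invoke the geodesic lower bound $E_V(\sigma,[s,t])\ge \mathrm{geod}_V^a(\sigma(s),\sigma(t))$ (from Lemma~\ref{lemmaV}(4) via Young, as in Lemma~\ref{cWgeqGeod}) together with Step~5 of Lemma~\ref{cWgeqGeod}: if $\sigma(t)$ did not converge, there would be $\delta>0$ and $s_n<t_n\to\infty$ with $|\sigma(s_n)-\sigma(t_n)|\ge\delta$ along disjoint intervals, each contributing $\ge\varepsilon(\delta)>0$ to $E_V(\sigma,\R)$, a contradiction; and finiteness plus $\mathrm{geod}_V^a(\sigma(t_n),z^+)\to 0$ for a well-chosen sequence pins down the limit to be a zero of $V$, hence of $W$. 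This packaging sidesteps the ad hoc uniform-continuity estimates and reuses machinery already proved in the excerpt.
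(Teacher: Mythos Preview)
Your proposal is correct. Both your primary $\omega$-limit-set route and your ``cleanest route'' work; interestingly, the paper takes precisely the second one.

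The paper's argument is a one-liner in spirit: by Young's inequality one has $E_V(\sigma,[s,t])\ge \mathrm{geod}_V^a(\sigma(s),\sigma(t))$ for every $s\le t$, so for any finite partition $t_1\le\dots\le t_N$,
\[
\sum_{i}\mathrm{geod}_V^a(\sigma(t_i),\sigma(t_{i+1}))\le \sum_i E_V(\sigma,[t_i,t_{i+1}])\le E_V(\sigma,\R)<\infty,
\]
i.e.\ $\sigma$ has bounded variation as a map into $(\R^d_a,\mathrm{geod}_V^a)$. Hence $\sigma$ is Cauchy at $\pm\infty$ for $\mathrm{geod}_V^a$, and Step~5 of Lemma~\ref{cWgeqGeod} (the uniform comparison between $\mathrm{geod}_V^a$ and the Euclidean distance) immediately upgrades this to Euclidean convergence $\sigma(t)\to z^\pm$. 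Integrability of $V(\sigma)$ then forces $V(z^\pm)=0$.

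Your primary approach --- boundedness, then $\omega^+(\sigma)\subset S_a$ via the global $\mathcal C^{0,1/2}$ bound, then connectedness of $\omega^+(\sigma)$ versus discreteness of $S_a$ --- is a genuinely different route. It is more elementary in that it never invokes $\mathrm{geod}_V^a$, relying only on the finiteness of $S_a$, the lower semicontinuity of $V$, and the global H\"older bound $|\sigma(t)-\sigma(s)|\le \|\dot\sigma\|_{L^2}|t-s|^{1/2}$ (which is what makes the ``definite dwell time near any accumulation point'' step quantitative). The paper's BV-in-the-degenerate-metric packaging is shorter and uses more of the machinery already built in Lemma~\ref{cWgeqGeod}; your $\omega$-limit argument is self-contained but requires the separate boundedness step and the connectedness-of-$\omega$-limit-set fact. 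Both are fine; the paper's version has the slight conceptual advantage that it never explicitly uses finiteness of $S_a$ to get convergence (only to identify the limit), whereas your connectedness argument leans on it.
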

\begin{proof}
By Lemma~\ref{cWgeqGeod}, we know that $\mathrm{geod}_{V}^a:\R^d_a\times\R^d_a\to\R_+$ defines a distance on $\R^d_a$.
If the target space $\R^d_a$ of $\sigma$ is endowed with the distance $\mathrm{geod}_{V}^a$, then the estimate $E_V(\sigma,\R)<+\infty$ yields a bound on the total variation of $\sigma:\R\to (\R^d_a,\mathrm{geod}_{V}^a)$. Indeed, for every sequence $t_1\le\dots\le t_N$ in $\R$, by the Young inequality, we have
$$
\sum_{i=1}^N \mathrm{geod}_{V}^a(\sigma(t_{i+1}),\sigma(t_i))\le \sum_{i=1}^N E_V(\sigma,[t_i,t_{i+1}])\le E_V(\sigma,\R)<+\infty.
$$
In particular, for every $\varepsilon>0$, there exists $R>0$ such that for all $t,s\in\R$ with $t,s\ge R$ or $t,s\le -R$, one has $\mathrm{geod}_{V}^a(\sigma(t),\sigma(s))<\varepsilon$. By Lemma~\ref{cWgeqGeod}, it follows that for every $\delta>0$, there exists $\varepsilon>0$ such that $\mathrm{geod}_{V}^a(x,y)< \varepsilon$ implies $|x-y|<\delta$; thus, we deduce that $\sigma$ has a limit $z^\pm\in\R^d_a$ at $\pm\infty$. Since $V(\sigma(\cdot))$ is integrable in $\R$, we have furthermore that $V(z^\pm)=0$, i.e. $z^\pm\in S_a$.
\end{proof}

\subsection{The case of double-well potentials in $\R^d_a$. Proof of Theorems~\ref{thm1}~and~\ref{bndenergy}\label{two-wells}}

Given a continuous potential $W:\R^d\to\R_+$ with only two wells $u^\pm$ in $\R^d_a$ for some $a\in\R$, i.e., $S_a=\{u^\pm\}$, our aim is to prove existence of a solution to the minimization problem $(\mathcal P)$.
We will actually prove relative compactness (up to translation in $x_1$-direction) of admissible configurations with uniformly bounded energy (not only minimizing sequences) as stated in Theorem~\ref{bndenergy}. The proof for double-well potentials in $\R^d_a$ will use Proposition~\ref{cdW}, whereas the case of multiple-well potentials in $\R^d_a$ requires more precise estimates on the energy and the relative compactness only holds for minimizing sequences. 

\paragraph{Strategy for proving Theorem~\ref{bndenergy}.}
Since $E$ is lower semicontinuous on $\dot{H}^1(\Omega,\R^d)$ endowed with the weak convergence (i.e., $L^2$-weak convergence of gradients and strong $L^2_{loc}$-convergence of maps), and since boundedness of the energy implies boundedness of the $L^2$-norm of gradients, it is enough to prove that the boundary condition \eqref{BC} is preserved in the limit (up to translation in $x_1$-direction).
We will present two proofs of Theorem~\ref{bndenergy}. The first proof is based on the following Lemma~\ref{focus} which 
does not use the fact that $S_a=\{u^\pm\}$ but only the fact that $S_a$ is finite (i.e. {\rm \bf(H1)}) and
is somehow reminiscent from the compactness result \cite[Lemma 4.4.]{Goldman:2015}, while the second proof is based on \cite[Lemma 1.]{Doring:2013}.

\begin{lemma}\label{focus}
Let $W:\R^d\to \R_+$ be a continuous function and assume that {\rm \bf (H1)} and {\rm \bf (H2)} are satisfied for some $a\in\R$. Let $(u_n)_{n\ge 1}$ be a sequence in $\dot{H}_{div}^1(\Omega,\R^d)$ such that $\overline{u_n}(\pm\infty)=u^\pm\in S_a$ for each $n\ge 1$ and
\[
\sup_n E(u_n)=\sup_n \int_\Omega \frac 12 |\nabla u_n|^2+W(u_n)\diff x<\infty.
\]
If $\varepsilon>0$ is a small radius such that the closed balls $B_\pm :=\overline{B}(u^\pm,\varepsilon)\subset \R^d$ are disjoint, then there exist a sequence $(t_n)_{n\geq 1}\subset\R$ and $T\geq 1$, such that, up to a subsequence, one has for every $ n\ge \log_2T$,
\begin{align*}
\overline{u}_{n}(t)\notin B_-\quad\text{for all }\ t\in [ t_n+T, t_n+2^n]\quad\text{and}\quad
\overline{u}_{n}(t)\notin B_+\quad\text{for all }\ t\in [t_n-2^n, t_n-T].
\end{align*}
\end{lemma}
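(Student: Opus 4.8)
\textbf{Plan of proof for Lemma~\ref{focus}.}
The plan is to exploit the finiteness of $S_a$ together with Proposition~\ref{cdW} to control how many times the average $\overline{u_n}$ can travel between disjoint neighbourhoods of distinct wells, and then to use a pigeonhole argument on dyadic scales to ``center'' the transition. First I would fix a radius $\varepsilon>0$ small enough that the balls $\overline B(z,\varepsilon)$, $z\in S_a$, are pairwise disjoint, and in particular $B_\pm=\overline B(u^\pm,\varepsilon)$ are disjoint; shrinking $\varepsilon$ if necessary I also ensure these balls avoid the region where \textbf{(H2)} gives a positive lower bound on $W$. By Lemma~\ref{lemmaV}~(claim 4) and the coarea-type bound \eqref{inegEV}, the one-dimensional energy $E_V(\overline{u_n},\R)$ is bounded by $E(u_n)$, hence uniformly bounded; so it suffices to argue with the scalar curves $\sigma_n:=\overline{u_n}\in\dot H^1(\R,\R^d_a)$ carrying uniformly bounded energy $E_V(\sigma_n,\R)\le C$.

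The key quantitative input is that each \emph{excursion} of $\sigma_n$ from one ball $B(z,\varepsilon)$ to another ball $B(z',\varepsilon)$ (with $z\ne z'$ in $S_a$) costs at least a fixed amount of energy. Indeed, on a time interval $[s,t]$ where $\sigma_n$ goes from $\partial B(z,\varepsilon/2)$ to $\partial B(z',\varepsilon/2)$ while staying outside $\bigcup_{y\in S_a}B(y,\varepsilon/2)$, the set $\R^d_a\setminus\bigcup_{y\in S_a}B(y,\varepsilon/2)$ is closed and misses the finite zero set $S_a$ of the lower semicontinuous function $V$; hence $V$ is bounded below there by a positive constant on the bounded part, and by \textbf{(H2)}/claim 3 of Lemma~\ref{lemmaV} also at infinity, so $\inf V\ge c_0>0$ on that set. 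Then $E_V(\sigma_n,[s,t])\ge\int_s^t(\tfrac12|\dot\sigma_n|^2+c_0)\,dt'\ge \sqrt{2c_0}\,\times(\text{length})\ge\sqrt{2c_0}\,\varepsilon/2$ (alternatively one invokes Proposition~\ref{cdW} directly on the pieces where $\sigma_n$ leaves and re-enters $B_\pm$). Consequently the number of such complete excursions is at most $N:=\lfloor 2C/(\sqrt{2c_0}\,\varepsilon)\rfloor$, uniformly in $n$. Since $\sigma_n(\pm\infty)=u^\pm$ with $u^+\ne u^-$, there is at least one excursion; writing down the ordered times at which $\sigma_n$ last leaves $B_+$ travelling towards $B_-$ and first reaches $B_-$, we see that the complement of $B_+\cup B_-$ on the $x_1$-axis, intersected with the set $\{t:\sigma_n(t)\notin B_+\cup B_-\}$, is a union of at most $N+1$ intervals of uniformly bounded total length $L\le 2C/c_0$.

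Now comes the centering. Because there are at most $N$ excursions, among the $\{+\!\to\!-\}$ transitions (and there is at least one), choose by pigeonhole a ``scale'' argument: subdivide and use that the total length of the ``bad'' set is $\le L$; hence for each $n$ there is a time $t_n\in\R$ — e.g. the left endpoint of the interval on which $\sigma_n$ performs its first $+\!\to\!-$ transition — and a fixed $T\ge\max\{1,L\}$ such that to the right of $t_n+T$ the curve has already exited $B_-$ for good and cannot return to $B_-$ within bounded time, while to the left of $t_n-T$ it cannot reach $B_+$; more precisely one takes $T$ larger than the maximal possible total sojourn length outside $B_+\cup B_-$, which is $\le L$. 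Replacing $u_n$ by $u_n(\cdot+t_n,\cdot)$ and passing to a subsequence so that the (bounded) combinatorial data — number and ordering of excursions — stabilizes, one obtains for all $n\ge\log_2 T$ that $\overline{u_n}(t)\notin B_-$ on $[t_n+T,t_n+2^n]$ and $\overline{u_n}(t)\notin B_+$ on $[t_n-2^n,t_n-T]$, because a return would force an extra excursion of energy $\ge\sqrt{2c_0}\,\varepsilon/2$, eventually exceeding the uniform bound $C$ once the available room $2^n-T$ is large. (The role of $2^n$ is just to have a window that grows with $n$; any window growing to $+\infty$ works, and in the subsequent existence proof this dyadic choice matches a diagonal extraction.)

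The main obstacle I anticipate is the uniform-in-$n$ control of the \emph{geometry} of the excursions: a priori $\sigma_n$ could oscillate between the same two balls many times with each oscillation cheap if $V$ degenerated; this is precisely ruled out by Proposition~\ref{cdW} (equivalently Lemma~\ref{cWgeqGeod}), which gives a uniform positive lower bound $\varepsilon=\varepsilon(\delta)$ on $c_W$ — and hence on $E_V$ — for any displacement of size $\ge\delta$, using crucially that $S_a$ is finite (\textbf{(H1)}) and the coercivity at infinity (\textbf{(H2)}). Once that uniform ``quantum of energy per excursion'' is in hand, the bound $\sup_n E(u_n)<\infty$ caps the number of excursions, and the rest is bookkeeping: choosing $t_n$, fixing $T$, and extracting a subsequence along which the finitely many combinatorial possibilities are constant.
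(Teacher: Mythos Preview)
Your opening moves are right and match the paper: reducing to the averaged curves $\sigma_n=\overline{u_n}$ with uniformly bounded $E_V$-energy, and observing via Proposition~\ref{cdW} (or the $V\ge c_0$ argument away from $S_a$) that each full excursion between distinct wells costs a fixed positive amount, hence the number of excursions is bounded uniformly in $n$. That is exactly the content of the paper's Steps~1--2.

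The gap is in your centering argument. You assert that the set $\{t:\sigma_n(t)\notin B_+\cup B_-\}$ has uniformly bounded total length $L\le 2C/c_0$. This is false under \textbf{(H1)} alone: the lower bound $V\ge c_0$ holds only outside $\bigcup_{z\in S_a}B(z,\varepsilon/2)$, not outside $B_+\cup B_-$. If $S_a$ contains a third well $z_0$, then $\sigma_n$ may loiter near $z_0$ for an arbitrarily long time at arbitrarily small energy cost, so the sojourn outside $B_+\cup B_-$ can have length $\to\infty$. Your subsequent choice of $T\ge L$ and of $t_n$ as ``the first transition'' therefore does not give the conclusion; indeed nothing prevents the pattern $u^-\to z_0$ (time $\sim n$) $\to u^-\to u^+$, in which the first exit from $B_-$ is followed by a return to $B_-$ after an interval of length $\sim n$.

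The paper's fix is a sharper pigeonhole that does \emph{not} need any length bound on the bad set. It builds, for each $n$, a finite alternating family of maximal intervals $I^n_k\subset\sigma_n^{-1}(B_+^c)$ or $\sigma_n^{-1}(B_-^c)$; the number $k_n$ of them is bounded by the excursion count. After passing to a subsequence with $k_n\equiv K$, each index $k$ is labelled $0$, $+$ or $-$ according to whether ${\rm Length}(I^n_k)$ stays bounded or tends to $\infty$ with the appropriate containment. Since the first label is $-$ (the interval is $(-\infty,\beta^n_1)$) and the last is $+$, there is a block $(-,0,\dots,0,+)$; the intermediate $0$-intervals have bounded length by definition, so the gap between the two unbounded intervals is uniformly bounded. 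Setting $t_n$ at the right endpoint of the ``$-$''-interval and $T$ the supremum of the gap gives the conclusion. Your sketch should replace the incorrect $L$-bound by this labelling argument.
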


\begin{proof}[Proof of Lemma~\ref{focus}] 
\textsc{Step 1: study of the oscillations of $(\overline{u_n})_{n\ge 1}$.} For each fixed $n\ge 1$, let us build a sequence of intervals $(I^n_k)_{k\ge 1}$ by induction as follows (see Figure \ref{oscillation}):

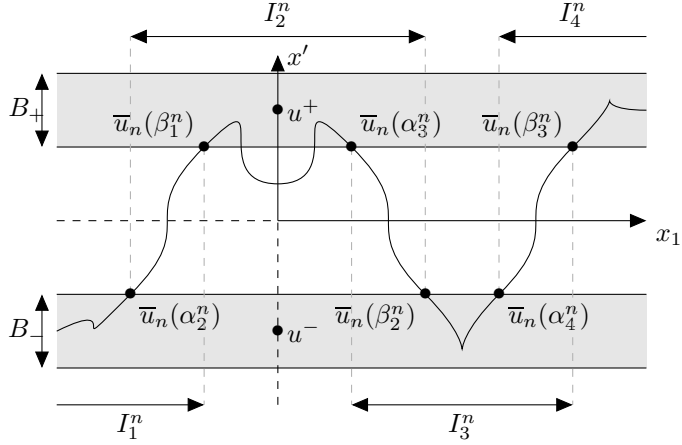
\begin{figure}[h]
\centering
\begin{tikzpicture}[scale=0.7,>=triangle 45,x=1.4cm,y=1.4cm]

\def\r{0.5}

\fill[color=gray!20]
(-3,1.5-\r)--(5,1.5-\r)--(5,1.5+\r)--(-3,1.5+\r)--cycle;

\fill[color=gray!20]
(-3,-1.5-\r)--(5,-1.5-\r)--(5,-1.5+\r)--(-3,-1.5+\r)--cycle;

\draw (0,1.5) node {$\bullet$} node[right] {$u^+$}; 
\draw (0,-1.5) node {$\bullet$} node[right] {$u^-$};
\draw (-3,1.5-\r)--(5,1.5-\r);
\draw (-3,1.5+\r)--(5,1.5+\r);
\draw (-3,-1.5-\r)--(5,-1.5-\r);
\draw (-3,-1.5+\r)--(5,-1.5+\r);

\draw[<->] (-3-0.2,1.5-\r)--(-3-0.2,1.5+\r);
\draw (-3-0.4,1.5) node {$B_+$};
\draw (-3-0.4,-1.5) node {$B_-$};
\draw[<->] (-3-0.2,-1.5-\r)--(-3-0.2,-1.5+\r);

\draw[->] (0,0)--(5,0) node[below right] {$x_1$};
\draw[->] (0,0)--(0,1.5+1.5*\r) node[right] {$x'$};
\draw[dashed] (0,-1.5-2*\r)--(0,0);
\draw[dashed] (-3,0)--(0,0);

\draw (-3,-1.5) .. controls +(1,0.5) and +(-1,-1)  .. (-2,-1.5+\r);
\draw[dashed, color=gray!60] (-2,-1.5+\r) -- (-2,1.5+2*\r);
\draw (-2,-1.5+\r) node {$\bullet$} node[below right] {$\overline{u}_n(\alpha^n_2)$};

\draw (-2,-1.5+\r) .. controls +(1,1) and +(-1,-1) .. (-1,1.5-\r);
\draw[dashed, color=gray!60] (-1,1.5-\r) -- (-1,-1.5-2*\r);
\draw (-1,1.5-\r) node {$\bullet$} node[above left] {$\overline{u}_n(\beta^n_1)$};

\draw  (-1,1.5-\r) .. controls +(1,1) and +(-1,0) .. (0,0.5);
\draw (0,0.5) .. controls +(1,0) and +(-1,1) .. (1,1.5-\r);
\draw[dashed, color=gray!60] (1,1.5-\r)  -- (1,-1.5-2*\r);
\draw (1,1.5-\r) node {$\bullet$} node[above right] {$\overline{u}_n(\alpha^n_3)$};

\draw (1,1.5-\r) .. controls +(1,-1) and +(-1,1) .. (2,-1.5+\r);
\draw[dashed, color=gray!60] (2,-1.5+\r) -- (2,1.5+2*\r);
\draw (2,-1.5+\r) node {$\bullet$} node[below left] {$\overline{u}_n(\beta^n_2)$};

\draw (2,-1.5+\r) .. controls +(1,-1) and +(-1,-1) .. (3,-1.5+\r);
\draw[dashed, color=gray!60] (3,-1.5+\r) -- (3,1.5+2*\r);
\draw (3,-1.5+\r) node {$\bullet$} node[below right] {$\overline{u}_n(\alpha^n_4)$};

\draw (3,-1.5+\r) .. controls +(1,1) and +(-1,-1) .. (4,1.5-\r);
\draw[dashed, color=gray!60] (4,1.5-\r) -- (4,-1.5-2*\r);
\draw (4,1.5-\r) node {$\bullet$} node[above left] {$\overline{u}_n(\beta^n_3)$};

\draw (4,1.5-\r) .. controls +(1,1) and +(-1,0) .. (5,1.5);

\draw[->] (-3,-1.5-2*\r)--(-1,-1.5-2*\r);
\draw (-2,-1.5-2*\r) node[below] {$I^n_1$};

\draw[<->] (-2,1.5+2*\r)--(2,1.5+2*\r);
\draw (0,1.5+2*\r) node[above] {$I^n_2$};

\draw[<->] (1,-1.5-2*\r)--(4,-1.5-2*\r);
\draw (2.5,-1.5-2*\r) node[below] {$I^n_3$};

\draw[<-] (3,1.5+2*\r)--(5,1.5+2*\r);
\draw (4,1.5+2*\r) node[above] {$I^n_{4}$};
\end{tikzpicture}
\caption{Possible trajectory for $\overline{u}_n$ and the corresponding $I^n_k$}
\label{oscillation}
\end{figure}

\begin{itemize}
	\item $I^n_1:=(\alpha_1^n,\beta_1^n)$, where $\alpha_1^n=-\infty$ and $\beta_1^n<\infty$ is the first instant for which $\overline{u}_n(\beta_1^n)\in \partial B_+$. In other words, $I^n_1$ is the first maximal interval in $\overline{u}_n^{-1}(B_+^c)$ (which exists since $\overline{u}_n(-\infty)=u^-$), where $B_+^c:=\R^d\setminus B_+$.
	\item $I^n_2=(\alpha_2^n,\beta_2^n)$ is the maximal interval in $\overline{u}_n^{-1}(B_-^c)$ containing $\beta_1^n$. Thus, either $\overline{u}_n(\beta_2^n)\in \partial B_-$ or $\beta_2^n=+\infty$.
	\item Given $k\geq 2$, assume that $I^n_k=(\alpha_k^n,\beta_k^n)$ has been constructed, and that $\beta_k^n<+\infty$. Then we define $I^n_{k+1}=(\alpha^n_{k+1},\beta^n_{k+1})$ as the maximal interval in $\overline{u}_n^{-1}(B^c)$ containing $\beta_k^n$, where either $B=B_-$ if $\overline{u}_n(I^n_k)\subset B_+^c$ or $B=B_+$ if $\overline{u}_n(I^n_k)\subset B_-^c$. Thus, either $\overline{u}_n(\beta_{k+1}^n)\in \partial B$ or ($B=B_-$ and $\beta^n_{k+1}=+\infty$).
\end{itemize}
This induction stops at the first iteration step $k_n\in\mathbb{N}\cup\{+\infty\}$ for which $\beta_{k_n}^n=+\infty$. Note that $k_n\geq 2$, for every $n$.

\medskip
\noindent\textsc{Step 2: $(k_n)_{n\ge 1}$ is a bounded sequence.} Indeed, by construction for $n$ fixed, the $\alpha_k^n$ and $\beta_k^n$ are ordered as follows:
\[
\alpha_1^n=-\infty<\alpha_2^n<\beta_1^n\leq\alpha_3^n<\beta_2^n\leq\alpha_4^n<\beta_3^n\leq\alpha_5^n\leq\dots<\beta_{k_n}^n=+\infty.
\]
In particular, for every index $k\in (1,k_n)$, one has $\alpha_k^n<\beta_{k-1}^n\leq\alpha_{k+1}^n<\beta_k^n$. Moreover, by construction of $I^n_k$, we know that either $\overline{u}_n(\beta_{k-1}^n)\in \partial B_+$ and both $\overline{u}_n(\alpha_k^n)$ and $\overline{u}_n(\beta_k^n)$ belong to $\partial B_-$, or $\overline{u}_n(\beta_{k-1}^n)\in \partial B_-$ and both $\overline{u}_n(\alpha_k^n)$ and $\overline{u}_n(\beta_k^n)$ belong to $\partial B_+$; in other words, $\overline{u}_n$ makes two transitions between $B_-$ and $B_+$ (one on $(\alpha^n_k,\beta^n_{k-1})$ and the other on $(\beta^n_{k-1},\beta^n_k)$). In particular, since the intervals $(I^n_{2k})_{1<2k<k_n}$ are disjoint, one has
\[
2c_W(B_-,B_+)\left\lfloor{\frac{k_n-1}{2}}\right\rfloor\leq \sum_{1<2k<k_n}E(u_n,I^n_{2k})\leq \sup_n E(u_n,\R)<\infty,
\]
where $\left\lfloor{\frac{k_n-1}{2}}\right\rfloor$ is the integer part of $\frac{k_n-1}{2}$, and
\[
c_W(B_-,B_+):=\inf\{c_W(x,y)\;:\; x\in B_-,\, y\in B_+\}.
\]
By Proposition~\ref{cdW}, one has $c_W(B_-,B_+)>0$, and so $\sup_n k_n<\infty$. 

\medskip
\noindent\textsc{Step 3.} We prove that there exist two indices $k_0$, $l_0$ and an unbounded set $X\subset\N^\ast$ (corresponding to the indices of a subsequence of $(u_n)_{n\ge 1}$) such that
\begin{itemize}
\item
for all $n\in X$, $1\leq k_0<l_0\leq k_n$,
\item
$({\rm Length}(I^n_{k_0}))_{n\in X}$ and $({\rm Length}(I^n_{l_0}))_{n\in X}$ converge to $+\infty$ as $n\to +\infty$,
\item
for all $n\in X$, $\overline{u}_n(I^n_{k_0})\subset B_+^c$ and $\overline{u}_n(I^n_{l_0})\subset B_-^c$,
\item
$(|\alpha^n_{l_0}-\beta^n_{k_0}|)_{n\in X}$ is bounded.
\end{itemize} 
In order to prove existence of $k_0,\, l_0$, we define a finite sequence $(\sigma_k)_{1\leq k\leq K}\subset\{0,+,-\}$ as follows. We first pick $K\geq 2$ to be a value that repeats infinitely many times in the sequence $(k_n)_{n\ge 1}$ (that is bounded in $\N$). We set $X$ to be the set of those (infinitely many) indices $n$ with $k_n=K$; then one has $I^n_K=(\alpha^n_K,+\infty)$ for every $n\in X$. Then, for each $k\in\N$ with $1\le k\le K$, we set $\sigma_k$ by the following algorithm (the set $X$ might change at some steps):
\begin{itemize}
\item
$\sigma_k:=0$ if the sequence $({\rm Length}(I^n_k))_{n\in X}$ is bounded;
\item
$\sigma_k:=+$ if there exists a sequence $(n_j)_{j\in \N}\subset X$ such that ${\rm Length}(I^{n_j}_k)\to \infty$ as $j\to \infty$ and $\overline{u}_{n_j}(I^{n_j}_k)\subset B_-^c$ for every $j\in \N$ (in this case, $X$ {\bf is replaced by} the sequence $(n_j)_j$);
\item
$\sigma_k:=-$ if there exists a sequence $(n_j)_{j\in \N}\subset X$ such that ${\rm Length}(I^{n_j}_k)\to \infty$ as $j\to \infty$ and $\overline{u}_{n_j}(I^{n_j}_k)\subset B_+^c$ for every $j\in \N$ (in this case, $X$ {\bf is replaced by} the sequence $(n_j)_j$).
\end{itemize}
Clearly, one has $\sigma_1=-$ and $\sigma_K=+$. Thus, the sequence $(\sigma_k)_{1\leq k\leq K}$ contains at least a subsequence $(\sigma_{k})_{k_0\leq k\leq l_0}$ of the form $(-,0,\dots,0,+)$. This means that $(I^n_{k_0})_{n\in X}$ and $(I^n_{l_0})_{n\in X}$ are unbounded, and that the intermediate interval between $I^n_{k_0}$ and $I^n_{l_0}$ is of uniformly bounded length, i.e. $(|\alpha^n_{l_0}-\beta^n_{k_0}|)_{n\in X}$ is bounded. Moreover, by construction, $\overline{u}_n(I^n_{k_0})\subset B_+^c$ and $\overline{u}_n(I^n_{l_0})\subset B_-^c$ for all $n\in X$.

\medskip
\noindent\textsc{Step 4: end of the proof.} By Step 3, the conclusion of Lemma~\ref{focus} holds true, up to a subsequence, with the choice $t_n=\beta_{k_0}^n$ (or alternatively, take $t_n=\alpha_{l_0}^n$), and $T=\max\{1, \sup_n |\alpha^n_{l_0}-\beta^n_{k_0}|\}$.
\end{proof}

\begin{proof}[First Proof of Theorem~\ref{bndenergy}]
Let us take $(t_n)_{n\ge 1}\subset\R$ and $T$ such that the conclusion of Lemma~\ref{focus} holds true: up to a subsequence, there exists $\varepsilon>0$ such that for each \(n\ge \log_2T\),
\begin{equation}\label{weakBC}
\overline{u}_{n}(t+t_n)\notin\overline{B}(u^-,\varepsilon)\ \text{ for all } t\in [T,2^n]\quad\text{and}\quad\overline{u}_{n}(t+t_n)\notin\overline{B}(u^+,\varepsilon)\ \text{ for all } t\in [-2^n,-T].
\end{equation}
Since $(u_n(\cdot+t_n, \cdot))_{n\ge 1}$ is bounded in $\dot{H}_{div}^1(\Omega,\R^d)$, up to a subsequence, it has a weak limit $u\in \dot{H}_{div}^1(\Omega,\R^d)$. In particular, by the Sobolev embedding $\dot{H}^1(\R,\R^d_a)\hookrightarrow \mathcal{C}^{ 0,\frac12}(\R,\R^d_a)$, one has $(\overline{u}_n(\cdot+t_n))_{n\ge 1}\to \overline{u}$ weakly in $\dot{H}^1$ and uniformly on compact subsets of $\R$. From \eqref{weakBC}, one deduces
\begin{equation}
\label{weakBCu}
\overline{u}(t)\notin B(u^-,\varepsilon)\quad\text{for all } t\geq T\quad\text{and}\quad\overline{u}(t)\notin B(u^-,\varepsilon)\quad \text{for all } t\leq -T.
\end{equation}
Now, from Lemma~\ref{lemmaV} and by lower semicontinuity of $E_V$ in weak $\dot{H}^1$-topology, we learn that
$$
E_V(\overline{u},\R)\le\liminf_{n\to\infty}E_V(\overline{u}_n,\R)\le \liminf_{n\to\infty}E(u_n)<+\infty.
$$
In particular, by Lemma~\ref{closed_boundary}, $\overline{u}$ has a limit $z^\pm\in S_a$ at $\pm\infty$. But \eqref{weakBCu} forces $z^\pm=u^\pm$ 
since \footnote{This is the only place where {\rm \bf (H1')} is needed instead of {\rm \bf (H1)} in the proof of Theorem \ref{bndenergy}.} $S_a=\{u^-,u^+\}$; thus, \eqref{BC} holds true. Since $E$ is lower semicontinuous in weak $\dot{H}^1(\Omega,\R^d)$-topology, the proof is now complete.
\end{proof}
We point out a second proof, based on the following compactness result \cite[Lemma 1.]{Doring:2013}, which can be seen as a generalization of Lemma~\ref{focus} in terms of the average sequence $\{\bar u_n\}$:
\begin{lemma}[L. D\"oring, R. Ignat, F. Otto \cite{Doring:2013}]\label{doring}
Let $(v_n)_{n\geq 1}$ be a sequence of scalar functions $v_n:\R\to\R$ uniformly bounded in $\dot{H}^1(\R)$, i.e., $\sup_n\|\dot{v}_n\|_{L^2(\R)}<\infty$, and such that
\[
\liminf\limits_{t\to +\infty} v_n(t)> 0\quad\text{and}\quad\limsup\limits_{t\to -\infty} v_n(t)< 0\quad\text{for each $n\geq 1$}. 
\]
Then up to a subsequence, there exist $(t_n)_{n\geq 1}\subset\R$ and $v\in\dot{H}^1(\R)$ such that $v_n(\cdot +t_n)\to v$ weakly in $\dot{H}^1(\R)$ with  $\liminf\limits_{t\to +\infty} v(t)\geq 0$ and $\limsup\limits_{t\to -\infty} v(t)\leq 0$.
\end{lemma}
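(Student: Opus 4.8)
The plan is to choose translations $t_n\in\R$ — always a zero of $v_n$, which exists since $v_n$ is continuous and changes sign between $\mp\infty$ — that asymptotically pin the transition of $v_n$ (from negative values at $-\infty$ to positive values at $+\infty$) near the origin. Granting a good choice of $t_n$, the compactness is routine: from $\sup_n\|\dot v_n\|_{L^2(\R)}<\infty$ the maps $w_n:=v_n(\cdot+t_n)$ satisfy $|w_n(s)-w_n(s')|\le\|\dot v_n\|_{L^2}|s-s'|^{1/2}$, and since $w_n(0)=0$ they are locally uniformly bounded; by Arzel\`a--Ascoli and a diagonal extraction $w_n\to v$ in $C_{loc}(\R)$ along a subsequence, and as $(\dot w_n)$ is bounded in $L^2$ it converges weakly, the $C_{loc}$-convergence forcing its weak limit to be $\dot v$. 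Hence $v\in\dot H^1(\R)$, $w_n\rightharpoonup v$ in $\dot H^1(\R)$ and $\|\dot v\|_{L^2}\le\liminf_n\|\dot v_n\|_{L^2}$. It remains to choose the zeros $t_n$ so that $\limsup_{t\to-\infty}v(t)\le 0\le\liminf_{t\to+\infty}v(t)$.

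First I would make two reductions. Up to a subsequence $\|\dot v_n\|_{L^2}^2\to E^\ast\in[0,\infty)$; if $E^\ast=0$ then $\dot w_n\to 0$ in $L^2$, so $v$ is constant and, as $v(0)=0$, $v\equiv 0$, which is admissible — so assume $E^\ast>0$. Next, one may assume $(v_n)$ uniformly bounded, replacing it by the truncation $\max(-1,\min(1,v_n))$: this does not increase $\|\dot v_n\|_{L^2}$, keeps all the hypotheses, and modifies $v_n$ only on $\{|v_n|\ge 1\}$; since $t_n$ is a zero of $v_n$ (equivalently of the truncation) and truncation commutes with $C_{loc}$-limits, the centered limit of the original sequence has truncation equal to the centered limit of the truncated sequence, so the one-sided inequalities transfer back. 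After a further extraction, $\limsup_{-\infty}v_n\to m^\ast\le 0$ and $\liminf_{+\infty}v_n\to M^\ast\ge 0$; the generic case is $m^\ast<0<M^\ast$, a vanishing end being handled by a variant of what follows.

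The heart is the choice of $t_n$, and the key is that the conclusion asks only for the \emph{weak} one-sided limits, so it suffices to arrange that the limit satisfies $v\le 0$ on $(-\infty,0]$ and $v\ge 0$ on $[0,\infty)$. This holds as soon as, for every $\varepsilon>0$ and $R>0$, one has $v_n\le\varepsilon$ on $[t_n-R,t_n]$ and $v_n\ge-\varepsilon$ on $[t_n,t_n+R]$ for $n$ large — i.e. the longest almost-nonpositive run of $v_n$ ending at $t_n$ and the longest almost-nonnegative run starting at $t_n$ both have length $\to\infty$. I would prove the existence of such $t_n$ (along a subsequence) by a concentration-compactness argument: the transition of $v_n$ from its negative end to its positive end may a priori pass through several sign oscillations, but for any fixed $\delta>0$ the oscillations of amplitude $\ge 2\delta$ occurring at bounded scale are finitely many — each forces $v_n$ to cross the band $[-\delta,\delta]$ over a bounded interval, which by Cauchy--Schwarz costs at least a fixed positive share of Dirichlet energy, controlled by $\|\dot v_n\|_{L^2}^2\le E^\ast$ — while oscillations at unbounded scale either flatten out (if one recenters at them) or drift to infinity under the recentering. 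One then selects $t_n$ at a zero of $v_n$ sitting at the ``essential transition'' that separates a long almost-nonpositive run from a long almost-nonnegative run, so that all the surviving bounded-scale oscillations leave every fixed window about $t_n$ in the limit; this yields $v\le 0$ on $(-\infty,0]$, $v\ge 0$ on $[0,\infty)$, hence $\limsup_{-\infty}v\le 0\le\liminf_{+\infty}v$.

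The hard part is exactly the previous paragraph: controlling the concentration-compactness dichotomy — the transition of $v_n$ can split into several pieces drifting apart — and making the selection of the ``essential transition'' precise and robust along a subsequence. The truncation is crucial here because, without an a priori $L^\infty$ bound, a single sign oscillation can be arbitrarily cheap in $\dot H^1$ (it may be spread over a long interval), so the bookkeeping must carefully separate bounded-scale features (finitely many, by the energy bound) from unbounded-scale ones (harmless after recentering) while respecting the sign constraints at $\pm\infty$. Everything else — the compactness of the first paragraph, passing to the limit in the one-sided inequalities, and the reduction steps — is routine.
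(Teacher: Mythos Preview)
The paper does not prove this lemma: it is quoted verbatim from \cite{Doring:2013} and used as a black box in the second proof of Theorem~\ref{bndenergy}. So there is no ``paper's own proof'' to compare against; one can only assess your argument on its own merits and against the oscillation--tracking technique of the closely related Lemma~\ref{focus}.

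Your compactness paragraph is correct, and the truncation reduction is fine. The gap is in the heart of the argument: the sufficient condition you set out to achieve --- that after centering, the limit satisfies \(v\le 0\) on \((-\infty,0]\) and \(v\ge 0\) on \([0,\infty)\), equivalently that for every \(\varepsilon>0\) and \(R>0\) one has \(v_n\le\varepsilon\) on \([t_n-R,t_n]\) and \(v_n\ge-\varepsilon\) on \([t_n,t_n+R]\) for \(n\) large --- is strictly stronger than the conclusion of the lemma, and it is \emph{not} always achievable. Take the constant sequence \(v_n\equiv v\) with \(v(t)=\tanh(t-1)+\tanh(t+1)-\tanh(t)\): one has \(v(\pm\infty)=\pm 1\) and \(\dot v\in L^2\), so the hypotheses hold; but \(v(0)=0\), \(v'(0)=2\,\mathrm{sech}^2(1)-1<0\), \(v(-1)<0<v(1)\), so \(v\) has (at least) three sign changes \(\alpha<0<\beta\), with \(v<0\) on \((-\infty,\alpha)\), \(v>0\) on \((\alpha,0)\), \(v<0\) on \((0,\beta)\), \(v>0\) on \((\beta,\infty)\). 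For this constant sequence, \(t_n\) must converge (else the limit is \(\pm 1\) and violates one inequality), and no translate of \(v\) is \(\le 0\) on a left half--line and \(\ge 0\) on a right half--line. Yet the lemma's conclusion holds trivially (any bounded \(t_n\) works, since \(v(\pm\infty)=\pm 1\)). Your concentration--compactness narrative implicitly assumes the finitely many bounded--scale oscillations ``leave every fixed window about \(t_n\)''; for a constant sequence nothing drifts, so this cannot happen.

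The fix is to aim for the correct, weaker target: one only needs, for every \(\varepsilon>0\), the existence of some \(R=R(\varepsilon)\) (independent of \(n\) along a subsequence) with \(v_n\le\varepsilon\) on \((-\infty,t_n-R]\) and \(v_n\ge-\varepsilon\) on \([t_n+R,\infty)\); this passes to the limit and gives \(\limsup_{-\infty}v\le 0\le\liminf_{+\infty}v\). A clean route is to set \(a_n=\inf\{t:v_n(t)\ge 0\}\) and \(b_n=\sup\{t:v_n(t)\le 0\}\) (so \(v_n<0\) on \((-\infty,a_n)\) and \(v_n>0\) on \((b_n,\infty)\)); when \(b_n-a_n\) is bounded along a subsequence, centering at \(a_n\) immediately gives both inequalities, and when \(b_n-a_n\to\infty\) one needs an oscillation--tracking/induction step in the spirit of the proof of Lemma~\ref{focus} to locate the ``right'' zero --- but now allowing the limit to oscillate on a bounded window about the origin rather than forcing global sign--definiteness.
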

\begin{proof}[Second proof of Theorem~\ref{bndenergy}] 
Let $(u_n)_{n\geq 1}$ be a sequence as in Theorem~\ref{bndenergy}, and apply Lemma~\ref{doring} to $(v_n)_{n\geq 1}$ given by
\[
v_n(t):=\left(\overline{u_n}(t)-\frac 12(u^++u^-)\right)\cdot (u^+-u^-),\quad n\geq 1,\, t\in\R.
\]
It is clear that $(v_n)_{n\geq 1}$ satisfies the assumptions in Lemma~\ref{doring} since $(\nabla u_n)_{n\ge 1}$ is bounded in $L^2$ and 
\[
\lim_{t\to\pm\infty}v_n(t)=\pm\frac 12 |u^+-u^-|^2. 
\]
Thus there exist $(t_n)_{n\geq 1}\subset\R$ and $v\in\dot{H}^1(\R)$ such that $v_n(\cdot +t_n)\to v$ weakly in $\dot{H}^1(\R)$ with  $\liminf\limits_{t\to +\infty} v(t)\geq 0$ and $\limsup\limits_{t\to -\infty} v(t)\leq 0$.

Moreover, as before, $(u_n(\cdot +t_n))_{n\ge 1}$ converges weakly in $\dot{H}^1$ to some limit $u\in \dot{H}^1_{div}(\Omega,\R^2)$ and we have also $(\overline{u_n}(\cdot +t_n))_{n\ge 1}\to\overline{u}$ weakly in $\dot{H}^1$. The fact that $\overline{u}(\pm\infty)=u^\pm$ is shown exactly as in the first proof of Theorem~\ref{bndenergy}, except that instead of \eqref{weakBCu} we use the following properties of the limit $v$:
\[
v(t)=\left(\overline{u}(t)-\frac 12(u^++u^-)\right)\cdot (u^+-u^-)\quad \textrm{for all } t\in \R, \quad \textrm{and} \quad \liminf\limits_{t\to\pm\infty}\ \pm v(t)\geq 0.
\]
By Lemma \ref{closed_boundary}, we conclude the second proof of Theorem~\ref{bndenergy}.
\end{proof}

\begin{proof}[Proof of Theorem~\ref{thm1}]
There exists a minimizing sequence with uniformly bounded energy (note that we have $E(u)<+\infty$ for some $1D$ smooth transition $u=u(x_1):\R\to\R^d_a$ between $u^-$ and $u^+$). Thus, Theorem~\ref{thm1} is a consequence of Theorem~\ref{bndenergy}.
\end{proof}

\subsection{The case of multiple-well potentials in  $\R^d_a$. Proof of Theorem~\ref{thm2}\label{MultipleWell}}
The proof of Theorem~\ref{thm2} relies on Lemma~\ref{focus} (which do not use Assumptions \textbf{(H3)} and \textbf{(H4)}) and the following Lemma~\ref{time_estimate} which aims to prevent lack of compactness due to the presence of a third well $z_0\in S_a\setminus\{u^-,u^+\}$ and uses also the assumptions \textbf{(H3)} and \textbf{(H4)}:
\begin{lemma}\label{time_estimate}
Let $W:\R^d\to \R_+$ be a continuous function such that {\rm \bf (H1)} - {\rm \bf (H4)} are satisfied for some $a\in\R$ and let $(u_n)_{n\ge 1}$ be a minimizing sequence, i.e. $u_n\in \dot{H}_{div}^1(\Omega,\R^d)$, $\overline{u_n}(\pm\infty)=u^\pm\in S_a$ for each $n\ge 1$, and
\[
\lim_{n\to\infty}\int_\Omega \frac 12 |\nabla u_n|^2+W(u_n)\diff x=\inf\left\{E(u)\;:\; u\in \dot{H}_{div}^1(\Omega,\R^d)\text{ with }\overline{u}(\pm\infty)=u^\pm \right\}.
\]
Then, for all $\delta< \min\{|x-y|\;:\; x,y\in S_a, \, x\neq y\}$ and for all $z\in S_a\setminus\{u^-,u^+\}$, one has $\sup_{n\ge 1} \mathcal{L}^1\big(\overline{u}_n^{-1}(B(z,\delta))\big)<+\infty$, where $\mathcal{L}^1$ stands for the Lebesgue  measure in $\R$.
\end{lemma}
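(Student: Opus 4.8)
The plan is to argue by contradiction: suppose there exist $\delta<\min\{|x-y|:x,y\in S_a,\,x\neq y\}$, a well $z\in S_a\setminus\{u^\pm\}$ and a subsequence (not relabelled) such that $\mathcal{L}^1\big(\overline{u}_n^{-1}(B(z,\delta))\big)\to+\infty$. The idea is to \emph{surgically cut out} the long piece of the path $\overline{u}_n$ that stays near $z$ and replace it by a short, energetically cheap detour, thereby producing a competitor whose energy undercuts $c_W(u^-,u^+)$ by a fixed positive amount, contradicting the assumption that $(u_n)$ is minimizing. First I would record that $E(u_n)$ is uniformly bounded (since $(u_n)$ is minimizing), so Lemma~\ref{focus} applies and, after translating, gives uniform control on where $\overline{u}_n$ leaves small balls $B_\pm$ around $u^\pm$; combined with Proposition~\ref{cdW} and Lemma~\ref{lemmaV}, the average $\overline{u}_n$ has $E_V$-energy bounded independently of $n$, hence by Lemma~\ref{cWgeqGeod} finite $\mathrm{geod}^a_V$-total variation, so $\overline{u}_n$ cannot oscillate too wildly between $z$ and the other wells.

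The key geometric step: fix a small radius $\rho\in(0,\delta)$ with the closed balls $\overline B(w,\rho)$, $w\in S_a$, pairwise disjoint. Because $\mathcal{L}^1\big(\overline{u}_n^{-1}(B(z,\delta))\big)\to\infty$ while the energy $E_V(\overline{u}_n,\overline{u}_n^{-1}(B(z,\delta)\setminus B(z,\rho)))$ stays bounded (each crossing of the annulus $B(z,\delta)\setminus B(z,\rho)$ costs at least a fixed amount by lower-semicontinuity and positivity of $V$ on this compact annulus, cf.\ the ring estimate \eqref{ringEstimate}), for $n$ large there must be a maximal time interval $J_n=(s_n,s_n')$ with $\overline{u}_n(J_n)\subset B(z,\rho)$ and $|J_n|\to\infty$; let $v_n^\pm:=u_n(s_n',\cdot)$, $u_n(s_n,\cdot)$ be the corresponding slices, which lie in $H^1_a(\T^{d-1},\R^d)$. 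On the remaining two pieces of $\R$, $u_n$ realizes (up to $o(1)$) a competitor for $d_W(u^-,v_n^-)$ and $d_W(v_n^+,u^+)$ respectively (here I would use that $\overline{u}_n(s_n),\overline{u}_n(s_n')\in\partial B(z,\rho)$, together with a Poincaré–Wirtinger-type argument and the uniform $\dot H^1$-bound on $u_n$ to extract, along a further subsequence, strong $H^1(\T^{d-1})$-convergence $v_n^\pm\to z$); hence by the lower-semicontinuity assumption \textbf{(H4)} and Proposition~\ref{dWcW},
\[
\liminf_{n\to\infty}E(u_n)\ \ge\ d_W(u^-,z)+d_W(z,u^+)\ =\ c_W(u^-,z)+c_W(z,u^+).
\]
By the strict triangle inequality \textbf{(H3)} (and $d_W=c_W$ on $S_a\times S_a$), the right-hand side is strictly larger than $d_W(u^-,u^+)=c_W(u^-,u^+)=\inf(\mathcal P)$, contradicting that $(u_n)$ is minimizing.

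The main obstacle is the third step — justifying the passage to the limit that yields the bound involving $d_W(u^-,z)$ and $d_W(z,u^+)$. The subtlety is twofold: (i) extracting a single time $s_n$ (resp.\ $s_n'$) at which the full \emph{slice} $u_n(s_n,\cdot)$ converges in $H^1(\T^{d-1})$ to the constant $z$, not merely the average $\overline{u}_n(s_n)$; this requires choosing $s_n$ in a set of times of positive measure where $\|\nabla'u_n(s_n,\cdot)\|_{L^2(\T^{d-1})}$ is small (a mean-value argument, since the transverse Dirichlet energy is integrable) \emph{and} $\overline{u}_n(s_n)$ is close to $z$; (ii) the pseudo-distance $d_W$ may a priori be $+\infty$, so one must ensure the pieces of $u_n$ on $(-\infty,s_n)\times\T^{d-1}$ and $(s_n',\infty)\times\T^{d-1}$ genuinely furnish admissible (finite-energy) competitors, which follows because their energy is $\le E(u_n)\le C$. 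Once the slices are handled, the lower bound is just superadditivity of the energy over the three subdomains plus \textbf{(H4)}, so the heart of the matter is really this slice-selection and the compactness it feeds into.
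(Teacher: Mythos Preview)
Your overall strategy—contradiction, find a slice of $u_n$ close to the third well $z$, split the energy, invoke \textbf{(H4)} then \textbf{(H3)}—is exactly the paper's. But the implementation has two issues worth fixing, and the paper's route is markedly simpler.

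First, your choice of the cut times is inconsistent. You take $s_n,s_n'$ to be the endpoints of a \emph{maximal} interval on which $\overline{u}_n\in B(z,\rho)$; by maximality, $\overline{u}_n(s_n),\overline{u}_n(s_n')\in\partial B(z,\rho)$, i.e.\ the averages sit at distance exactly $\rho>0$ from $z$. Then the slices $u_n(s_n,\cdot)$ \emph{cannot} converge to $z$ in $H^1(\T^{d-1})$, since $H^1$-convergence forces convergence of the averages. You recognize this in your ``obstacle'' paragraph and propose instead to pick $s_n$ where both $\|\nabla'u_n(s_n,\cdot)\|_{L^2}$ is small and $\overline{u}_n(s_n)$ is close to $z$---but that contradicts your own setup. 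The paper avoids the maximal-interval detour entirely: since $\mathcal L^1(I_k)\to\infty$ with $I_k=\overline{u}_{n_k}^{-1}(B(z,\delta))$ while
\[
\int_{I_k}\!\int_{\T^{d-1}}\Big(\tfrac12|\nabla'u_{n_k}|^2+W(u_{n_k})\Big)\le E(u_{n_k})\le C,
\]
a single mean-value choice of $t_k\in I_k$ gives $\int_{\T^{d-1}}(\tfrac12|\nabla'u_{n_k}(t_k,\cdot)|^2+W(u_{n_k}(t_k,\cdot)))\to 0$. Poincar\'e--Wirtinger plus $\overline{u}_{n_k}(t_k)\in B(z,\delta)$ then force $u_{n_k}(t_k,\cdot)\to z$ in $H^1$ (the limit must be a constant in $S_a\cap\overline B(z,\delta)=\{z\}$). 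One cut time suffices; there is no need for two, nor for Lemma~\ref{focus} or the $E_V$/ring-estimate machinery you invoke.

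Second, to turn the two remaining pieces into admissible $d_W$-competitors you need \emph{bounded} intervals with $H^1$-controlled traces at both ends, not half-lines: $d_W$ in \eqref{dw} is defined on finite cylinders with Dirichlet data. The paper handles this with Lemma~\ref{BC_unif}, choosing $R_k^-<t_k<R_k^+$ with $u_{n_k}(R_k^\pm,\cdot)\to u^\pm$ in $H^1$, so that
\[
E(u_{n_k})\ge d_W\big(u_{n_k}(R_k^-,\cdot),u_{n_k}(t_k,\cdot)\big)+d_W\big(u_{n_k}(t_k,\cdot),u_{n_k}(R_k^+,\cdot)\big),
\]
and then \textbf{(H4)} gives $c_W(u^-,u^+)\ge d_W(u^-,z)+d_W(z,u^+)$, contradicting \textbf{(H3)} since $d_W(u^-,u^+)\ge c_W(u^-,u^+)$. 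Your ``remaining two pieces of $\R$'' are unbounded and do not directly yield $d_W$-competitors; you should insert this step.
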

\begin{proof}[Proof of Lemma \ref{time_estimate}]
Assume by contradiction that there exist $\delta<\inf\{|x-y|\;:\; x,y\in S_a, \, x\neq y\}$, a well $z_0\in S_a\setminus\{u^-,u^+\}$ and a subsequence $(u_{n_k})_{k\ge 1}$ such that $\mathcal{L}^1\big(I_k\big)\to +\infty$ as $k\to \infty$, where $I_k:=\overline{u}_{n_k}^{-1}(B(z_0,\delta))$ is an open set. Since
$$
\int_{I_k} \left(\int_{\T^{d-1}} \Big(\frac 12 |\nabla' u_{n_k}(x_1,x')|^2+W(u_{n_k}(x_1,x'))\Big)\diff x'\right)\diff x_1\le \sup_{n\ge 1} E(u_n)<+\infty,
$$
we deduce the existence of a sequence $(t_k)_{k\ge 1}$ such that for each $k\ge 1$, $t_k\in I_k$ and 
$$
\int_{\T^{d-1}} \Big(\frac 12 |\nabla' u_{n_k}(t_k,x')|^2+W(u_{n_k}(t_k,x'))\Big)\diff x'\underset{k\to\infty}{\longrightarrow}0.
$$
Since we have furthermore $\overline{u}_{n_k}(t_k)\in B(z_0,\delta)\subset (\R^d_a\setminus S_a)\cup\{z_0\}$, the sequence $(u_{n_k}(t_k,\cdot))_{k\ge 1}$ converges to a constant $z\in\R^d_a$ strongly in $H^1(\T^{d-1},\R^d)$, and this constant belongs to $S_a$ (since $W(z)=0$ by Fatou's Lemma) so that we have necessarily $z=z_0$.  Moreover, for each $k$, by Lemma~\ref{BC_unif} applied to $u_{n_k}$, there exist $R_k^-$ and $R_k^+$ such that $R^-_k<t_k<R^+_k$, with $\|u_{n_k}(R^\pm_k,\cdot)-u^\pm\|_{H^1}\to 0$ as $k\to\infty$. Then, by definition of $d_W$, we get
\begin{align*}
E(u_{n_k})\geq d_W(u_{n_k}(R^-_k,\cdot),u_{n_k}(t_k,\cdot))+d_W(u_{n_k}(t_k,\cdot),u_{n_k}(R^+_k,\cdot));
\end{align*}
since $(u_n)_{n\ge 1}$ is a minimizing sequence and by {\textbf{(H4)}}, we obtain in the limit $k\to\infty$,
\[
c_W(u^-,u^+)\geq d_W(u^-,z_0)+d_W(z_0,u^+),
\]
thus contradicting Hypothesis {\textbf{(H3)}} since $d_W(u^-,u^+)\ge c_W(u^-,u^+)$ (by definitions \eqref{cw} and \eqref{dw}).
\end{proof}

\begin{proof}[Proof of Theorem \ref{thm2}]
Let $(u_n)_{n\geq 1}\subset \dot{H}^1(\Omega,\R^d)$ be a minimizing sequence for the minimization problem $(\mathcal{P})$, i.e.
\[
+\infty>E(u_n)\underset{n\to\infty}{\longrightarrow}c_W(u^-,u^+),\quad 
\nabla\cdot u_n=0\quad
\text{and}\quad\overline{u_n}(\pm\infty)=u^\pm.
\]
From the first proof of Theorem~\ref{bndenergy} (based on Lemmas \ref{closed_boundary} and \ref{focus} that use only the assumptions {\rm \bf (H1)} and {\rm \bf (H2)}), we learn that $(u_n)_{n\ge 1}$ converges up to a subsequence (and up to translation) weakly in $\dot{H}^1(\Omega,\R^d)$ to a limit $u$ satisfying $\overline{u}(\pm\infty)=z^\pm\in S_a$. For potentials $W$ with more than two wells, we cannot deduce at this stage $z^\pm=u^\pm$; however, we can 
assert that $z^-\neq u^+$ and $z^+\neq u^-$ (thanks to Lemma \ref{focus}). The conclusion will follow from Lemma~\ref{time_estimate}. Indeed, by Fatou's lemma, we have for all $\delta>0$ such that $\delta<|x-y|$ whatever $x,y\in S_a$ with $x\neq y$, and $z\in S_a\setminus\{u^-,u^+\}$,
$$
\int_\R\mathbf{1}_{B(z,\delta)}(\overline{u}(t))\diff t
 \le\liminf_{n\to\infty}\int_\R\mathbf{1}_{B(z,\delta)}(\overline{u}_n(t))\diff t
= \liminf_{n\to\infty} \mathcal{L}^1\big(\overline{u}_n^{-1}(B(z,\delta))\big)<+\infty.
$$
In particular, $ \mathcal{L}^1\big(\overline{u}^{-1}(B(z,\delta))\big)$ is finite and $\overline{u}$ cannot converge to $z\in S_a\setminus\{u^-,u^+\}$ at $\pm \infty$. We have thus proved $\overline{u}(\pm\infty)=z^\pm=u^\pm$ and $u$ is a solution to the minimization problem $(\mathcal{P})$ since $E(u)\le \liminf_{n\to\infty}E(u_n)$ by lower semicontinuity of $E$.
\end{proof}

\subsection{Analysis of the transition cost. Proof of Propositions \ref{dWcW} and \ref{dWdist}}\label{lscTransitionCost}
\begin{proof}[Proof of Proposition~\ref{dWcW}]
By definitions \eqref{cw} and \eqref{dw}, we have $d_W(z^-,z^+)\ge c_W(z^-,z^+)$ for all $z^\pm\in S_a$ (there is no need of any assumption on $W$, in particular, no need of {\rm \bf (H4)}). Indeed, $d_W$ is defined by minimizing $E(u,I)$ on finite intervals $I$ with Dirichlet conditions $u=z^\pm$ on the boundary of $I\times\T^{d-1}$; extending $u$ by setting $u=z^\pm$ out of $I\times\T^{d-1}$ ($z^-$ at the left side and $z^+$ at the right side of $I$) yields an admissible function $\tilde{u}$ with the same energy $E(\tilde{u})=E(u,I)$ since $W(z^\pm)=0$, and then $E(u,I)\ge c_W(z^-,z^+)$.

Conversely, we now prove that $d_W(z^-,z^+)\le c_W(z^-,z^+)$ for all $z^\pm\in S_a$ under the assumption {\rm \bf{(H4)}}. Indeed, let $u\in\dot{H}_{div}^1(\Omega,\R^d)$ be a map such that $E(u)<+\infty$ and $\overline{u}(\pm\infty)=z^\pm$. Thanks to Lemma~\ref{BC_unif}, there exist two sequences $(R_n^\pm)_{n\ge 1}\to\pm\infty$ with $(\| u(R_n^\pm,\cdot)- z^\pm\|_{H^1})_{n\ge 1} \to 0$ and it follows from \textbf{(H4)},
$$
d_W(z^-,z^+)\leq \liminf_{n\to\infty}d_W(u(R_n^-,\cdot),u(R_n^+,\cdot))\le\liminf_{n\to\infty}E(u,[R_n^-,R_n^+])\le E(u).
$$
The conclusion follows by taking the infimum over $u$.
\end{proof}
Proposition~\ref{dWdist} is a consequence of the following technical but standard lemma:
{\begin{lemma}\label{Main_est}
If $W:\R^d\to\R_+$ is a continuous potential satisfying \eqref{growthMultiwell}, then the quantity 
\[
J(v_0,v):=\inf\big\{E(u,[0,1])\; :\; u\in H^1([0,1]\times\T^{d-1}),\,\nabla\cdot u=0,\, u(0,\cdot)=v_0,\, u(1,\cdot)=v\big\},
\]
defined for every $v_0\in S_a$ and every $v\in H^1(\T^{d-1},\R^d)$ such that $\int_{\T^{d-1}} v\cdot e_1=a$, satisfies $(J(v_0, {v_n}))_{n\ge 1}\to 0$ for every sequence $(v_n)_{n\ge 1}\to v_0$ in $H^1(\T^{d-1},\R^d)$ with $\int_{\T^{d-1}} v_n\cdot e_1=a$ for each \(n\ge 1\).
\end{lemma}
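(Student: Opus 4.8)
Fix $v_0\in S_a$ and $v_n\to v_0$ in $H^1(\T^{d-1},\R^d)$ with $\int_{\T^{d-1}}v_n\cdot e_1=a$, and set $w_n:=v_n-v_0\to 0$ in $H^1(\T^{d-1},\R^d)$; since $v_0\in S_a\subset\R^d_a$ one has $\int_{\T^{d-1}}w_n\cdot e_1=0$. The plan is to construct, for each $n$, an admissible competitor $u_n$ for $J(v_0,v_n)$ on $Q:=(0,1)\times\T^{d-1}$ — that is, $u_n\in H^1(Q,\R^d)$ with $\nabla\cdot u_n=0$, $u_n(0,\cdot)=v_0$, $u_n(1,\cdot)=v_n$ — which stays $O(\|w_n\|_{H^1(\T^{d-1})})$-close to the constant $v_0$ in $H^1$ and which performs the (possibly rough) transition towards $v_n$ inside an infinitesimal layer near $\{x_1=1\}$, obtained by mollifying $v_n$ in $x'$ at a scale that vanishes as $x_1\to 1$. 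I fix a smooth even mollifier $\phi\ge 0$ on $\R^{d-1}$ with $\int\phi=1$, periodised to $\T^{d-1}$, and write $\phi_\rho:=\rho^{-(d-1)}\phi(\cdot/\rho)$; the two facts about it that I will use are $\|\phi_\rho*g\|_{L^2(\T^{d-1})}\le\|g\|_{L^2}$ and the cancellation estimate $\|\partial_\rho(\phi_\rho*g)\|_{L^2(\T^{d-1})}\lesssim\|\nabla' g\|_{L^2}$, valid because $\partial_\rho\phi_\rho$ is a rescaling of the mean-zero kernel $\nabla\cdot(z\phi(z))$ (its Fourier symbol has modulus $O(\min\{1,\rho|k|\})/\rho$), together with Young's convolution inequality and the Sobolev embedding $H^1(\T^{d-1})\hookrightarrow L^{p^*}(\T^{d-1})$ with $p^*:=\tfrac{2(d-1)}{d-3}$ (for $d\le 3$ one takes any finite exponent for $p^*$).

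For the construction I would, near $\{x_1=1\}$, build $u_n$ from $\phi_{1-x_1}*v_n$ and then apply a finite sequence of divergence-restoring correctors — each obtained by inverting a Laplacian on $\T^{d-1}$ slice by slice, which is legitimate since $\int_{\T^{d-1}}\nabla\cdot(\phi_{1-x_1}*v_n)(x_1,\cdot)=\partial_1\int_{\T^{d-1}}(v_n)_1=0$ — arranged so that none of them changes the trace at $\{x_1=1\}$ and so that every slice $u_n(x_1,\cdot)-v_0$ is a finite sum of mollifications at scale $\gtrsim 1-x_1$ of fixed functions in $H^1(\T^{d-1})$ of norm $\lesssim\|w_n\|_{H^1(\T^{d-1})}$. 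Away from $\{x_1=1\}$ the mollification scale is bounded below, so everything is smooth and $L^\infty$-small there and one connects to $v_0$ by a cheap divergence-free path; the small residual mismatch at $\{x_1=0\}$ is removed on a thin slab $(0,\delta_n)\times\T^{d-1}$ by a Bogovskii-type corrector, with $\delta_n$ a small positive power of $\|w_n\|_{H^1(\T^{d-1})}$ chosen so that the negative powers of $\delta_n$ coming from the slab's Bogovskii and Sobolev constants are absorbed. This divergence-free bookkeeping is the ``technical but standard'' part; I would set it up once and then use it as a black box.

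It then remains to estimate the energy of $u_n$. For the gradient, $\nabla'$ commutes with the mollifications, so the $\nabla'$-part of $\|\nabla u_n\|_{L^2(Q)}^2$ is $\lesssim\|\nabla' w_n\|_{L^2(\T^{d-1})}^2$; the $\partial_1$-part is controlled by the cancellation estimate above together with $\partial_1(u_n)_1=-\nabla'\cdot(u_n)'$, and the correctors contribute $o(1)$ by the choice of $\delta_n$; hence $\|\nabla u_n\|_{L^2(Q)}\lesssim\|w_n\|_{H^1(\T^{d-1})}\to 0$. For the potential, fix a small $c>0$, split $Q=Q_c\cup(Q\setminus Q_c)$ with $Q_c:=(0,1-c)\times\T^{d-1}$, and use on $Q_c$ that the mollification scale is $\ge c$, so $u_n\to v_0$ uniformly on $Q_c$ and $\int_{Q_c}W(u_n)\,\diff x\to 0$ by continuity of $W$. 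On $Q\setminus Q_c$ write $\rho:=1-x_1<c$ and use $W(z)\le C(1+|z|^q)$ from \eqref{growthMultiwell}: by Young's inequality on $\T^{d-1}$ each slice obeys
\[
\|u_n(x_1,\cdot)-v_0\|_{L^q(\T^{d-1})}\;\lesssim\;\rho^{-\big(\frac{d-3}{2}-\frac{d-1}{q}\big)_+}\,\|w_n\|_{H^1(\T^{d-1})},
\]
so, raising to the $q$-th power and integrating over $\rho\in(0,c)$,
\[
\int_{Q\setminus Q_c}\big(1+|u_n|^q\big)\,\diff x\;\lesssim\;c\,+\,\|w_n\|_{H^1(\T^{d-1})}^q\int_0^c\rho^{\,(d-1)-\frac{(d-3)q}{2}}\,\diff\rho\;\le\;c\,+\,C\,c^{\,d-\frac{(d-3)q}{2}}\,\|w_n\|_{H^1(\T^{d-1})}^q,
\]
the $\rho$-integral being finite precisely because $(d-1)-\tfrac{(d-3)q}{2}>-1$, i.e. because $q<\tfrac{2d}{d-3}$. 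Since $\|w_n\|_{H^1(\T^{d-1})}\to 0$, this gives $\limsup_n\int_{Q\setminus Q_c}W(u_n)\,\diff x\le C'c$ for every $c>0$, and letting $c\to 0$ yields $\int_Q W(u_n)\,\diff x\to 0$. Combining the two estimates, $J(v_0,v_n)\le E(u_n,[0,1])\to 0$, which is the claim.

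The step I expect to be the main obstacle is exactly the one the construction is built around: reconciling the \emph{exact} boundary trace $u_n(1,\cdot)=v_n$ — where $v_n$ is only in $H^1(\T^{d-1})$, so that $W(v_n)$ need not be integrable on $\T^{d-1}$ — with the divergence constraint and with a finite, vanishing energy. This is what forces the shrinking-scale mollification rather than any fixed interpolation, and it is precisely the convergence of $\int_0^c\rho^{(d-1)-(d-3)q/2}\,\diff\rho$, i.e. the hypothesis $q<\tfrac{2d}{d-3}$ in \eqref{growthMultiwell}, that keeps the potential term small; in this sense the growth exponent is sharp for the construction. The remaining difficulty — preventing the numerous divergence-restoring correctors from disturbing the traces, and controlling the thin-slab Bogovskii step at $\{x_1=0\}$ by powering $\delta_n$ — is genuinely technical but routine.
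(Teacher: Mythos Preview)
Your approach differs substantially from the paper's. The paper's proof is a two–line construction using heavy tools: after translating so that $v_0=0$, set $u_n=Pv_n-w_n$, where $Pv_n$ is the harmonic extension of $(0,v_n)$ to $Q=[0,1]\times\T^{d-1}$ (so $Pv_n\in H^{3/2^-}(Q)\hookrightarrow W^{1,p}(Q)$ for every $p<\tfrac{2d}{d-1}$), and $w_n$ solves $\nabla\cdot w_n=\nabla\cdot(Pv_n)$ with zero Dirichlet data via Bourgain--Brezis, with $\|w_n\|_{W^{1,p}}\lesssim\|\nabla\cdot(Pv_n)\|_{L^p}\to 0$. Then $u_n\to 0$ in $W^{1,p}(Q)$, hence in $L^q(Q)$ by Sobolev embedding for any $q<\tfrac{dp}{d-p}$, and letting $p\uparrow\tfrac{2d}{d-1}$ recovers exactly $q<\tfrac{2d}{d-3}$; the growth condition then gives $\int_Q W(u_n)\to 0$. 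No layers, no slice-by-slice analysis.

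Your shrinking-mollification route is an attractive alternative, and the final computation showing that the potential contribution on the boundary layer is governed by the integrability of $\rho^{(d-1)-(d-3)q/2}$ on $(0,c)$ --- hence precisely by $q<\tfrac{2d}{d-3}$ --- is correct and illuminating. But the divergence-corrector step is not the routine bookkeeping you suggest: the claim that one can arrange a \emph{finite} sequence of correctors that simultaneously (i) leave the trace at $x_1=1$ untouched and (ii) keep every slice $u_n(x_1,\cdot)-v_0$ a finite sum of scale-$\gtrsim(1-x_1)$ mollifications of fixed $H^1$ functions is not obvious, and the natural candidates fail. With $\tilde u_n=v_0+\phi_{1-x_1}\ast w_n$ near $x_1=1$, one computes $\nabla\cdot\tilde u_n(x_1,\cdot)\to\nabla'\cdot(w_n)'$ as $x_1\to 1$ (the $\partial_\rho$-term vanishes by your cancellation estimate, but the other does not), so the slice-Laplacian corrector $(0,\nabla'\psi)$ has trace $-\nabla'(\Delta')^{-1}\nabla'\cdot(w_n)'\neq 0$ at $x_1=1$, violating~(i). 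The alternative, integrating the divergence in $x_1$ to put the correction in the first component, produces an unmollified copy of $(w_n)_1$, which lies only in $L^{p^*}(\T^{d-1})$ with $p^*=\tfrac{2(d-1)}{d-3}<\tfrac{2d}{d-3}$ and so destroys your slice-$L^q$ bound for $q\in(p^*,\tfrac{2d}{d-3})$, violating~(ii). This is exactly the obstruction that the Bourgain--Brezis step in the paper sidesteps by working on the $d$-dimensional box rather than slice by slice; if you insist on your route, you will need a genuinely different corrector (e.g.\ an anisotropic construction that also mollifies in $x_1$, or a careful vector-potential ansatz) rather than the iterated slice inversion you describe.
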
}
\begin{proof}
Up to replacing $v_n$ by $v_n+v_0$, $u$ by $u+v_0$ (in the infimum defining $J(v_0,v)$) and $W(\cdot)$ by $W(v_0+\cdot)$, one can assume that $v_0=0\in S_a$ with $a=0$. So, let $(v_n)_{n\ge 1}$ be a sequence converging to $0$ in $H^1(\T^{d-1},\R^d)$ such that for each $n$, $\int_{\T^{d-1}}e_1\cdot v_n=0$. We shall prove $J(0,v_n)\to 0$ as $n\to\infty$. To this aim, we look for an admissible map of the form $u_n=Pv_n - w_n$,
where $P:H^1(\T^{d-1},\R^d)\to H^{\frac32^-}([0,1]\times\T^{d-1},\R^d)$ is the harmonic extension operator such that for all $v\in H^1(\T^{d-1},\R^d)$, $Pv(0,\cdot)=0$, $Pv(1,\cdot)=v$ and $\|Pv\|_{H^{\frac32^-}}\le C\|v\|_{H^1}$, where $\frac32^-$ stands for a real number strictly less than $\frac32$.  
In order that $\nabla\cdot u_n=0$, we impose on $w_n$ the following conditions:
\begin{equation}
\label{brezis_div}
\begin{cases}
\nabla \cdot w_n=f_n&\text{in }Q:=[0,1]\times\T^{d-1},\\
w_n=0&\text{on }\partial Q=\{0,1\}\times\T^{d-1},
\end{cases}
\end{equation}
where $f_n=\nabla\cdot (Pv_n)$. By Sobolev embedding, for any $p<\frac{2d}{d-1}$, there exist some constants 
$C_1,C_2,C_3>0$, two real numbers $\frac12^-\in (0, \frac12)$ and $\frac32^-\in(1+\frac12^-, \frac32)$
(all depending on $p$) such that
\[
\|f_n\|_{L^p(Q)}\leq C_1 \|f_n\|_{ H^{1/2^-}(Q)}\leq C_2 \|Pv_n\|_{H^{3/2^-}(Q)}\leq C_3\|v_n\|_{H^1(\T^{d-1})}.
\]
It is known (see for instance  \cite[Theorem 2]{Bourgain:2003}) that there exists a solution $w_n$ of \eqref{brezis_div} such that 
\begin{equation*}
\|w_n\|_{W^{1,p}(Q)}\leq C_4 \|f_n\|_{L^{p}(Q)}, 
\end{equation*}
where $C_4>0$ is a constant only depending on $d$. Since $v_n\to 0$ in $H^1(\T^{d-1})$, then $f_n\to 0$ in $L^{p}
(Q)$, and $w_n\to 0$ in $W^{1,p}(Q)$. Moreover $Pv_n\to 0$ in $H^{3/2-}(Q)$, and $H^{3/2-}(Q)$ is continuously embedded in $W^{1,p}(Q)$. Thus, we have proved that $u_n=Pv_n-w_n\to 0$ in $W^{1,p}(Q)$. Since $E(u_n,[0,1])=\frac12\|\nabla u_n\|_{L^2(Q)}^2+\|W(u_n)\|_{L^1(Q)}$ and we can choose $p\geq 2$, it remains to prove that $W(u_n)\to 0$ in $L^1(Q)$.

In dimension $d=2$, one can choose $p\in (2,4)$ so that $W^{1,p}(Q)$ is continuously embedded in $\mathcal{C}^{0,\alpha}$ for some $\alpha>0$; thus, both $(u_n)_{n\ge 1}$ and $(W(u_n))_{n\ge 1}$ converge uniformly to $0$, in particular, $W(u_n)\to 0$ in $L^1(Q)$. In dimension $d\geq 3$, 
for $q$ given in \eqref{growthMultiwell}, we can choose $p$ close to $\frac{2d}{d-1}$ so that $W^{1,p}(Q)$ is continuously embedded in $L^{q}$ yielding $u_n\to 0$ in $L^{q}(Q)$; up to a subsequence, one can assume that $u_n\to 0$ a.e. in $Q$. But \eqref{growthMultiwell} provides a constant $C>0$ such that $W(z)\leq C|z|^{q}$ for every $z\in\R^d$ with $|z|\ge 1$. Then $\|W(u_n)\mathbf{1}_{|u_n|>1}\|_{L^1}\leq C \|u_n\|_{L^q}^q\to 0$ as $n\to \infty$, and it is clear that $W(u_n)\mathbf{1}_{|u_n|\leq 1}\to 0$ in $L^1(Q)$ by the dominated convergence theorem. This concludes $W(u_n)\to 0$ in $L^1(Q)$.
\end{proof} 

\begin{proof}[Proof of Proposition~\ref{dWdist}]
Since one can always glue together two $\dot{H}_{div}^1$ maps when their traces coincide (with the new map still belonging to $\dot{H}_{div}^1$), the following immediately holds for each $v_n^\pm\in H^1_a(\T^{d-1},\R^d)$:
\begin{align*}
J(z^-,v_n^-)+d_W(v_n^-,v_n^+)+J(z^+,v_n^+)&\geq d_W(z^-,z^+),\\
J(z^-,v_n^-)+d_W(z^-,z^+)+J(z^+,v_n^+)&\geq d_W(v_n^-,v_n^+).
\end{align*}
If $(v_n^\pm)_{n\ge 1}\to z^\pm$ in $H^1(\T^{d-1})$, then Lemma~\ref{Main_est} implies that $(d_W(v_n^-,v_n^+))_{n\ge 1}$ has a limit given by $d_W(z^-,z^+)$.
\end{proof}

\subsection{Regularity of minimizers.} 
Under some regularity assumption on $W$, any global minimizer $u$ of the problem $(\mathcal P)$ solves \eqref{stokes} so that classical regularity results for the Stokes equation apply:
\begin{proposition}
\label{pro:reg}
Let $W\in \mathcal{C}^1(\R^d,\R)$ and $u\in\dot{H}_{div}^1(\Omega,\R^d)$ be a {solution of $(\mathcal{P})$}. Assume in addition that either 
$ u\in L_{loc}^\infty(\Omega)$ or $W$ is globally Lipschitz on $\R^d$. Then there exists a pressure $p$ such that \eqref{stokes} holds true; moreover, $p\in W_{loc}^{1,q}(\Omega,\R)$ and $u\in W_{loc}^{2,q}(\Omega,\R^d)$ for every $q\in (1,+\infty)$.
\end{proposition}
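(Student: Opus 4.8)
The plan is to derive the Euler--Lagrange system of $(\mathcal P)$, recover the pressure via the classical de Rham lemma, and then run a standard interior elliptic bootstrap for the Stokes operator.

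First I would use that a solution of $(\mathcal P)$ is in particular a local minimizer: for every $\phi\in\mathcal{C}_c^\infty(\Omega,\R^d)$ with $\nabla\cdot\phi=0$, the perturbation $u+t\phi$ is still admissible for $(\mathcal P)$ (it is divergence-free and, $\phi$ having compact support, it still satisfies $\overline{u+t\phi}(\pm\infty)=u^\pm$, with finite energy), so $t\mapsto E(u+t\phi)$ attains its minimum at $t=0$. Differentiating the Dirichlet part gives $\int_\Omega\nabla u:\nabla\phi\,\diff x$; for the potential part I would write $W(u+t\phi)-W(u)=t\int_0^1\nabla W(u+st\phi)\cdot\phi\,\diff s$ and observe that on the compact support of $\phi$ the integrand is bounded uniformly in $|t|\le1$ --- because $u$ is locally bounded under the first hypothesis, or because $\nabla W$ is globally bounded under the second --- so that dominated convergence yields
\[
\int_\Omega \nabla u:\nabla\phi+\nabla W(u)\cdot\phi\,\diff x=0\qquad\text{for every }\phi\in\mathcal{C}_c^\infty(\Omega,\R^d)\text{ with }\nabla\cdot\phi=0.
\]
In both cases $\nabla W(u)$ lies in $L^q_{loc}(\Omega,\R^d)$ for every $q\in(1,\infty)$ (it is in $L^\infty_{loc}$, resp.\ in $L^\infty(\Omega)$), which is the integrability that will drive the bootstrap.

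Next I would apply the classical de Rham lemma: the $\R^d$-valued distribution $\phi\mapsto\int_\Omega\nabla u:\nabla\phi+\nabla W(u)\cdot\phi\,\diff x$ annihilates all solenoidal test fields, hence equals $\nabla p$ for some distribution $p$ on $\Omega$; from the resulting identity $\nabla p=\Delta u-\nabla W(u)\in H^{-1}_{loc}(\Omega)$ one then deduces $p\in L^2_{loc}(\Omega)$. This yields \eqref{stokes}. Only the local form of the de Rham lemma is needed, since the remaining conclusions are local and \eqref{stokes} is a local PDE, so no topological subtlety coming from $\Omega=\R\times\T^{d-1}$ being non-simply-connected intervenes.

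Finally I would read \eqref{stokes} as the Stokes system $-\Delta u+\nabla p=-\nabla W(u)$, $\nabla\cdot u=0$, with right-hand side in $L^q_{loc}(\Omega)$ for all $q\in(1,\infty)$, and invoke the classical interior $L^q$ estimates for the Stokes operator (Cattabriga; see e.g.\ Galdi's monograph): on concentric balls $B\Subset B'\Subset\Omega$ one has
\[
\|u\|_{W^{2,q}(B)}+\|p\|_{W^{1,q}(B)}\le C\big(\|\nabla W(u)\|_{L^q(B')}+\|u\|_{L^q(B')}+\|p\|_{L^q(B')}\big),
\]
so that $u\in W^{2,q}_{loc}(\Omega,\R^d)$ and $p\in W^{1,q}_{loc}(\Omega,\R)$ for every $q\in(1,\infty)$; by Sobolev embedding $u\in\mathcal{C}^{1,\alpha}_{loc}$, in particular $u$ is automatically locally bounded, which reconciles the two hypotheses a posteriori. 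I do not expect a genuine obstacle here: everything is an assembly of standard results, the only points needing care being the differentiation under the integral sign in the potential term --- precisely what the two alternative assumptions on $(W,u)$ are designed to legitimize --- and the appeal to de Rham's lemma, which is sidestepped by working locally; after that the regularity is a routine Stokes bootstrap.
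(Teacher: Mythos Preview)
Your proposal is correct and follows essentially the same route as the paper: derive the weak Euler--Lagrange equation by perturbing with compactly supported divergence-free test fields (using the alternative hypotheses precisely to justify dominated convergence in the potential term), invoke de Rham to produce the pressure, and then apply the interior $L^q$ Stokes estimates from Galdi's monograph. Your version is in fact slightly more explicit about the de Rham step and the non-simply-connected topology of $\Omega$, which the paper leaves implicit.
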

\begin{proof}
By minimality, one has $E(u)\leq E(u+\varepsilon v)$ for every $\varepsilon>0$ and every smooth test map $v$ compactly supported in $\Omega$ with $\nabla\cdot v=0$. By the Taylor-Lagrange formula applied to $W\in \mathcal{C}^1$, for all $x\in\Omega$, there exists $t(x)\in [0,1]$ such that 
$
W(u(x)+\varepsilon v(x))=W(u(x))+\varepsilon\nabla W(u(x)+\varepsilon\, t(x) v(x))\cdot v(x)$.
Thus, one has the inequality
\[
0\leq \frac{E(u+\varepsilon v)-E(u)}{\varepsilon}=\int_\Omega\nabla u\cdot\nabla v\diff x+\int_\Omega \nabla W(u(x)+\varepsilon t(x)v(x))\cdot v(x)\diff x+\frac{\varepsilon}{2}\int_\Omega |\nabla v|^2\diff x.
\]
In the limit when $\varepsilon\to 0$, by the dominated convergence theorem (note that, within our assumptions, $\nabla W(u+\varepsilon tv)$ is locally bounded in $\Omega$ as $\varepsilon\to 0$), one gets
\[
\int_\Omega \nabla u\cdot \nabla v+\nabla W(u)\cdot v\diff x\geq 0.
\]
Replacing $v$ by $-v$, it follows that the above LHS vanishes. In other words, the distribution $-\Delta u+\nabla W(u)$ vanishes when tested against smooth compactly supported divergence-free maps $v$, which means that there exists a distribution $p\in \mathcal{D}'(\Omega)$ such that 
\[
-\Delta u+\nabla W(u)=\nabla p \quad \textrm{ in } \quad \mathcal{D}'(\Omega).
\]
Since either $u\in L_{loc}^\infty(\Omega)$ or $W$ is globally Lipschitz on $\R^d$, then $ \nabla W(u)\in L_{loc}^\infty(\Omega)$ which implies the claimed regularity results for $u$ and $p$ thanks to standard regularity for the Stokes equation (see e.g. \cite[Theorem IV.2.1]{Galdi}).
\end{proof}
\begin{remark}
\label{rem_reg}
Without assuming $u\in L_{loc}^\infty(\Omega)$ or $W$ being globally Lipschitz on $\R^d$, one can still show that a global minimizer $u\in\dot{H}_{div}^1(\Omega,\R^d)$ of $(\mathcal P)$ solves the Stokes system \eqref{stokes} within some (weaker) growth conditions on $\nabla W$. Indeed, as $u$ belongs to $\dot{H}^1(\Omega,\R^d)$, one has $u\in L^{2^\ast}_{loc}(\Omega,\R^d)$ with $2^\ast:=\frac{2d}{d-2}$ if $d\geq 3$, and, by the Moser-Trudinger estimate, $ e^{\alpha |u(x)|^2}\in L^1_{loc}(\Omega)$ for all $\alpha>0$ if $d=2$. Thus, it is enough to assume that
\[
|\nabla W(z)|\leq 
\begin{cases}
Ce^{\alpha |z|^2}&\text{for some $C>0$ and $\alpha>0$ if $d=2$,}\\
C(1+|z|^r)&\text{for some $C>0$ and $r< 2^\ast$ if $d\geq 3$.}
\end{cases}
\]
If $d=2$, one has $ \nabla W(u)\in L_{loc}^q(\Omega,\R^d)$ for all $q\in (1,+\infty)$, and we get the same regularity result: $p\in W^{1,q}_{ loc}(\Omega,\R)$ and $u\in W^{2,q}_{ loc}(\Omega,\R^d)$ for every $q\in (1,+\infty)$. If $d\geq 3$ and $r<2^\ast$, one gets $p\in W^{1,q}_{ loc}(\Omega,\R)$ and $u\in W^{2,q}_{ loc}(\Omega,\R^d)$ for every $q\in (1, \frac{2^\ast}{r}]$.
\end{remark}

\section{One dimensional symmetry of global minimizers\label{one_dim_sym}}
\subsection{Analysis of the one-dimensional profile\label{an_1Dprofile}}

In this section, we study existence, uniqueness and properties of one-dimensional minimizers in \eqref{min1Dintro} which are essential for our aim of analyzing the one-dimensional symmetry in Question 1. In particular, the sufficient conditions we will find for $W$ in order to prove existence in \eqref{min1Dintro} are more general than the ones presented in Theorems \ref{thm1} and \ref{thm2} for the $d$-dimensional problem $(\mathcal P)$. The problem  \eqref{min1Dintro} shares the same difficulties (translation invariance, multiple zeros of the potential, etc\dots), but the proofs will be easier to establish since \eqref{min1Dintro} is a minimization problem in $1D$. 

\paragraph{Minimal energy of one-dimensional transitions.} 
We will focus here on maps $u:\Omega\to \R^d$, only depending on the first variable $x_1$, such that $\nabla\cdot u=0$ and $\overline{u}(\pm\infty)=u^\pm\in S_a$, $a\in\R$. Since $\nabla\cdot u=\partial_{1} u_1=0$, one has $u_1\equiv a$. Thus, $u$ writes $u(x)=\bar{u}(x_1)=\gamma(x_1)\in\R^d_a$ for a.e. $x_1\in\R$. Our aim is to 
analyze
solutions $\gamma$ of the 1D minimization problem
\begin{equation}\label{energy_1D}
\gamma\in\mathrm{Argmin}\left\{E(\gamma)\;:\;\gamma\in\dot{H}^1(\R,\R^d_a),\,\gamma(\pm\infty)=u^\pm\right\},
\end{equation}
where the one-dimensional energy $E$ is defined for all $\gamma\in H_{loc}^1(\R,\R^d_a)$ by
$$
E(\gamma)=\int_\R \frac 12|\dot{\gamma}(t)|^2+W(\gamma(t))\diff t.
$$
By Young's inequality, one has 
\begin{equation}\label{young_e1D}
E(\gamma)\geq L_W(\gamma):= \int_\R\sqrt{2W(\gamma(t))}\, |\dot{\gamma}(t)|\diff t . 
\end{equation}
The RHS integral in \eqref{young_e1D} is invariant by monotone reparametrization and represents the length of the curve $\gamma$ in $\R^d_a$ endowed with the singular Riemannian metric $g_W(z)=2W(z)g_0$, where $g_0$ is the usual Euclidean metric on $\R^d_a$. 

If $W\in W^{1,\infty}_{loc}(\R^d)$, any solution $\gamma$ of \eqref{energy_1D} satisfies the Euler-Lagrange equation,
\begin{equation*}
\ddot{\gamma}(t)=\nabla W(\gamma(t)).
\end{equation*}
In particular, $\ddot{\gamma}\in L^\infty_{loc}$ (i.e., $\dot{\gamma}\in W^{1,\infty}_{loc}$). Moreover, multiplying the above equation by $\dot{\gamma}$ and integrating provides the equipartition of the $1D$ energy density:
$$\frac 12|\dot{\gamma}(t)|^2=W(\gamma (t))\quad\text{in }\R.$$
Note that if a curve $\gamma:\R\to\R^d_a$ satisfies the equipartition identity, then the inequality in \eqref{young_e1D} becomes equality. Therefore, every global minimizer $\gamma$ of $E$ connecting $u^-$ to $u^+$ is expected to lie on an energy-minimizing geodesic between $u^-$ and $u^+$ in $(\R^d_a,g_W)$ (see Proposition \ref{inf_1D}). The metric $g_W$ is singular on $S_a=\{W=0\}$ but, at least if $S_a$ is discrete, it induces the distance defined in \eqref{estgeod} for $W$ instead of $V$, i.e., for all $u^\pm\in\R^d_a$,
\begin{equation}\label{geod_H1}
\mathrm{geod}_{W}^a(u^-,u^+)=\inf \left\{\int_{-1}^1 \sqrt{2W(\gamma(s))}\, |\dot{\gamma}(s)|\diff s \;:\; \gamma\in{\rm Lip}([-1,1],\R^d_a),\, \gamma(\pm 1)=u^\pm\right\}.
\end{equation}
First, we prove that the infimum of $E$ in \eqref{min1Dintro} always coincides with $\mathrm{geod}_{W}^a(u^-,u^+)$ under very weak assumptions on $W$. This result is quite standard, but we prove it for completeness: 
\begin{proposition}\label{inf_1D}
For every continuous potential $W:\R^d\to \R_+$ and any two wells $u^\pm\in S_a$ for some $a\in\R$, one has
\begin{equation}
\label{identity_inf1D}\mathrm{geod}_{W}^a(u^-,u^+)=\inf \left\{E(\gamma)\;:\;\gamma\in\dot{H}^1(\R,\R^d_a),\,\gamma(\pm\infty)=u^\pm\right\}.
\end{equation}
In particular, if there exists a minimizer $\gamma$ in \eqref{energy_1D}, then $E(\gamma)=L_W(\gamma)=\mathrm{geod}_{W}^a(u^-,u^+)$ and we have equipartition of the energy density, i.e. $|\dot{\gamma}(t)|=\sqrt{2W(\gamma (t))}$ for a.e. $t\in\R$.
\end{proposition}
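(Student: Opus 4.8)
The plan is to prove the identity \eqref{identity_inf1D} by establishing the two inequalities separately. For the inequality
\[
\inf\left\{E(\gamma)\;:\;\gamma\in\dot{H}^1(\R,\R^d_a),\,\gamma(\pm\infty)=u^\pm\right\}\ge \mathrm{geod}_{W}^a(u^-,u^+),
\]
I would take any admissible $\gamma\in\dot{H}^1(\R,\R^d_a)$ with $\gamma(\pm\infty)=u^\pm$ and apply Young's inequality \eqref{young_e1D} to get $E(\gamma)\ge L_W(\gamma)$. Then the point is that $L_W(\gamma)$, being the length of $\gamma$ in the metric $g_W=2Wg_0$, is invariant under monotone reparametrization and dominates $\mathrm{geod}_{W}^a(u^-,u^+)$; the only subtlety is that $\gamma$ is defined on all of $\R$ rather than on a compact interval, so I would reparametrize $\gamma$ by arc-length and then truncate and close up the ends by line segments, exactly as in Step~1 of the proof of Lemma~\ref{cWgeqGeod}, paying a cost that tends to $0$ since $\sqrt{2W}$ is locally bounded ($W$ being continuous). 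This reduces the competitor to a Lipschitz curve on $[-1,1]$ connecting $u^-$ to $u^+$, hence $E(\gamma)\ge \mathrm{geod}_{W}^a(u^-,u^+)-\eps$ for arbitrary $\eps>0$.

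For the reverse inequality $\inf E \le \mathrm{geod}_{W}^a(u^-,u^+)$, I would start from a near-optimal Lipschitz curve $\gamma\in\mathrm{Lip}([-1,1],\R^d_a)$ with $\gamma(\pm 1)=u^\pm$ and $\int_{-1}^1\sqrt{2W(\gamma)}|\dot\gamma|\le \mathrm{geod}_{W}^a(u^-,u^+)+\eps$, and upgrade it to an admissible competitor in \eqref{min1Dintro} defined on all of $\R$ with finite energy. The standard device is to reparametrize so as to obtain equipartition: if $\sqrt{2W(\gamma)}>0$ along the curve, one rescales time by $ds/dt = \sqrt{2W(\gamma(s))}$ (equivalently, solves an ODE for the reparametrization) to produce a curve $\tilde\gamma$ with $|\dot{\tilde\gamma}|^2 = 2W(\tilde\gamma)$, for which Young's inequality is an equality and $E(\tilde\gamma)=L_W(\tilde\gamma)\le \mathrm{geod}_{W}^a(u^-,u^+)+\eps$; the reparametrized time interval becomes (possibly) infinite, which is exactly what is needed to connect the wells $u^\pm$ at $\pm\infty$, since near a well $W$ is small and the curve spends infinite time there. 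When $W$ vanishes somewhere along $\gamma$ other than at the endpoints, one first approximates $\gamma$ by curves avoiding the extra zeros (or simply perturbs the metric / adds a small drift), an approximation that only changes $L_W$ by $o(1)$ by continuity of $W$; then one concludes by letting $\eps\downarrow 0$.

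The last assertion of the proposition is then immediate: if a minimizer $\gamma$ of \eqref{energy_1D} exists, then by \eqref{young_e1D} and the identity \eqref{identity_inf1D} just proved,
\[
\mathrm{geod}_{W}^a(u^-,u^+)=E(\gamma)\ge L_W(\gamma)\ge \mathrm{geod}_{W}^a(u^-,u^+),
\]
where the last inequality is again the length-versus-geodesic-distance comparison (using the truncation argument above). Hence all inequalities are equalities, so $E(\gamma)=L_W(\gamma)$, which forces equality in Young's inequality \eqref{young_e1D} and therefore $\frac12|\dot\gamma(t)|^2=W(\gamma(t))$ for a.e.\ $t$, i.e.\ equipartition. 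I expect the main technical obstacle to be the bookkeeping in the reparametrization-to-equipartition step when $W$ has additional zeros along the near-optimal curve (so that the naive time-change $ds/dt=\sqrt{2W(\gamma)}$ degenerates): one must either argue that such curves can be perturbed away from those zeros at negligible cost, or carry out the construction directly on the pieces where $W>0$ and handle the crossing of extra zeros by a limiting argument. Everything else — Young's inequality, arc-length reparametrization, truncation at the ends — is routine and borrowed almost verbatim from the proof of Lemma~\ref{cWgeqGeod}.
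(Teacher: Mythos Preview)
Your plan is correct and matches the paper's proof essentially step for step: Young's inequality plus the truncation/reparametrization argument of Lemma~\ref{cWgeqGeod} for $\mathrm{geod}_W^a\le\inf E$, an equipartition reparametrization (via the ODE for $\sigma$ with Peano existence) for the reverse inequality, and equality in Young for the final equipartition claim. For the multi-well obstacle you flag, the paper takes precisely your second option---perturbing the potential to $W_\eps=W+\eps^2\xi$ with $\xi>0$ away from $\{u^\pm\}$, reducing to the double-well case, and then checking $\mathrm{geod}_{W_\eps}^a\to\mathrm{geod}_W^a$ via $\sqrt{W_\eps}\le\sqrt{W}+\eps\sqrt{\xi}$.
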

\begin{proof}
One can assume that $u^-\neq u^+$. The inequality $\leq$ in \eqref{identity_inf1D} follows from \eqref{young_e1D} and \eqref{geod_H1} (within the reparametrization and smoothing argument presented at Step 1 of the proof of 
Lemma~\ref{cWgeqGeod}). For the reverse inequality, we divide the proof according to whether $W$ has only two wells or not in $\R^d_a$.

\medskip
\noindent\textsc{Step 1: double-well potentials in $\R^d_a$.} Assume that $W(z)>0$ for all $z\in\R^d_a\setminus \{u^\pm\}$. We need to prove that for every curve $\gamma\in\mathrm{Lip}([-1,1],\R^d_a)$ such that $\gamma(\pm 1)=u^\pm$, one has
\[
L_W(\gamma)\geq\inf\left\{E(\lambda)\;:\;\lambda\in\dot{H}^1(\R,\R^d_a),\,\lambda(\pm\infty)=u^\pm\right\}.
\]
Up to reparametrizing $\gamma$, one can assume that it is injective and has constant speed so that $W(\gamma(\cdot))>0$ and $|\dot{\gamma}|\equiv v>0$ in $(-1,1)$. We then reparametrize $\gamma$ by equipartition: we set $\lambda:=\gamma\circ\sigma$ in such a way that $|\dot{\lambda}|=\sqrt{2W(\lambda)}$, i.e. we need $\sigma:\R\to [-1,1]$ to solve
\[
\dot{\sigma}(t)=\frac{\sqrt{2W(\gamma(\sigma(t)))}}v \quad\text{for all }t\in\R.
\]
Indeed, by the Peano-Arzela Theorem, there exists such a (maximal) solution $\sigma\in\mathcal{C}^1(\R,[-1,1])$ which is nondecreasing and converges to $\pm 1$ at $\pm\infty$. The claimed inequality then follows since
\[
L_W(\gamma)=L_W(\lambda)=E(\lambda)\quad\text{and}\quad \lambda(\pm\infty)=u^\pm.
\]
\textsc{Step 2: multi-well potentials in $\R^d_a$. } Take a continuous function $\xi :\R^d\to\R_+$ such that $\xi=0$ on $\{u^-,u^+\}$ and $\xi >0$ on $\R^d\setminus\{u^-,u^+\}$, and set $W_\varepsilon:= W+\varepsilon^2\xi$ for each $\varepsilon>0$. By Step 1, one has
\[
\mathrm{geod}_{W_\varepsilon}^a(u^-,u^+)\geq\inf \left\{\int_\R \frac 12|\dot{\gamma}(t)|^2+W_\varepsilon(\gamma(t))\diff t \;:\;\gamma\in\dot{H}^1(\R,\R^d_a),\,\gamma(\pm\infty)=u^\pm\right\}.
\]
Since $W_\varepsilon\geq W$, it is enough to prove that 
\[
 \mathrm{geod}_{W}^a(u^-,u^+)\ge \limsup_{\varepsilon\to 0}\mathrm{geod}_{W_\varepsilon}^a(u^-,u^+).
\]
To prove this last fact, observe that for every curve $\gamma\in\mathrm{Lip}([-1,1],\R^d_a)$ such that $\gamma(\pm 1)=u^\pm$, we have 
\(
\mathrm{geod}_{W_\varepsilon}^a(u^-,u^+)\leq L_{W_\varepsilon}(\gamma)\leq L_W(\gamma)+\varepsilon L_{\xi}(\gamma)
\)
by subadditivity of $t\mapsto\sqrt{t}$.
The desired inequality follows by taking the $\limsup$ as $\varepsilon\to 0$ and then, the infimum over $\gamma$.

The last statement follows from \eqref{young_e1D} where the equality holds for a minimizer $\gamma$ of \eqref{energy_1D}. In particular, 
$|\dot{\gamma}(t)|=\sqrt{2W(\gamma (t))}$.
\end{proof}

As consequence, we deduce that any global minimizer of the $d$-dimensional problem $(\mathcal P)$ having the image confined in the hyperplane $\R^d_a$ is necessarily one-dimensional:
\begin{corollary}\label{1Dcriterium}
Let $W:\R^d\to\R_+$ be a continuous potential and $a\in\R$ such that $S_a$ contains at least two wells $u^\pm$ of $W$. If $u\in\dot{H}^1_{div}(\Omega,\R^d)$ is a global minimizer of $(\mathcal P)$ with $u(x)\in\R^d_a$ for a.e.\ $x\in\Omega$, then $u$ is one-dimensional, i.e. there exists $g\in\dot{H}^1(\R,\R^d_a)$ with $u(x)=g(x_1)$ for a.e.\ $x=(x_1,x')\in\Omega$.
\end{corollary}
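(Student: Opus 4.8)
The plan is to reduce the $d$-dimensional problem to the one-dimensional minimisation \eqref{energy_1D} via the lower bound obtained by averaging in the transverse variables, and then to exploit the rigidity of the equality cases to deduce that a confined global minimiser cannot depend on $x'$.

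First I would observe that ``$u(x)\in\R^d_a$ for a.e.\ $x$'' means exactly $u_1\equiv a$, so by Lemma~\ref{lem_aver} the average $\gamma:=\overline u$ belongs to $\dot H^1(\R,\R^d_a)$ and satisfies $\gamma(\pm\infty)=u^\pm$. Next, I would use the computation leading to \eqref{inegEV} (i.e.\ claim 4 of Lemma~\ref{lemmaV}), but keeping the intermediate step in which one has only applied Jensen's inequality to $\partial_1 u$ slice by slice:
\[
E(u)\ \ge\ \int_\R\Big(\tfrac12\big|\tfrac{\diff}{\diff t}\gamma(t)\big|^2+e(u(t,\cdot))\Big)\diff t\ \ge\ \int_\R\Big(\tfrac12\big|\tfrac{\diff}{\diff t}\gamma(t)\big|^2+V(\gamma(t))\Big)\diff t\ =\ E_V(\gamma,\R)\ \ge\ \mathrm{geod}^a_V(u^-,u^+),
\]
the last step being Young's inequality together with the reparametrisation argument of Lemma~\ref{cWgeqGeod}. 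The first inequality is an equality if and only if $\partial_1 u(t,x')$ is independent of $x'$ for a.e.\ $t$.

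The core of the argument is to show that, because $u$ is a \emph{global} minimiser, the first inequality above is in fact an equality, i.e.\ $E(u)=\int_\R(\tfrac12|\tfrac{\diff}{\diff t}\gamma|^2+e(u(t,\cdot)))\diff t$. Here I would combine two facts: on the one hand every one-dimensional transition $\sigma\in\dot H^1(\R,\R^d_a)$ with $\sigma(\pm\infty)=u^\pm$ is (as an $x'$-independent map on $\Omega$) an admissible competitor for $(\mathcal P)$, so Proposition~\ref{inf_1D} gives $E(u)=c_W(u^-,u^+)\le\mathrm{geod}^a_W(u^-,u^+)$; on the other hand, for a confined map one can also compare $u$ with its ``reparametrised'' copies $u(\tau(\cdot),\cdot)$, which are again admissible, divergence free and confined (the reparametrisation leaves the divergence constraint and the boundary data unchanged since $u_1\equiv a$), and whose energy is $\int_\R\bigl(\mu(s)A(s)+\mu(s)^{-1}B(s)\bigr)\diff s$ with $A(s)=\int_{\T^{d-1}}\tfrac12|\partial_1u(s,\cdot)|^2$, $B(s)=e(u(s,\cdot))$; minimality then yields $A=B$ a.e., i.e.\ equipartition between the longitudinal and the transverse parts of the energy density. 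I expect the main obstacle to be precisely this middle step: turning the displayed chain of inequalities into a chain of equalities — that is, showing that a global minimiser confined to $\R^d_a$ cannot beat the energy of $\gamma$ dressed with the transverse profiles it already carries. This is where the \emph{global} (rather than merely slice-wise) minimality of $u$ and the structure of $V$ from Lemma~\ref{lemmaV} must be used; morally, one shows that no strict gain can occur between $E_V(\gamma,\R)$ and $c_W(u^-,u^+)=E(u)$, hence no strict gain in the Jensen step either.

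Once equality in the Jensen step is established, I would conclude by rigidity as follows: $\partial_1 u(t,x')=\tfrac{\diff}{\diff t}\gamma(t)$ for a.e.\ $(t,x')$, so (integrating in $t$ for a.e.\ fixed $x'$) $u(t,x')=\gamma(t)+h(x')$ for some $h\in H^1(\T^{d-1},\R^d)$ with $\int_{\T^{d-1}}h=0$. Then $e(u(t,\cdot))=e(\gamma(t)+h)\ge\tfrac12\int_{\T^{d-1}}|\nabla'h|^2$ for every $t$, while $\int_\R e(u(t,\cdot))\diff t\le E(u)<\infty$; since the lower bound $\tfrac12\int_{\T^{d-1}}|\nabla'h|^2$ is a constant in $t$, it must vanish, so $h$ is constant on the connected torus $\T^{d-1}$, and $\int_{\T^{d-1}}h=0$ forces $h\equiv0$. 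Therefore $u(x)=\gamma(x_1)$ with $g:=\gamma\in\dot H^1(\R,\R^d_a)$, which is the assertion.
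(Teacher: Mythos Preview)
The core step you flag as the ``main obstacle'' is a genuine gap, and the route through $V$ and the transverse average $\gamma=\overline{u}$ does not close. From your chain
\[
E(u)\ \ge\ \int_\R\Big(\tfrac12|\dot\gamma|^2+e(u(t,\cdot))\Big)\diff t\ \ge\ E_V(\gamma,\R)\ \ge\ \mathrm{geod}^a_V(u^-,u^+),
\]
together with $E(u)=c_W(u^-,u^+)\le \mathrm{geod}^a_W(u^-,u^+)$, you would need $\mathrm{geod}^a_V(u^-,u^+)\ge c_W(u^-,u^+)$ to force all inequalities to be equalities. But Lemma~\ref{cWgeqGeod} only gives the \emph{reverse} inequality $c_W\ge\mathrm{geod}^a_V$, and since $V\le W$ on $\R^d_a$ one has $\mathrm{geod}^a_V\le\mathrm{geod}^a_W$ with no reason for equality. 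The equipartition argument you sketch (comparing $u$ with $u(\tau(\cdot),\cdot)$) does yield $\int_{\T^{d-1}}\tfrac12|\partial_1 u|^2\diff x'=e(u(t,\cdot))$ slice by slice, but this identity is orthogonal to the Jensen step and does not force $\partial_1 u(t,\cdot)$ to be constant in $x'$; your ``morally'' sentence is precisely where the argument fails.

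The paper's proof avoids averaging altogether and exploits the confinement $u(x)\in\R^d_a$ \emph{fibre by fibre}. For a.e.\ fixed $x'\in\T^{d-1}$, the curve $t\mapsto u(t,x')$ lies in $\R^d_a$ and (along sequences $R_n^\pm$ from Lemma~\ref{BC_unif}) connects points near $u^-$ and $u^+$, so Young's inequality and the definition of $\mathrm{geod}^a_W$ give
\[
\int_{R_n^-}^{R_n^+}\Big(\tfrac12|\partial_1 u(t,x')|^2+W(u(t,x'))\Big)\diff t\ \ge\ \mathrm{geod}^a_W\big(u(R_n^-,x'),u(R_n^+,x')\big).
\]
Integrating over $x'$, keeping the transverse term $\tfrac12\int_\Omega|\nabla' u|^2$ \emph{separate throughout}, and passing to the limit via Fatou and the continuity of $\mathrm{geod}^a_W$, one obtains
\[
E(u)\ \ge\ \mathrm{geod}^a_W(u^-,u^+)+\tfrac12\int_\Omega|\nabla' u|^2.
\]
Since $E(u)=c_W(u^-,u^+)\le\mathrm{geod}^a_W(u^-,u^+)$, this forces $\nabla' u=0$ immediately. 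The key difference is that the confinement hypothesis lets you invoke $\mathrm{geod}^a_W$ (not $\mathrm{geod}^a_V$) on each fibre, and the nonnegative surplus $\tfrac12\int_\Omega|\nabla'u|^2$ is never absorbed into an averaged potential.
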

\begin{proof}
By Lemma~\ref{BC_unif}, there exist two sequences $(R_n^\pm)_{n\ge 1}$ such that $R_n^\pm\to\pm\infty$ and the Sobolev trace $u(R_n^\pm,\cdot)\to u^\pm$ a.e.\ in $\T^{d-1}$ as $n\to\infty$; by Fubini's theorem, it follows
\begin{align*}
E(u)\geq\int_{\T^{d-1}}\left\{\int_{R_n^-}^{R_n^+} \frac 12|\partial_{x_1}u(x_1,x')|^2+W(u(x_1,x'))\diff x_1\right\}\diff x' +\frac 12\int_\Omega |\nabla' u|^2\diff x\\
\geq\int_{\T^{d-1}}\left\{\int_{R_n^-}^{R_n^+} |\partial_{x_1}u(x_1,x')|\sqrt{2W(u(x_1,x'))}\diff x_1\right\}\diff x' +\frac 12\int_\Omega |\nabla' u|^2\diff x,
\end{align*}
where $\nabla'=(\partial_2,\dots,\partial_d)$. Since $u(R_n^\pm,\cdot)\in\R^d_a$ a.e.\ by assumption, we have by definition \eqref{geod_H1} of $\mathrm{geod}_{W}^a$ and by the reparametrization and smoothing argument presented at Step 1 of the proof of Lemma \ref{cWgeqGeod},
\begin{align*}
E(u)\geq\int_{\T^{d-1}}\mathrm{geod}_{W}^a(u(R^-_n,x'),u(R_n^+,x'))\diff x' +\frac 12\int_\Omega |\nabla' u|^2\diff x.
\end{align*}
By Fatou's Lemma, and by continuity of $\mathrm{geod}_{W}^a(\cdot,\cdot)$ (see Step 4 in the proof of Lemma \ref{cWgeqGeod}), one gets in the limit $n\to\infty$,
\begin{align*}
E(u)\geq\mathrm{geod}_{W}^a(u^-,u^+)+\frac 12\int_\Omega |\nabla' u|^2\diff x.
\end{align*}
By Proposition~\ref{inf_1D}, $\mathrm{geod}_{W}^a(u^-,u^+)$ is the infimum of $E$ restricted to $1D$ transitions connecting $u^-$ to $u^+$, thus $\mathrm{geod}_{W}^a(u^-,u^+)\geq c_W(u^-, u^+)=E(u)$. Combined with the above inequality, the minimality of $u$ yields $\nabla' u=0$ a.e., that is, $u$ only depends on $x_1$. 
\end{proof}

\paragraph{Existence of $1D$ minimizers in dimension 2.} When $d=2$, the situation is very simple since $\R^2_a$ is of dimension $1$. Set $u^\pm=(a,u_2^\pm)$ and $\gamma (t)=(a,\varphi(t))$ for $t\in\R$, where $\varphi:\R\to\R$ satisfies $\varphi(\pm\infty)=u_2^\pm$. It is clear that the infimum \eqref{geod_H1} can be restricted to those curves $\gamma$ such that $\varphi$ is monotone. Then the change of variables $y=\varphi(t)$ yields
\begin{equation}
\label{min_en_1D}
\mathrm{geod}_{W}^a(u^-,u^+)=\int_{[u_2^-,u_2^+]}\sqrt{2W(a,y)}\diff y,
\end{equation}
where $[u_2^-,u_2^+]=\{tu_2^-+(1-t)u_2^+\;:\; 0\leq t\leq 1\}$ (which makes sense even if $u_2^->u_2^+$). Recall that solutions of \eqref{energy_1D} are characterized by the equipartition identity, which is equivalent to
\(
|\varphi'|=\sqrt{2W(a,\varphi)}.
\)
The existence and uniqueness of the one-dimensional profile is given in the following:
\begin{proposition}\label{Exist_profile1D}
If $W:\R^2\to\R_+$ is a continuous function and $u^\pm=(a,u_2^\pm)\in\R^2_a$ are two distinct zeros of $W$ for some $a\in\R$, then one has equivalence of the two following assertions:
\begin{itemize}
\item the minimization problem \eqref{energy_1D} has a solution,
\item the function $y\mapsto \frac{1}{\sqrt{W(a,y)}}$ belongs to $L^1_{loc}((u_2^-,u_2^+))$, with the convention $\frac{1}{\sqrt{0}}=+\infty$.
\end{itemize}
In case of existence, minimizers $\gamma=(a,\varphi):\R\to\R^d_a$ are characterized by
\begin{equation}\label{ode_1Dprof}
\varphi\in\mathcal{C}^1(\R,\R),\quad \varphi(\pm\infty)=u_2^\pm
\quad\text{and}\quad\varphi'(t)=
\begin{cases}
\sqrt{2W(a,\varphi(t))}&\text{if }u_2^-\leq u_2^+,\\
-\sqrt{2W(a,\varphi(t))}&\text{if }u_2^-> u_2^+,
\end{cases}
\quad
 t\in\R.
\end{equation}
Moreover, there exists at most one minimizer (up to translation) $\gamma=(a,\varphi)$ of \eqref{energy_1D} such that $\varphi$ is strictly monotone. In particular, the problem \eqref{energy_1D} has a unique solution if $W(a,y)>0$ for $y\in (u_2^-,u_2^+)$.
\end{proposition}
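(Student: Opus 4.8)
The plan is to reduce everything to the scalar function $w(y):=W(a,y)$, $y\in\R$, and to the segment with endpoints $b:=u_2^-$ and $c:=u_2^+$; replacing if necessary $y$ by $-y$ one may assume $b<c$. By Proposition~\ref{inf_1D} together with \eqref{min_en_1D}, the infimum in \eqref{energy_1D} equals $\ell:=\int_b^c\sqrt{2w(y)}\,dy=\mathrm{geod}_{W}^a(u^-,u^+)$, which is finite since $w$ is continuous on the compact interval $[b,c]$; moreover any minimizer $\gamma=(a,\varphi)$ satisfies $E(\gamma)=L_W(\gamma)=\ell$ and the equipartition identity $|\varphi'(t)|=\sqrt{2w(\varphi(t))}$ for a.e.\ $t$. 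Since $t\mapsto\sqrt{2w(\varphi(t))}$ is continuous, equipartition forces $\varphi'\in L^\infty_{loc}(\R)$, hence $\varphi$ is locally Lipschitz; this is what legitimizes the changes of variables and the one-dimensional area formula $\int_I g(\varphi)|\varphi'|\,dt=\int_\R g(y)\,N_I(y)\,dy$, with $N_I(y):=\#(\varphi^{-1}(y)\cap I)$ the Banach indicatrix, used repeatedly below.

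For the implication ``$1/\sqrt w\in L^1_{loc}((b,c))\Rightarrow$ existence'' I would build the profile by hand. Fixing $y_0\in(b,c)$ and setting $T(y):=\int_{y_0}^y \frac{ds}{\sqrt{2w(s)}}$, the hypothesis makes $T$ absolutely continuous and strictly increasing from $(b,c)$ onto an interval $(\beta,\Gamma)$ with $\beta\in[-\infty,+\infty)$ and $\Gamma\in(-\infty,+\infty]$. Its inverse $\psi$ is $\mathcal C^1$ on $(\beta,\Gamma)$ with $\psi'=\sqrt{2w(\psi)}$ (differentiate $T\circ\psi=\mathrm{id}$ where $w\circ\psi>0$, and note that at the zeros of $w\circ\psi$ the incremental ratio of $T$ tends to $+\infty$, so $\psi'=0$ there, consistently). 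Extending $\psi$ by the constants $b$ on $(-\infty,\beta]$ and $c$ on $[\Gamma,+\infty)$ (empty half-lines when $\beta=-\infty$, resp.\ $\Gamma=+\infty$) yields $\varphi\in\mathcal C^1(\R)$ — the one-sided derivatives match since $\sqrt{2w(b)}=\sqrt{2w(c)}=0$ — which is nondecreasing, satisfies $\varphi(\pm\infty)=b,c$ and solves \eqref{ode_1Dprof}. As $\varphi$ then satisfies equipartition, $E(\gamma)=L_W(\gamma)=\int_\R\sqrt{2w(\varphi)}\,\varphi'\,dt=\int_b^c\sqrt{2w(y)}\,dy=\ell$, so $\gamma=(a,\varphi)$ solves \eqref{energy_1D}; the same computation proves the ``if'' part of the ODE characterization \eqref{ode_1Dprof}.

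For the converse I would take any minimizer $\gamma=(a,\varphi)$ and first show $b\le\varphi\le c$ on $\R$: the chain $\ell=E(\gamma)=\int_\R 2w(\varphi)\,dt=L_W(\gamma)\ge\int_{(b,c)}\sqrt{2w(y)}\,N_\R(y)\,dy\ge\int_{(b,c)}\sqrt{2w}=\ell$ (the first inequality restricting to $\{\varphi\in(b,c)\}$ via the area formula, the second using $N_\R\ge1$ on $(b,c)$ by the intermediate value theorem and $\varphi(\pm\infty)=b,c$) forces, on one hand, $\varphi'(t)=w(\varphi(t))=0$ for a.e.\ $t$ with $\varphi(t)\notin(b,c)$, and, on the other, $N_\R(y)=1$ for a.e.\ $y\in(b,c)$ with $w(y)>0$; the first fact makes $\varphi$ locally constant on each connected component of the open sets $\{\varphi>c\}$ and $\{\varphi<b\}$, which by continuity and the boundary conditions are therefore empty. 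Next, introducing $\tau(\lambda):=\sup\{t\in\R:\varphi(t)\le\lambda\}$ (finite, nondecreasing in $\lambda\in(b,c)$, with $\varphi(\tau(\lambda))=\lambda$), the area formula on $[\tau(\lambda_1),\tau(\lambda_2)]$ combined with equipartition yields $\tau(\lambda_2)-\tau(\lambda_1)\ge\int_{\lambda_1}^{\lambda_2}\frac{dy}{\sqrt{2w(y)}}$ for all $b<\lambda_1<\lambda_2<c$, hence $1/\sqrt w\in L^1_{loc}((b,c))$, and in particular $w>0$ a.e.\ on $(b,c)$, so $N_\R=1$ a.e.\ on $(b,c)$. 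I then claim $\varphi$ is nondecreasing: if $\varphi(s_1)>\varphi(s_2)$ for some $s_1<s_2$, then for a.e.\ $y\in(\varphi(s_2),\varphi(s_1))\subseteq(b,c)$ the intermediate value theorem produces a solution of $\varphi(t)=y$ in $(s_1,s_2)$ and another one in $(-\infty,s_1)$ (since $\varphi(-\infty)=b<y<\varphi(s_1)$), so $N_\R(y)\ge2$, a contradiction. Being nondecreasing, $\varphi'=\sqrt{2w(\varphi)}$ a.e., and since this right-hand side is continuous in $t$ we get $\varphi\in\mathcal C^1(\R)$ with \eqref{ode_1Dprof} holding everywhere. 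I expect this step — that a minimizer has image in $[b,c]$ and is monotone — to be the main obstacle, as it is where the interplay of the area formula, the equipartition identity and the intermediate value theorem is essential.

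Finally, for uniqueness of a strictly monotone minimizer up to translation: by the above any such $\varphi_i$ is a $\mathcal C^1$ strictly increasing homeomorphism of $\R$ onto $(b,c)$ with $\varphi_i'=\sqrt{2w(\varphi_i)}$; inverting, $(\varphi_i^{-1})'(y)=1/\sqrt{2w(y)}$ for a.e.\ $y\in(b,c)$ (namely wherever $w(y)>0$, which is a.e.), so $\varphi_i^{-1}$ and the primitive $T$ of $1/\sqrt{2w}$ have the same derivative a.e., and a change of variables gives $\varphi_i^{-1}(y)=T(y)+\mathrm{const}_i$; thus $\varphi_1$ and $\varphi_2$ differ by a translation. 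In the particular case $W(a,\cdot)>0$ on $(u_2^-,u_2^+)$ this is completely clean — $T$ is then $\mathcal C^1$, \eqref{ode_1Dprof} is a genuine ODE with a $\mathcal C^1$ inverse profile, and every minimizer is automatically strictly increasing since $\varphi'=\sqrt{2w(\varphi)}>0$ — which gives the asserted uniqueness of the solution of \eqref{energy_1D}. (When $W(a,\cdot)$ is allowed to vanish inside the segment a little care is needed, since $\varphi^{-1}$ need not be absolutely continuous; this does not affect the situations in which uniqueness is actually claimed.)
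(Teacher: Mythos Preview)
Your argument is correct and follows the same overall architecture as the paper: build the profile by inverting the time map $T(y)=\int^{y}1/\sqrt{2w}$, and conversely, pass from a minimizer to the ODE via equipartition and then to the $L^1_{loc}$ bound via a change of variables. Your explicit construction of $\psi=T^{-1}$ (with the constant extensions when $\beta$ or $\Gamma$ is finite) is exactly the paper's $G^{-1}$, and your final bound $\tau(\lambda_2)-\tau(\lambda_1)\ge\int_{\lambda_1}^{\lambda_2}1/\sqrt{2w}$ is the paper's identity $t^+-t^-=\int_{u_2^-+\eta}^{u_2^+-\eta}1/\sqrt{2W}$, obtained there directly from the monotone change of variables $y=\varphi(t)$.

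The one substantive difference is how monotonicity of a minimizer is obtained. The paper simply asserts ``by the minimality of $\gamma$, $\varphi$ is monotone'' and moves on; you instead derive both the confinement $b\le\varphi\le c$ and the monotonicity from the area formula, by showing that the equalities in the chain $\ell=L_W(\gamma)=\int\sqrt{2w}\,N_\R\ge\int_{(b,c)}\sqrt{2w}\,N_\R\ge\ell$ force $N_\R=1$ a.e.\ on $(b,c)$ once you know $w>0$ a.e.\ there. This is a cleaner and more self-contained justification than the paper's one-line appeal to minimality; it also yields, as a by-product, that $\{w=0\}\cap(b,c)$ is null (via the finiteness of your $\tau$-bound with the convention $1/0=+\infty$), which the paper gets implicitly from the change-of-variable identity. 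Your caveat in the last paragraph---that when $w$ may vanish inside $(b,c)$ the inverse $\varphi^{-1}$ need not be absolutely continuous---is well placed, but since the proposition only claims uniqueness among \emph{strictly monotone} minimizers (and full uniqueness only when $w>0$ on $(b,c)$), your argument covers precisely what is asserted.
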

\begin{proof}
W.l.o.g., one can assume that $u_2^-<u_2^+$. First, we prove that \eqref{ode_1Dprof} holds true if and only if $\varphi$ solves \eqref{energy_1D}. If $\gamma=(a,\varphi)$ solves \eqref{energy_1D} with $\varphi\in\dot{H}^1(\R,\R)$, then, by the minimality of $\gamma$, $\varphi$ is monotone and, since we have assumed $u_2^-<u_2^+$, it is actually nondecreasing on $\R$. Moreover, by Proposition \ref{inf_1D}, one has equality in \eqref{young_e1D}, therefore $|\dot{\gamma} |=\sqrt{2W(\gamma)}$, i.e. $\varphi$ solves \eqref{ode_1Dprof}. 
Conversely, if $\varphi$ solves \eqref{ode_1Dprof}, then $\gamma$ solves \eqref{energy_1D} since by \eqref{min_en_1D}, we have
\[
E(\gamma)=\int_\R \frac 12|\varphi'|^2+W(a,\varphi)\diff t=\int_\R \sqrt{2W(a,\varphi)}|\varphi'|\diff t=\int_{u_2^-}^{u_2^+}\sqrt{2W(a,y)}\diff y=\mathrm{geod}_{W}^a(u^-,u^+).
\]
Second, we prove the equivalence of the two assertions for existence in \eqref{energy_1D}. On the one hand, if $F(\cdot):=(2W(a,\cdot))^{-1/2}\in L^1_{loc}((u_2^-,u_2^+))$, a solution $\varphi:\R\to\R$ to \eqref{ode_1Dprof} is given by $\varphi(t)=u^-$ if $t\le G(u_2^-)$, $\varphi(t)=G^{-1}(t)$ if $t\in (G(u_2^-),G(u_2^+))$ and $\varphi(t)=u^+$ if $t\ge G(u_2^+)$, where $G$ is an antiderivative of $F$, i.e. $G\in W^{1,1}_{loc}((u_2^-,u_2^+))$ with $G'=F>0$ a.e. in $(u_2^-,u_2^+)$ (in particular, $G$ has an inverse $G^{-1}$ on its range $G((u_2^-,u_2^+))$, so $\varphi$ is well defined).
Thus, by the above argument, $(a,\varphi)$ is a minimizer in \eqref{energy_1D}.
On the other hand, if \eqref{energy_1D} has a minimizer, i.e., \eqref{ode_1Dprof} has a solution $\varphi$, then for all $\eta<\frac 12 (u_2^+-u_2^-)$, there exist two real numbers $t^\pm\in\R$ such that $t^-\leq t^+$, $\varphi(t^-)=u_2^-+\eta$ and $\varphi(t^+)=u_2^+-\eta$. Then, by the change of variables $y=\varphi(t)$,
\[
\int_{u_2^-+\eta}^{u_2^+-\eta}\frac{\diff y}{\sqrt{2W(a,y)}}=\int_{t^-}^{t^+}\frac{\varphi'(t)\diff t}{\sqrt{2W(a,\varphi(t))}}=t^+-t^-<+\infty,
\]
which implies local integrability of $(2W(a,\cdot))^{-1/2}$ on $(u_2^-,u_2^+)$.
\end{proof}

\paragraph{Existence of $1D$ minimizers in dimension $d\geq 3$.} In higher dimension $d\geq 3$, the problem of the existence of a one-dimensional minimizer in \eqref{energy_1D} is more delicate. Proposition~\ref{1D_transition} below gives sufficient conditions on $W$ for solving the existence problem  in \eqref{energy_1D}. These sufficient conditions are more general than the ones in Theorems \ref{thm1} and \ref{thm2}. \footnote{A generalization of Proposition~\ref{1D_transition} can be found in \cite[Theorem 3]{Monteil:2016}, \cite[Theorem 2.5]{Zuniga:2016} and \cite[Theorem 1]{Sourdis:2016}. The proof of Proposition \ref{1D_transition} is new, therefore we present it here.} 

\begin{proposition}\label{1D_transition}
Let $W:\R^d\to\R_+$ be a continuous potential and $a\in\R$ such that:
\begin{enumerate}
\item $S_a=\{z\in\R^d_a\;:\; W(z)=0\}$ is finite and contains at least two wells $u^\pm$;
\item $\liminf_{|z'|\to \infty} W(a,z')>0$;
\item $\mathrm{geod}_{W}^a(u^-,u^+)<\mathrm{geod}_{W}^a(u^-,z)+\mathrm{geod}_{W}^a(z,u^+)$ for all $z\in S_a\setminus\{u^\pm\}$. 
\end{enumerate}
Then the one-dimensional minimization problem \eqref{energy_1D} has a solution.
\end{proposition}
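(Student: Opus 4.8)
The plan is to transcribe to the one-dimensional setting the compactness strategy of Theorems~\ref{thm1} and~\ref{thm2}, replacing the slice--energy estimates by direct bounds on $W(\gamma)$. Since every curve in \eqref{energy_1D} is confined to $\R^d_a$, only $W|_{\R^d_a}$ matters, and assumptions~1, 2 play the roles of \textbf{(H1)} and \textbf{(H2)}; in particular $\mathrm{geod}_W^a$ is a coercive distance on $\R^d_a$ by the arguments of Lemma~\ref{cWgeqGeod} (with $W$ in place of $V$), and the one-dimensional versions of Lemma~\ref{focus} and Lemma~\ref{closed_boundary} are available. First I would take a minimizing sequence $(\gamma_n)_{n\ge 1}\subset\dot H^1(\R,\R^d_a)$ with $\gamma_n(\pm\infty)=u^\pm$ and $E(\gamma_n)\to\mathrm{geod}_W^a(u^-,u^+)$ (the infimum being $\mathrm{geod}_W^a(u^-,u^+)$ by Proposition~\ref{inf_1D}). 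Boundedness of $E(\gamma_n)$ bounds $\|\dot\gamma_n\|_{L^2(\R)}$; applying the one-dimensional version of Lemma~\ref{focus} (equivalently, the first proof of Theorem~\ref{bndenergy}), after translating $\gamma_n(\cdot+t_n)$ and extracting a subsequence there are $\eps>0$ and $T\ge1$ with $\gamma_n(t)\notin\overline B(u^-,\eps)$ for $t\in[T,2^n]$ and $\gamma_n(t)\notin\overline B(u^+,\eps)$ for $t\in[-2^n,-T]$. After this normalization $(\gamma_n)$ is bounded in $\dot H^1(\R,\R^d_a)\hookrightarrow\mathcal C^{0,1/2}_{loc}$ and pointwise bounded near the origin, so up to a further subsequence it converges weakly in $\dot H^1$ and locally uniformly to some $\gamma\in\dot H^1(\R,\R^d_a)$, and passing to the limit in the exclusions gives $\gamma(t)\notin B(u^-,\eps)$ for $t\ge T$ and $\gamma(t)\notin B(u^+,\eps)$ for $t\le-T$.

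By lower semicontinuity, $E(\gamma)\le\liminf_n E(\gamma_n)=\mathrm{geod}_W^a(u^-,u^+)<+\infty$. Finiteness of $E(\gamma)$ then yields, as in Lemma~\ref{closed_boundary} carried out directly in $\R^d_a$ with the distance $\mathrm{geod}_W^a$, two wells $z^\pm\in S_a$ with $\gamma(\pm\infty)=z^\pm$; the limiting exclusions force $z^-\ne u^+$ and $z^+\ne u^-$. Since $S_a$ is finite, it remains only to rule out $z^\pm\in S_a\setminus\{u^-,u^+\}$, and this is where assumption~3 enters.

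For that I would establish the one-dimensional counterpart of Lemma~\ref{time_estimate}: for each $z_0\in S_a\setminus\{u^-,u^+\}$ and each $\delta>0$ small enough that $\overline B(z_0,\delta)\cap S_a=\{z_0\}$ and $u^\pm\notin\overline B(z_0,\delta)$, one has $\sup_n\mathcal L^1\big(\gamma_n^{-1}(B(z_0,\delta))\big)<+\infty$. If this failed, there would be a subsequence with $\mathcal L^1(I_k)\to\infty$, $I_k:=\gamma_{n_k}^{-1}(B(z_0,\delta))$; from $\int_{I_k}W(\gamma_{n_k})\le E(\gamma_{n_k})\le C$ one picks $t_k\in I_k$ with $W(\gamma_{n_k}(t_k))\le 2C/\mathcal L^1(I_k)\to0$, whence by continuity of $W$ and compactness $\gamma_{n_k}(t_k)\to z_0$. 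Splitting $\gamma_{n_k}$ at $t_k$ and bounding the energy of each half-line piece by its $L_W$-length, which dominates the geodesic distance between its endpoints by the reparametrization argument of Step~1 in the proof of Lemma~\ref{cWgeqGeod}, gives
\[
E(\gamma_{n_k})\ \ge\ \mathrm{geod}_W^a\big(u^-,\gamma_{n_k}(t_k)\big)+\mathrm{geod}_W^a\big(\gamma_{n_k}(t_k),u^+\big).
\]
Letting $k\to\infty$, using continuity of $\mathrm{geod}_W^a$ (Step~4 of Lemma~\ref{cWgeqGeod}) and $E(\gamma_{n_k})\to\mathrm{geod}_W^a(u^-,u^+)$, yields $\mathrm{geod}_W^a(u^-,u^+)\ge\mathrm{geod}_W^a(u^-,z_0)+\mathrm{geod}_W^a(z_0,u^+)$, contradicting assumption~3. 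Granted this uniform bound, Fatou's lemma applied to the lower semicontinuous indicator $\mathbf 1_{B(z_0,\delta)}$ along $\gamma_n\to\gamma$ gives $\mathcal L^1\big(\gamma^{-1}(B(z_0,\delta))\big)<+\infty$, so $\gamma$ cannot tend to $z_0$ at $\pm\infty$. Hence $z^-=u^-$, $z^+=u^+$, and together with $E(\gamma)\le\mathrm{geod}_W^a(u^-,u^+)=\inf$ this shows $\gamma$ solves \eqref{energy_1D}.

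The main obstacle is the time estimate of the preceding paragraph: it is precisely there that the strict triangle inequality is indispensable, and one must be a little careful that the truncated pieces of $\gamma_{n_k}$ --- which are merely $\dot H^1$ curves on half-lines with an endpoint at $\pm\infty$ --- genuinely have $L_W$-length at least the geodesic distance between their endpoints, a point already handled by the reparametrization/smoothing device in Lemma~\ref{cWgeqGeod}. Everything else (the focusing normalization, the weak and locally uniform convergence, lower semicontinuity, and the existence of limits at $\pm\infty$) is a routine one-dimensional version of what was done for Theorems~\ref{thm1}--\ref{thm2}.
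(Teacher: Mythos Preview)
Your proposal is correct and uses the same core ingredient as the paper --- the strict triangle inequality (assumption~3) combined with the estimate $E(\gamma_n)\ge \mathrm{geod}_W^a(u^-,\gamma_n(s))+\mathrm{geod}_W^a(\gamma_n(s),u^+)$ and the continuity of $\mathrm{geod}_W^a$ --- but the organization differs. You transcribe the multi-well machinery of Theorem~\ref{thm2} literally: Lemma~\ref{focus} for the translation, then a one-dimensional time estimate \`a la Lemma~\ref{time_estimate}, then Fatou to rule out convergence to a third well. The paper takes a shortcut: it uses the triangle-inequality argument \emph{first} to show that for $n$ large the entire trajectory $\gamma_n(\R)$ stays $\varepsilon$-away from every $z\in S_a\setminus\{u^\pm\}$ (not merely that the time spent near $z$ is bounded); this stronger avoidance passes directly to the locally uniform limit $\gamma$, so the time-estimate and Fatou steps are unnecessary. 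For the translation step the paper invokes Lemma~\ref{doring} applied to $v_n=(\gamma_n-\tfrac12(u^++u^-))\cdot(u^+-u^-)$, whose sign condition at $\pm\infty$ then distinguishes $z^+=u^+$ from $z^+=u^-$. Your route works fine; the paper's is shorter because proving full avoidance upfront removes the need for the whole Lemma~\ref{time_estimate}/Fatou detour.
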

\begin{proof}
The proof follows the same arguments as in the second proof of Theorem~\ref{thm2} based on Lemma \ref{doring} (see \cite[Lemma 1.]{Doring:2013}). 
Take a minimizing sequence $(\gamma_n)_{n\ge 1}\subset \dot{H}^1(\R,\R^d_a)$ such that $\gamma_n(\pm\infty)=u^\pm$ and $E(\gamma_n)\to 
\mathrm{geod}_{W}^a(u^-,u^+)$ as $n\to \infty$. Then there exist $\varepsilon>0$ and $N>0$ such that for $n\geq N$ we have $\gamma_n(\R)\cap {B}(z, \eps)=\emptyset$ for every $z\in S_a\setminus\{u^\pm\}$.
This is because otherwise, there would be sequences $(n_k)_{k\ge 1}\subset\N$ and $(s_{n_k})_{k\ge 1}\subset \R$, and $z\in S_a\setminus\{u^\pm\}$ such that $n_k\to\infty$ and $\gamma_{n_k}(s_{n_k})\to z$ as $k\to \infty$; since
$$
E(\gamma_{n_k})\stackrel{\eqref{young_e1D}}\geq L_W(\gamma_{n_k})\geq  \mathrm{geod}_{W}^a(u^-,\gamma_{n_k}(s_{n_k}))+ \mathrm{geod}_{W}^a(\gamma_{n_k}(s_{n_k}), u^+),
$$ 
the continuity of $\mathrm{geod}_{W}^a$ (see Step 4 in the proof of Lemma \ref{cWgeqGeod}) would yield in the limit $k\to \infty$: 
$\mathrm{geod}_{W}^a(u^-,u^+)\geq 
\mathrm{geod}_{W}^a(u^-,z)+\mathrm{geod}_{W}^a(z,u^+)$ which contradicts the assumption 3. Now, for $n\geq N$, we define
\[
v_n(t)=\left({\gamma_n}(t)-\frac 12(u^++u^-)\right)\cdot (u^+-u^-)\quad \textrm{for all } t\in \R.
\]
Since $(\dot{v}_n)_{n\ge 1}$ is bounded in $L^2(\R)$ (because $(\dot{\gamma}_n)_{n\ge 1}$ does it) and $v_n(\pm \infty)=\pm \frac12 |u^+-u^-|^2$, by 
Lemma~\ref{doring}, there exist $v\in\dot{H}^1(\R)$ and a sequence $(t_n)_{n\ge 1}\subset \R$ such that, up to a subsequence, one has $v_n(\cdot+t_n)\to v$ weakly in $\dot{H}^1(\R)$ and locally uniformly in $\R$, and $\liminf_{t\to \pm \infty} \pm v(t)\geq 0$. Also, up to a subsequence, $(\gamma_n(\cdot+t_n))_{n\ge 1}$ converges weakly in $\dot{H}^1$ and locally uniformly in $\R$ to a curve $\gamma\in \dot{H}^1(\R,\R^d_a)$; in particular, $E(\gamma)\leq \mathrm{geod}_{W}^a(u^-,u^+)$ (by the lower semicontinuous of $E$ in weak $\dot{H}^1$-topology), $\gamma(\R)\cap {B}(z, \eps)=\emptyset$ for every $z\in S_a\setminus\{u^\pm\}$ and $v(t)=\big({\gamma}(t)-\frac 12(u^++u^-)\big)\cdot (u^+-u^-)$ for all $t\in \R$. By Lemma \ref{closed_boundary}, we know that $\gamma(\pm\infty)=z^\pm$ with $z^\pm\in S_a$. Since $\gamma$ stays away from $S_a\setminus\{u^\pm\}$, then 
$\{z^\pm\}=\{u^\pm\}$; finally, the sign of $v$ at $\pm \infty$ implies that $z^\pm=u^\pm$, leading to $E(\gamma)= \mathrm{geod}_{W}^a(u^-,u^+)$.
\end{proof}

\subsection{A density result. Proof of Lemma \ref{lavrentiev}}\label{sec:lav}
The aim of this section is to prove Lemma \ref{lavrentiev}, i.e., the set $\dot{H}^1_{div}\cap\mathcal{C}^{\infty}\cap L^\infty (\Omega,\R^d)$ is dense in the admissible set $\{ u\in \dot{H}_{div}^1(\Omega,\R^d)\; :\; E(u)<\infty\}$ within the topology induced by the energy $E$. A situation where this property fails was pointed out by  Lavrentiev \cite{Lavrentiev:1926}: he gave an example of an energy functional whose infimum over smooth functions is strictly greater than the infimum over all finite energy admissible configurations. This phenomenon is now usually called ``Lavrentiev gap'' in the literature. The role of Lemma \ref{lavrentiev} is to give a sufficient condition on $W$ such that the ``Lavrentiev gap'' is avoided for the energy $E$.

\begin{proof}[Proof of Lemma~\ref{lavrentiev} when $W$ satisfies \eqref{growth_W}] We will define a sequence $(u_k)_{k\ge 1}\subset L^\infty\cap\mathcal{C}^\infty$ converging strongly to $u$ in $\dot{H}^1_{div}(\Omega,\R^d)$ s.t. for each $k\ge 1$, $u_k(x)=u^\pm$ in a neighborhood of~$\pm\infty$.

\medskip
\noindent\textsc{Step 1. Cutting $u$ by $u^\pm$ at $\pm\infty$.} By Lemma~\ref{BC_unif}, there exist two sequences $(R_n^+)_{n\ge 1}$ and $(R_n^-)_{n\ge 1}$ such that $R_n^\pm\to\pm\infty$ and $u(R_n^\pm,\cdot)\to u^\pm$ strongly in $H^1(\T^{d-1},\R^d)$ as $n\to +\infty$. Then, since the growth condition \eqref{growth_W} is more restrictive than \eqref{growthMultiwell}, Lemma~\ref{Main_est} allows to construct for each $n\ge 1$ a new map $v_n\in\dot{H}^1(\Omega,\R^d)$ such that $v_n(x)=u(x)$ if $x\in [R_n^-,R_n^+]\times\T^{d-1}$, $v_n(x)=u^+$ if $x_1\ge R_n^++1$, $v_n(x)=u^-$ if $x_1\le R_n^--1$, and $E(v_n)\to E(u)$ as $n\to\infty$. In particular, $(v_n)_{n\ge 1}\to u$ in $\dot{H}^1(\Omega,\R^d)$.

\medskip
\noindent\textsc{Step 2. Smoothing $u$ by convolution.} By Step 1, one can assume that there exists $R>0$ with $u=u^\pm$ for $\pm x_1\ge R$. Let us take a smooth mollifying kernel $\rho\in\mathcal{C}^\infty (\R)$ such that $\rho\geq 0$, $\int_\R \rho =1$, and $\rho$ is even on $\R$ and compactly supported in $(-1/2,1/2)$. For each $k\geq 1$, we set $\rho_k (t)=k\rho (k t)$ for every $t\in\R$ and
$$\rho_k^{\otimes d}(y):=\prod_{i=1}^d \rho_k(y_i),\quad \text{for every }y=(y_1,\dots,y_d)\in\R^d.$$
Since $(-1/2, 1/2)^d$ isometrically embeds into $\Omega=\R\times\T^{d-1}$ via the quotient map, $\rho_k^{\otimes d}$ induces a smooth kernel on $\Omega$ (corresponding to the periodized kernel in $\T^{d-1}$ for every $x_1\in \R$) that is still denoted by $\rho_k^{\otimes d}$. This allows to define the regularization by convolution of $u$ by 
\[
u_k(x):=\rho^{\otimes d}_k\ast u(x)=\int_{\Omega} u(x-y)\rho^{\otimes d}_k(y)\diff y\quad\text{for every $x\in \Omega$.}
\] 
Then $(u_k)_{k\ge 1}\to u$ in $\dot{H}^1(\Omega,\R^d)$, i.e. $u_k\to u$ in $L^2_{loc}(\Omega,\R^d)$ and $\nabla u_k=\rho_k^{\otimes d}\ast \nabla u\to \nabla u$ in $L^2(\Omega)$ as $k\to\infty$. Moreover, $u_k\in\mathcal{C}^\infty( \Omega,\R^d)$ and, since $u=u^\pm$ for $\pm x_1\ge R>0$ and $\rho_k(x_1)$ is supported in $x_1\in (-1/2,1/2)$, one has $u_k=u^\pm$ for $\pm x_1\ge R+1$; in particular $u\in L^\infty(\Omega,\R^d)$. Concerning the divergence constraint, we observe that $\nabla\cdot u_k=\rho_k^{\otimes d}\ast (\nabla\cdot u)=0$.

\medskip
\noindent\textsc{Step 3. Convergence of the energy densities $e_{den}(u_k)$ in $L^1(\Omega)$.} Since $(\nabla u_k)_{k\ge 1}\to \nabla u$ in $L^2(\Omega)$, it is enough to prove convergence of $(W(u_k))_{k\ge 1}$ in $L^1(\Omega)$. Note that, since $u_k=u^\pm$ out of the set
$$\omega_{R+1}:=(-R-1,R+1)\times\T^{d-1}$$ and $W(u^\pm)=0$, $W(u_k)$ is compactly supported in $\omega_{R+1}$. Thus, by Vitali's convergence theorem, it is enough to prove that $(W(u_k))_{k\ge 1}$ is uniformly integrable in $\omega_{R+1}$. We use \eqref{growth_W} which means that there exist $\alpha,C>0$ such that $W(z)\leq F(z)$, where
\begin{equation*}\label{F}
F(z):=\begin{cases}
C\, e^{\alpha |z|^2}&\text{if }d=2,\\
C\, [1+|z|^{2^\ast}]&\text{if }d\geq 3.
\end{cases}
\end{equation*}
By the Gagliardo-Nirenberg-Sobolev inequality (if $d\geq 3$) and by the Moser-Trudinger inequality (if $d=2$) respectively, one has $F(u)\in L^1(\omega_{R+ 2})$. We prove that $(F(u_k))_{k\ge 1}$, and so $(W(u_k))_{k\ge 1}$, are uniformly integrable in $\omega_{R+1}$, the main ingredient being the convexity of $F$. Indeed, for all measurable set $A\subset \omega_{R+1}$, by Jensen's inequality and Fubini's theorem, we have
\begin{align*}
\int_{A}W(u_k)\diff x\leq \int_A F(u_k)&\diff x= \int_{A}F\left(\int_{\Omega}u(x-y) \rho^{\otimes d}_k(y)\diff y\right)\diff x\\
&\leq \int_A\int_{\Omega}F(u(x-y))\rho^{\otimes d}_k(y)\diff y\diff x=\int_{\Omega}\rho^{\otimes d}_k(y)\left\{\int_{A-y}F(u(z))\diff z\right\}\diff y;
\end{align*}
the last integral goes to $0$ when the Lebesgue measure of $A$ tends to $0$, uniformly in $k$.
\end{proof}

\begin{proof}[Proof of Lemma~\ref{lavrentiev} when $W\in\mathcal{C}^2(\R^d,\R_+)$ and $u\in L^\infty(\Omega,\R^d)$] We shall define a sequence $(u_k)_{k\ge 1}\subset L^\infty\cap\mathcal{C}^\infty$ converging to $u$ in $\dot{H}^1_{div}(\Omega,\R^d)$ with $\overline{u}_k(\pm\infty)=u^\pm$ for each $k\ge 1$.

\medskip
\noindent\textsc{Step 1. Smoothing $u$ by convolution.} We follow the strategy in Step 2 in the preceding proof and we obtain for each $k\ge 1$ a map $u_k=\rho_k^{\otimes d}\ast u\in \mathcal{C}^\infty\cap\dot{H}^1_{div}(\Omega,\R^d)$ such that $(u_k)_{k\ge 1}\to u$ in $\dot{H}^1(\Omega,\R^d)$ and a.e. It is also clear that $(u_k)_{k\ge 1}$ is bounded in $L^\infty(\Omega,\R^d)$ since for each $k\ge 1$, $\|u_k\|_{L^\infty}\le \|u\|_{L^\infty}<+\infty$. We now check that $\overline{u_k}(\pm\infty)=u^\pm$. Indeed, for every $x_1\in\R$, we have by Fubini's theorem,
\begin{align*}
\overline{u_k}(x_1)=\int_{\T^{d-1}} u_k(x_1,x')\diff x' &= \int_\R \rho_k(y_1)\int_{\T^{d-1}}\rho^{\otimes (d-1)}_k(y')\int_{\T^{d-1}} u(x_1-y_1,x'-y')\diff x'\diff y'\diff y_1\\
&=\int_\R \rho_k(y_1)\bigg\{\int_{\T^{d-1}}\rho^{\otimes (d-1)}_k(y')\diff y'\bigg\}\bigg\{\int_{\T^{d-1}} u(x_1-y_1,z')\diff z'\bigg\}\diff y_1\\
&=\rho_k\ast\overline{u}(x_1).
\end{align*}
In particular, $\overline{u_k}=\rho_k\ast\overline{u}$ has the same limit as $\overline{u}$ at $\pm\infty$, that is $u^\pm$. 

\medskip
\noindent\textsc{Step 2. Convergence of $e_{den}(u_k)$ in $L^1(\Omega)$.} It is enough to prove convergence of $(W(u_k))_{k\ge 1}$ in $L^1(\Omega)$. By continuity of $W$, we know that $(W(u_k))_{k\ge 1}\to W(u)$ a.e.\ and by Fatou's lemma, we deduce 
$$
\int_\Omega W(u)\diff x\le \liminf_{k\to\infty}\int_\Omega W(u_k)\diff x.
$$
Therefore, it is enough to prove that
\be
\label{limsu}
\limsup_{k\to\infty}\int_\Omega W(u_k)\diff x\le\int_\Omega W(u)\diff x.
\ee
We shall use the following $\lambda$-convexity type inequality for the potential $W\in \mathcal{C}^2(\R^d)$: for every $z_1,z_2\in B:=\overline{B}(0,\|u\|_{L^\infty})$,
\[
W(z_1)\ge W(z_2)+\nabla W(z_2)\cdot (z_1-z_2)-\lambda |z_1-z_2|^2,
\]
where $\lambda=\frac 12\sup\{|\nabla^2W(z)|\;:\; z\in B\}$. Applying this inequality to $z_1=u(x)$ and $z_2=u_k(y)$ with $x\in\Omega$ and $y\in\omega_R:=(-R,R)\times\T^{d-1}$ for some fixed $R>1$, one gets
\begin{align}
\nonumber
\int_\Omega & W(u(x))\diff x =\int_\Omega\int_{\omega_R}W(u(x))\rho^{\otimes d}_k(x-y)\diff y\diff x\\
\label{numar120}
&\quad\ge\int_\Omega\int_{\omega_R}\left[ W(u_k(y))+\nabla W(u_k(y))\cdot\big(u(x)-u_k(y)\big)-\lambda |u(x)-u_k(y)|^2\right]\rho^{\otimes d}_k (x-y)\diff y\diff x,
\end{align}
where every integrand in \eqref{numar120} is integrable on $(x,y)\in \Omega\times\omega_R$ for vanishing
{on $\{|x_1|>R+1\}$}. We first claim that the second term (involving $\nabla W$) in \eqref{numar120} will disappear since, as $\rho^{\otimes d}_k (z)=\rho^{\otimes d}_k (-z)$ for all $z\in\R^d$ and by Fubini's theorem, one has 
\begin{align*}
\int_\Omega\int_{\omega_R}\nabla W(u_k(y))\cdot u(x)\, \rho^{\otimes d}_k (x-y)\diff y\diff x
&=\int_{\omega_R}\nabla W(u_k(y))\cdot u_k(y)\diff y\\
&=\int_{\Omega}\int_{\omega_R}\nabla W(u_k(y))\cdot u_k(y)\,\rho^{\otimes d}_k (x-y)\diff y\diff x.
\end{align*}
We then claim that the integral 
\[
I:=\int_\Omega\int_{\omega_R}|u(x)-u_k(y)|^2\rho^{\otimes d}_k(x-y)\diff y\diff x
\] 
vanishes in the limit $k\to\infty$. Indeed, by Jensen's inequality, we have
\begin{align*}
|u(x)-u_k(y)|^2&=\left|\int_{\Omega} (u(z)-u(x))\rho^{\otimes d}_k (y-z)\diff z\right|^2\\
&\le \int_{\Omega}  |u(z)-u(x)|^2\rho^{\otimes d}_k  (y-z)\diff z\\
&= \int_{\Omega}  \left|\int_0^1\nabla u(x+t(z-x)) (z-x)^T\diff t\right|^2\rho^{\otimes d}_k  (y-z)\diff z\\
&\le \int_{\Omega} \int_0^1|\nabla u(x+t(z-x))|^2|z-x|^2\rho^{\otimes d}_k  (y-z)\diff t\diff z.
\end{align*}
We have thus obtained the estimate
\[
I\le\int_\Omega\int_{\omega_R}\int_\Omega\int_0^1|\nabla u(x+t(z-x))|^2|z-x|^2\rho^{\otimes d}_k  (y-z)\rho^{\otimes d}_k(x-y)\diff t\diff z\diff y\diff x.
\] 
We now use the changes of variables $s=z-x$ and $w=y-x$, and the fact that $\rho^{\otimes d}_k (y-z)\rho^{\otimes d}_k(x-y)=0$ if $|x-y|=|w|\ge \frac{\sqrt{d}}{2k}$ or $|z-y|=|s-w|\ge \frac{\sqrt{d}}{2k}$ (here, \(|\cdot|\) is the norm induced on \(\Omega=\R\times\T^{d-1}\) by the euclidean norm on \(\R^d\) via the quotient map). We obtain
\begin{align*}
I&\le\int_\Omega\int_{B(0,\frac{\sqrt{d}}{2k})}\int_{B(w,\frac{\sqrt{d}}{2k})} \int_0^1|\nabla u(x+ts)|^2 |s|^2\rho^{\otimes d}_k(s-w)\rho^{\otimes d}_k(w)\diff t\diff s\diff w\diff x\\
& \leq \frac{d}{k^2}\int_{B(0,\frac{\sqrt{d}}{2k})}\int_{B(w,\frac{\sqrt{d}}{2k})} \int_0^1\rho^{\otimes d}_k(s-w)\rho^{\otimes d}_k(w)\left(\int_\Omega |\nabla u(x+ts)|^2\diff x\right)\diff t\diff s\diff w= \frac{d}{k^2}\|\nabla u\|^2_{L^2(\Omega)},
\end{align*}
where we have used the inequality $|s|\le |s-w|+|w|\le \frac{\sqrt{d}}{k}$. Finally, we have obtained
\[
\int_\Omega W(u(x))\diff x\ge\int_{\omega_R}W(u_k(y))\diff y-\frac{\lambda d}{k^2}\|\nabla u\|^2_{L^2(\Omega)}.
\]
Passing to the limit as $R\to\infty$ by the monotone convergence theorem, and then taking the $\limsup$ as $k\to\infty$, we obtain \eqref{limsu}. 
\end{proof}

\subsection{Entropy method\label{entropymethod}}

Our main tool in the study of the global minimizers of the energy $E$ under both the divergence constraint and the boundary condition $\overline{u}(\pm\infty)=u^\pm\in S_a$ is the entropy method that we develop here in any dimension $d\geq 2$. In dimension $d=2$, this method has reminiscence in the works of Aviles-Giga \cite{Aviles:1987,Aviles:1999}, Jin-Kohn \cite{Jin:2000} and has been formalized in Ignat-Merlet \cite{Ignat:2011} for obtaining lower bounds for the energy of Bloch walls and in DeSimone-Kohn-M\"uller-Otto \cite{DKMOcomp} to obtain compactness in the Aviles-Giga model for the potential $W(u)=(1-|u|^2)^2$. If the one-dimensional transition layer in \eqref{energy_1D} is known to be a global minimizer in 
$(\mathcal P)$ in the Aviles-Giga model in dimension $d=2$ (see \cite{Jin:2000}), i.e., $\mathrm{geod}_{W}^a(u^-,u^+)=c_W(u^-,u^+)$,  we will prove that the one-dimensional transition layer is actually the unique global minimizer in $(\mathcal P)$ (up to translation). Surprisingly, this can be done by use of the entropy method which was initially design to prove optimality rather than uniqueness.

It is instructive to think of the entropy method as an adaptation of the calibration method to the framework of divergence-free maps. In fact, the calibration method has been already used by Alberti-Ambrosio-Cabr\'e in \cite{Alberti:2001} in the context of the (scalar) De Giorgi conjecture in order to prove that the monotonicity assumption required on entire scalar solutions $u$ of \eqref{nodiv} (e.g., $\partial_1u>0$) implies local minimality of $u$. The outlook of the calibration method is the following: 

Assume that $\mathcal{E}$ is a functional defined on some functional space $\mathcal{A}$ composed of functions $u:\Omega\to X$ (e.g., $\Omega\subset\R^d$ and $X=\R^N$) and let $u_0\in\mathcal{A}$ be a critical point of $\mathcal E$. Then a calibration associated to the functional $\mathcal{E}$ and the critical point $u_0$ is a functional $\mathcal{F}$ defined on $\mathcal{A}$ such that:
\begin{description}
\item[(C1)]
$\mathcal{F}(u_0)=\mathcal{E}(u_0)$,
\item[(C2)]
$\mathcal{F}(u)\leq \mathcal{E}(u)$ for all $u\in\mathcal{A}$,
\item[(C3)]
$\mathcal{F}$ is a \emph{null-lagrangian}, i.e. $\mathcal{F}(u)=\mathcal{F}(v)$ whenever $u=v$ on $\partial\Omega$.
\end{description}
This immediately implies that $u_0$ is a global minimizer of $\mathcal{E}$ under Dirichlet boundary conditions (namely, $u=u_0$ on $\partial \Omega$). In that case, if $u$ is another global minimizer of $\mathcal{E}$ with $u=u_0$ on $\partial\Omega$, then $\mathcal{F}(u)=\mathcal{E}(u)$. In some cases, this equality will allow us to prove one-dimensional symmetry of a global minimizers. Recall that null-lagrangians can be expressed in a divergence form, see e.g. \cite{Giaquinta:1996}. Here, we use calibrations $\mathcal{F}$ of the form 
\[
\mathcal{F}(u)=\int_\Omega \nabla\cdot [\Phi(u)] \diff x,
\]
where $\Phi$ will denote an entropy. This yields the following definition (see also \cite{Ignat:2011} for alternative definitions), which amounts to 
imposing \textbf{(C2)} on the above $\mathcal{F}$ in the space $\mathcal{A}=\mathcal{C}^\infty\cap L^\infty\cap { \dot{H}_{div}^1}(\Omega, \R^d)$ and 
the energy functional $\mathcal{E}=E$:
\begin{definition}\label{entropy_def}
A map $\Phi \in \mathcal{C}^1(\R^d,\R^d)$ is called {\it entropy} if for all $u \in \mathcal{C}^\infty\cap L^\infty\cap { \dot{H}_{div}^1}(\Omega, \R^d)$ with $E(u)<\infty$, one has $\nabla\cdot[\Phi(u)]\in L^1(\Omega)$ and
\begin{equation}
\label{ineg_ent}
\int_\Omega\nabla\cdot[\Phi(u)]\diff x\leq E(u).
\end{equation}
\end{definition}

\begin{remark}\label{entropy_rem}
The condition $ \nabla\cdot[\Phi(u)]\in L^1$, imposed for every $u\in  \mathcal{C}^\infty\cap L^\infty$ with $\nabla\cdot u=0$ and $E(u)<\infty$, can be insured by the punctual condition:
\[
\exists C>0,\quad |\nabla \Phi(z)|^2\leq CW(z),\quad\forall z\in\R^d.
\]
Indeed, we have by Cauchy-Schwarz and Young inequalities,
\begin{equation}\label{BV_est}
\int_\Omega|\nabla\cdot[\Phi(u)]|\leq \int_\Omega|\nabla\Phi(u)|\,|\nabla u|\leq \int_\Omega\frac 12 \left(CW(u)+|\nabla u|^2\right)\leq \max\bigg\{1,\frac C2\bigg\} E(u)<\infty.
\end{equation}
Thus, an alternate definition of an entropy, stronger than Definition~\ref{entropy_def}, would be to impose $|\nabla\Phi|^2\leq 2W$, i.e. $C=2$ in the preceding inequalities (so that  \eqref{BV_est} implies \eqref{ineg_ent}). However, for the potentials $W$ we will look at, this condition is often too strong to allow the existence of an entropy. This is for instance the case of the Aviles-Giga potential in dimension $d=2$: 
\[
W(z)=(1-|z|^2)^2. 
\]
Indeed, if $|\nabla\Phi (z)|^2\leq CW(z)$ for all $z\in\R^2$, with $C>0$, then $\Phi$ must be constant on $\{W=0\}=\mathbb{S}^1$ and \textbf{(C1)} cannot be satisfied if $u^-\neq u^+$ since 
$$\mathcal{F}(u_0)=\int \nabla\cdot [\Phi(u_0)]\diff x=\Phi_1(u^+)-\Phi_1(u^-)=0<E(u_0)$$ by the Gauss-Green formula (see Lemma~\ref{gauss-green} below). More generally, this condition is too strong when $u^-$ and $u^+$ are on the same connected component in $\{W=0\}$.
\end{remark}
Now, if we set $\mathcal{F}(u):=\int_\Omega\nabla\cdot [\Phi (u)]\diff x$, then \textbf{(C3)} is automatically satisfied because of the (nonstandard) Gauss-Green formula. More precisely, if $\overline{u}(\pm\infty)=u^\pm\in S_a$ then one has the identity (see Lemma~\ref{gauss-green}):
\[
\mathcal{F}(u)=\Phi_1(u^+)-\Phi_1(u^-).
\]
Since our goal is to identify potentials $W$ such that the optimal $1D$ transition $u_0$ in \eqref{energy_1D} (given by Propositions \ref{Exist_profile1D} and \ref{1D_transition}) minimizes $(\mathcal P)$, it remains to check \textbf{(C3)}, i.e. $\mathcal{F}(u_0)=E(u_0)$. Since $E(u_0)=\mathrm{geod}^a_W(u^-,u^+)$, this condition, called \emph{saturation condition} for the entropy $\Phi$, depends on $u^\pm$ and reads
\begin{equation}
\label{satur_cond}
\Phi_1(u^+)-\Phi_1(u^-)=\mathrm{geod}^a_{W}(u^-,u^+).
\end{equation}
If there exists an entropy $\Phi$ satisfying the saturation condition, then the calibration method yields optimality of $u_0$ in the class $\mathcal{A}$ of smooth bounded divergence-free maps. Thanks to Lemma \ref{lavrentiev}, $\mathcal{F}$ is a calibration in the larger class $\tilde{\mathcal{A}}=\dot{H}_{div}^1(\Omega,\R^d)$ (thus yielding optimality of $u_0$ in this larger class) provided the growth condition \eqref{growth_W} on $W$ (needed in Lemma \ref{lavrentiev}).

The above arguments on how the entropy method proves the optimality of one-dimensional transition layers in $(\mathcal P)$ are summarized in the following:
\begin{proposition}
\label{minimal}
Let $W:\R^d\to\R_+$ be a continuous potential satisfying the growth condition \eqref{growth_W} and let $a\in\R$ be such that $S_a$ contains at least two points $u^\pm$. If there exists an entropy $\Phi\in\mathcal C^1(\R^d,\R^d)$ satisfying the saturation condition \eqref{satur_cond} then  one-dimensional transitions are optimal in $(\mathcal P)$, i.e.
\begin{equation}\label{minimal_unbnd}
\mathrm{geod}_{W}^a(u^-,u^+)=\inf\left\{E(u)\;:\; u\in \dot{H}_{div}^1(\Omega,\R^d),\,\overline{u}(\pm\infty)=u^\pm\right\}\ (=c_W(u^-,u^+)).
\end{equation}
\end{proposition}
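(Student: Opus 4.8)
The plan is to prove the two inequalities $\mathcal{I}\le\mathrm{geod}_{W}^a(u^-,u^+)$ and $\mathcal{I}\ge\mathrm{geod}_{W}^a(u^-,u^+)$, where
\[
\mathcal{I}:=\inf\bigl\{E(u)\;:\; u\in\dot{H}_{div}^1(\Omega,\R^d),\ \overline{u}(\pm\infty)=u^\pm\bigr\}
\]
is the infimum appearing in \eqref{minimal_unbnd}, and then to dispatch the trailing identity $\mathcal{I}=c_W(u^-,u^+)$ by inspection of the definition \eqref{cw}: since $W(u^\pm)=0$, the competitors defining $c_W(u^-,u^+)$ are exactly the admissible maps on the interval $I=\R$, i.e.\ on $\R\times\T^{d-1}=\Omega$ with $E(u,\R)=E(u)$, so the two infima coincide verbatim.

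For the inequality $\mathcal{I}\le\mathrm{geod}_{W}^a(u^-,u^+)$ I would invoke Proposition~\ref{inf_1D}, which identifies $\mathrm{geod}_{W}^a(u^-,u^+)$ with the infimum of the one-dimensional energy $E(\gamma)=\int_\R\frac12|\dot{\gamma}|^2+W(\gamma)\diff t$ over $\gamma\in\dot{H}^1(\R,\R^d_a)$ with $\gamma(\pm\infty)=u^\pm$. Any such $\gamma$ has first component constantly equal to $a$, so the map $u(x):=\gamma(x_1)$ lies in $\dot{H}_{div}^1(\Omega,\R^d)$ (its divergence vanishes since $\gamma_1\equiv a$), satisfies $\overline{u}(\pm\infty)=u^\pm$, and has $E(u)=E(\gamma)$ because $|\T^{d-1}|=1$. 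Taking the infimum over $\gamma$ gives the inequality.

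For the reverse inequality, fix an admissible $u$ with $E(u)<\infty$. This is where the growth condition \eqref{growth_W} enters: Lemma~\ref{lavrentiev} supplies a sequence $(u_k)_{k\ge1}\subset\mathcal{C}^\infty\cap L^\infty\cap\dot{H}_{div}^1(\Omega,\R^d)$ with $\overline{u_k}(\pm\infty)=u^\pm$, $u_k\to u$ in $\dot{H}^1(\Omega,\R^d)$ and $e_{den}(u_k)\to e_{den}(u)$ in $L^1(\Omega)$ (hence $E(u_k)\to E(u)$), and — by the construction in its proof — $u_k\equiv u^\pm$ for $\pm x_1$ large. Then $x\mapsto\Phi(u_k(x))$ is smooth and constant off a compact set, hence $\nabla\cdot[\Phi(u_k)]\in L^1(\Omega)$; splitting $\nabla\cdot[\Phi(u_k)]=\partial_1[\Phi_1(u_k)]+\sum_{i=2}^d\partial_i[\Phi_i(u_k)]$, integrating over $\Omega$, and using that $\T^{d-1}$ has no boundary (so the second group of terms integrates to $0$ over $\T^{d-1}$ for each $x_1$), one obtains the Gauss--Green identity (Lemma~\ref{gauss-green})
\[
\int_\Omega\nabla\cdot[\Phi(u_k)]\diff x=\Phi_1(u^+)-\Phi_1(u^-).
\]
Since $\Phi$ is an entropy, the inequality \eqref{ineg_ent} in Definition~\ref{entropy_def} gives $\Phi_1(u^+)-\Phi_1(u^-)\le E(u_k)$; letting $k\to\infty$ and invoking the saturation condition \eqref{satur_cond} yields $\mathrm{geod}_{W}^a(u^-,u^+)=\Phi_1(u^+)-\Phi_1(u^-)\le E(u)$. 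Passing to the infimum over $u$ finishes the argument, and combining the two inequalities proves \eqref{minimal_unbnd}.

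The one delicate point is the reverse inequality, and essentially all of its difficulty has been packaged into Lemma~\ref{lavrentiev}: the entropy inequality \eqref{ineg_ent} is postulated only on $\mathcal{C}^\infty\cap L^\infty\cap\dot{H}_{div}^1(\Omega,\R^d)$, so one must know that there is no ``Lavrentiev gap'' for $E$ relative to this class of test maps — which is exactly what the growth hypothesis \eqref{growth_W} secures. Everything else (the one-dimensional reduction, the elementary $x_1$-integration producing the boundary term $\Phi_1(u^+)-\Phi_1(u^-)$, and the convergence $E(u_k)\to E(u)$) is routine.
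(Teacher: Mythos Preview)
Your proof is correct and follows essentially the same route as the paper: the inequality $\mathcal{I}\le\mathrm{geod}_{W}^a(u^-,u^+)$ via Proposition~\ref{inf_1D}, and the reverse inequality by approximating an arbitrary admissible $u$ with the sequence $(u_k)$ from Lemma~\ref{lavrentiev}, applying the entropy inequality \eqref{ineg_ent} together with Lemma~\ref{gauss-green} and the saturation condition \eqref{satur_cond}, then passing to the limit. Your direct verification of the Gauss--Green identity (using that $u_k\equiv u^\pm$ for large $|x_1|$ under \eqref{growth_W}) and your explicit check that $\mathcal{I}=c_W(u^-,u^+)$ are minor embellishments, but the logical skeleton is identical to the paper's.
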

In the case when a minimizing one-dimensional transition layer $u_0$ exists (see Propositions \ref{Exist_profile1D} and \ref{1D_transition}), \eqref{minimal_unbnd} yields $u_0$ to be a global minimizer of the $d$-dimensional problem $(\mathcal P)$. A-priori, in that context, other global minimizers in $(\mathcal P)$ might exist. We will see later some situations where the $1D$ minimizer $u_0$ is indeed the unique minimizer, i.e.,
the answer to Question 1 is positive.

When the growth condition \eqref{growth_W} on $W$ is dropped out, the conclusion of Proposition \ref{minimal} still holds provided that the infimum of $E$ is considered over \emph{bounded} admissible function $u$ in $\Omega$:
\begin{proposition}\label{minimal_bndProp}
If $W\in\mathcal{C}^2(\R^d,\R_+)$ and $a\in\R$ are such that $S_a$ contains at least two points $u^\pm$ and there exists an entropy $\Phi\in \mathcal C^1(\R^d,\R^d)$ satisfying the saturation condition \eqref{satur_cond}, then
\begin{equation}\label{minimal_bnd}
\mathrm{geod}_{W}^a(u^-,u^+)=\inf\left\{E(u)\;:\; u\in \dot{H}_{div}^1(\Omega,\R^d)\cap L^\infty(\Omega,\R^d),\,\overline{u}(\pm\infty)=u^\pm\right\}.
\end{equation}
\end{proposition}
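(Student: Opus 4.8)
The plan is to run the entropy (calibration) argument described just before Proposition~\ref{minimal}, replacing only the growth-condition case of Lemma~\ref{lavrentiev} by its second case, the one requiring $W\in\mathcal{C}^2(\R^d,\R_+)$ and bounded competitors; this substitution is exactly what forces the infimum in \eqref{minimal_bnd} to run over $L^\infty$ maps only. Thus the proof splits into a lower bound (``$\ge$'' in \eqref{minimal_bnd}), established first for smooth bounded competitors and then in general by density, and a reverse inequality (``$\le$'') obtained by exhibiting a one-dimensional competitor.

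First I would record the lower bound for smooth bounded competitors: for every $u\in\mathcal{C}^\infty\cap L^\infty\cap\dot{H}_{div}^1(\Omega,\R^d)$ with $E(u)<\infty$ and $\overline{u}(\pm\infty)=u^\pm$, the entropy inequality \eqref{ineg_ent} gives $\int_\Omega\nabla\cdot[\Phi(u)]\diff x\le E(u)$, the Gauss-Green formula (Lemma~\ref{gauss-green}) evaluates the left-hand side as $\Phi_1(u^+)-\Phi_1(u^-)$, and the saturation condition \eqref{satur_cond} identifies this quantity with $\mathrm{geod}_W^a(u^-,u^+)$; hence $E(u)\ge\mathrm{geod}_W^a(u^-,u^+)$. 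Next, for a general $u\in\dot{H}_{div}^1(\Omega,\R^d)\cap L^\infty(\Omega,\R^d)$ with $E(u)<\infty$ and $\overline{u}(\pm\infty)=u^\pm$, I would invoke Lemma~\ref{lavrentiev} in the case ($W\in\mathcal{C}^2$, $u\in L^\infty$) to obtain a sequence $(u_k)_{k\ge1}\subset\mathcal{C}^\infty\cap L^\infty\cap\dot{H}_{div}^1(\Omega,\R^d)$ with $\overline{u_k}(\pm\infty)=u^\pm$, $u_k\to u$ in $\dot{H}^1(\Omega,\R^d)$ and $e_{den}(u_k)\to e_{den}(u)$ in $L^1(\Omega)$, so that $E(u_k)\to E(u)$; combined with the previous bound this yields $E(u)\ge\mathrm{geod}_W^a(u^-,u^+)$, and taking the infimum over such $u$ gives ``$\ge$'' in \eqref{minimal_bnd}.

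For the reverse inequality I would use a one-dimensional competitor. Given $\varepsilon>0$, Proposition~\ref{inf_1D} provides $\gamma\in\dot{H}^1(\R,\R^d_a)$ with $\gamma(\pm\infty)=u^\pm$ and $\int_\R\frac12|\dot{\gamma}|^2+W(\gamma)\diff t\le\mathrm{geod}_W^a(u^-,u^+)+\varepsilon$. Since $\dot{\gamma}\in L^2(\R)$, the curve $\gamma$ is globally $\tfrac12$-Hölder, hence bounded (it also has limits at $\pm\infty$). Regarded as the map $x\mapsto\gamma(x_1)$ on $\Omega$, it lies in $\dot{H}_{div}^1(\Omega,\R^d)\cap L^\infty(\Omega,\R^d)$ --- divergence-free because $\gamma_1\equiv a$ is constant and the remaining components do not depend on $x'$ --- it satisfies $\overline{\gamma}(\pm\infty)=u^\pm$, and (as $\T^{d-1}$ has unit measure) its energy on $\Omega$ equals its one-dimensional energy, hence is $\le\mathrm{geod}_W^a(u^-,u^+)+\varepsilon$. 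Letting $\varepsilon\downarrow0$ gives ``$\le$'' in \eqref{minimal_bnd}, and the two inequalities together finish the proof.

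I do not anticipate a genuine obstacle: the entire content is carried by the entropy inequality, the Gauss-Green formula, the saturation condition, and the approximation Lemma~\ref{lavrentiev}. The only point requiring care is that the density result used in the second step is the $\mathcal{C}^2/L^\infty$ version of Lemma~\ref{lavrentiev} rather than its growth-condition version --- which is precisely why the statement must be restricted to bounded admissible maps, and the only difference from the proof of Proposition~\ref{minimal}.
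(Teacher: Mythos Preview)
Your proposal is correct and follows essentially the same approach as the paper: the paper proves Propositions~\ref{minimal} and~\ref{minimal_bndProp} together by obtaining one inequality from Proposition~\ref{inf_1D} (the one-dimensional competitor) and the other from the chain \emph{entropy inequality $\to$ Gauss--Green $\to$ saturation}, after approximation via Lemma~\ref{lavrentiev}. The only cosmetic differences are that you present the two directions in reverse order and make explicit that a one-dimensional curve $\gamma\in\dot{H}^1(\R,\R^d_a)$ with $\gamma(\pm\infty)=u^\pm$ is automatically bounded (the paper leaves this implicit).
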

Before proving Propositions \ref{minimal} and \ref{minimal_bndProp}, we need the following Gauss-Green type formula, applied on the unbounded domain $\Omega$ for bounded admissible maps:
\begin{lemma}\label{gauss-green}
For all $u\in L^\infty\cap \dot{H}^1(\Omega,\R^d)$ such that $\overline{u}(\pm\infty)=u^\pm$ and $\Phi\in\mathcal C^1(\R^d,\R^d)$ such that $\nabla\cdot[\Phi(u)]\in L^1(\Omega)$, one has
$$\int_\Omega \nabla\cdot [\Phi(u)]\diff x = \Phi_1(u^+)-\Phi_1(u^-).$$
\end{lemma}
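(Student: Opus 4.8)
The plan is to reduce this to the classical divergence theorem on a bounded cylinder $(S,R)\times\T^{d-1}$ and then let $R\to+\infty$ and $S\to-\infty$ along suitably chosen slices. First I would record the regularity of $\Phi(u)$: since $u\in H^1_{loc}\cap L^\infty(\Omega,\R^d)$ and $\Phi\in\mathcal C^1(\R^d,\R^d)$, the chain rule for Sobolev functions gives $\Phi(u)\in H^1_{loc}\cap L^\infty(\Omega,\R^d)$ with $\nabla[\Phi(u)]=\nabla\Phi(u)\,\nabla u$; in particular $\Phi_i(u)\in W^{1,1}_{loc}(\Omega)$ for each $i$. By the ACL characterisation of Sobolev functions, for a.e. $x'\in\T^{d-1}$ the map $x_1\mapsto\Phi_1(u(x_1,x'))$ is absolutely continuous on bounded intervals with derivative $\partial_1[\Phi_1(u)](\cdot,x')$, and for a.e. $x_1$ and each $i\ge2$ the map $x'\mapsto\Phi_i(u(x_1,x'))$ belongs to $W^{1,1}(\T^{d-1})$.

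Next I would establish, for a.e. $S<R$, the slice identity
\[
\int_{(S,R)\times\T^{d-1}}\nabla\cdot[\Phi(u)]\,\diff x=\int_{\T^{d-1}}\Phi_1(u(R,x'))\,\diff x'-\int_{\T^{d-1}}\Phi_1(u(S,x'))\,\diff x'.
\]
Indeed, by Fubini and the fundamental theorem of calculus on the absolutely continuous slices, $\int_{(S,R)\times\T^{d-1}}\partial_1[\Phi_1(u)]\,\diff x=\int_{\T^{d-1}}\big(\Phi_1(u(R,x'))-\Phi_1(u(S,x'))\big)\,\diff x'$ for a.e. $R,S$ (precisely, those $R,S$ at which the pointwise values of the a.c. representatives agree with the $L^2(\T^{d-1})$-trace); and for $i\ge2$, $\int_{(S,R)\times\T^{d-1}}\partial_i[\Phi_i(u)]\,\diff x=0$ because $\T$ has no boundary, the integral over $\T$ of the derivative of a periodic $W^{1,1}$ function being zero. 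Here the hypothesis $u\in L^\infty$ is essential, as it is what makes $\Phi(u)$ a Sobolev function at all.

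Then I would apply Lemma~\ref{BC_unif} to obtain sequences $R_n^\pm\to\pm\infty$, which may be chosen inside the full-measure set of those $R$ (resp.\ $S$) for which the slice identity holds, such that $u(R_n^\pm,\cdot)\to u^\pm$ in $H^1(\T^{d-1},\R^d)$. Plugging $R=R_n^+$, $S=R_n^-$ into the slice identity, the left-hand side converges to $\int_\Omega\nabla\cdot[\Phi(u)]\,\diff x$ by dominated convergence, since $\nabla\cdot[\Phi(u)]\in L^1(\Omega)$ and $(R_n^-,R_n^+)\times\T^{d-1}\uparrow\Omega$. For the right-hand side, set $M:=\|u\|_{L^\infty(\Omega)}$ and let $L$ be a Lipschitz constant of $\Phi_1$ on $\overline B(0,M)$; since $|u(R_n^\pm,\cdot)|\le M$ a.e.\ on $\T^{d-1}$ and $|\T^{d-1}|=1$,
\[
\Big|\int_{\T^{d-1}}\Phi_1(u(R_n^\pm,x'))\,\diff x'-\Phi_1(u^\pm)\Big|\le L\int_{\T^{d-1}}|u(R_n^\pm,x')-u^\pm|\,\diff x'\le L\,\|u(R_n^\pm,\cdot)-u^\pm\|_{L^2(\T^{d-1})}\longrightarrow0.
\]
Passing to the limit $n\to\infty$ yields $\int_\Omega\nabla\cdot[\Phi(u)]\,\diff x=\Phi_1(u^+)-\Phi_1(u^-)$.

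The only genuinely delicate point is the slice identity, i.e.\ justifying the divergence theorem on $(S,R)\times\T^{d-1}$ for the merely $H^1\cap L^\infty$ field $\Phi(u)$ and checking that the $x'$-derivatives contribute no boundary term; this is precisely what the Fubini/ACL computation above takes care of, bypassing any appeal to trace theorems. Everything else is a routine passage to the limit, powered by the global integrability $\nabla\cdot[\Phi(u)]\in L^1(\Omega)$ and the $L^\infty$ bound that turns $\Phi_1$ into a Lipschitz map on the relevant ball.
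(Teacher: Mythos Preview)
Your proof is correct and follows essentially the same approach as the paper: apply the divergence theorem on the bounded cylinder $(R_n^-,R_n^+)\times\T^{d-1}$, use Lemma~\ref{BC_unif} to select good slices, and pass to the limit using $\nabla\cdot[\Phi(u)]\in L^1(\Omega)$ and the $L^\infty$ bound on $u$. The only differences are cosmetic---you spell out the Gauss--Green step via Fubini/ACL rather than invoking it for $H^1$ fields, and you pass to the limit in the boundary term via the Lipschitz estimate and $L^2$ convergence rather than a.e.\ convergence plus dominated convergence.
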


\begin{proof}[Proof of Lemma~\ref{gauss-green}]
As a consequence of Lemma~\ref{BC_unif}, there exist two sequences $(R_n^\pm)_{n\ge 1}$ with $u(R_n^\pm,x')\to u^\pm$ for a.e.\ $x'\in\T^{d-1}$ as $n\to\infty$. By the Gauss-Green formula, applied to $ \Phi(u)\in H^1(\omega_n,\R^d)$ on the bounded domain $\omega_n:=(R_n^-,R_n^+)\times\T^{d-1}$, one has
\[
\int_{\omega_n}\nabla\cdot [\Phi(u)]\diff x=\int_{\T^{d-1}} \Bigl(\Phi_1(u(R_n^+,x'))-\Phi_1(u(R_n^-,x'))\Bigr)\diff x'.
\]
The conclusion follows from the dominated convergence theorem.
\end{proof}

\begin{proof}[Proof of Propositions \ref{minimal} and \ref{minimal_bndProp}]
The inequalities $\geq$ in \eqref{minimal_unbnd} and \eqref{minimal_bnd} follow from Proposition~\ref{inf_1D}. Conversely, we want to 
prove $\mathrm{geod}_{W}^a(u^-,u^+)\leq E(u)$ for every $u\in \dot{H}_{div}^1(\Omega,\R^d)$ such that $E(u)<\infty$ when either ($u\in L^\infty(\Omega,\R^d)$ and $W\in\mathcal{C}^2(\R^d,\R_+)$) or $W$ satisfies the growth condition \eqref{growth_W}. By Lemma~\ref{lavrentiev}, there exists a sequence $(u_k)_{k\in\N}\subset\mathcal{C}^\infty\cap L^\infty \cap \dot{H}_{div}^1(\Omega,\R^d)$ with $\overline{u_k}(\pm\infty)=u^\pm$ such that
\[
u_k\to u\quad \textrm{in } \dot{H}^1(\Omega)\quad\text{and}\quad E(u_k){\to} E(u) \quad \textrm{as } k\to\infty.
\]
Moreover, by Definition~\ref{entropy_def}, Lemma~\ref{gauss-green} and the saturation condition \eqref{satur_cond}, one has
\[
\mathrm{geod}_{W}^a(u^-,u^+)=\Phi_1(u^+)-\Phi_1(u^-)= \int_\Omega\nabla\cdot [\Phi(u_k)]\leq E(u_k)\quad\text{for each $k\in\N$}.
\]
In the limit $k\to\infty$, we obtain $\mathrm{geod}_{W}^a(u^-,u^+)\leq E(u)$, thus ending the proof.
\end{proof}

\paragraph{Our strategy to find entropies.} The easiest way to ensure that a given $\mathcal C^1$ map $\Phi$ is an entropy is to impose the punctual inequality $|\nabla\Phi|^2\leq 2W$ (see Remark \ref{entropy_rem}). This condition is too strong to have the saturation condition \eqref{satur_cond} fulfilled when $u^-$ and $u^+$ lie on the same connected component in $\{W=0\}$. However, due to the constraint $\nabla\cdot u=0$ imposed on our admissible maps, this condition can be relaxed in the weaker condition $|\Pi_0\nabla\Phi|^2\leq 2W$, where $\Pi_0$ is the projection onto the set of traceless matrices (see \eqref{strong_punct}), as explained in Section \ref{sym_intro} in Situation 1.
\begin{remark}
Knowing $\Pi_0\nabla\Phi$ is equivalent to knowing $\nabla\Phi$ up to an affine homothety. Indeed, if $\Phi$ is a $ \mathcal C^1$ map with $\Pi_0\nabla\Phi(z)=0$, namely $\nabla\Phi(z)=\alpha(z) {I}_d$ with $\alpha(z)\in\R$ for all $z\in\R^d$, then it is well known that 
$\Phi$ is an affine homothety, i.e. $\Phi(z)=\alpha z+\beta$ for every $z\in\R^d$, with $\alpha\in\R$ and $\beta\in\R^d$. Moreover, affine homotheties $\Phi$ are trivial for our problem in the sense that the corresponding calibration $\mathcal{F}(u)=\int_\Omega\nabla\cdot[\Phi(u)]\diff x$ vanishes for every $u\in\dot{H}^1_{div}(\Omega,\R^d)$.
\end{remark}

\medskip
\nd {\bf Situation 1. Strong punctual condition ($\mathcal{E}_{strg}$)}.
A sufficient condition for a  $ \mathcal{C}^1$  map $\Phi$ to be an entropy is the inequality \eqref{strong_punct} and this fact is shown by inequality 
\eqref{entropy_strong_est} for all $u \in \mathcal{C}^\infty\cap L^\infty\cap {\dot{H}_{div}^1}(\Omega, \R^d)$ with $E(u)<\infty$.
In fact, \eqref{strong_punct} is equivalent to imposing \eqref{entropy_strong_est} for all $u$:
\begin{proposition}\label{criterium_strong}
Given a map $\Phi\in \mathcal{C}^1(\R^d,\R^d)$, the two following conditions are equivalent:
\begin{itemize}
\item
$\nabla\cdot [\Phi (u)]\leq \frac 12|\nabla u|^2+W(u)$  for all $u\in L^\infty\cap\mathcal{C}^\infty(\Omega,\R^d)$ with $\nabla\cdot u=0$,
\item
$|\Pi_0\nabla\Phi (z)|^2\leq 2W(z)$ for all $z\in\R^d$.
\end{itemize}
In particular, if \eqref{strong_punct} holds true, then $\Phi$ is an entropy.
\end{proposition}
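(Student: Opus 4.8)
The plan is to prove the two implications of the equivalence separately; once the implication ``(punctual bound) $\Rightarrow$ (pointwise inequality)'' is established, the ``in particular'' clause follows immediately by integrating the pointwise inequality over $\Omega$ and invoking Definition~\ref{entropy_def}.

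For the direction ``(punctual bound) $\Rightarrow$ (pointwise inequality)'' I would simply redo the computation behind \eqref{entropy_strong_est}: given $u\in L^\infty\cap\mathcal{C}^\infty(\Omega,\R^d)$ with $\nabla\cdot u=0$, the matrix field $\nabla u^T$ has vanishing trace (its trace is $\nabla\cdot u$), hence $\nabla\cdot[\Phi(u)]=\nabla\Phi(u):\nabla u^T=\Pi_0\nabla\Phi(u):\nabla u^T$; Young's inequality together with the hypothesis $|\Pi_0\nabla\Phi(z)|^2\le 2W(z)$ then gives $\nabla\cdot[\Phi(u)]\le\frac12|\Pi_0\nabla\Phi(u)|^2+\frac12|\nabla u|^2\le W(u)+\frac12|\nabla u|^2$, which is the first condition. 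The same estimate bounds $|\nabla\cdot[\Phi(u)]|$ by the energy density, so $\nabla\cdot[\Phi(u)]\in L^1(\Omega)$ whenever $E(u)<\infty$, and integrating over $\Omega$ yields \eqref{ineg_ent}; thus $\Phi$ is an entropy.

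For the reverse direction, the key observation is that the pointwise inequality $\nabla\cdot[\Phi(u)](x)\le\frac12|\nabla u(x)|^2+W(u(x))$ depends on $u$ only through its $1$-jet at $x$, i.e.\ through $u(x)$ and $(\nabla u(x))^T$. I would therefore reduce to the following claim: fixing once and for all a point $x_0\in\Omega$, for every $z_0\in\R^d$ and every traceless matrix $M\in\R^{d\times d}$ there is a test map $u\in L^\infty\cap\mathcal{C}^\infty(\Omega,\R^d)$ with $\nabla\cdot u=0$, $u(x_0)=z_0$ and $(\nabla u(x_0))^T=M$. Granting the claim, evaluating the pointwise inequality at $x_0$ gives $\nabla\Phi(z_0):M=\Pi_0\nabla\Phi(z_0):M\le\frac12|M|^2+W(z_0)$ for every traceless $M$ (the first equality because $I_d:M=\mathrm{Tr}\,M=0$); choosing $M=\Pi_0\nabla\Phi(z_0)$ (which is traceless) yields $|\Pi_0\nabla\Phi(z_0)|^2\le\frac12|\Pi_0\nabla\Phi(z_0)|^2+W(z_0)$, hence $|\Pi_0\nabla\Phi(z_0)|^2\le 2W(z_0)$, and since $z_0$ is arbitrary this is exactly the punctual bound.

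It thus remains to construct the test maps, which I expect to be the only delicate point: the difficulty is to realize a prescribed $1$-jet while keeping the field simultaneously divergence-free and globally admissible (bounded and $x'$-periodic) on the cylinder $\Omega$. I would build the test maps from the divergence-free ``elementary'' fields $u^{(ij)}:=(\partial_j\psi)\,e_i-(\partial_i\psi)\,e_j$ ($i\ne j$, $\psi\in\mathcal{C}^\infty(\Omega)$), which satisfy $\nabla\cdot u^{(ij)}=\partial_i\partial_j\psi-\partial_j\partial_i\psi=0$ for any $\psi$. A direct computation shows that the $1$-jet of $u^{(ij)}$ at $x_0$ is governed by the Hessian of $\psi$ at $x_0$: with $\psi$ vanishing to first order at $x_0$ and with Hessian $e_j\otimes e_j$ one gets $u^{(ij)}(x_0)=0$ and $\nabla u^{(ij)}(x_0)=E_{ij}$ (the matrix with a single $1$ in position $(i,j)$), while Hessian $e_i\otimes e_j+e_j\otimes e_i$ gives $\nabla u^{(ij)}(x_0)=E_{ii}-E_{jj}$. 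Since $\{E_{ij}:i\ne j\}\cup\{E_{11}-E_{jj}:2\le j\le d\}$ is a basis of the $(d^2-1)$-dimensional space of traceless matrices, an arbitrary traceless matrix is obtained as $\nabla u(x_0)$ for a suitable linear combination $u$ of such fields (still divergence-free), hence also as $(\nabla u(x_0))^T$; adding the constant $z_0$ then fixes the value at $x_0$. Finally, to make each $\psi$ admissible it suffices to take it equal to the relevant quadratic inside a small ball of $\Omega$ centered at $x_0$ and to multiply by a fixed cutoff supported in that ball, which changes neither the divergence constraint (any $\psi$ works) nor the $2$-jet of $\psi$ at $x_0$, and leaves $\psi$ smooth, bounded and $x'$-periodic. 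With this construction in hand, the equivalence follows from Young's inequality as above.
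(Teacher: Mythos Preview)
Your proof is correct and follows the same overall strategy as the paper: for the nontrivial implication, reduce to realizing an arbitrary traceless $1$-jet $(z_0,M)$ by an admissible divergence-free test map, evaluate the pointwise inequality at the chosen point, and then set $M=\Pi_0\nabla\Phi(z_0)$.

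The only genuine difference is in how the test maps are built. The paper takes the affine map $x\mapsto z_0+p^Tx$ on a small ball, extends it to a compactly supported divergence-free $\mathcal{C}^1$ map via a divergence-free filling on an annulus (using that the normal flux on $\partial(2B)$ vanishes because $\mathrm{Tr}\,p=0$), and then mollifies; it restricts first to invertible traceless $p$ and concludes by density in $\mathrm{Im}(\Pi_0)$. Your construction instead uses the explicit curl-type fields $u^{(ij)}=(\partial_j\psi)e_i-(\partial_i\psi)e_j$ with $\psi$ a cut-off quadratic, and spans the traceless matrices via the basis $\{E_{ij}:i\ne j\}\cup\{E_{11}-E_{jj}\}$. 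Your route is a bit more elementary in that it avoids the annulus extension problem, the mollification step, and the density argument; the paper's route has the virtue of making the test map literally affine on a whole ball rather than only matching the $1$-jet at a point. Either way the conclusion is the same.
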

\begin{proof}
It is clear, by  \eqref{entropy_strong_est}, that the second condition implies the first one. 
Assume now that the first condition is satisfied. Fix $z_0\in\R^d$, an invertible matrix $p\in \R^{d\times d}$ such that $\mathrm{Tr}(p)=0$, and take a periodic map $u\in L^\infty\cap\mathcal{C}^\infty(\Omega,\R^d)$ 
with $\nabla\cdot u=0$ such that $u(x)=z_0+p^T x$ for all $x$ in a small ball $B\subset\Omega$ centered at the origin. Such a map exists; it can be constructed in two steps as follows.
\begin{enumerate}[leftmargin=15pt]
\item {\it Cut-off:} first, consider a divergence-free map $w\in \mathcal{C}^1\bigl(\overline{3B\setminus 2B},\R^d\bigr)$ such that $w(x)=z_0+p^T x$ on $\partial(2B)$ and $w=0$ on $\partial(3B)$ (such a function $w$ exists because the normal component $w\cdot \nu$ at $\partial(2B)$ has vanishing average on $\partial(2B)$ due to the assumption $\mathrm{Tr}(p)=0$); then we define the map $v:\Omega\to\R^d$ by $v(x)=z_0+p^T x$ on $2B$, $v(x)=w(x)$ on $3B\setminus 2B$ and $v(x)=0$ on $\Omega\setminus 3B$. In particular, $v$ is a $\mathcal{C}^1$ divergence-free map in $\Omega$.

\item {\it Mollification:} if $v$ is not smooth, we set $u:=\rho\ast v$ in $\Omega$, where $\rho$ is a smooth mollifying kernel concentrated on $B$ such that $\int_B y\rho(y)\diff y=0$ (this is true if for instance $\rho(-y)=\rho(y)$ for every $y\in B$); thus, $u$ is smooth, bounded, divergence-free and it is easy to see that $u(x)=z_0+p^T x$ on $B$.
\end{enumerate}
By the first inequality in the statement of the proposition, one has
$$\nabla\cdot[\Phi(u)](x)=\nabla\Phi (u(x)):p\leq \frac 12 |p|^2+W(u(x))\quad \text{for all }x\in B.$$
In particular, for $x=0$ and $u(0)=z_0$, we obtain
\[
\nabla\Phi (z_0):p\leq \frac 12 |p|^2+W(z_0).
\]
Since the set of invertible matrices $p\in {\rm Im}(\Pi_0)$ is dense in $ {\rm Im}(\Pi_0)$, we deduce that the above inequality is actually satisfied for every $p\in {\rm Im}(\Pi_0)$. By making the choice $p=\Pi_0\nabla\Phi(z_0)$, we conclude $\frac 12 |p|^2\leq W(z_0)$.
\end{proof}

The criterium provided by Situation 1 is not applicable in the Aviles-Giga situation. In that case, we need more sophisticated computations in the estimation of $\nabla\cdot[\Phi(u)]$ which we explain in the following.

\medskip
\nd {\bf Situations 2 and 3. Entropies with symmetric/antisymmetric Jacobian ($\mathcal{E}_{sym}$)/($\mathcal{E}_{asym}$)}. The main tool in the Situations 2 and 3 presented in Section \ref{sym_intro} relies on the following computation, which is inspired by the technique of Jin-Kohn \cite{Jin:2000} in dimension $d=2$: 
\begin{proposition}\label{antisymgradient}
For all $u=(u_1,\dots ,u_d)\in \dot{H}_{div}^1(\Omega,\R^d)$ such that $\bar{u}\in L^\infty(\R,\R^d)$, one has
\begin{equation}\label{symgradient}
\int_\Omega |\nabla u|^2=\int_\Omega \sum_{1\le i<j\le d}|\partial_i u_j-\partial_j u_i|^2=\int_\Omega \sum_{1\le i<j\le d}|\partial_i u_j+\partial_j u_i|^2 +2\int_\Omega\sum_{i=1}^d |\partial_i u_i|^2.
\end{equation}
In other words, if $\Pi^+$ (resp. $\Pi^-$) denotes the projection on the set of symmetric (resp. of antisymmetric) matrices, then
\eqref{equiProjection} holds true.
\end{proposition}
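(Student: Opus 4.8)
The plan is to prove the identity \eqref{symgradient} by first establishing it for smooth, bounded, divergence-free maps on $\Omega$ that stabilize to constants at $\pm\infty$, and then passing to the general case by the density result Lemma~\ref{lavrentiev} (or, more simply, by the approximation of Step~1--2 in its proof combined with the cut-off/mollification already used in Proposition~\ref{criterium_strong}); since the statement only involves $L^2$ norms of $\nabla u$, any sequence $u_k\to u$ in $\dot H^1(\Omega,\R^d)$ gives the identity for $u$ by passing to the limit in each term. So the heart of the matter is the smooth case, where every integral below is absolutely convergent because the integrand is compactly supported in the $x_1$ direction (after the cut-off) or at least integrable.

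For a smooth divergence-free $u$, expand $|\nabla u|^2=\sum_{i,j}(\partial_j u_i)^2$ and split into diagonal and off-diagonal parts. For the off-diagonal part, the elementary algebraic identity
\[
(\partial_i u_j)^2+(\partial_j u_i)^2=\tfrac12|\partial_i u_j-\partial_j u_i|^2+\tfrac12|\partial_i u_j+\partial_j u_i|^2
\]
holds pointwise, so summing over $i<j$ gives
\[
\sum_{i\ne j}(\partial_j u_i)^2=\tfrac12\sum_{i<j}|\partial_i u_j-\partial_j u_i|^2+\tfrac12\sum_{i<j}|\partial_i u_j+\partial_j u_i|^2 .
\]
The remaining task is to relate $\sum_i\int_\Omega(\partial_i u_i)^2$ to the cross terms $\int_\Omega\partial_i u_j\,\partial_j u_i$ for $i\ne j$. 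The key point is the ``null-Lagrangian'' identity: for $i\ne j$, integrating by parts twice (legitimate on $\Omega=\R\times\T^{d-1}$ since there is no boundary in the torus directions, and in the $x_1$ direction the boundary terms vanish because $u$ is constant near $x_1=\pm\infty$),
\[
\int_\Omega \partial_i u_j\,\partial_j u_i\,\diff x=\int_\Omega \partial_i u_i\,\partial_j u_j\,\diff x ,
\]
using that $\partial_i\partial_j u_k$ commute. Summing this over all ordered pairs $i\ne j$ and adding $\sum_i\int(\partial_i u_i)^2$ yields $\int_\Omega\big(\sum_i\partial_i u_i\big)^2=\int_\Omega(\nabla\cdot u)^2=0$, hence $\sum_{i\ne j}\int\partial_i u_j\partial_j u_i=-\sum_i\int(\partial_i u_i)^2$. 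Feeding this into the expansion of $\int|\partial_i u_j\pm\partial_j u_i|^2=\int(\partial_i u_j)^2+(\partial_j u_i)^2\pm2\int\partial_i u_j\partial_j u_i$ and comparing with $\int|\nabla u|^2$ gives both claimed equalities in \eqref{symgradient}, and then \eqref{equiProjection} by definition of $\Pi^\pm$ and the observation that $\|\Pi^+\nabla u\|^2+\|\Pi^-\nabla u\|^2=\|\nabla u\|^2$ always holds.

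The main obstacle is purely one of justification at infinity, not algebra: one must be careful that the integration by parts in the $x_1$-variable produces no boundary contribution for a general $u\in\dot H^1_{div}(\Omega,\R^d)$ with only $\bar u\in L^\infty$, since such $u$ need not decay. This is exactly why we reduce first to smooth maps that are constant for $|x_1|\ge R$ (available via the cut-off construction of Step~1 in the proof of Lemma~\ref{lavrentiev}, which only requires continuity of $W$ and not its growth), carry out the computation there, and then pass to the limit using density in $\dot H^1$. Alternatively, one can avoid cutting off by testing the identity $\partial_i\partial_j(u_iu_j-u_iu_j)=0$ in the distributional sense against a sequence of cut-off functions in $x_1$ and using Remark~\ref{rem:BC} to kill the error terms, which is the route implicit in \cite{Jin:2000}; either way the analytic content is controlling the $x_1=\pm\infty$ flux, and the torus directions contribute nothing.
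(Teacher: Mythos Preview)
Your algebra is correct and matches the paper's: both reduce the identity to showing that $\int_\Omega b_{ij}=0$ for $b_{ij}=\partial_i u_j\,\partial_j u_i-\partial_i u_i\,\partial_j u_j$, then use $(\nabla\cdot u)^2=0$. The only real issue is the justification of the reduction step. Your primary route---cutting off to maps constant for $|x_1|\ge R$ via Step~1 of Lemma~\ref{lavrentiev}---is not available under the hypotheses of Proposition~\ref{antisymgradient}: that step uses Lemma~\ref{Main_est} (hence the growth condition \eqref{growthMultiwell}) and, more fundamentally, requires $\overline{u}(\pm\infty)=u^\pm$ to exist, whereas here you only know $\bar u\in L^\infty(\R,\R^d)$. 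So ``constant near $\pm\infty$'' cannot be arranged in general.

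The paper instead does exactly what you list as your alternative: it mollifies $u$ (this alone preserves $\nabla\cdot u=0$, $\bar u\in L^\infty$, and gives $\nabla u_k\to\nabla u$ in $L^2$), handles the torus directions by direct integration by parts, and for the $x_1$-direction integrates on $[R^-,R^+]\times\T^{d-1}$ and uses Remark~\ref{rem:BC} to find sequences $R_n^\pm\to\pm\infty$ along which the boundary products $\|u_1(R_n^\pm,\cdot)\|_{L^2}\|\partial_1 u_1(R_n^\pm,\cdot)\|_{L^2}$ vanish. This is the clean way to ``kill the flux at $\pm\infty$'' under the minimal assumption $\bar u\in L^\infty$, and you should make it your main argument rather than an afterthought.
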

\begin{proof} First notice that \eqref{equiProjection} is a rewriting of \eqref{symgradient} in terms of $\Pi ^\pm\nabla u$ because
\[
|\Pi^-\nabla u|^2=\frac 12\sum_{i<j}|\partial_iu_j-\partial_ju_i|^2\quad\text{while}\quad|\Pi^+\nabla u|^2=\sum_i|\partial_iu_i|^2+\frac 12\sum_{i<j}|\partial_iu_j+\partial_ju_i|^2\quad a.e.
\]
We now prove \eqref{symgradient}. Since $u\in \dot{H}^1(\Omega,\R^d)$, up to convolution with a smooth kernel (as in the proof of Lemma~\ref{lavrentiev}), one can assume that $u\in\mathcal{C}^\infty\cap \dot{H}_{div}^1(\Omega,\R^d)$. Then we compute
\begin{equation}\label{identity}
\sum_{i<j} |\partial_i u_j\pm \partial_j u_i|^2=\sum_{i\neq j} |\partial_i u_j|^2\pm 2\sum_{i<j}\partial_i u_j \partial_j u_i= \sum_{i\neq j}\left(|\partial_i u_j|^2\pm \partial_i u_j \partial_j u_i\right) .
\end{equation}
Together with the identity $0=|\nabla\cdot u|^2=|\sum_i \partial_i u_i|^2=\sum_{i,j}\partial_i u_i \partial_j u_j$, \eqref{identity} implies
\begin{equation*}
\sum_{i<j} |\partial_i u_j - \partial_j u_i|^2
=|\nabla u|^2-\sum_{i,j} \partial_i u_j \partial_j u_i
=|\nabla u|^2-\sum_{i,j}\left(\partial_i u_j \partial_j u_i - \partial_i u_i \partial_j u_j\right).
\end{equation*}
In order to prove the first identity in \eqref{symgradient}, we have to prove that integrating the last term of the above RHS, we obtain $0$. Let us use the notation $b_{ij}=\partial_i u_j \partial_j u_i - \partial_i u_i \partial_j u_j\in L^1(\Omega)$ because $u\in \dot{H}^1(\Omega, \R^d)$. For all $i,j\in\{1,\dots,d\}$, one has $b_{ij}=b_{ji}$ and $b_{ii}=0$. Moreover, if $i\neq 1$ and $j\neq 1$, since $x_i$ and $x_j$ lie on the torus $\T$ which has no boundary, integrating by parts twice yields
\[
\int_{\T^{d-1}}\partial_iu_i\partial_ju_j\diff x'=-\int_{\T^{d-1}}u_i \partial_{ij}u_j\diff x'=\int_{\T^{d-1}}\partial_ju_i\partial_iu_j\diff x',
\quad\text{i.e. }\int_\Omega b_{ij}=0, \quad i\neq 1, j\neq 1.
\]
It remains to prove that
$$\sum_{j\neq 1}\int_{\Omega} b_{1j}\diff x=0.$$
Indeed, for every $R^-,R^+\in\R$ with $R^-<R^+$, integrating $\partial_1u_1\partial_ju_j$ by parts on $[R^-,R^+]\times \T^{d-1}$ twice (so as to switch $\partial_1$ and $\partial_j$), and using the divergence constraint, it yields
\begin{align*}
\bigg|\sum_{j\neq 1}\int_{[R^-,R^+]\times \T^{d-1}} b_{1j}\diff x\bigg|&
=\bigg|\sum_{j\neq 1}\int_{[R^-,R^+]\times \T^{d-1}}(\partial_1u_1\partial_ju_j-\partial_1 u_j\partial_j u_1)\diff x\bigg|\\
&=\bigg|\sum_{j\neq 1}\int_{\T^{d-1}}u_1(R^+,x')\partial_ju_j(R^+,x')-u_1(R^-,x')\partial_ju_j(R^-,x')\diff x'\bigg|\\
&=\bigg|\int_{\T^{d-1}}u_1(R^+,x')\partial_1u_1(R^+,x')-u_1(R^-,x')\partial_1u_1(R^-,x')\diff x'\bigg|\\
&\leq\|u_1(R^+,\cdot)\|_{L^2}\|\partial_1u_1(R^+,\cdot)\|_{L^2}+\|u_1(R^-,\cdot)\|_{L^2}\|\partial_1u_1(R^-,\cdot)\|_{L^2}.
\end{align*}
Now, Remark \ref{rem:BC} yields two sequences $(R_n^\pm)_{n\ge 1}\to\pm\infty$ with 
$\|u_1(R_n^\pm,\cdot)\|_{L^2}\|\partial_1u_1(R_n^\pm,\cdot)\|_{L^2}\to 0$; the above inequality applied to $R^\pm=R^\pm_n$ yields the claimed identity in the limit $n\to\infty$. 

For the second equality in \eqref{symgradient}, because of the condition $\nabla\cdot u=0$ and \eqref{identity}, one has
\begin{align*}
\begin{split}
\sum_{i<j} |\partial_i u_j + \partial_j u_i|^2
&=|\nabla u|^2+\sum_{i,j} \partial_i u_j \partial_j u_i -2\sum_i |\partial_i u_i|^2\\
&=|\nabla u|^2-2\sum_i |\partial_i u_i|^2+\sum_{i,j}\left(\partial_i u_j \partial_j u_i - \partial_i u_i \partial_j u_j\right).
\label{reste_ND}
\end{split}
\end{align*}
Again, \eqref{symgradient} follows from the fact that $\int_\Omega \sum_{i,j} b_{ij}=0$.
\end{proof}
We now explain how we use this proposition to find entropies. Let us consider a map {$\Phi\in\mathcal{C}^1(\R^d,\R^d)$}, assume that $\Pi_0\nabla\Phi (z)$ is either symmetric for all $z$ or antisymmetric for all $z$, denoted shortly by
\[
\Pi_0\nabla\Phi (z)\in {\rm Im}(\Pi^\pm)\quad \text{ for all }z\in\R^d. 
\]
Then for all $u\in \mathcal{C}^\infty\cap L^\infty\cap \dot{H}_{div}^1(\Omega,\R^d)$ and $E(u)<+\infty$, by self-adjointness of an orthogonal projection, one obtains \eqref{11}.
Now, by Young's inequality $2st\leq s^2/2+2t^2$ and Proposition~\ref{antisymgradient} (note that $\bar u\in L^\infty$ because $u$ does it), then
\begin{align*}
\int_\Omega \nabla\cdot[\Phi(u)]\diff x
&\leq \frac 12\left(\frac 12\|\Pi_0\nabla\Phi(u)\|_{L^2(\Omega)}^2+2\|\Pi^\pm\nabla u\|_{L^2(\Omega)}^2\right)\\
&= \frac 12\left(\frac 12\|\Pi_0\nabla\Phi(u)\|_{L^2(\Omega)}^2+\|\nabla u\|_{L^2(\Omega)}^2\right).
\end{align*}
Moreover, since $E(u)<\infty$, the condition $\nabla\cdot[\Phi(u)]\in L^1(\Omega)$ can be insured by imposing $|\Pi_0\nabla\Phi|^2\leq C W$ for some constant $C>0$. Thus, if $C=4$, the above argument yields the following proposition:
\begin{proposition}\label{criterium_weak}
{Let $\Phi\in\mathcal{C}^1(\R^d,\R^d)$} be a map such that $\Pi_0\nabla\Phi $ is either symmetric in $\R^d$ or antisymmetric in all $\R^d$, and such that $|\Pi_0\nabla\Phi|^2\leq 4W$. Then $\Phi$ is an entropy.
\end{proposition}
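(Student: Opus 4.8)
The plan is to assemble the bound directly from the algebraic identity \eqref{11} and the cancellation identity \eqref{equiProjection} of Proposition~\ref{antisymgradient}, following the heuristic computation given just above the statement. First I would fix a map $u\in\mathcal{C}^\infty\cap L^\infty\cap\dot{H}_{div}^1(\Omega,\R^d)$ with $E(u)<\infty$ and note that $\overline{u}\in L^\infty(\R,\R^d)$ since $u\in L^\infty(\Omega,\R^d)$, so Proposition~\ref{antisymgradient} is available for $u$. I would then record the integrability $\nabla\cdot[\Phi(u)]\in L^1(\Omega)$: by the chain rule $\nabla\cdot[\Phi(u)]=\nabla\Phi(u):\nabla u^T$, and since $\nabla\cdot u=0$ the matrix $\nabla u^T$ is traceless, whence $\nabla\Phi(u):\nabla u^T=\Pi_0\nabla\Phi(u):\nabla u^T$; Cauchy--Schwarz and Young together with the hypothesis $|\Pi_0\nabla\Phi|^2\le 4W$ then give
\[
|\nabla\cdot[\Phi(u)]|\le \tfrac12|\Pi_0\nabla\Phi(u)|^2+\tfrac12|\nabla u|^2\le 2W(u)+\tfrac12|\nabla u|^2\le 2\,e_{den}(u)\in L^1(\Omega).
\]

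For the main inequality \eqref{ineg_ent}, the point is to transfer the projection $\Pi^\pm$ from the $\nabla\Phi$ factor onto the $\nabla u$ factor. Since by assumption $\Pi_0\nabla\Phi(z)\in\mathrm{Im}(\Pi^\pm)$ for every $z\in\R^d$, self-adjointness of the orthogonal projection $\Pi^\pm$ and the tracelessness of $\nabla u^T$ give, pointwise in $\Omega$,
\[
\nabla\cdot[\Phi(u)]=\Pi_0\nabla\Phi(u):\nabla u^T=\Pi^\pm\bigl(\Pi_0\nabla\Phi(u)\bigr):\nabla u^T=\Pi_0\nabla\Phi(u):\Pi^\pm\nabla u^T,
\]
which is exactly \eqref{11}. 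I would then apply the weighted Young inequality $st\le \tfrac14 s^2+t^2$ (valid because $\tfrac14 s^2+t^2-st=(\tfrac12 s-t)^2\ge0$) with $s=|\Pi_0\nabla\Phi(u)|$ and $t=|\Pi^\pm\nabla u^T|$, integrate over $\Omega$, use that transposing $\nabla u$ merely exchanges the roles of $A$ and $A^T$ in the definition of $\Pi^\pm$ so that $|\Pi^\pm\nabla u^T|=|\Pi^\pm\nabla u|$ pointwise, and invoke the hypothesis $|\Pi_0\nabla\Phi|^2\le 4W$ together with the identity \eqref{equiProjection}, i.e. $\|\Pi^\pm\nabla u\|_{L^2(\Omega)}^2=\tfrac12\|\nabla u\|_{L^2(\Omega)}^2$. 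This yields
\[
\int_\Omega\nabla\cdot[\Phi(u)]\diff x\le \frac14\int_\Omega|\Pi_0\nabla\Phi(u)|^2\diff x+\int_\Omega|\Pi^\pm\nabla u|^2\diff x\le \int_\Omega W(u)\diff x+\frac12\int_\Omega|\nabla u|^2\diff x=E(u),
\]
which is \eqref{ineg_ent}; hence $\Phi$ is an entropy in the sense of Definition~\ref{entropy_def}, proving Proposition~\ref{criterium_weak}.

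I do not anticipate a genuine obstacle here: the only non-elementary ingredient is the cancellation identity \eqref{equiProjection} of Proposition~\ref{antisymgradient}, which is already established and where the hypothesis $\overline{u}\in L^\infty$ is used (it is automatic in our class since $u\in L^\infty$); no approximation argument is needed because $u$ is already smooth and Proposition~\ref{antisymgradient} is stated directly for $u\in\dot{H}_{div}^1(\Omega,\R^d)$ with $\overline{u}\in L^\infty$. The one place to be careful is the constant bookkeeping: the weight $4$ in the assumption $|\Pi_0\nabla\Phi|^2\le 4W$ is exactly what compensates the factor $\tfrac12$ in $\|\Pi^\pm\nabla u\|_{L^2}^2=\tfrac12\|\nabla u\|_{L^2}^2$, so the Young inequality must be used with the precise weight above, and one must check in each of the two cases ($\Pi_0\nabla\Phi$ symmetric, resp. $\Pi_0\nabla\Phi$ antisymmetric) that $\Pi^\pm$ indeed acts as the identity on $\Pi_0\nabla\Phi(u)$.
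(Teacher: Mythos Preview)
Your proposal is correct and follows essentially the same approach as the paper: the paper also reduces to the pointwise identity \eqref{11} via the tracelessness of $\nabla u^T$ and self-adjointness of $\Pi^\pm$, then applies the weighted Young inequality (written there as $2st\le s^2/2+2t^2$, equivalent to your $st\le\tfrac14 s^2+t^2$) together with \eqref{equiProjection} from Proposition~\ref{antisymgradient}, noting as you do that $u\in L^\infty$ guarantees $\overline{u}\in L^\infty$. Your explicit check of $L^1$-integrability via $|\nabla\cdot[\Phi(u)]|\le 2\,e_{den}(u)$ is the same estimate the paper records just above the proposition.
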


\begin{remark}\label{hessian}
It is well known that ($\mathcal{E}_{sym}$) (i.e., \eqref{ent_sym}) implies that there exists {$\Psi\in \mathcal{C}^2(\R^d,\R)$} such that
$\nabla\Phi(z)=\nabla^2\Psi(z)$ for all $z\in\R^d$, where $\nabla^2\Psi(z)$ is the Hessian matrix of $\Psi$. In other words, there exists a constant $\Phi_0\in\R^d$ such that
\[
\Phi(z)=\Phi_0+\nabla\Psi(z)\quad\text{for all }z\in\R^d.
\]
\end{remark}

\medskip
\nd {\bf The saturation condition}. It remains to confront the above estimates to the saturation condition \eqref{satur_cond} for two fixed zeros $u^\pm$ of $W$ such that $u^\pm\in\R^d_a$ for some $a\in\R$. Assume that there exists $\gamma$ which achieves the infimum in the definition \eqref{geod_H1} of $\mathrm{geod}_{W}^a$, i.e.
\begin{equation}\label{gamma_geod}
\gamma\in {\rm Argmin}\left\{\int_{-1}^1 \sqrt{2W(\gamma(s))}\, |\dot{\gamma}(s)|\diff s \;:\; \gamma\in\mathrm{Lip}([-1,1],\R^d_a),\, \gamma(\pm 1)=u^\pm\right\}
\end{equation}
(see Propositions \ref{Exist_profile1D} and \ref{1D_transition} for sufficient conditions, and Proposition \ref{inf_1D} for the link between $\mathrm{geod}_{W}^a$ and the 1D minimization problem in \eqref{energy_1D}).
For a map $\Phi\in\mathcal{C}^1(\R^d,\R^d)$, the saturation condition \eqref{satur_cond} rewrites as
\begin{equation}
\label{satur_diff}
\int_{-1}^1 \nabla \Phi_1(\gamma(t)) \cdot \dot{\gamma}(t)\diff t = \int_{-1}^1 \sqrt{2W(\gamma (t))}\, |\dot{\gamma}(t)|\diff t.
\end{equation}
We now combine \eqref{satur_diff} with the conditions assumed on $\Pi_0\nabla\Phi$ in Proposition~\ref{criterium_strong} or Proposition~\ref{criterium_weak}, that is one of the criteria ($\mathcal{E}_{strg}$), ($\mathcal{E}_{sym}$) or ($\mathcal{E}_{asym}$) in \eqref{strong_punct}, \eqref{ent_sym} or \eqref{ent_asym}.
 In fact, the condition \eqref{satur_diff} implies a saturation of the inequalities in ($\mathcal{E}_{strg}$), ($\mathcal{E}_{sym}$) and ($\mathcal{E}_{asym}$) on the range ${\rm Im}(\gamma)$ so that $\Pi_0\nabla\Phi$ is fully determined on ${\rm Im}(\gamma)$:
\begin{proposition}\label{satur}
Assume that there exists $\gamma\in {\rm Lip}([-1,1],\R^d_a)$ satisfying \eqref{gamma_geod} and a map $\Phi\in\mathcal{C}^1(\R^d,\R^d)$ satisfying the saturation condition \eqref{satur_cond}. Then
\begin{itemize}
\item
$\Pi_0\nabla\Phi(\gamma(t))=\sqrt{2W(\gamma(t))}\ e_1\otimes \frac{\dot{\gamma}(t)}{|\dot{\gamma}(t)|}$ a.e.\ in $[-1,1]$ if $\Phi$ satisfies ($\mathcal{E}_{strg}$) in \eqref{strong_punct},
\item
$\Pi_0\nabla\Phi(\gamma(t))=2\sqrt{2W(\gamma(t))}\ \Pi^+\left(e_1\otimes \frac{\dot{\gamma}(t)}{|\dot{\gamma}(t)|}\right)$ a.e.\ in $[-1,1]$ if $\Phi$ satisfies ($\mathcal{E}_{sym}$) in \eqref{ent_sym},
\item
$\Pi_0\nabla\Phi(\gamma(t))=2\sqrt{2W(\gamma(t))}\ \Pi^-\left(e_1\otimes \frac{\dot{\gamma}(t)}{|\dot{\gamma}(t)|}\right)$ a.e.\ in $[-1,1]$ if $\Phi$ satisfies ($\mathcal{E}_{asym}$) in \eqref{ent_asym}.
\end{itemize}
\end{proposition}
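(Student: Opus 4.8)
The plan is to reduce the whole statement to a chain of pointwise inequalities that must saturate, starting from the reformulation \eqref{satur_diff} of the saturation condition. First I would fix a convenient parametrization of $\gamma$: since $\Phi_1(u^+)-\Phi_1(u^-)$ depends only on the endpoints and the length $\int_{-1}^1\sqrt{2W(\gamma)}\,|\dot\gamma|\diff t$ is invariant under orientation-preserving reparametrizations, the reparametrization procedure in Step~1 of the proof of Lemma~\ref{cWgeqGeod} lets me assume that $\gamma$ has constant speed $|\dot\gamma(t)|\equiv c$ on $[-1,1]$; here $c>0$ since $u^-\ne u^+$ (if $u^-=u^+$ there is nothing to prove), so $\dot\gamma$ never vanishes. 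Next I would record two elementary facts. Writing $e_1\otimes v$ for the matrix with entries $(e_1)_i v_j$, one has $\nabla\Phi_1(\gamma(t))\cdot\dot\gamma(t)=\nabla\Phi(\gamma(t)):(e_1\otimes\dot\gamma(t))$; and since $\gamma(t)\in\R^d_a$ forces $\dot\gamma_1(t)=0$, the matrix $e_1\otimes\dot\gamma(t)$ is traceless, hence fixed by $\Pi_0$, so $\nabla\Phi(\gamma):(e_1\otimes\dot\gamma)=\Pi_0\nabla\Phi(\gamma):(e_1\otimes\dot\gamma)$. Moreover $|e_1\otimes\dot\gamma|=|\dot\gamma|$, and a direct computation using $\dot\gamma_1=0$ gives $|\Pi^\pm(e_1\otimes\dot\gamma)|^2=\frac12|\dot\gamma|^2$.

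Then I would run the three cases in parallel. In case ($\mathcal{E}_{strg}$), Cauchy--Schwarz for the Frobenius inner product together with \eqref{strong_punct} gives, for a.e.\ $t\in[-1,1]$ (omitting the argument $t$),
\[
\nabla\Phi_1(\gamma)\cdot\dot\gamma=\Pi_0\nabla\Phi(\gamma):(e_1\otimes\dot\gamma)\le|\Pi_0\nabla\Phi(\gamma)|\,|\dot\gamma|\le\sqrt{2W(\gamma)}\,|\dot\gamma|.
\]
Integrating over $[-1,1]$ and comparing with \eqref{satur_diff} forces both inequalities to be equalities a.e.: equality in Cauchy--Schwarz says $\Pi_0\nabla\Phi(\gamma(t))$ is a nonnegative multiple of $e_1\otimes\dot\gamma(t)$, and equality in the punctual bound says $|\Pi_0\nabla\Phi(\gamma(t))|=\sqrt{2W(\gamma(t))}$; taking norms then pins down the multiple and yields $\Pi_0\nabla\Phi(\gamma(t))=\sqrt{2W(\gamma(t))}\,e_1\otimes\frac{\dot\gamma(t)}{|\dot\gamma(t)|}$. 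In case ($\mathcal{E}_{sym}$) (resp.\ ($\mathcal{E}_{asym}$)), I would first use that $\Pi_0\nabla\Phi(\gamma)$ is symmetric (resp.\ antisymmetric) by \eqref{ent_sym} (resp.\ \eqref{ent_asym}), hence orthogonal to the antisymmetric (resp.\ symmetric) matrices, to replace $e_1\otimes\dot\gamma$ by $\Pi^+(e_1\otimes\dot\gamma)$ (resp.\ $\Pi^-(e_1\otimes\dot\gamma)$) in the identity above; then Cauchy--Schwarz, the bound $|\Pi_0\nabla\Phi|^2\le 4W$, and $|\Pi^\pm(e_1\otimes\dot\gamma)|=\frac1{\sqrt2}|\dot\gamma|$ give the same pointwise upper bound $\sqrt{2W(\gamma)}\,|\dot\gamma|$. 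Saturation a.e.\ yields $\Pi_0\nabla\Phi(\gamma(t))=\lambda(t)\,\Pi^\pm(e_1\otimes\dot\gamma(t))$ with $\lambda(t)\ge 0$ and $|\Pi_0\nabla\Phi(\gamma(t))|=\sqrt{4W(\gamma(t))}$, and solving for $\lambda(t)$ gives $\lambda(t)=2\sqrt{2W(\gamma(t))}/|\dot\gamma(t)|$, i.e.\ the stated formula.

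This is a pure equality-case argument, so I do not expect a genuine obstacle: the real content already sits in the entropy estimates of Propositions~\ref{criterium_strong} and~\ref{criterium_weak}, and this proposition merely records when those estimates are tight along $\gamma$. The only points needing care are choosing a parametrization with $\dot\gamma$ a.e.\ nonzero so that the conclusion is meaningful; correctly transferring equality of the two integrals in \eqref{satur_diff} to a.e.\ equality in the pointwise chain; and, in the equality case, extracting both of its consequences — proportionality from Cauchy--Schwarz and norm saturation from the punctual estimate — and combining them while keeping track of $\Pi_0$, $\Pi^\pm$ and the orthogonality between symmetric and antisymmetric matrices.
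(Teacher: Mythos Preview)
Your proposal is correct and follows essentially the same route as the paper: rewrite $\nabla\Phi_1(\gamma)\cdot\dot\gamma$ as $\Pi_0\nabla\Phi(\gamma):(e_1\otimes\dot\gamma)$ (respectively its $\Pi^\pm$-projected version), bound it pointwise by $\sqrt{2W(\gamma)}\,|\dot\gamma|$ via Cauchy--Schwarz and the punctual estimate, and then read off the equality case from \eqref{satur_diff}. The only difference is your preliminary constant-speed reparametrization to guarantee $|\dot\gamma|>0$ a.e.; the paper works directly with the given $\gamma$ and leaves implicit that the conclusion is only meaningful where $\dot\gamma\ne 0$, so your extra step is a harmless clarification rather than a different argument (just note that since the statement is about the given $\gamma$, strictly speaking you should transfer the identity back from the reparametrized curve, which is immediate since both $\Pi_0\nabla\Phi(\gamma(t))$ and $\dot\gamma(t)/|\dot\gamma(t)|$ are reparametrization-invariant).
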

\begin{proof}
First assume that ($\mathcal{E}_{strg}$) is fulfilled. Since $\dot \gamma_1(t)=0$ a.e.\ in $[-1,1]$ and $\Pi_0\nabla\Phi$ and $\nabla\Phi$ coincide out of the diagonal, the Cauchy-Schwarz inequality and \eqref{strong_punct} imply 
\[
\nabla\Phi_1 (\gamma)\cdot \dot{\gamma}=\Pi_0\nabla\Phi(\gamma): (e_1\otimes\dot{\gamma})
\le\sqrt{2W(\gamma)}\, |\dot{\gamma}|\quad\text{a.e. in $[-1,1]$.}
\]
Combined with  \eqref{satur_cond} which rewrites as \eqref{satur_diff}, we deduce that $\nabla\Phi_1 (\gamma)\cdot \dot{\gamma}=\sqrt{2W(\gamma)}\, |\dot{\gamma}|$ a.e. in $[-1,1]$. In other words, we have
\[
\Pi_0\nabla\Phi(\gamma): \Big(e_1\otimes \frac{\dot{\gamma}}{|\dot{\gamma}|}\Big)=\sqrt{2W(\gamma)}\quad\text{a.e. in $[-1,1]$}
\]
which implies the claim by the case of equality in the Cauchy-Schwarz inequality $A:B\le |A|\,|B|$ for matrices $A,B\in\R^{d\times d}$.

Similarly, in the cases where ($\mathcal{E}_{sym}$) (read $\pm=+$ in the following) or ($\mathcal{E}_{asym}$) (read $\pm=-$ in the following) are fulfilled, we have $|\Pi_0\nabla\Phi|^2\le 4W$ and we deduce
\[
\nabla\Phi_1 (\gamma)\cdot \dot{\gamma}=\Pi_0\Pi^\pm\nabla\Phi(\gamma): (e_1\otimes\dot{\gamma})=\Pi_0\nabla\Phi(\gamma): \Pi^\pm(e_1\otimes\dot{\gamma})
\le\sqrt{2W(\gamma)}\, |\dot{\gamma}|\quad\text{a.e. in $[-1,1]$,}
\]
where we used the equality $|\Pi^\pm(e_1\otimes\dot{\gamma})|=\frac{|\dot{\gamma}|}{\sqrt{2}}$. The claim follows by saturation of \eqref{satur_diff} and by the case of equality in the Cauchy-Schwarz inequality.
\end{proof}

\subsection{Structure of global minimizers}\label{sec:structure}

The aim of this section is to highlight that the existence of an entropy $\Phi$ satisfying the saturation condition \eqref{satur_cond} and one of the conditions ($\mathcal{E}_{strg}$), ($\mathcal{E}_{sym}$) or ($\mathcal{E}_{asym}$) in \eqref{strong_punct}, \eqref{ent_sym} or \eqref{ent_asym}, implies that any solution to the global minimization problem $(\mathcal P)$ satisfies a first order PDE which encodes in particular the equipartition of the energy density, and implies in general one-dimensional symmetry:

\begin{proposition}\label{pde_opt}
Let $W:\R^d\to\R_+$ be a continuous potential and $a\in\R$ such that $S_a$ contains at least two wells $u^\pm$ of $W$, and assume that there exists $\Phi\in\mathcal{C}^1(\R^d,\R^d)$ satisfying the saturation condition \eqref{satur_cond} and either ($\mathcal{E}_{strg}$), ($\mathcal{E}_{sym}$) or ($\mathcal{E}_{asym}$) in \eqref{strong_punct}, \eqref{ent_sym} or \eqref{ent_asym}. 

If $u$ is a global minimizer of $(\mathcal{P})$ and if either ($u\in L^\infty(\Omega,\R^d)$ and $W\in\mathcal{C}^2(\R^d,\R_+)$) or $W$ satisfies the growth condition \eqref{growth_W}, then \eqref{pdeStrong} (resp. (\eqref{pdeAsym}), (\eqref{pdeSym})) in the case ($\mathcal{E}_{strg}$) (resp. ($\mathcal{E}_{asym}$), ($\mathcal{E}_{sym}$)) holds true.

Conversely, if $W$ satisfies the growth condition \eqref{growth_W} and $u\in\dot{H}_{div}^1(\Omega,\R^d)$ solves \eqref{pdeStrong} (resp. \eqref{pdeAsym}, \eqref{pdeSym}) in the case ($\mathcal{E}_{strg}$) (resp. ($\mathcal{E}_{asym}$), ($\mathcal{E}_{sym}$)), then $u$ is a global minimizer of $(\mathcal{P})$.
\end{proposition}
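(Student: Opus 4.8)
The plan is to treat the two directions separately, and within the first (forward) direction, to exploit the chain of inequalities coming from the entropy method and show that each one must be an equality for a global minimizer. First I would record what the saturation condition buys us: by Proposition \ref{minimal} (if \eqref{growth_W} holds) or Proposition \ref{minimal_bndProp} (if $u\in L^\infty$ and $W\in\mathcal C^2$), one has $\mathrm{geod}^a_W(u^-,u^+)=\Phi_1(u^+)-\Phi_1(u^-)=c_W(u^-,u^+)$, and a global minimizer $u$ of $(\mathcal P)$ satisfies $E(u)=\mathrm{geod}^a_W(u^-,u^+)$. Using Lemma \ref{lavrentiev}, pick an approximating sequence $u_k\in\mathcal C^\infty\cap L^\infty\cap\dot H^1_{div}(\Omega,\R^d)$ with $\overline{u_k}(\pm\infty)=u^\pm$, $u_k\to u$ in $\dot H^1$, $E(u_k)\to E(u)$; by the Gauss--Green Lemma \ref{gauss-green}, $\int_\Omega\nabla\cdot[\Phi(u_k)]\diff x=\Phi_1(u^+)-\Phi_1(u^-)=E(u)$. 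Now for each $u_k$ the pointwise computation leading to \eqref{entropy_strong_est} (case ($\mathcal E_{strg}$)), resp. the computation via \eqref{11} and Proposition \ref{antisymgradient} (cases ($\mathcal E_{sym}$), ($\mathcal E_{asym}$)), shows
\[
E(u) = \int_\Omega \nabla\cdot[\Phi(u_k)]\diff x \le E(u_k) \xrightarrow[k\to\infty]{} E(u),
\]
so all the intermediate inequalities become asymptotically equalities. Passing to the limit (the integrands converge in $L^1$, up to a subsequence a.e., by $\dot H^1$-convergence of gradients, $L^1$-convergence of $W(u_k)$, and continuity of $\nabla\Phi$), one gets that the two Young/Cauchy--Schwarz inequalities used for $u$ itself are equalities a.e.\ in $\Omega$.

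Next I would read off the PDE from the equality cases. In case ($\mathcal E_{strg}$): equality in $\nabla\Phi(u):\Pi_0\nabla u^T\le\frac12(|\nabla u|^2+|\Pi_0\nabla\Phi(u)|^2)$ forces $\nabla u^T=\Pi_0\nabla\Phi(u)$ a.e., and equality in $|\Pi_0\nabla\Phi(u)|^2\le 2W(u)$ forces $W(u)=\frac12|\Pi_0\nabla\Phi(u)|^2$, which is \eqref{pdeStrong}. In case ($\mathcal E_{asym}$): equality in Young's inequality $2\Pi_0\nabla\Phi(u):\Pi^-\nabla u^T\le 2|\Pi^-\nabla u|^2+\frac12|\Pi_0\nabla\Phi(u)|^2$ together with \eqref{equiProjection} gives $2\Pi^-\nabla u^T=\Pi_0\nabla\Phi(u)$, and equality in $|\Pi_0\nabla\Phi(u)|^2\le 4W(u)$ gives $W(u)=\frac14|\Pi_0\nabla\Phi(u)|^2$, i.e.\ \eqref{pdeAsym}; case ($\mathcal E_{sym}$) is identical with $\Pi^-$ replaced by $\Pi^+$, yielding \eqref{pdeSym}. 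Here I would be slightly careful to justify that the equality identities survive the limit from $u_k$ to $u$: since $\int_\Omega\big[E(u_k)-\nabla\cdot\Phi(u_k)\big]\diff x\to0$ and the integrand is a sum of nonnegative quantities (squared differences), each such quantity tends to $0$ in $L^1(\Omega)$, hence the limiting a.e.\ identities for $u$.

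For the converse direction, suppose $W$ satisfies \eqref{growth_W} and $u\in\dot H^1_{div}(\Omega,\R^d)$ solves \eqref{pdeStrong} (resp.\ \eqref{pdeAsym}, \eqref{pdeSym}). Then the displayed PDE makes all of the above inequalities equalities pointwise, so $e_{den}(u)=\frac12|\nabla u|^2+W(u)=\nabla\cdot[\Phi(u)]$ a.e.; in particular $\nabla\cdot[\Phi(u)]\in L^1(\Omega)$ and $E(u)<\infty$. One still needs $\overline u(\pm\infty)=u^\pm$; this is forced by the fact that $W(u)\in L^1$ forces $\overline u$ to have limits in $S_a$ (via Lemma \ref{closed_boundary} applied to the averaged energy $E_V\le E$ and Lemma \ref{lemmaV}), while the PDE pins those limits — more precisely, one should interpret the hypothesis ``$u$ solves \eqref{pdeStrong}'' as including $\overline u(\pm\infty)=u^\pm$, or else invoke that $\Pi_0\nabla\Phi(u)=\nabla u^T$ together with $W(u)=0$ at the ends forces $u$ constant there; I would state this carefully. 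Granting the boundary condition, Lemma \ref{gauss-green} gives $E(u)=\int_\Omega\nabla\cdot[\Phi(u)]\diff x=\Phi_1(u^+)-\Phi_1(u^-)=\mathrm{geod}^a_W(u^-,u^+)=c_W(u^-,u^+)$ by saturation, so $u$ realizes the infimum, hence is a global minimizer.

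The main obstacle I anticipate is the limit-passage bookkeeping in the forward direction: making rigorous that the a.e.\ equality identities (not just the integral equality) transfer from the smooth approximants $u_k$ to $u$. The clean way is to write $E(u_k)-\int_\Omega\nabla\cdot[\Phi(u_k)]=\int_\Omega\big(\text{sum of nonnegative defect terms}\big)$, observe the left side $\to0$, conclude each defect term $\to0$ in $L^1$, extract an a.e.-convergent subsequence, and use continuity of $\nabla\Phi$ plus $\nabla u_k\to\nabla u$ in $L^2$ to identify the a.e.\ limit as the corresponding defect term for $u$, which must then vanish a.e. A secondary subtlety is handling the two regimes (growth condition vs.\ $L^\infty$ with $W\in\mathcal C^2$) uniformly — but both are covered by Lemma \ref{lavrentiev}, so this only affects which of Proposition \ref{minimal}/\ref{minimal_bndProp} one cites.
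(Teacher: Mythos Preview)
Your proposal is correct and follows essentially the same approach as the paper: expand $\nabla\cdot[\Phi(u)]$ via the relevant Young/Cauchy--Schwarz identity so that $E(u)-\int_\Omega\nabla\cdot[\Phi(u)]$ is a sum of nonnegative defect terms, use Lemma~\ref{lavrentiev} and Lemma~\ref{gauss-green} together with the saturation condition to force these defects to vanish for a global minimizer, and read off the PDE; the converse runs the same identity backwards. The one streamlining in the paper worth noting is that instead of passing to the limit in the a.e.\ identities along the approximants $u_k$ (the ``bookkeeping'' you flag), the paper first establishes the integrated identity
\[
\Phi_1(u^+)-\Phi_1(u^-)=E(u)-\int_\Omega\Big(W(u)-\tfrac14|\Pi_0\nabla\Phi(u)|^2\Big)-\tfrac14\int_\Omega\big|\Pi_0\nabla\Phi(u)-2\Pi^+\nabla u\big|^2
\]
directly for \emph{all} admissible (non-smooth) $u$, by observing that each integrand is dominated by the energy density and depends continuously on $u$, so Lemma~\ref{lavrentiev} passes the identity itself to the limit; the vanishing of the defect terms for the minimizer $u$ then follows immediately without any further subsequence extraction.
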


\begin{remark}
If $u$ is one-dimensional, then $u=u(x_1)=(a,\varphi(x_1))\in\R^d_a$, and the first order PDE in \eqref{pdeStrong}, 
\eqref{pdeAsym} and \eqref{pdeSym} is equivalent to the ODE
\(
\dot{\varphi}(t)=[\partial_2\Phi_1,\dots,\partial_d\Phi_1](a,\varphi(t)).
\)
\end{remark}
\begin{proof}
We will focus on the third case ($\mathcal{E}_{sym}$), the first and second cases are similar. Namely, assume that $\Phi\in\mathcal{C}^1(\R^d,\R^d)$ satisfies \eqref{satur_cond}, that $\nabla\Phi$ is symmetric and $|\Pi_0\nabla\Phi|^2\leq 4W$. If $u\in\mathcal{C}^\infty\cap L^\infty$ with $\nabla\cdot u=0$, then by \eqref{11},
\begin{align*}
 \nabla\cdot[\Phi(u)]
 =\Pi_0\nabla\Phi(u):\Pi^+\nabla u^T= \frac 14|\Pi_0\nabla\Phi(u)|^2+|\Pi^+\nabla u|^2-\frac 14\left|\Pi_0\nabla\Phi(u)-2\Pi^+\nabla u\right|^2.
\end{align*}
If $E(u)<\infty$, we have $\nabla\cdot[\Phi(u)]\in L^1(\Omega)$ due to $|\Pi_0\nabla\Phi(u)|^2\leq 4W(u)$ in \eqref{ent_sym}; thus, from Lemma~\ref{gauss-green}, Proposition~\ref{antisymgradient} and the boundary condition $\overline{u}(\pm\infty)=u^\pm$, we deduce by integrating the preceding identity that
\begin{equation}
\label{22}
\Phi_1(u^+)-\Phi_1(u^-)=E(u)- \int_\Omega \Big(W(u)-\frac 14|\Pi_0\nabla\Phi(u)|^2\Big)\diff x-\frac 14\int_\Omega\left|\Pi_0\nabla\Phi(u)-2\,\Pi^+\nabla u\right|^2\diff x.
\end{equation}
Since each term of the RHS is controlled by the energy density $\frac 12|\nabla u|^2+W(u)$ and since the integrands depend continuously on \(u\), we deduce by Lemma~\ref{lavrentiev} that the relation \eqref{22} still holds for all $u\in H^1_{div}(\Omega,\R^d)$ with $E(u)<+\infty$ and $\overline{u}(\pm\infty)=u^\pm$, without assuming that $u$ is smooth, but only that $u$ is bounded whenever $W$ does not satisfy \eqref{growth_W} and $W\in \mathcal{C}^2$. Moreover, since $W\ge\frac 14|\Pi_0\nabla\Phi|^2$, the last two terms in \eqref{22} are nonnegative; in particular, $E(u)\ge \Phi_1(u^+)-\Phi_1(u^-)$.
Now, by the saturation condition \eqref{satur_cond} and Proposition \ref{inf_1D}, $\Phi_1(u^+)-\Phi_1(u^-)=\mathrm{geod}_{W}^a(u^-,u^+)$ coincides with the infimum of the energy over 1D transitions. Thus, if $u$ is a global minimizer of $(\mathcal P)$ and either ($u\in L^\infty(\Omega,\R^d)$ and $W\in\mathcal{C}^2(\R^d,\R_+)$) or $W$ satisfies the growth condition \eqref{growth_W}, then the last two terms of \eqref{22} vanish, i.e. \eqref{pdeSym} holds true.

Conversely, if $u$ solves \eqref{pdeSym}, then \eqref{22} yields $E(u)=\Phi_1(u^+)-\Phi_1(u^-)$. Since $W$ satisfies \eqref{growth_W} and $\Phi$ satisfies ($\mathcal{E}_{sym}$), then \eqref{22} gives also $E(v)\ge \Phi_1(u^+)-\Phi_1(u^-)$ for every $v\in H^1_{div}(\Omega,\R^d)$ with $E(v)<+\infty$ and $\overline{v}(\pm\infty)=u^\pm$ (without assuming $v$ bounded). In particular, $u$ is a global minimizer of $(\mathcal{P})$.
\end{proof}

In dimension $d\geq 3$ there is no hope for uniqueness of global minimizers, even up to a translation in $x_1$-direction; in fact, $1D$ solutions of \eqref{energy_1D} need not be unique when $\R^d_a$ is of dimension $d-1\geq 2$ since there could be two distinct minimizers of $\mathrm{geod}_{W}^a$ connecting $u^-$ to $u^+$ within the hyperspace $\R^d_a$.
Therefore, in these cases, there is no uniqueness in the first order PDE in \eqref{pdeStrong}, 
\eqref{pdeAsym} and \eqref{pdeSym}.
Nevertheless, we will prove in the following that a necessary condition in having uniqueness is given by a punctual condition $u(x_0)=u_0\in\R^d$. For that, we will focus on the cases ($\mathcal{E}_{strg}$) and ($\mathcal{E}_{sym}$) because in the case ($\mathcal{E}_{asym}$) there are only ``trivial'' entropies as we shall see in Proposition~\ref{rigidity_antisym} (thus, it is useless in proving uniqueness in that case). 
Note that in those two cases, the first order PDE system in \eqref{pdeStrong} and \eqref{pdeSym}  is of the form $\nabla u=F(u)$ when ($\mathcal{E}_{strg}$) holds true, and $\Pi^+\nabla u=F(u)$ when ($\mathcal{E}_{sym}$) is satisfied, where $F$ maps $\R^d$ into the set of square matrices. If $F$ is locally Lipschitz \footnote{Note that if $\Phi\in W_{loc}^{2,\infty}$ then $F$ corresponds to a locally Lipschitz map.}, it is clear that $\mathcal{C}^1$-solutions of $\nabla u=F(u)$ such that $u(x_0)=u_0$ are unique by the Cauchy-Lipschitz theorem (applied to $t\mapsto u(x_0+tv)$ which satisfies an ODE whatever $v\in\R^d$). Equations of the form $\Pi^+\nabla u=F(u)$ are weaker (obviously, they cover the first class of equations since if $\nabla u=F(u)$ then $\Pi^+\nabla u=\Pi^+F(u)=\tilde{F}(u)$) and we show that they enjoy a similar uniqueness property in the case of Lipschitz solutions (which is coherent with the regularity in Proposition \ref{pro:reg}):
\begin{proposition}
If $F\in\mathrm{Lip}_{loc}(\R^d,\R^{d\times d})$ and $v,w\in \mathrm{Lip}_{loc}(\Omega,\R^d)$ are two solutions of the system 
\(
\Pi^+\nabla u=F(u)\text{ a.e.}
\)
such that $v(x_0)=w(x_0)$ for some $x_0\in\Omega$, then $v=w$.
\end{proposition}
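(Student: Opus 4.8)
The plan is to reduce the claim to a uniqueness statement for a first-order ODE system along lines through $x_0$, exploiting the antisymmetry of the ``error'' $\Pi^-\nabla u$. First I would set $z := v - w \in \mathrm{Lip}_{loc}(\Omega,\R^d)$ and note that $\Pi^+\nabla z = F(v) - F(w)$ a.e., so that $|\Pi^+\nabla z| \le L\,|z|$ locally, where $L$ is a local Lipschitz constant of $F$. The obstacle is that this controls only the symmetric part of $\nabla z$; the antisymmetric part $\Pi^-\nabla z$ is a priori unconstrained, so one cannot directly write a Gronwall estimate for $z$ along a segment. The key idea (this is the analogue of Proposition~\ref{antisymgradient} at the level of the difference, but here \emph{without} a divergence constraint) is that the antisymmetric part is not really free: it is the differential of a skew form built from a single vector field, and such a form can be recovered from its symmetric counterpart via an elliptic identity. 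Concretely, writing $A := \Pi^-\nabla z$, one has $\partial_k A_{ij} = \partial_i(\Pi^+\nabla z)_{kj} - \partial_j(\Pi^+\nabla z)_{ki}$ in the sense of distributions (the standard identity $\partial_k A_{ij} = \partial_i S_{kj} - \partial_j S_{ki}$ relating the curls of the symmetric and antisymmetric parts of a gradient), so $A$ — hence all of $\nabla z$ — is controlled in a suitable negative Sobolev/$L^2$ sense by $\Pi^+\nabla z$, and therefore by $z$ itself.

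The cleanest route I would take is a direct energy estimate. Fix a ball $B = B(x_0,r) \subset\subset \Omega$ on which $v,w$ take values in a fixed compact set, let $L$ be the corresponding Lipschitz constant of $F$, and consider $\phi(t) := \int_{B} |z|^2 \rho$ for a suitable cutoff, or more simply argue on concentric balls. One has, using $\int_B |\nabla z|^2 = \int_B (|\Pi^+\nabla z|^2 + |\Pi^-\nabla z|^2)$ and the Gauss--Green identity $\int_B \Pi^-\nabla z : \Pi^-\nabla z = \int_B \Pi^+\nabla z : (\text{symmetric tensor}) + (\text{boundary terms})$ — the same algebra as in the proof of Proposition~\ref{antisymgradient}, where the role of the divergence-free constraint is now replaced simply by integration by parts with controlled boundary contributions on $\partial B$ — an estimate of the shape
\[
\int_{B_\rho} |\nabla z|^2 \;\le\; C\int_{B_{\rho'}} |\Pi^+\nabla z|^2 + (\text{flux through }\partial B_{\rho'})\;\le\; C L^2 \int_{B_{\rho'}} |z|^2 + (\text{flux}),
\]
valid for $\rho < \rho'$. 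Combining this with the Poincaré-type inequality on $B$ and a Grönwall / iteration argument on the radius (absorbing the flux terms by a good-radius choice à la Lemma~\ref{BC_unif}), one concludes that $z \equiv 0$ on a neighbourhood of $x_0$, since $z(x_0)=0$ kills the constant.

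Finally, I would upgrade the local conclusion to a global one: the set $\{x \in \Omega : v = w \text{ in a neighbourhood of } x\}$ is by definition open, and by the above argument (run at an arbitrary base point where $v=w$) it is also closed in $\Omega$; since $\Omega = \R \times \T^{d-1}$ is connected, it is all of $\Omega$, so $v = w$. The main obstacle, as flagged above, is handling the antisymmetric part $\Pi^-\nabla z$, which carries no pointwise bound from the hypothesis; the resolution is that $\Pi^-\nabla z$ is a \emph{gradient-type} object whose curl equals (up to sign) the curl of $\Pi^+\nabla z$, so it is controlled by $\Pi^+\nabla z$ in $L^2_{loc}$ via integration by parts, exactly the mechanism already used to prove the identity $\|\Pi^+\nabla u\|_{L^2}^2 = \|\Pi^-\nabla u\|_{L^2}^2$ in Proposition~\ref{antisymgradient} — only here applied on bounded balls with the boundary terms kept and estimated rather than discarded.
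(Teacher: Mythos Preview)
Your approach is in the right spirit: the identity $\partial_k A_{ij}=\partial_i S_{kj}-\partial_j S_{ki}$ (with $A=\Pi^-\nabla z$, $S=\Pi^+\nabla z$) is correct and is in fact one of the standard ingredients in proofs of Korn's inequality. But you are effectively re-deriving Korn by hand. The paper instead invokes Korn's inequality directly, together with a scaling/compactness argument and the uniform Lipschitz bound on $z=v-w$, to obtain in one stroke
\[
\int_{B(x_0,r)}|z|^2\,\diff x\le C_1 r^2\int_{B(x_0,r)}|\Pi^+\nabla z|^2\,\diff x,
\]
which is then fed into $|\Pi^+\nabla z|\le C_F|z|$ to produce $\int_{B(x_0,r)}|z|^2\le C_2 r^2\int_{B(x_0,r)}|z|^2$, forcing $z=0$ on a ball of radius $r_0$ independent of the basepoint; propagation then proceeds as you describe.

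The genuine gap in your outline is the treatment of the kernel. On a ball, the kernel of $u\mapsto\Pi^+\nabla u$ is \emph{not} the constants: it is the full space of infinitesimal rigid motions $x\mapsto a+A(x-x_0)$ with $A$ antisymmetric. Your remark that ``$z(x_0)=0$ kills the constant'' handles only the translational part $a$; it says nothing about the rotational part $A$, and without controlling that, no combination of your flux estimate with a Poincar\'e inequality can close. This is precisely why the paper packages the estimate through Korn rather than through the ad hoc integration-by-parts you sketch.

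Two smaller points. Your appeal to Proposition~\ref{antisymgradient} is misplaced: that identity relies essentially on the divergence constraint $\nabla\cdot u=0$ and on integration over all of $\Omega$ with controlled behaviour at infinity, neither of which is available for $z=v-w$ on a bounded ball. Likewise, Lemma~\ref{BC_unif} concerns traces as $x_1\to\pm\infty$ and has nothing to do with selecting good radii for local balls. The final open--closed propagation argument on the connected domain $\Omega$ is fine and matches the paper.
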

\begin{proof}
Let us fix $R>1$ such that $x_0\in B(0,R-1)$. Since $u:=v-w$ is Lipschitz in $B(0,R)$ and $u(x_0)=0$, one has
\[
|u(x)|\leq L_R\; \mathrm{dist}_\Omega(x,x_0)\quad\text{for all $x\in B(0,R)$,}
\]
where $L_R>0$ is the Lipschitz constant of $u$ in $B(0,R)$ and \(\mathrm{dist}_\Omega\) is the distance induced on \(\Omega=\R\times\T^{d-1}\) by the euclidean distance in \(\R^d\) (via the quotient map). By Korn's inequality and a compactness/scaling argument, 
we deduce that for all $r<1$,
\be
\label{77}
\int_{B(x_0,r)}|u|^2\diff x\le C_1r^2\int_{B(x_0,r)} |\Pi^+\nabla u|^2\diff x,
\ee
where the constant $C_1>0$ only depends on $R$, $L_R$ and the dimension $d$. Since $F$ is Lipschitz in a ball containing $v(B(0,R))\cup w(B(0,R))$, by the ODE satisfied by $v$ and $w$, we have $|\Pi^+\nabla u|=
|F(v)-F(w)|\leq C_F|u|$ for some $C_F>0$. Combined with \eqref{77}, since $B(x_0,r)\subset B(0,R)$, we finally deduce
\[
\int_{B(x_0,r)}|u|^2\diff x\le C_2r^2\int_{B(x_0,r)}|u|^2\diff x,
\]
where the constant $C_2>0$ depends on $R$, but is independent of $x_0\in B(0,R-1)$ and $r<1$. This implies that $u=v-w=0$ on $B(x_0,r_0)$ for the choice of a small radius $r_0:=\frac 12C_2^{-1/2}$. Since $r_0$ is uniform for all $x_0\in B(0,R-1)$, applying the same reasoning when the point $x_0$ is replaced by any point in $B(x_0,r_0)$, and repeating the procedure inductively, yield the equality $v=w$ a.e.\ in $B(0,R)$. Since this is true for all $R>0$, we have proved $v=w$.
\end{proof}
The preceding result does not imply one-dimensional symmetry of solutions of the first order PDE in \eqref{pdeStrong}, 
\eqref{pdeAsym} and \eqref{pdeSym} in dimension $d\geq 3$ (thus, of global minimizers of $(\mathcal P)$) due to the additional assumption $v(x_0)=w(x_0)$. 
In the cases ($\mathcal{E}_{strg}$) or ($\mathcal{E}_{sym}$), a simple situation where this one-dimensional symmetry holds is given by entropies $\Phi$ satisfying additionally \eqref{33}.

\begin{proposition}\label{diagonal_entropy}
Let $W:\R^d\to\R_+$ be a continuous potential and $a\in\R$ such that $S_a$ contains at least two wells $u^\pm$ and assume that there exists an entropy $\Phi=(\Phi_1,\dots,\Phi_d)\in\mathcal{C}^1(\R^d,\R^d)$ satisfying either ($\mathcal{E}_{strg}$) or ($\mathcal{E}_{sym}$), the saturation condition \eqref{satur_cond} and the condition \eqref{33}. If $u$ is a global minimizer of $(\mathcal P)$ such that either ($u\in L^\infty(\Omega,\R^d)$ and $W\in\mathcal{C}^2(\R^d,\R_+)$) or $W$ satisfies the growth condition \eqref{growth_W} then $u$ is one-dimensional, i.e. $u=g(x_1)$ for some $g\in\dot{H}^1(\R,\R^d_a)$.
\end{proposition}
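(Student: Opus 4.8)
The plan is to combine the first-order PDE satisfied by a global minimizer (Proposition~\ref{pde_opt}) with the diagonal relation \eqref{33} so as to force $u$ to be valued in the hyperplane $\R^d_a$, after which Corollary~\ref{1Dcriterium} finishes the argument. First I would invoke Proposition~\ref{pde_opt}: the hypotheses here — an entropy $\Phi$ satisfying the saturation condition \eqref{satur_cond} and one of ($\mathcal{E}_{strg}$), ($\mathcal{E}_{sym}$), together with $u$ a global minimizer of $(\mathcal{P})$ under the boundedness ($u\in L^\infty$, $W\in\mathcal{C}^2$) or the growth condition \eqref{growth_W} — are exactly those required by that proposition, so $u$ solves \eqref{pdeStrong} in the case ($\mathcal{E}_{strg}$), resp.\ \eqref{pdeSym} in the case ($\mathcal{E}_{sym}$).

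Next I would exploit \eqref{33} to kill $\partial_1u_1$. By \eqref{33}, $\mathrm{Tr}(\nabla\Phi(z))=\sum_{i=1}^d\partial_i\Phi_i(z)=d\,\partial_1\Phi_1(z)$ for every $z\in\R^d$, hence $\Pi_0\nabla\Phi(z)=\nabla\Phi(z)-\partial_1\Phi_1(z)\,I_d$ has a vanishing $(1,1)$-coefficient. Reading off the $(1,1)$-entry of the PDE of the previous step — i.e.\ $\partial_1u_1=(\Pi_0\nabla\Phi(u))_{11}$ under ($\mathcal{E}_{strg}$) and $2\partial_1u_1=(\Pi_0\nabla\Phi(u))_{11}$ under ($\mathcal{E}_{sym}$), recalling the convention $\nabla u=(\partial_ju_i)_{i,j}$ — yields $\partial_1u_1=0$ a.e.\ in $\Omega$ in both cases. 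This is precisely where ($\mathcal{E}_{asym}$) must be excluded: an antisymmetric $\Pi_0\nabla\Phi$ already has zero diagonal, so \eqref{33} carries no information and \eqref{pdeAsym} says nothing about $\partial_1u_1$.

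Then I would propagate the boundary value $a$ into the domain. Since $u_1\in\dot{H}^1(\Omega)$ with $\partial_1u_1=0$, the trace $u_1(R,\cdot)$ on $\{R\}\times\T^{d-1}$ is a fixed element $\tilde u_1\in H^1(\T^{d-1})$, independent of $R\in\R$; picking, by Lemma~\ref{BC_unif}, a sequence $R_n^+\to+\infty$ with $u(R_n^+,\cdot)\to u^+$ in $H^1(\T^{d-1})$ and letting $n\to\infty$ gives $\tilde u_1\equiv u_1^+=a$. Hence $u(x)\in\R^d_a$ for a.e.\ $x\in\Omega$, and Corollary~\ref{1Dcriterium} then directly gives that $u$ is one-dimensional, $u=g(x_1)$ with $g\in\dot{H}^1(\R,\R^d_a)$.

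I do not anticipate a genuine obstacle: granted Proposition~\ref{pde_opt}, the proof reduces to a short matrix computation (the step $\partial_1u_1=0$) plus bookkeeping with results already established. The only point needing a line of justification is that $\partial_1u_1=0$ really forces the trace $u_1(R,\cdot)$ to be $R$-independent, which is routine for $H^1$ functions. Should one prefer to avoid Corollary~\ref{1Dcriterium}, an alternative is a direct slicing argument: once $u_1\equiv a$, for a.e.\ $x'$ the slice $x_1\mapsto u(x_1,x')$ stays in $\R^d_a$, and by Young's inequality, arc-length reparametrization (as in Step~1 of the proof of Lemma~\ref{cWgeqGeod}) and continuity of $\mathrm{geod}_W^a$ one obtains $\int_\R\frac12|\partial_1u(x_1,x')|^2+W(u(x_1,x'))\diff x_1\ge\mathrm{geod}_W^a(u^-,u^+)$; integrating over $x'$ and using $E(u)=c_W(u^-,u^+)=\mathrm{geod}_W^a(u^-,u^+)$ forces $\nabla'u=0$, hence $u=u(x_1)$.
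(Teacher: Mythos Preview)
Your proof is correct and follows essentially the same route as the paper: invoke Proposition~\ref{pde_opt}, use \eqref{33} to see that the diagonal of $\Pi_0\nabla\Phi$ vanishes so that $\partial_1u_1=0$ (the paper notes more generally $\partial_iu_i=0$ for all $i$, but only the case $i=1$ is used), then apply Lemma~\ref{BC_unif} to get $u_1\equiv a$ and conclude via Corollary~\ref{1Dcriterium}. Your additional remarks on why $(\mathcal{E}_{asym})$ is excluded and the alternative slicing argument are correct but not needed.
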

\begin{proof}
Since the diagonal of $\Pi_0\nabla\Phi$ vanishes, it follows from Proposition~\ref{pde_opt} that $\partial_iu_i=0$ a.e.\ for all $i\in\{1,\dots,d\}$. In particular, $u_1(x_1,x')$ does not depend on $x_1\in\R$. By Lemma~\ref{BC_unif}, there exist two sequences $(R_n^\pm)_{n\ge 1}$ such that $R_n^\pm\to\pm\infty$ and $u(R_n^\pm,x')\to u^\pm\in S_a$ for a.e.\ $x'\in \T^{d-1}$ as $n\to +\infty$. In particular, $u_1\equiv a$, i.e. $u\in\R^d_a$ a.e. in $\Omega$, and the conclusion follows from Corollary~\ref{1Dcriterium}.
\end{proof}
We will explain in Section \ref{higherdimension} how Theorem \ref{thm:rigid_sym} is a consequence of the above 
Proposition~\ref{diagonal_entropy}.

\subsection{One-dimensional symmetry in dimension $2$. Proof of Theorems \ref{thm:harm_wave} and \ref{thm:tricomi} \label{one_dim_2D}}
Our aim is to identify potentials $W$ for which one has existence of an entropy, and so optimality of the $1D$ transition layers (by Proposition~\ref{minimal}). We also want to deduce rigidity results from the entropy method, i.e., every global minimizer in $(\mathcal P)$ is one-dimensional symmetric. We will restrict ourselves to very specific potentials. Namely, we impose that there exists $w\in\mathcal{C}^2(\R^2,\R)$ such that $W(z)=\frac 12w^2 (z)$ for all $z\in\R^2$ and
\begin{equation*}
\partial_{11}w\pm \partial_{22}w=0 \quad \textrm{in } \R^2.
\end{equation*}
We will also discuss the case of more general potentials $W=\frac 12w^2$ with $w$ being a solution of the Tricomi equation.

\paragraph{Existence of entropies.} { We start with the case of $W=\frac 12w^2$ where $\Delta w=0$ or $\square w=0$ in $\R^2$.} Motivated by Proposition~\ref{criterium_weak}, we look for entropies $\Phi\in\mathcal{C}^1(\R^2,\R^2)$ that subscribe to Situation 2 or 3 (i.e., \eqref{ent_sym} or \eqref{ent_asym} hold true), namely, we impose the punctual condition \eqref{ansatz_harmonic} on $\Phi$:
\begin{equation*}
\Pi_0\nabla\Phi(z)=\nabla\Phi (z)+\alpha(z)I_2=\left(
\begin{matrix} 
0& w(z)\\
\mp w(z)&0
\end{matrix}
\right)
\quad\text{for all }z\in\R^2,
\end{equation*}
where $\alpha$ is a scalar function to be determined. In the case $\mp=-$ corresponding to the antisymmetry of $\Pi_0\nabla\Phi$
in \eqref{ent_asym}, by Cauchy-Riemann, this condition implies that $\Phi$ is holomorphic on $\R^2\simeq \C$ which is coherent with the assumption on $w$ to be harmonic. Indeed, writing $\Phi (z)=\Phi_1 (x,y)+i\Phi_2(x,y)$ for $z=x+iy\in\mathbb{C}\simeq \R^2$, \eqref{ansatz_harmonic} implies that $\Phi$ is holomorphic with the (complex) derivative $-\partial_z \Phi(z)=\alpha(z)+iw(z)$; then $\alpha$ is the harmonic conjugate of $w$ (defined up to an additive constant).
In the case $\mp=+$, the symmetry of $\Pi_0\nabla\Phi$ is coherent with the assumption on $w$ to be a solution of the wave equation and corresponds to $(\mathcal{E}_{sym})$ in \eqref{ent_sym}.

{
\begin{lemma}
\label{lem_entropy_harmonic}
Let $W=\frac 12w^2$ with $w\in\mathcal{C}^2(\R^2,\R)$ such that $\partial_{11}w\pm\partial_{22} w=0$ in $\R^2$ and for some $a\in\R$, $w$ vanishes at $u^\pm=(a,u^\pm_2)\in S_a$ and $w\geq 0$ on the segment $[u^-,u^+]$.
Then there exists an entropy $\Phi\in\mathcal{C}^3(\R^2, \R^2)$ satisfying the saturation condition \eqref{satur_cond} together with \eqref{ansatz_harmonic} for some $\alpha\in\mathcal{C}^2(\R^2)$. 
\end{lemma}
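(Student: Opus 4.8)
\medskip

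The plan is to construct $\Phi$ explicitly by first building the scalar function $\alpha$ as a primitive, then defining $\Phi$ componentwise from the prescribed Jacobian in \eqref{ansatz_harmonic}, and finally checking the entropy condition and the saturation identity. First I would treat the two cases simultaneously by writing the prescribed Jacobian as
\[
\nabla\Phi(z)=\begin{pmatrix}-\alpha(z)&w(z)\\ \mp w(z)&-\alpha(z)\end{pmatrix},
\]
so that the compatibility (Schwarz) conditions $\partial_2(\partial_1\Phi_1)=\partial_1(\partial_2\Phi_1)$ and $\partial_2(\partial_1\Phi_2)=\partial_1(\partial_2\Phi_2)$ become, respectively, $\partial_2\alpha=-\partial_1 w$ together with $\pm\partial_1 w=\partial_2\alpha$ — wait, one must be careful with signs: reading off the four entries, $\partial_1\Phi_1=-\alpha$, $\partial_2\Phi_1=w$, $\partial_1\Phi_2=\mp w$, $\partial_2\Phi_2=-\alpha$. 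Compatibility for $\Phi_1$ gives $-\partial_2\alpha=\partial_1 w$; compatibility for $\Phi_2$ gives $\mp\partial_1 w=-\partial_2\alpha$. These two are consistent precisely because of the sign convention, and they force $\partial_2\alpha=-\partial_1 w$, while there is no constraint linking $\partial_1\alpha$ to $w$ from $\Phi_1$ alone — but to have $\nabla\Phi$ genuinely the Jacobian of a $\mathcal C^1$ map we also need $\partial_1\alpha$ determined, and the natural (and only) choice making everything consistent is $\partial_1\alpha=\mp\partial_2 w$. I would then verify: (i) the one-form $(\mp\partial_2 w)\,dz_1+(-\partial_1 w)\,dz_2$ is closed exactly when $\partial_{11}w\pm\partial_{22}w=0$, which is our hypothesis, hence it is exact on the simply connected domain $\R^2$ and defines $\alpha\in\mathcal C^2(\R^2)$ (one more derivative than $w$ via the primitive, but since $w\in\mathcal C^2$ and we differentiate once inside, $\alpha\in\mathcal C^2$; similarly $\Phi\in\mathcal C^3$); (ii) with this $\alpha$, the $\R^2$-valued one-form with components given by the two rows of $\nabla\Phi$ is closed, hence $\Phi\in\mathcal C^3(\R^2,\R^2)$ exists with that Jacobian.

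\medskip

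Next I would check that $\Phi$ is an entropy. Since the prescribed $\Pi_0\nabla\Phi(z)=\begin{pmatrix}0&w(z)\\ \mp w(z)&0\end{pmatrix}$ is antisymmetric when $\mp=-$ and symmetric when $\mp=+$, and $|\Pi_0\nabla\Phi(z)|^2=2w(z)^2=4W(z)\le 4W(z)$, the map $\Phi$ satisfies exactly the hypotheses of Proposition~\ref{criterium_weak} (criterion $(\mathcal E_{asym})$ in the harmonic case, $(\mathcal E_{sym})$ in the wave case). Hence $\Phi$ is an entropy. (The integrability $\nabla\cdot[\Phi(u)]\in L^1(\Omega)$ is built into Proposition~\ref{criterium_weak} via $|\Pi_0\nabla\Phi|^2\le 4W$.)

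\medskip

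The remaining and most delicate point is the saturation condition \eqref{satur_cond}, i.e. $\Phi_1(u^+)-\Phi_1(u^-)=\mathrm{geod}^a_W(u^-,u^+)$. I would compute the left-hand side by integrating $\nabla\Phi_1$ along the straight segment $t\mapsto\gamma(t)=(a,\varphi(t))$ from $u^-$ to $u^+$ inside $\R^2_a$; since $\dot\gamma_1\equiv 0$, only the entry $\partial_2\Phi_1=w$ contributes, giving
\[
\Phi_1(u^+)-\Phi_1(u^-)=\int \partial_2\Phi_1(a,y)\,dy=\int_{[u_2^-,u_2^+]} w(a,y)\,dy .
\]
On the other hand, by \eqref{min_en_1D} (the $d=2$ formula), $\mathrm{geod}^a_W(u^-,u^+)=\int_{[u_2^-,u_2^+]}\sqrt{2W(a,y)}\,dy=\int_{[u_2^-,u_2^+]}|w(a,y)|\,dy$. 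These two agree precisely because $w\ge 0$ on the segment $[u^-,u^+]$, which is exactly the standing hypothesis; this is where the sign assumption on $w$ is essential, and it is the step I expect to carry the whole weight of the argument. (If $w$ changed sign on the segment, the left-hand side would only be $\le$ the geodesic cost in absolute value and saturation would fail.) Putting these together yields \eqref{satur_cond} and completes the proof. One small bookkeeping remark I would include: the additive constant in $\alpha$ (equivalently in $\Phi$) is irrelevant since only differences $\Phi_1(u^+)-\Phi_1(u^-)$ enter, and in the harmonic case $\mp=-$ the pair $(\alpha,\mp w)=(\alpha,-w)$ are Cauchy–Riemann conjugates, so $\Phi$ is (anti)holomorphic, consistent with Remark preceding the lemma.
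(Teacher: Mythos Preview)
Your approach is essentially identical to the paper's: construct $\alpha$ via Poincar\'e's lemma from the compatibility conditions on the prescribed Jacobian, invoke Proposition~\ref{criterium_weak} for the entropy property since $|\Pi_0\nabla\Phi|^2=2w^2=4W$, and verify saturation via \eqref{min_en_1D} using $w\ge 0$ on $[u^-,u^+]$. One bookkeeping slip: compatibility for $\Phi_2$ gives $\mp\partial_2 w=-\partial_1\alpha$, i.e.\ $\partial_1\alpha=\pm\partial_2 w$ (not $\mp\partial_2 w$ as you wrote), and with this correct sign the closedness condition for $\nabla\alpha=(\pm\partial_2 w,-\partial_1 w)$ is indeed $\partial_{11}w\pm\partial_{22}w=0$; your stated one-form would instead yield $\partial_{11}w\mp\partial_{22}w=0$.
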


\begin{proof} By the Poincar\'e lemma, we know that the existence of a map $\Phi$ satisfying \eqref{ansatz_harmonic} is equivalent to the system
$$
-\partial_2\alpha -\partial_1 w=\pm\partial_2 w-\partial_1\alpha =0,
$$
which rewrites $\nabla\alpha=(\pm\partial_2 w,-\partial_1 w)$. Applying again the Poincar\'e lemma, the last equality for $\alpha$ is equivalent to the equation $\partial_{11}w\pm \partial_{22}w=0$ as stated in 
our assumption. Moreover, in this case, $\alpha$ satisfies the same equation as $w$.

Let us check now that a map $\Phi$ with \eqref{ansatz_harmonic} is an entropy that satisfies the saturation condition \eqref{satur_cond}. Since we assume that $w$ is $\mathcal{C}^2$, we know that $\alpha$ is $\mathcal{C}^2$ and that $\Phi$ is $\mathcal{C}^3$. The fact that $\Phi$ is an entropy is a consequence of Proposition~\ref{criterium_weak} since $|\Pi_0\nabla\Phi(z)|^2=4W(z)$. The saturation condition \eqref{satur_cond} follows from the equality $\partial_2 \Phi_1 (z)=w(z)=\sqrt{2W(z)}$ for all $z\in [u^-,u^+]$, where we use the assumption $w\ge 0$ on $[u^-,u^+]$. Indeed, one has
\be
\label{eq2018}
\Phi_1(u^+)-\Phi_1(u^-)=\int_{u_2^-}^{u_2^+} \partial_2\Phi_1(a,z_2)\diff z_2=\int_{u_2^-}^{u_2^+} \sqrt{2W(a,z_2)}\diff z_2
\stackrel{\eqref{min_en_1D}}{=}\mathrm{geod}_{W}^a(u^-,u^+).
\ee
\end{proof}

\paragraph{One-dimensional symmetry for $(\mathcal P)$. Proof of Theorem \ref{thm:harm_wave}.} For the two previous classes of potentials, Lemma \ref{lem_entropy_harmonic} and Proposition \ref{pde_opt} yield one-dimensional symmetry of global minimizers. We start with the case where $w$ solves $\square w=0$:
\begin{theorem}\label{symmetry_wave}
Let $W:\R^2\to\R_+$ be a continuous potential and $a\in\R$ such that $S_a$ contains at least two points $u^\pm=(a,u^\pm_2)$. Assume that $W\geq \frac 12w^2$ on $\R^2$ and ($W=\frac 12w^2$ and $w>0$ on $(u^-,u^+)$), where $w\in\mathcal{C}^2(\R^2,\R)$ solves the wave equation $\square w=0$ in $\R^2$.
If $u$ is a global minimizer in $(\mathcal P)$ such that either $u\in L^\infty$ or $|w|$ satisfies the growth condition \eqref{growth_intro}, then $u$ is one-dimensional, i.e. $u(x)=g(x_1)$ a.e.\ where $g:\R\to\R^2_a$ is, up to a translation in the $x_1$-variable, the unique one-dimensional transition layer given by Proposition~\ref{Exist_profile1D}.
\end{theorem}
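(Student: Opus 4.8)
The plan is to produce, for the fixed wells $u^\pm=(a,u_2^\pm)$, an entropy in the sense of Definition~\ref{entropy_def} that belongs to the symmetric class $(\mathcal{E}_{sym})$, satisfies the saturation condition \eqref{satur_cond}, and moreover obeys the diagonal identity \eqref{33}; once this is in place, the theorem follows immediately from Proposition~\ref{diagonal_entropy}. The entropy is the one built in the proof of Lemma~\ref{lem_entropy_harmonic}, a construction that only involves $w$: since $w\in\mathcal{C}^2(\R^2)$ solves $\square w=\partial_{11}w-\partial_{22}w=0$, the Poincar\'e lemma gives $\alpha\in\mathcal{C}^2(\R^2)$ with $\nabla\alpha=-(\partial_2 w,\partial_1 w)$, and a second application of the Poincar\'e lemma yields $\Phi\in\mathcal{C}^3(\R^2,\R^2)$ with
\[
\nabla\Phi=\begin{pmatrix}-\alpha&w\\ w&-\alpha\end{pmatrix}\qquad\text{on }\R^2 .
\]
This $\nabla\Phi$ is symmetric; $\Pi_0\nabla\Phi$ has zero diagonal and off-diagonal entries equal to $w$, so $|\Pi_0\nabla\Phi|^2=2w^2\le 4W$ using only the global inequality $W\ge\frac12 w^2$, whence $\Phi$ is an entropy by Proposition~\ref{criterium_weak}. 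Finally $\partial_1\Phi_1=\partial_2\Phi_2=-\alpha$, which is exactly \eqref{33}.

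\textbf{Saturation and conclusion.} From $\partial_2\Phi_1=w$ together with $W=\frac12 w^2$ and $w\ge 0$ on the segment $[u^-,u^+]\subset\R^2_a$, one obtains
\[
\Phi_1(u^+)-\Phi_1(u^-)=\int_{u_2^-}^{u_2^+}w(a,y)\,\diff y=\int_{u_2^-}^{u_2^+}\sqrt{2W(a,y)}\,\diff y\stackrel{\eqref{min_en_1D}}{=}\mathrm{geod}_{W}^a(u^-,u^+),
\]
so \eqref{satur_cond} holds. The alternative hypothesis of the statement (boundedness of $u$, or the growth condition \eqref{growth_intro}, which for $d=2$ is \eqref{growth_W}) is precisely the one required by Proposition~\ref{diagonal_entropy}; that proposition then shows that every global minimizer $u$ of $(\mathcal P)$ is one-dimensional, $u=g(x_1)$ with $g\in\dot{H}^1(\R,\R^2_a)$. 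It remains to identify $g$: any $\tilde g\in\dot{H}^1(\R,\R^2_a)$ with $\tilde g(\pm\infty)=u^\pm$ is a competitor for $(\mathcal P)$ whose energy equals its one-dimensional energy, so minimality of $u$ forces $g$ to solve \eqref{energy_1D}; since $w>0$, hence $W(a,\cdot)>0$, on $(u_2^-,u_2^+)$, Proposition~\ref{Exist_profile1D} identifies $g$ with the unique one-dimensional transition layer up to $x_1$-translation.

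\textbf{Where the work is.} There is no serious analytic difficulty: the statement is an assembly of Lemma~\ref{lem_entropy_harmonic} (construction of $\Phi$), Proposition~\ref{criterium_weak} ($\Phi$ is an entropy), Proposition~\ref{diagonal_entropy} (rigidity), and Propositions~\ref{inf_1D} and \ref{Exist_profile1D} (the one-dimensional profile). The one point needing attention is that the assumption on $W$ here is the inequality $W\ge\frac12 w^2$ with equality imposed only along the segment $[u^-,u^+]$, rather than the global identity $W=\frac12 w^2$ used in Lemma~\ref{lem_entropy_harmonic}: one checks that the inequality alone already yields $|\Pi_0\nabla\Phi|^2\le 4W$ (hence the entropy property), while equality on the segment, combined with $w>0$ there, is exactly what saturates \eqref{satur_cond}. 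If one prefers to avoid citing Proposition~\ref{diagonal_entropy}, the same conclusion follows by hand: Proposition~\ref{pde_opt} in case $(\mathcal{E}_{sym})$ gives $2\Pi^+\nabla u=\Pi_0\nabla\Phi(u)$ a.e., whose diagonal reads $\partial_1 u_1=\partial_2 u_2=0$; then $u_1$ is independent of $x_1$, Lemma~\ref{BC_unif} forces $u_1\equiv a$, so $u\in\R^2_a$ a.e., and Corollary~\ref{1Dcriterium} gives the one-dimensional symmetry.
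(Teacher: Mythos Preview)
Your argument is essentially the paper's own, packaged through Proposition~\ref{diagonal_entropy}, and your ``by hand'' alternative is exactly the paper's Case~1. However, there is a genuine gap when you invoke Proposition~\ref{diagonal_entropy} (equivalently Proposition~\ref{pde_opt}) directly with the potential~$W$.

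Those propositions require either that $W$ satisfies the growth condition \eqref{growth_W}, or that $u\in L^\infty$ \emph{and} $W\in\mathcal{C}^2$. The hypotheses of Theorem~\ref{symmetry_wave} give neither: $W$ is only assumed continuous, and the growth condition \eqref{growth_intro} is imposed on $|w|$, not on $W$. Since the assumption $W\ge\frac12 w^2$ is only a \emph{lower} bound, growth of $|w|$ says nothing about growth of $W$; and when $u\in L^\infty$, you still cannot feed a merely continuous $W$ into Lemma~\ref{lavrentiev}, on which Propositions~\ref{pde_opt} and~\ref{diagonal_entropy} rest. Your sentence ``is precisely the one required by Proposition~\ref{diagonal_entropy}'' is therefore incorrect.

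The paper closes this gap by a short reduction (its Case~2): if $u$ minimizes $E_W$, then
\[
\int_\Omega\frac12|\nabla u|^2+\frac12 w^2(u)\le E_W(u)=c_W(u^-,u^+)\le\mathrm{geod}_W^a(u^-,u^+)=\mathrm{geod}_{w^2/2}^a(u^-,u^+)=c_{\frac12 w^2}(u^-,u^+),
\]
using $W=\frac12 w^2$ on $[u^-,u^+]$ and Proposition~\ref{minimal} applied to the potential $\frac12 w^2$. Hence $u$ is also a global minimizer for the potential $\frac12 w^2$, which \emph{is} $\mathcal{C}^2$ (as $w\in\mathcal{C}^2$) and \emph{does} satisfy \eqref{growth_W} whenever $|w|$ satisfies \eqref{growth_intro}. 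One then runs your argument with $\frac12 w^2$ in place of $W$. With this reduction inserted, your proof is complete and matches the paper's.
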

\begin{proof}[Proof of Theorem~\ref{symmetry_wave}] 
First, we note that by Lemma \ref{lem_entropy_harmonic}, there exists an entropy $\Phi\in\mathcal{C}^3(\R^2, \R^2)$ associated to the potential $\frac 12w^2$ satisfying the saturation condition \eqref{satur_cond} for this potential $\frac 12w^2$, together with \eqref{ansatz_harmonic} (with $\mp=+$ and $\alpha\in\mathcal{C}^2(\R^2)$). The symmetry of global minimizers is proved by considering two cases:

\medskip

\nd {\it Case 1: $W=\frac 12w^2$ on $\R^2$.} Then, by Proposition \ref{pde_opt}, if $u$ is a global minimizer of $(\mathcal{P})$, it satisfies $2\Pi^+\nabla u=\Pi_0\nabla\Phi(u)$ a.e., i.e.
$$\partial_1u_2+\partial_2u_1=w(u)\in L^2(\Omega)\quad \mbox{and}\quad\partial_1u_1=\partial_2u_2=0\quad\text{a.e. in }\Omega .$$ 
In particular, $u_2$ only depends on $x_1$ and $u_1$ only depends on $x_2$. Thanks to Lemma~\ref{BC_unif}, since $H^1(\T)$ is embedded in $\mathcal{C}^0(\T)$, we know that $u_1(R_n,\cdot)$ converges uniformly to $a$ for a sequence $R_n\to\infty$ and thus, $u_1\equiv a$. This implies that $u$ is one-dimensional and the uniqueness property follows from Proposition~\ref{Exist_profile1D}.

\medskip

\nd {\it Case 2: $W\geq \frac 12w^2$ on $\R^2$.} We prove that if $u$ is a global minimizer in $(\mathcal P)$, 
i.e., $E(u)=c_W(u^-,u^+)$, then $u$ is also a global minimizer for the energy 
$v\mapsto \int_\Omega \frac 12 |\nabla v|^2+\frac 12w^2(v).$ Indeed,
\begin{align*}
\int_\Omega\frac 12 |\nabla u|^2+  \frac 12 w^2(u)&\le E(u)=c_W(u^-,u^+)
\le \mathrm{geod}^a_W(u^-,u^+)=\mathrm{geod}^a_{w^2/2}(u^-,u^+)=c_{\frac 12 w^2}(u^-,u^+),
\end{align*}
where the last two last equalities follow from $W=\frac12w^2$ on $[u^-,u^+]$ and Proposition \ref{minimal} 
(as $\Phi$ is an entropy satisfying the saturation condition \eqref{satur_cond} for the potential $\frac 12w^2$). 
Therefore, the conclusion follows by Case 1.
\end{proof}
\begin{proof}[Proof of Corollary~\ref{symmetry_ag}]
Since $W(z)=(1-|z|^2)^2$ satisfies the growth condition \eqref{growth_intro}, the conclusion follows from Theorem~\ref{symmetry_wave}. 
\end{proof}
When $w$ is harmonic, we have a similar rigidity result:
\begin{theorem}\label{symmetry_harmonic} 
Let $W:\R^2\to\R_+$ be a continuous potential and $u^\pm=(a,u^\pm_2)\in S_a$ be two wells of $W$ for some $a\in\R$. Assume that $W\geq \frac 12w^2$ on $\R^2$ and ($W=\frac 12w^2$ and $w>0$ on $(u^-,u^+)$) for some harmonic function $w\in\mathcal{C}^2(\R^2,\R)$. If $u$ is a global minimizer of $(\mathcal{P})$ such that either $u\in L^\infty$ or $|w|$ satisfies the growth condition \eqref{growth_intro} then $u$ is one-dimensional, i.e. $u(x)=g(x_1)$ a.e.\ with $g:\R\to\R^2_a$ being, up to a translation in the $x_1$-variable, the unique one-dimensional transition layer given by Proposition~\ref{Exist_profile1D}.
\end{theorem}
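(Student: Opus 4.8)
The plan is to run the same scheme as in the proof of Theorem~\ref{symmetry_wave}, but with the holomorphic entropy of Situation~3 (criterion $(\mathcal E_{asym})$) in place of the symmetric one. The new feature is that the last step is genuinely different, and that is where I expect the real difficulty to be.

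First I would invoke Lemma~\ref{lem_entropy_harmonic} with $\mp=-$: it produces an entropy $\Phi\in\mathcal C^3(\R^2,\R^2)$ for the potential $\tfrac12w^2$, with $\nabla\Phi$ as in \eqref{ansatz_harmonic} (so $\Phi$ is holomorphic, $\Pi_0\nabla\Phi$ is antisymmetric and $|\Pi_0\nabla\Phi|^2=4\cdot\tfrac12w^2$), satisfying the saturation condition \eqref{satur_cond} for the wells $u^\pm$. When only $W\ge\tfrac12w^2$ is assumed, I would reduce to the case $W=\tfrac12w^2$ on $\R^2$ exactly as in Case~2 of the proof of Theorem~\ref{symmetry_wave}: since $W=\tfrac12w^2$ on $[u^-,u^+]$ one has $\mathrm{geod}^a_W(u^-,u^+)=\mathrm{geod}^a_{w^2/2}(u^-,u^+)$, and Proposition~\ref{minimal} (the growth \eqref{growth_W} being implied by \eqref{growth_intro}) or Proposition~\ref{minimal_bndProp} in the $L^\infty$ case forces a global minimizer $u$ of $(\mathcal P)$ for $W$ to be also a global minimizer for $\tfrac12w^2$, with $W(u)=\tfrac12w^2(u)$ a.e.

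With $W=\tfrac12w^2$ on all of $\R^2$, Proposition~\ref{pde_opt} in case $(\mathcal E_{asym})$ gives that $u$ solves \eqref{pdeAsym}. In dimension $d=2$, since $2\Pi^-\nabla u^T$ and $\Pi_0\nabla\Phi(u)$ are both traceless, this collapses to the single scalar relation
\[
\partial_1u_2-\partial_2u_1=w(u)\quad\text{a.e. in }\Omega,\qquad \partial_1u_1+\partial_2u_2=0 ;
\]
equivalently, writing $u=\nabla^\perp\psi$ for a stream function $\psi$, it reads $\Delta\psi=w(\nabla^\perp\psi)$. Note that here the equipartition relation $W(u)=\tfrac14|\Pi_0\nabla\Phi(u)|^2$ in \eqref{pdeAsym} is automatic and carries no information; unlike in Theorem~\ref{symmetry_wave}, the antisymmetric structure does \emph{not} hand over $\partial_1u_1=\partial_2u_2=0$.

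Deducing one-dimensional symmetry from this single scalar PDE plus minimality is the main obstacle. The route I would pursue: (i) upgrade regularity --- $w$ harmonic is real-analytic, hence $W=\tfrac12w^2$ is real-analytic, and Proposition~\ref{pro:reg} (when $u\in L^\infty$) or Remark~\ref{rem_reg} (under \eqref{growth_intro}), bootstrapped, makes $u$ real-analytic on $\Omega$; (ii) observe that the PDE, solved for $\partial_1u$, is a Cauchy--Kovalevskaya system in the $x_1$-direction, so that an analytic solution is determined by its trace on a single slice $\{x_1=t\}\times\T$; (iii) combine with the boundary behaviour --- by Lemma~\ref{BC_unif} the traces $u(R_n^\pm,\cdot)$ converge in $H^1(\T)$ to the constants $u^\pm$, and the one-dimensional profile $g=(a,\varphi)$ of Proposition~\ref{Exist_profile1D} solves the same system with $E(g)=\mathrm{geod}^a_W(u^-,u^+)=E(u)$ --- with a Liouville-type rigidity on the cylinder $\R\times\T\cong\C^\ast$ (anti-holomorphic functions on the cylinder that decay at both ends are constant, which controls the linearization $\partial_z(u-g)\approx 0$), so as to conclude $u\equiv g$ up to translation in $x_1$. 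I expect step (iii) --- promoting ``asymptotically one-dimensional energy-minimizer'' to ``exactly one-dimensional'' --- to be where the real work lies, with no counterpart in the wave case. A cleaner alternative, if it can be made to work, would be to extract from holomorphy of $\Phi$ together with $\nabla\cdot u=0$ (i.e. $\partial_zu=\tfrac i2w(u)$) a pointwise identity forcing $\partial_1u_1\equiv 0$ directly, whence $u_1\equiv a$ by $\overline{u_1}\equiv a$ (Lemma~\ref{lem_aver}), and $u$ is one-dimensional by Corollary~\ref{1Dcriterium}.
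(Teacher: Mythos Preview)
Your setup is correct and matches the paper through the derivation of the first-order system
\[
\partial_1u_1+\partial_2u_2=0,\qquad \partial_1u_2-\partial_2u_1=w(u)\quad\text{a.e. in }\Omega,
\]
including the observation that, unlike in the wave case, the equipartition in \eqref{pdeAsym} is automatic and gives no free information. Your reduction from $W\ge\tfrac12w^2$ to $W=\tfrac12w^2$ is also the same as in the paper.

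The gap is in the final step. Your proposed route via Cauchy--Kovalevskaya and a Liouville-type rigidity on the cylinder is not a working argument: knowing that an analytic solution is determined by its trace on a slice does not single out any particular slice or trace, and your linearization heuristic ``$\partial_z(u-g)\approx 0$ with decay forces constancy'' has no mechanism to close, since $u$ solves a genuinely nonlinear equation and $g$ is only known up to translation. Your ``cleaner alternative'' at the end is in the right direction but not fleshed out.

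What the paper does instead is short and decisive: differentiate the second equation in $x_2$, use $\partial_2u_2=-\partial_1u_1$ to eliminate $u_2$, and obtain a \emph{scalar elliptic} equation for $u_1$ alone,
\[
-\Delta u_1-\nabla^\perp w(u)\cdot\nabla u_1=0.
\]
After a regularity bootstrap (Moser--Trudinger if $|w|$ satisfies \eqref{growth_intro}, or $u\in L^\infty$, then elliptic regularity for the stream function gives $u\in W^{1,p}_{loc}$ for all $p$, and iterating makes $u_1\in\mathcal C^{2,\theta}$), this is a linear elliptic equation with continuous drift and no zero-order term, so the classical maximum principle applies on each slab $[R_n^-,R_n^+]\times\T$. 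Since Lemma~\ref{BC_unif} provides sequences $R_n^\pm\to\pm\infty$ with $u_1(R_n^\pm,\cdot)\to a$ uniformly on $\T$, one concludes $u_1\equiv a$; then $\partial_2u_2=0$ forces $u=u(x_1)$, and uniqueness up to translation is Proposition~\ref{Exist_profile1D}. This elliptic-equation-plus-maximum-principle step is the missing idea in your proposal.
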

\begin{remark}\label{rotation_invariant}
The advantage of the Laplace operator over the wave operator consists in being rotation invariant. Consequently, if $w$ is harmonic, then Theorem~\ref{symmetry_harmonic} also applies in an infinite cylinder in any direction $\nu\in\S^1$ as explain at page~\pageref{pagina}, in particular, if $W$ is a multi-well potential that is positive on the segment relying two wells.
\end{remark}
\begin{proof}[Proof of Theorem~\ref{symmetry_harmonic}]
As in the previous proof, by Lemma \ref{lem_entropy_harmonic}, there exists an entropy $\Phi\in\mathcal{C}^3(\R^2, \R^2)$ associated to the potential $\frac 12w^2$ satisfying the saturation condition \eqref{satur_cond} for the potential $\frac 12w^2$, together with \eqref{ansatz_harmonic} (with $\mp=-$ and $\alpha\in\mathcal{C}^2(\R^2)$). By the same argument explained in Case 2 of the proof of Theorem \ref{symmetry_wave}, we can assume $W=\frac 12w^2$ on $\R^2$. 

By Proposition \ref{pde_opt}, if $u$ is a global minimizer of $(\mathcal{P})$, it satisfies $2\Pi^-\nabla u^T=\Pi_0\nabla\Phi(u)$ a.e., i.e. $u$ is a solution of the following first order quasilinear PDE system:
\begin{equation}\label{defectPDE}
\begin{cases}
\partial_1u_1+\partial_2u_2&=0\\
-\partial_2 u_1+\partial_1u_2&=w(u)
\end{cases}
\quad\text{a.e. in }\Omega .
\end{equation} 
Since either $u\in L^\infty$ or $|w|$ satisfies \eqref{growth_intro}, i.e. $|w|(z)\le C\exp(\beta |z|^2)$ for all $z\in\R^2$ and for some $C,\beta>0$, we have by the Moser-Trudinger inequality that $w(u)\in L^p_{loc}(\Omega)$ for every $p\in (1,+\infty)$. By \eqref{defectPDE}, we write $u=\nabla^\perp \varphi=(-\partial_2\varphi, \partial_1 \varphi)$ with $-\Delta \varphi=w(u)$ and we deduce by elliptic regularity that $u=\nabla^\perp \varphi\in W^{1,p}_{loc}(\Omega,\R^2)$ for every $p\in (1,+\infty)$. Thus, we have $u\in \mathcal{C}^{0,\theta}\subset L^\infty_{loc}$ with $\theta>0$. By the chain rule applied to the composition $w\circ u$ with $w\in\mathcal{C}^1$ ($w$ is harmonic, thus smooth) and $u\in L^\infty_{loc}\cap H^1_{loc}(\Omega)$, we can compute the derivative $\partial_2$ of the second equation in \eqref{defectPDE} in the distributional sense:
$$-\partial_{22}u_1+\partial_{12}u_2= \partial_{1}w(u)\; \partial_{2}u_1+\partial_2w(u)\; \partial_2u_2.$$
Since $\nabla\cdot u=0$, one has $\partial_2u_2=-\partial_1 u_1$ yielding $\partial_{12}u_2=-\partial_{11}u_1$ in the distribution sense, and thus,
$$-\partial_{22}u_1-\partial_{11}u_1= \partial_{1}w(u)\; \partial_{2}u_1-\partial_2w(u)\; \partial_1u_1.$$
Consequently, $u_1$ solves the following elliptic semi-linear equation
 \begin{equation}\label{equation_u1}
-\Delta u_1-\nabla^\perp w(u)\cdot \nabla u_1=0.
\end{equation}
In particular, since $w$ is smooth, we deduce that $u_1\in\mathcal{C}^{2,\theta}$ by a classical boot-strap argument for elliptic PDE's. Using the classical maximum principle and the boundary condition \eqref{BC}, we shall prove that $u_1$ is constant. Indeed, Lemma~\ref{BC_unif} yields two sequences $(R^\pm_n)_{n\ge 1}\to \pm \infty$ such that
$$u_1(R^\pm_n, \cdot)\to a\quad\text{uniformly on $\T$ when }n\to\infty.$$
Take $\varepsilon>0$ and $n$ large enough to have $|u_1(R_n^\pm,x_2)-a|\leq\varepsilon$ for all $x_2\in\T$. Applying the maximum principle to the elliptic equation \eqref{equation_u1} on the domain $[R_n^-,R_n^+]\times\T$, one gets
\[
|u_1(x)-a|\leq\varepsilon\quad\text{for all }x\in [R_n^-,R_n^+]\times\T.
\]
Since this can be done for arbitrary small values of $\varepsilon >0$ and large values of $n$, one has actually $u_1\equiv a$ and  $\nabla\cdot u=\partial_2u_2=0$. Thus, $u$ depends on $x_1$ and the uniqueness of $u$ (up to translation) follows from Proposition~\ref{Exist_profile1D}.
\end{proof}
}

\begin{example}
\label{exa:separ}
An elementary example of potential for which Theorem~\ref{symmetry_harmonic} applies is given by $W(z)=\frac 12(z_1z_2)^2$. In this case, the set $\{W=0\}$ is the union of the two axis $\{z_1=0\}$ and $\{z_2=0\}$. For two wells $u^+,u^-\neq 0$ which are not on the same axis, the transition axis $\nu:=(u^+-u^-)^\perp$ (which plays the role of $e_1$ in the preceding computations) can be any vector different than the two axis $e_1$ and $e_2$ (see the last paragraph in Section \ref{sym_intro}). Theorem~\ref{symmetry_harmonic} asserts that, with a periodicity condition with respect to the second variable in the basis $(\nu, \nu^\perp)$, that is $x\cdot \nu^\perp$, one has $1D$ symmetry of global minimizers of the energy for the transition between $u^-$ and $u^+$. For two wells lying on the same axis, e.g. $u^\pm=(0,u^\pm_2)\in\{z_1=0\}$, the minimization problem $(\mathcal P)$ has no solution. Indeed, if $u(x_1,x_2)=(0,\varphi(x_1))$ with $\varphi(\pm\infty)=u^\pm_2$, then the energy of $u$ writes
$$E(u)=\frac 12\int_\R |\varphi'(t)|^2\diff t,$$
so that the infimum over all admissible $\varphi$ is $0$. Of course, this infimum is not achieved if $u^-\neq u^+$ since any zero-energy configuration should be constant.

Note that $W(z)=\frac12(z_1z_2)^2$ can also be seen within the framework of Theorem~\ref{symmetry_wave} since $w(z)=z_1z_2$ solves the wave equation as $\partial_{11}(z_1z_2)=\partial_{22}(z_1z_2)=0$. However, as we noticed in Remark~\ref{rotation_invariant}, Theorem~\ref{symmetry_harmonic} applies for any rotation of $W$ contrary to Theorem~\ref{symmetry_wave}: for example, the rotation of angle $\frac\Pi 4$ of $w$ leads to the potential $\tilde{w}(z)= \frac 12 (z_1^2-z_2^2)$ that is still harmonic but not solution of the wave equation.
\end{example}

\begin{proof}[Proof of Theorem~\ref{thm:harm_wave}]
It is a direct consequence of Theorems \ref{symmetry_wave} and \ref{symmetry_harmonic}.
\end{proof}

\paragraph{The case of potentials satisfying the Tricomi equation. Proof of Theorem \ref{thm:tricomi}} \label{pag_tricomi}
The above results can be extended to potentials $W(z)=\frac 12w^2 (z)$ where $w\in\mathcal{C}^2(\R^2,\R)$ satisfies the Tricomi equation 
\eqref{tricomi_eq} for a continuous function $f:\R\to \R$ with $|f|\leq 1$ in $\R$. 
The idea is to construct a map $\Phi\in\mathcal{C}^1(\R^2,\R^2)$ and a scalar function $\alpha\in\mathcal{C}^1(\R^2,\R)$ such that
\begin{equation*}
\Pi_0\nabla\Phi(z)=\nabla\Phi (z)+\alpha(z)I_2=\left(
\begin{matrix} 
0& w(z)\\
f(z_1) w(z)&0
\end{matrix}
\right)
\quad\text{for all }z\in\R^2.
\end{equation*}
As before, by Poincar\'e's lemma, one checks that the existence of $\Phi$ and $\alpha$ are equivalent with the Tricomi equation in 
\eqref{tricomi_eq} (as a consequence, $-\nabla \alpha=(f(z_1)\partial_2 w, \partial_1 w)$ in $\R^2$). 

\medskip

\nd {\it Step 1. $\Phi$ is an entropy associated to the potential $W=\frac 12 w^2$}. Indeed, if $u\in \mathcal{C}^\infty(\Omega,\R^2)$,
$$
\nabla\cdot [\Phi(u)]+\alpha (u)\nabla\cdot u=[\nabla\Phi(u)+\alpha(u)\mathrm{Id}]:\nabla u^T{=}w(u) 
(f(u_1)\partial_2u_1+\partial_1u_2).
$$
In particular, if $\nabla\cdot u=0$, one deduces that $\|\nabla\cdot [\Phi(u)]\|_{L^1(\Omega)}\leq 2E(u)$ since $|f|\leq 1$; moreover,
\begin{align}
\nonumber
2\, \nabla\cdot [\Phi(u)]&=\big({f(u_1)\partial_2u_1+\partial_1u_2}\big)^2+w^2(u)-\big(w(u)- ({f(u_1)\partial_2u_1+\partial_1u_2})\big)^2\\
\nonumber
&=\big(|\nabla u_2|^2+f(u_1)^2 |\nabla u_1|^2 \big)+ w^2(u)-\big(w(u)- (f(u_1)\partial_2u_1+\partial_1u_2)\big)^2\\
\label{egalita20}
&\quad - \big(\partial_2u_2-f(u_1)\partial_1u_1\big)^2-2f(u_1)\big(\partial_1u_1\partial_2u_2-\partial_1u_2\partial_2u_1\big).
\end{align}
Denoting by $F$ the antiderivative of $f$ such that \(F(0)=0\) and using that $\nabla\cdot u=0$,  we compute as in the proof of Proposition \ref{antisymgradient}:
\begin{align*}
I_{R^-,R^+}:=&\Big|\int_{(R^-, R^+)\times\T} f(u_1)\big(\partial_1u_1\partial_2u_2-\partial_1u_2\partial_2u_1\big)\, \diff x\Big|\\
=&\Big|\int_{(R^-, R^+)\times\T}\Big(\partial_1(F(u_1))\,\partial_2u_2-\partial_1u_2\,\partial_2( F(u_1))\Big)\, \diff x\Big|\\
=&\Big|\int_{\T} \Bigl(F(u_1(R^+, x_2))\partial_1u_1(R^+, x_2)-F(u_1(R^-, x_2))\partial_1u_1(R^-, x_2) \Bigr) \diff x_2\Big|, \quad R^-<R^+.
\end{align*}
If $u\in \mathcal{C}^\infty\cap L^\infty \cap \dot{H}^1_{div}(\Omega,\R^2)$ and \(E(u)<+\infty\), then by Remark \ref{rem:BC} there exist two sequences $(R_n^+)_{n\ge 1}$ and $(R_n^-)_{n\ge 1}$ such that $R_n^\pm\to\pm\infty$ and 
$\|u_1(R_n^\pm,\cdot)\|_{L^2(\T^{d-1})}\|\partial_1 u_1(R_n^\pm,\cdot)\|_{L^2(\T^{d-1})}\to 0$ as $n\to +\infty$. Therefore, since $|F(u_1)|\le |u_1|$, we deduce $I_{R_n^-,R_n^+}\to 0$. By \eqref{egalita20}, we obtain 
\begin{equation}
\begin{split}
\label{entropie_tricomi_satur}
\int_\Omega \nabla\cdot [\Phi(u)]=E(u)- \frac 12 \int_\Omega (1-f(u_1)^2)|\nabla u_1|^2
- \frac 12 \int_\Omega\big(w(u)- (f(u_1)\partial_2u_1+\partial_1u_2)\big)^2\\
- \frac 12 \int_\Omega\big(\partial_2u_2-f(u_1)\partial_1u_1\big)^2
\end{split}
\end{equation}
and we conclude $\int_\Omega \nabla\cdot [\Phi(u)]\, dx\leq E(u)$, i.e. $\Phi$ is an entropy.

\medskip

\nd {\it Step 2. The saturation condition \eqref{satur_cond} for two wells $u^\pm=(a,u^\pm_2)\in S_a$ provided that $w\geq 0$ on $[u^-,u^+]$.} This follows by the computation \eqref{eq2018}.

\medskip

\nd {\it Step 3. Symmetry of a global minimizer $u$ of $(\mathcal P)$ provided that (either $u\in L^\infty$ or $|\nabla w|$ satisfies the growth condition \eqref{growth_intro}).}
By \eqref{entropie_tricomi_satur} and Lemma \ref{gauss-green}, for every $u\in \mathcal{C}^\infty\cap L^\infty \cap \dot{H}^1_{div}(\Omega,\R^2)$ with \(E(u)<+\infty\) and \(\overline{u}(\pm\infty)=u^\pm\), one has
\begin{equation}
\begin{split}
\label{tricomi_sat}
\Phi_1(u^+)-\Phi_1(u^-)=E(u)- \frac 12 \int_\Omega (1-f(u_1)^2)|\nabla u_1|^2
\qquad\qquad\qquad\qquad\qquad\qquad\qquad\\
 - \frac 12 \int_\Omega\big(w(u)- (f(u_1)\partial_2u_1+\partial_1u_2)\big)^2
- \frac 12 \int_\Omega\big(\partial_2u_2-f(u_1)\partial_1u_1\big)^2.
\end{split}
\end{equation}
Since each term of the RHS is controlled by the energy density $\frac 12(|\nabla u|^2+w^2(u))$ and since the integrands depend continuously on \(u\), we deduce by Lemma~\ref{lavrentiev} that the equality \eqref{tricomi_sat} still holds for all $u\in H^1_{div}(\Omega,\R^d)$ with $E(u)<+\infty$ and $\overline{u}(\pm\infty)=u^\pm$, without assuming that $u$ is smooth, but only that $u\in L^\infty$ or $W\in\mathcal{C}^2$ satisfies \eqref{growth_W}\footnote{Note that if $|\nabla w|$ satisfies the growth condition \eqref{growth_intro}, then $|w|$ also satisfies the growth condition \eqref{growth_intro}.}.
In particular, if $u$ is a global minimizer of $(\mathcal{P})$, then
\begin{multline*}
\mathrm{geod}_{W}^a(u^-,u^+)\overset{\text{Step 2}}{=}\Phi_1(u^+)-\Phi_1(u^-)
\overset{\eqref{tricomi_sat}}{\le} E(u)\overset{(\mathcal{P})}{=}c_{W}(u^-,u^+)\overset{\text{Prop.~\ref{minimal}}}{=}\mathrm{geod}_{W}^a(u^-,u^+).
\end{multline*}
Therefore, the above inequality, based on \eqref{tricomi_sat}, is an equality which means that the three integrals in \eqref{tricomi_sat} vanish, that is
\begin{align}
\label{lip2}
f(u_1(x))^2=1 \quad \textrm{or}\quad \nabla u_1(x)=0
&\quad \textrm{a.e. in } \Omega,\\
\label{leip}
f(u_1)\partial_2u_1+\partial_1u_2=w(u) \quad \textrm{and}\quad \partial_2u_2=f(u_1)\partial_1u_1 &\quad \textrm{a.e. in } \Omega.
\end{align}
Note that by Remark~\ref{rem_reg}, we have that $u\in W^{2,q}_{loc}(\Omega, \R^2)$ for every $q>1$; in particular, $u\in \mathcal{C}^1(\Omega,\R^2)$ so that \eqref{lip2} and \eqref{leip} hold for every $x\in \Omega$.
Let \(F:\R\to\R\) be an antiderivative of \(f\); we shall prove that under the above conditions, we have
\begin{equation}\label{eq_inter}
-\Delta [F(u_1)]+f(u_1)\nabla^\perp w(u) \cdot\nabla[F(u_1)]=0 \quad\text{distributionally in \(\Omega\).}
\end{equation}
We first note that each term in the equation \eqref{eq_inter} (and in the following computations leading to it) actually belongs to \(L_{loc}^q(\Omega)\) for every \(q>1\) since \(F\in\mathcal{C}^2\), \(w\in\mathcal{C}^2\) and \(u\in W^{2,q}_{loc}\). 
Next, by \eqref{lip2}, we have
\begin{equation}
\label{lip2cor}
\nabla u_1=(f(u_1))^2\,\nabla u_1=f(u_1)\nabla [F(u_1)]
\end{equation}
and from \eqref{leip}, we obtain
\[
\begin{cases}
\partial_1 [F(u_1)]=f(u_1)\partial_1u_1=\partial_2u_2
\\
\partial_2[F(u_1)]=f(u_1)\partial_2u_1=-\partial_1u_2+w(u).
\end{cases}
\]
Hence,
\[
\Delta [F(u_1)]=\partial_2[w(u)]=\partial_1w(u)\partial_2u_1+\partial_2w(u)\partial_2u_2
\]
and by the identity \(\partial_2u_2=-\partial_1u_1\) and \eqref{lip2cor}, we find
\[
\Delta [F(u_1)]=\nabla^\perp w(u)\cdot\nabla u_1=f(u_1)\nabla^\perp w(u)\cdot\nabla [F(u_1)]
\]
which is the desired equation \eqref{eq_inter}. Repeating the argument in the proof of Theorem \ref{symmetry_harmonic}, we deduce from \eqref{eq_inter} that $F(u_1)\equiv F(a)$ in $\Omega$ yielding $u_1\equiv a$ in $\Omega$ by \eqref{lip2cor}; by the divergence constraint, $u_2$ depends only on $x_1$.

\begin{proof}[Proof of Theorem~\ref{thm:tricomi}]
It is a direct consequence of the above arguments.
\end{proof}

\begin{remark} Theorem \ref{thm:tricomi} leads to a large class of potentials $W(z)=\frac 12w^2 (z)$ for which the symmetry result holds true (see e.g. Remark \ref{rem:tric} where $f$ is constant in the Tricomi equation \eqref{tricomi_eq}). An example with a nonconstant function $f$ in \eqref{tricomi_eq} is given by
$$w(z_1,z_2)=(1+\sin^2\frac{z_1}2)\cos z_2 \, \textrm{ and } \, f(z_1)=\frac{\cos z_1}{2(1+\sin^2\frac{z_1}2)}, \, |f|\leq 1,$$ 
for which Theorem \ref{thm:tricomi} applies for any two wells $u^-=(z_1,\pi (k-\frac12))$ and 
$u^+=(z_1, \pi (k+\frac12))$ with $k\in \Z$ and $z_1\in \R$.
\end{remark}

\subsection{One-dimensional symmetry in higher dimension\label{higherdimension}. Proof of Theorems~\ref{thm:rigid_sym} and \ref{thm:finite_wells}}
We start by investigating the existence of entropies in any dimension $d\geq 2$ by the three sufficient conditions ($\mathcal{E}_{strg}$), ($\mathcal{E}_{sym}$) or ($\mathcal{E}_{asym}$) in \eqref{strong_punct}, \eqref{ent_sym} or \eqref{ent_asym}. Obviously, the question of existence of an entropy depends on the potential; in fact, our aim is rather to find potentials $W$ with pairs of zeros $(u^-,u^+)$ for which one has existence of an entropy $\Phi$ satisfying the saturation condition. 
We shall see in particular that the condition ($\mathcal{E}_{asym}$) for entropies $\Phi$ with antisymmetric Jacobians (analogue to the case of harmonic potentials $W$ in $2D$) is too restrictive in dimension $d\geq 3$, i.e., only trivial entropies can be found in this case (see Proposition \ref{rigidity_antisym}). The criterium ($\mathcal{E}_{sym}$) (where the entropy $\Phi$ has symmetric Jacobian), analogue to potentials that are solutions to the wave equation in $2D$, provides nontrivial entropies, in particular, corresponding to an extension of the Ginzburg-Landau potential in dimension $d\geq 3$ (see Theorem~\ref{rigidity_sym}). We are also able to handle a nontrivial class of potentials with a finite number of wells (see Theorem~\ref{1Dgeometry}) by use of the first criterium ($\mathcal{E}_{strg}$) in dimension $d\geq 2$.

\paragraph{Strong punctual condition ($\mathcal{E}_{strg}$).}
We look for $\mathcal{C}^1$ maps $\Phi:\R^d\to\R^d$ satisfying the saturation condition \eqref{satur_cond} and the punctual estimate \eqref{strong_punct}, i.e., 
$|\Pi_0\nabla\Phi|^2\leq 2W$ in $\R^d$. 
We are able to construct such an entropy in the situation when the following holds: 
\begin{equation}
\label{strong}
\mathrm{geod}_{W}^a(u^-,u^+)=\mathrm{geod}_{W}(u^-,u^+), \quad \forall u^{\pm}\in S_a,
\end{equation}
where $\mathrm{geod}_W$ was defined in \eqref{geod_W_gen} (it corresponds to the geodesic (pseudo-)distance between $u^-$ and $u^+$ in $\R^d$ -- and not $\R^d_a$ as in the definition of $\mathrm{geod}_W^a$ in \eqref{geod_H1} -- endowed with the (pseudo-)metric $2Wg_0$).

\begin{theorem}\label{strongsym}
If $W:\R^d\to\R_+$ is a continuous potential satisfying the growth condition \eqref{growth_W}, if $a\in\R$, $u^-,u^+\in S_a$ and if $\mathrm{geod}_{W}^a(u^-,u^+)=\mathrm{geod}_{W}(u^-,u^+)$, then any global minimizer $u$ in $(\mathcal P)$ is one-dimensional, i.e. $u=g(x_1)$ with $g:\R\to\R^d_a$.
\end{theorem}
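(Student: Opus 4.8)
The plan is to exhibit an entropy $\Phi$ satisfying the strong punctual condition $(\mathcal{E}_{strg})$ and the saturation condition $(\mathcal{E}_{strg})$, and then apply Proposition~\ref{pde_opt} together with Corollary~\ref{1Dcriterium}. First, I would build a scalar function $\varphi:\R^d\to\R$ whose gradient encodes the metric $2W$: because $\mathrm{geod}_W$ is a genuine geodesic pseudo-distance on $\R^d$ for the (degenerate) metric $2Wg_0$, the function $\varphi(z):=\mathrm{geod}_W(u^-,z)$ satisfies the eikonal inequality $|\nabla\varphi|^2\le 2W$ a.e.\ (it is locally Lipschitz since $W$ is locally bounded, hence differentiable a.e.\ by Rademacher), and along any minimizing geodesic the inequality saturates. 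However, $\varphi$ need only be Lipschitz, not $\mathcal C^1$, so the clean move is instead to recall that in this theorem we are \emph{given} the pair $(u^-,u^+)$ with $\mathrm{geod}_W^a(u^-,u^+)=\mathrm{geod}_W(u^-,u^+)$, and to use a regularized eikonal solution (or argue directly with the Lipschitz $\varphi$ in the density argument). I would then set
\[
\Phi(z):=\varphi(z)\,e_1,\qquad z\in\R^d,
\]
so that $\nabla\Phi(z)=e_1\otimes\nabla\varphi(z)$, giving $|\Pi_0\nabla\Phi(z)|^2\le|\nabla\Phi(z)|^2=|\nabla\varphi(z)|^2\le 2W(z)$, which is exactly $(\mathcal{E}_{strg})$ in \eqref{strong_punct}; and by Proposition~\ref{criterium_strong}, $\Phi$ is an entropy.

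Second, I would verify the saturation condition \eqref{satur_cond}. One has $\Phi_1(u^+)-\Phi_1(u^-)=\varphi(u^+)-\varphi(u^-)=\mathrm{geod}_W(u^-,u^+)$ by construction of $\varphi$ (and the triangle inequality forces $\varphi(u^+)=\mathrm{geod}_W(u^-,u^+)$ since $\varphi(u^-)=0$). By the hypothesis \eqref{strong}, $\mathrm{geod}_W(u^-,u^+)=\mathrm{geod}_W^a(u^-,u^+)$, which is precisely the right-hand side of \eqref{satur_cond}. Thus $\Phi$ is an entropy satisfying the saturation condition, so Proposition~\ref{minimal} applies (using the growth condition \eqref{growth_W}): one-dimensional transitions are optimal in $(\mathcal P)$.

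Third, I would invoke the structure result. Since $\Phi$ satisfies $(\mathcal{E}_{strg})$ and \eqref{satur_cond}, Proposition~\ref{pde_opt} tells us that any global minimizer $u$ of $(\mathcal P)$ solves the first-order PDE \eqref{pdeStrong}, namely $\nabla u^T=\Pi_0\nabla\Phi(u)$ a.e.\ in $\Omega$. Because $\Phi=\varphi e_1$, the matrix $\nabla\Phi(u)=e_1\otimes\nabla\varphi(u)$ has all diagonal entries except possibly the $(1,1)$ one equal to zero, and $\Pi_0$ of it has the $(1,1)$ entry equal to $\partial_1\varphi(u)-\tfrac1d\partial_1\varphi(u)$ while $\partial_i u_i=0$ for $i=2,\dots,d$; combined with $\mathrm{Tr}(\nabla u)=\nabla\cdot u=0$ this forces $\partial_1 u_1=0$ as well. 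Hence $\partial_i u_i=0$ for all $i$, so in particular $u_1$ does not depend on $x_1$; by Lemma~\ref{BC_unif} (using $H^1(\T^{d-1})\hookrightarrow L^1$ and the boundary condition $\overline u(\pm\infty)=u^\pm\in\R^d_a$) we get $u_1\equiv a$, i.e.\ $u(\Omega)\subset\R^d_a$. Then Corollary~\ref{1Dcriterium} yields that $u$ is one-dimensional, $u=g(x_1)$ with $g\in\dot H^1(\R,\R^d_a)$, which is the assertion; and by Proposition~\ref{inf_1D} (or Proposition~\ref{Exist_profile1D}) $g$ is a minimizing $1D$ transition layer.

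The main obstacle is the regularity of $\varphi$: the geodesic-distance function to a point is only locally Lipschitz, not $\mathcal C^1$, whereas Proposition~\ref{criterium_strong} and Proposition~\ref{pde_opt} are phrased for $\Phi\in\mathcal C^1$. The fix is the Lavrentiev-gap lemma (Lemma~\ref{lavrentiev}) plus an approximation step: one mollifies $\varphi$ at scale $\eps$ to get $\varphi_\eps\in\mathcal C^\infty$ with $|\nabla\varphi_\eps|^2\le 2W+o(1)$ — for instance because $\sqrt{2W}$ is (uniformly) continuous on the relevant compact sets, or by a sup-convolution / inf-convolution argument that keeps the eikonal inequality exactly while gaining $\mathcal C^{1,1}$ regularity — and passes to the limit in the energy identity \eqref{22}-type relation (the analogue of \eqref{satur_diff}) that underlies Proposition~\ref{pde_opt}. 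Alternatively, and more cleanly, one can carry the whole argument at the level of the inequality $\int_\Omega\nabla\cdot[\Phi(u_k)]\le E(u_k)$ for the smooth approximants $u_k$ from Lemma~\ref{lavrentiev} directly with the Lipschitz $\Phi=\varphi e_1$, since $\nabla\cdot[\Phi(u_k)]=\nabla\varphi(u_k)\cdot\partial_1 u_k$ makes sense a.e.\ and the chain rule for Lipschitz $\varphi$ composed with $W^{1,p}$ maps is valid; the equipartition/PDE conclusion then follows from tracking the equality cases, exactly as in the proof of Theorem~\ref{symmetry_harmonic}. I would present the $\mathcal C^{1,1}$ sup-convolution regularization as the default, flagging that it preserves $|\nabla\varphi_\eps|^2\le 2W$ up to an error tending to $0$ and that it converges locally uniformly to $\varphi$, which suffices.
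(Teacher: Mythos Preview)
Your overall strategy coincides with the paper's: define $\varphi(z)=\mathrm{geod}_W(u^-,z)$, use $\Phi=\varphi\,e_1$, and exploit the saturation $\Phi_1(u^+)-\Phi_1(u^-)=\mathrm{geod}_W(u^-,u^+)=\mathrm{geod}_W^a(u^-,u^+)$. The regularity obstacle you flag is real, and the paper handles it by the mollification $\varphi^\ell=\rho^\ell*\varphi$ together with Jensen's inequality, which gives the exact (not merely approximate) bound $|\nabla\varphi^\ell|^2\le 2\,\rho^\ell*W$; one then works on the smooth approximants $u_k$ from Lemma~\ref{lavrentiev}, lets $\ell\to\infty$ (using $\rho^\ell*W\to W$ uniformly on compacta and $u_k\in L^\infty$), and finally $k\to\infty$. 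Your sup-convolution suggestion is less natural here because it does not obviously preserve the eikonal inequality.

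There is, however, a genuine slip in your third paragraph. With $\Phi=\varphi e_1$ one has $\nabla\Phi=e_1\otimes\nabla\varphi$, whose trace is $\partial_1\varphi$, so for $i\ge 2$ the diagonal entry of $\Pi_0\nabla\Phi$ is $-\tfrac1d\partial_1\varphi$, \emph{not} zero. Hence \eqref{pdeStrong} does not give $\partial_i u_i=0$ for $i\ge 2$ directly. The fix is to use the \emph{first} identity in \eqref{pdeStrong}: since $|\Pi_0\nabla\Phi|^2=|\nabla\varphi|^2-\tfrac1d(\partial_1\varphi)^2$ and $|\nabla\varphi|^2\le 2W$, the equality $W(u)=\tfrac12|\Pi_0\nabla\Phi(u)|^2$ forces $\partial_1\varphi(u)=0$ a.e., after which the diagonal indeed vanishes. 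In fact, once this is observed, the off-diagonal rows $i\ge 2$ of $\Pi_0\nabla\Phi$ are already zero, so \eqref{pdeStrong} yields $\partial_i u=0$ for every $i\ge 2$ directly, and the detour through $u_1\equiv a$ and Corollary~\ref{1Dcriterium} is unnecessary.

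The paper's proof short-circuits all of this. Since $\Phi=\varphi e_1$, one has simply $\nabla\cdot[\Phi(u)]=\nabla\varphi(u)\cdot\partial_1 u$, and Young's inequality gives, for the mollified $\varphi^\ell$ and smooth $u_k$,
\[
\varphi^\ell(u^+)-\varphi^\ell(u^-)\le \int_\Omega \tfrac12|\nabla u_k|^2+(\rho^\ell*W)(u_k)-\tfrac12\sum_{j\ge 2}|\partial_j u_k|^2.
\]
Passing to the limit yields $\mathrm{geod}_W(u^-,u^+)+\tfrac12\int_\Omega\sum_{j\ge 2}|\partial_j u|^2\le E(u)$, and minimality forces $\partial_j u=0$ for $j\ge 2$. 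This avoids both the $\mathcal{C}^1$ hypothesis required in Proposition~\ref{pde_opt} and the diagonal subtlety above; it is the cleaner route and I recommend you adopt it.
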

\begin{proof}
We first set \(\varphi (z):=\mathrm{geod}_{W}(u^-,z)\) for every \(z\in\R^d\). It is easy to see that $\varphi\in\mathrm{Lip}_{loc}(\R^d,\R)$ and $|\nabla\varphi|\le\sqrt{2W}$ a.e. Since \(\varphi\) does not need to belong to \(\mathcal{C}^1\) class \footnote{If $\varphi\in \mathcal{C}^1$, then the corresponding entropy is $\Phi=\varphi e_1$ which 
satisfies \eqref{strong_punct} and the saturation condition \eqref{satur_cond}.}, we introduce a standard mollifying kernel \(\rho^\ell= {\ell^d}\rho({\cdot}{\ell})\), with \(\rho\in\mathcal{C}^\infty_c(\R^d,\R_+)\) such that \(\int_{\R^d}\rho=1\), and we set
\(\varphi^\ell:=\rho^\ell\ast \varphi\) for each \(\ell\in\N^*\). We observe that, by Jensen's inequality,
\begin{equation}
\label{molEst}
|\nabla\varphi^\ell|^2= |\rho^\ell\ast\nabla\varphi|^2\le\rho^\ell\ast |\nabla\varphi|^2\le 2\rho^\ell\ast W\quad\text{in } \R^d.
\end{equation}
Now, let \(u\in\dot{H}^1_{div}(\Omega,\R^d)\) satisfy \(E(u)<+\infty\), \(\overline{u}(\pm\infty)=u^\pm\) and take the sequence of smooth approximations \((u_k)_{k\in\N^*}\subset\mathcal{C}^\infty\cap L^\infty\cap\dot{H}^1_{div}(\Omega,\R^d)\) provided by Lemma \ref{lavrentiev}. In particular we impose that \(u_k(\pm t)=u^\pm\) for large values of \(t\ge 0\), i.e., \(t\ge T_k\in\R_+\). We now set \(\omega_k=[-T_k,T_k]\times\T^{d-1}\) and we compute
\begin{multline*}
\varphi^\ell(u^+)-\varphi^\ell(u^-)=\int_{\omega_k}\partial_1(\varphi^\ell(u_k)) =\int_{\omega_k}\nabla\varphi^\ell(u_k)\cdot\partial_1 u_k\\
\le \int_{\omega_k}\frac 12|\nabla u_k|^2+\frac 12|\nabla\varphi^\ell(u_k)|^2-\frac 12\sum_{j=2}^d |\partial_j u_k|^2
\le \int_{\omega_k}\frac 12|\nabla u_k|^2+(\rho^\ell\ast W)(u_k)-\frac 12\sum_{j=2}^d|\partial_j u_k|^2.
\end{multline*}
We now observe that when \(\ell\to\infty\), \(\varphi^\ell(u^+)-\varphi^\ell(u^-)\to \varphi(u^+)-\varphi(u^-)=\mathrm{geod}_W(u^-,u^+)\) and \(\int_{\omega_k}(\rho^\ell\ast W)(u_k)\to \int_{\omega_k}W(u_k)\) since \((\rho^\ell\ast W)_\ell\) tends to \(W\) uniformly on compact sets and \(u_k\in L^\infty\) for each \(k\). Thus, passing to the limit \(\ell\to\infty\) in the preceding estimates and rearranging the terms, we obtain\footnote{Note that \(\nabla u_k\) is compactly supported in \(\omega_k\) so that integrating on \(\Omega\) or \(\omega_k\) is the same.}
\[
\mathrm{geod}_W(u^-,u^+)+\frac 12\int_{\Omega}\sum_{j=2}^d|\partial_j u_k|^2\le E(u_k),
\]
which in the limit \(k\to\infty\) yields
\[
\mathrm{geod}_W(u^-,u^+)+\frac 12\int_{\Omega}\sum_{j=2}^d|\partial_j u|^2\le E(u).
\]
Since \(\mathrm{geod}_W(u^-,u^+)=\mathrm{geod}_{W}^a(u^-,u^+)\) is the infimum of the energy over 1D admissible maps (see Proposition \ref{inf_1D}), in particular, 
$\mathrm{geod}_W(u^-,u^+)\geq E(u)$ for any global minimizer $u$ in $(\mathcal{P})$, 
we immediately deduce that $u$ only depends on $x_1$.
\end{proof}

We now investigate the existence of potentials $W$ with a finite number of wells for which Theorem~\ref{strongsym} applies. The simplest way to guarantee \eqref{strong} is to set $W=\frac 12w^2$ for some $w$ such that the line segment $[x,y]$ minimizes the $\mathcal{L}_w$-length between any two points 
$x, y\in \{w=0\}$. Here, we denoted for every continuous function $w:\R^d\to \R_+$ and for all Lipschitz curve $\gamma:[-1,1]\to \R^d$ (and not restricted to $\R^d_a$), the length:
\[
\mathcal{L}_w(\gamma)=\int_{-1}^1 w(\gamma(t)) |\dot{\gamma}(t)|\diff t.
\]
The existence of appropriate weight functions $w$ is given by:
\begin{theorem}\label{1Dgeometry}
Let $X=\{x_0,\dots,x_d\}$ be an affine basis of $\R^d$ and let $\delta$ be a pseudo-metric over $X$, that is $\delta\in\Delta (X)$, where $\Delta (X)$ is defined by 
\footnote{\label{foot11} Recall that a pseudo-metric $\delta$ can vanish at a point $(x,y)$ for some $x\neq y$.}
\[
\Delta(X)=\{\delta\in\R_+^{X\times X}\;:\; \forall x,y,z\in X,\, \delta(x,x)=0,\, \delta(x,y)=\delta(y,x),\, \delta(x,y)\leq \delta(x,z)+\delta(z,y)\}.
\]
Then there exists a Lipschitz bounded function $w:\R^d\to\R_+$ such that
\begin{enumerate}
\item
for all $z\in X$, $w(z)=0$, 
\item
$w(z)=\sqrt{2}$ if $|z|$ is large enough,
\item
for all $x,y\in X$ with $\delta(x,y)>0$ and $t\in (0,1)$, $w(t y+(1-t)x)>0$,
\item
for all $x,y\in X$,
\(
\mathcal{L}_w(\overline{xy})=\mathrm{geod}_{w^2/2}(x,y)=\delta(x,y),
\)
where $\overline{xy}$ stands for the line segment between $x$ and $y$, parametrized by $\overline{xy}(t)=t y+(1-t)x$ for all $t\in [0,1]$.
\end{enumerate}
Moreover, if $\delta$ is a metric, i.e. $\delta(x,y)>0$ for all $x,y\in X$ with $x\neq y$, then $w$ can be chosen in such a way that $w>0$ on $\R^d\setminus X$.

Setting $W=\frac 12w^2$, for every two wells $u^\pm\in X$, any $\nu\in\mathbb{S}^{d-1}$ with $\nu\cdot (u^+-u^-)=0$ and any $R\in SO(d)$ such that $R\nu=e_1$, if $u\in \dot{H}_{div}^1(\Omega_R,\R^d)$ is a global minimizer of $E_R$ in \eqref{minimalNu} over divergence-free configurations satisfying the boundary condition \eqref{constraintNu}, then $u$ is one-dimensional, i.e., $u=g(x\cdot \nu)$ where $g\in \dot{H}^1(\R,\R^d)$ with $g(\pm\infty)=u^\pm$ and $E_R(u)=\delta(u^-, u^+)$.
\end{theorem}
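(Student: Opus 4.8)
The statement splits into two essentially independent tasks: (a) constructing the weight function $w$ with properties 1--4, and (b) deducing one-dimensional symmetry. I would do (b) first, since it is short. By the change of variables under rotation explained in Section~\ref{sym_intro} (applied to the rotated data $(w\circ R^{-1},\,RX,\,\delta\circ(R^{-1}\times R^{-1}))$, which again is a weight function / affine basis / pseudo-metric triple satisfying the same properties 1--4), it suffices to prove the symmetry statement, for \emph{every} $w$ satisfying 1--4, in the case $\nu=e_1$, $R=I_d$, $\Omega_R=\Omega$, with two wells $u^\pm\in X$ such that $u^+\cdot e_1=u^-\cdot e_1=:a$. Set $W=\tfrac12 w^2$; then $W$ is continuous and bounded (so it trivially satisfies the growth condition \eqref{growth_W} in every dimension), and $W(u^\pm)=0$ by property~1.

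\paragraph{The symmetry part.} The key point is the identity $\mathrm{geod}^a_W(u^-,u^+)=\mathrm{geod}_W(u^-,u^+)$, which lets us invoke Theorem~\ref{strongsym}. Indeed $\sqrt{2W}=w$, so property~4 gives $\mathrm{geod}_W(u^-,u^+)=\mathrm{geod}_{w^2/2}(u^-,u^+)=\delta(u^-,u^+)=\mathcal{L}_w(\overline{u^-u^+})$; moreover the segment $\overline{u^-u^+}$ lies in the hyperplane $\R^d_a$ (both endpoints do, and it is affine), hence is an admissible competitor for $\mathrm{geod}^a_W$, whence $\mathrm{geod}^a_W(u^-,u^+)\le\mathcal{L}_w(\overline{u^-u^+})=\mathrm{geod}_W(u^-,u^+)\le\mathrm{geod}^a_W(u^-,u^+)$, so all are equal. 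Theorem~\ref{strongsym} then yields that every global minimizer $u$ of $(\mathcal P)$ is one-dimensional, $u=g(x_1)$ with $g\in\dot H^1(\R,\R^d_a)$, $g(\pm\infty)=u^\pm$. Finally, since one-dimensional maps are admissible in $(\mathcal P)$ one has $E(u)=\inf(\mathcal P)\le\mathrm{geod}^a_W(u^-,u^+)$ by Proposition~\ref{inf_1D}, while $E(u)=\int_\R\tfrac12|\dot g|^2+W(g)\ge\mathrm{geod}^a_W(u^-,u^+)$ again by Proposition~\ref{inf_1D}; thus $E(u)=\delta(u^-,u^+)$. Transporting back by the rotation gives the statement for general $\nu$, $R$.

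\paragraph{Construction of $w$.} Since $X=\{x_0,\dots,x_d\}$ is an affine basis, the edges $e_{ij}:=\overline{x_ix_j}$ of the corresponding simplex meet only at common vertices, so the $1$-skeleton $\Gamma=\bigcup_{i<j}e_{ij}$ is an embedded graph; one can fix $\rho_0>0$ so that the tubes $T_{ij}:=\{z:\mathrm{dist}(z,e_{ij})\le\rho_0\}$ satisfy $T_{ij}\cap T_{kl}=\emptyset$ when $\{i,j\}\cap\{k,l\}=\emptyset$ and $T_{ij}\cap T_{ik}\subset B(x_i,\rho_0')$ for a suitable $\rho_0'$. On each edge I would prescribe a Lipschitz profile $\psi_{ij}:[0,L_{ij}]\to\R_+$ (with $L_{ij}=|x_i-x_j|$, arc-length parameter), vanishing at the endpoints, positive on $(0,L_{ij})$ exactly when $\delta(x_i,x_j)>0$, with $\int_0^{L_{ij}}\psi_{ij}=\delta(x_i,x_j)$, and -- crucially -- behaving like a fixed multiple of $\mathrm{dist}(\cdot,\text{nearest vertex})^{m-1}$ near each vertex, where $m\in\N$, $m\ge2$, is chosen large enough that $m\,\angle x_jx_ix_k\ge\pi$ for every vertex $x_i$ and every pair of incident edges. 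One then sets $w:=\psi_{ij}(s(z))+g(\mathrm{dist}(z,e_{ij}))$ on $T_{ij}$ away from the vertex balls ($g$ Lipschitz, $g(0)=0$, $g(\rho_0)$ large), glues these near each vertex $x_i$ by a radial profile $\propto|z-x_i|^{m-1}$ compatible with all incident edge profiles (and, in the degenerate pseudo-metric case where some $\delta(x_i,x_j)=0$, by a profile $\propto\mathrm{dist}(z,e_{ij})^{m-1}$ so that $w$ vanishes on the null edge), takes $w$ equal to a large constant on a neighbourhood of $\Gamma$ outside the tubes, and finally lets $w$ decrease Lipschitz-continuously to the value $\sqrt2$ outside a large ball, keeping $w>0$ off $\Gamma$ (and off $X$ when $\delta$ is a metric). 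Properties 1, 2, 3 are then immediate.

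\paragraph{The geodesic property and the main obstacle.} The heart of the proof is property~4: $\mathcal{L}_w(\overline{x_ix_j})=\int_0^{L_{ij}}\psi_{ij}=\delta(x_i,x_j)$ holds by construction, and for minimality of $\overline{x_ix_j}$ among curves from $x_i$ to $x_j$ I would combine three facts. First, for a curve $\gamma$ staying in $T_{ij}$, the nearest-point projection $\sigma=\pi_{ij}\circ\gamma$ onto the line of $e_{ij}$ is $1$-Lipschitz relative to $\gamma$, so (extending $\psi_{ij}$ by $0$) $\mathcal{L}_w(\gamma)\ge\int w(\gamma)|\sigma'|\ge\int\psi_{ij}(\sigma)|\sigma'|\ge\big|\int\psi_{ij}(\sigma)\sigma'\big|=\int_0^{L_{ij}}\psi_{ij}$. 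Second, a curve leaving $T_{ij}$ accumulates $\mathcal{L}_w$-cost on the region where $w$ is the large constant, which is arranged to exceed $\delta(x_i,x_j)$. Third, a curve hugging a third vertex $x_k$ costs at least $\delta(x_i,x_k)+\delta(x_k,x_j)\ge\delta(x_i,x_j)$ by the triangle inequality for $\delta$; the delicate point here is that near each vertex the high power $m-1$ turns the metric $w^2g_0$ into a cone of angle $\ge\pi$ between any two incident edge directions, so that geodesics between points near a vertex genuinely pass through the vertex -- this is exactly what prevents ``corner-cutting'' from beating the triangle inequality even when some of the inequalities among the $\delta(x_i,x_j)$ happen to be equalities. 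Iterating the third fact rules out detours through several vertices. I expect this last step -- the quantitative choice of $\rho_0$, of the large constant and of $m$, and especially the gluing near the vertices in the degenerate pseudo-metric case -- to be the main obstacle; everything else is soft.
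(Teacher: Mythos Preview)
Your symmetry argument is correct and coincides with the paper's Step~4: reduce by rotation to $\nu=e_1$, observe that property~4 together with $[u^-,u^+]\subset\R^d_a$ forces $\mathrm{geod}^a_W(u^-,u^+)=\mathrm{geod}_W(u^-,u^+)$, and invoke Theorem~\ref{strongsym}.

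For the construction of $w$, however, you take a genuinely different route. The paper does not build $w$ edge-by-edge at all. Instead it first decomposes $\delta=\sum_Y\lambda_Y\,\delta_Y$ as a nonnegative combination of the \emph{extremal} pseudo-metrics $\delta_Y$ (those taking only values $0$ and $1$ according to a fixed bipartition $Y\sqcup(X\setminus Y)$); this is a Krein--Milman argument (Lemma~\ref{extremalmetric}). For each $\delta_Y$ it constructs a scalar \emph{calibration} $\varphi_Y\in\mathcal C^\infty_c(\R^d)$ (Lemma~\ref{extremalcalibration}) with $\varphi_Y\equiv0$ on $Y$, $\varphi_Y\equiv1$ on $X\setminus Y$, and $\nabla\varphi_Y$ positively collinear with $y-x$ along each segment $[x,y]$ with $x\in Y$, $y\notin Y$. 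Setting $w_Y:=|\nabla\varphi_Y|$, the geodesic lower bound is then a one-line calibration: for any Lipschitz $\gamma$ from $x$ to $y$,
\[
\mathcal L_{w_Y}(\gamma)=\int|\nabla\varphi_Y(\gamma)|\,|\dot\gamma|\ \ge\ \Big|\int\nabla\varphi_Y(\gamma)\cdot\dot\gamma\Big|\ =\ |\varphi_Y(y)-\varphi_Y(x)|\ =\ \delta_Y(x,y),
\]
with equality on the segment. Summing in $Y$ with weights $\lambda_Y$ gives the general case; a final additive correction $w_1$ vanishing on the $1$-skeleton handles point~2 and the strict positivity when $\delta$ is a metric. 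The degenerate pseudo-metric case requires no separate treatment.

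Your direct construction, by contrast, has two soft spots you should be aware of. First, your ``third fact'' is circular as stated: you bound the cost of a detour near $x_k$ by $\delta(x_i,x_k)+\delta(x_k,x_j)$, but that already presupposes the segments $[x_i,x_k]$ and $[x_k,x_j]$ are $\mathcal L_w$-minimizing, which is exactly what you are proving. The honest version is a simultaneous argument for all pairs: use the cone-angle condition to show that any curve in the tube region can be replaced, at no extra $\mathcal L_w$-cost, by an edge path through the vertices it visits, and then apply the triangle inequality for $\delta$ along the resulting vertex sequence. This can be made to work but is fiddly. Second, in the degenerate case you need $w\equiv0$ along a null edge $e_{ij}$ while simultaneously keeping the radial profile $c|z-x_i|^{m-1}$ near $x_i$ on which your cone argument rests; these are incompatible along $e_{ij}$, so the vertex profile must be anisotropic there and the cone-angle reasoning has to be redone. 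The paper's calibration approach sidesteps both issues: the lower bound is global and exact from the potential $\varphi_Y$, with no local geometry near vertices to control.
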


\begin{remark}
The assumption that $X$ is an affine basis in $\R^d$ cannot be removed in Theorem~\ref{1Dgeometry}. Indeed, let $X\subset\R^2$ be the set of vertices of a square endowed with the metric $\delta (x,y):=2$ if $x,y\in X$ lie on the same edge of the square and $\delta(x,y):=1$ if $x,y\in X$ lie on the same diagonal of this square (see Figure \ref{square}). 
\begin{figure}[h]
\centering
\begin{tikzpicture}[scale=2]
\draw (0,0) node {$\bullet$} -- (0.5,0) node[below] {$2$} -- (1,0) node {$\bullet$} node[below right] {$x_1$} -- (1,0.5) node[right] {$2$} -- (1,1) node {$\bullet$} node[above right] {$x_2$} -- (0.5,1) node[above] {$2$} -- (0,1) node {$\bullet$} -- (0,0.5) node[left] {$2$} -- cycle;

\draw (0,0) -- (0.8,0.8) node[below] {$0.4$} -- (1,1);
\draw (0,1) -- (0.8,0.2) node[above] {$0.2$} -- (1,0);

\draw (0.2,0.2) node[right] {$0.6$};
\draw (0.2,0.8) node[right] {$0.8$};

\draw (0.5,0.5) node {$\bullet$} node[left] {$x_0$};
\end{tikzpicture}
 \caption{A complete graph of the four vertices of a square and the associated $\delta$-distances\label{square}}
\end{figure}
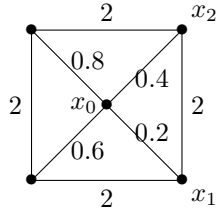
Assume by contradiction that $\delta=\mathrm{geod}_W$ for $W=\frac 12 w^2$ with some weight function $w$ (defined on $\R^2$) such that the line segments $[x,y]$ are minimal for $\mathrm{geod}_W$ for every two vertices $x$ and $y$ of the square. Since the length (in the metric $\mathrm{geod}_W$ over $\R^2$) of the two diagonals is $1$, there exist two vertices $x_1,x_2$ on the same edge such that $\mathrm{geod}_W(x_0,x_i)\le 1/2$ for $i=1,2$, where $x_0$ is the intersection of the two diagonals. We thus have by the triangle inequality $2=\mathrm{geod}_W(x_1,x_2)\le \mathrm{geod}_W(x_0,x_1)+\mathrm{geod}_W(x_0,x_2)\le 1$ which is a contradiction.
\end{remark}
The proof of Theorem~\ref{1Dgeometry} relies on two tools: the decomposition of $\delta$ in terms of extremal  pseudo-metrics (see Lemma~\ref{extremalmetric} in the appendix) and the existence of a calibration $\varphi$ for the line segments $\overline{xy}$, with $x,y\in X$, when the pseudo-metric $\delta$ is extremal and $w=|\nabla\varphi|$ (see Lemma~\ref{extremalcalibration} in the appendix). Here, we just explain how Lemma~\ref{extremalmetric} and Lemma~\ref{extremalcalibration} imply Theorem~\ref{1Dgeometry}.
\begin{proof}[Proof of Theorem~\ref{1Dgeometry}]
We divide the proof into several steps.

\medskip
\noindent\textsc{Step 1: The case of an extremal pseudo-metric $\delta$, i.e.,  $\delta=\delta_Y$ for some $Y\subset X$ with $Y\neq \emptyset$ and $Y\neq X$, where}
\[
\delta_Y(x,y):=
\begin{cases}
0&\text{if }(x,y\in Y)\text{ or }(x,y\notin Y),\\
1&\text{otherwise.}
\end{cases}
\]
Let $ \varphi_Y\in\mathcal{C}_c^\infty(\R^d,\R)$ be a scalar function satisfying all the properties claimed in Lemma~\ref{extremalcalibration} and define the Lipschitz  compactly supported function $w_Y:\R^d\to \R_+$ by 
\[
w_Y(z)=|\nabla\varphi_Y (z)|\quad\text{for all }z\in\R^d.
\] 
We claim that the differential form $\omega=\diff \varphi_Y$ is a calibration for the line segment $\overline{xy}$ for every $x,y\in X$, in the following sense:
\begin{itemize}
\item
for any Lipschitz curve $\gamma:[0,1]\to \R^d$ with $\gamma(0)=x$ and $\gamma(1)=y$, one has
\[
\varphi_Y(y)-\varphi_Y(x)=\int_\gamma\omega=\int_0^1 \nabla\varphi_Y(\gamma(t))\cdot \dot{\gamma}(t)\diff t\leq \int_0^1 w_Y(\gamma(t)) |\dot{\gamma}(t)|\diff t=\mathcal{L}_{w_Y}(\gamma),
\]
\item
the preceding inequality is an equality when $\gamma=\overline{xy}$ with ($x,y\in Y$), ($x,y\in X\setminus Y$) or ($x\in Y$ and $y\in X\setminus Y$), i.e.
\[
\varphi_Y(y)-\varphi_Y(x)=\int_0^1 (\nabla\varphi_Y(ty+(1-t)x)\,,\, y-x)\diff t=\mathcal{L}_{w_Y}(\overline{xy}).
\]
\end{itemize}
This comes from the fact that $\nabla\varphi_Y(ty+(1-t)x)$ and $y-x$ are positively collinear if $x\in Y$ and $y\in X\setminus Y$ and $w_Y=|\nabla\varphi_Y|$; as a consequence, the segment $\overline{xy}$ minimizes the $\mathcal{L}_{w_Y}$-length between any two points $x,y\in X$ and by Lemma ~\ref{extremalcalibration}, 
$\mathcal{L}_{w_Y}(\overline{xy})=|\varphi_Y(y)-\varphi_Y(x)|=\delta(x,y)$ for every $x,y\in X$, i.e. $w=w_Y$ satisfies Point 4 in Theorem~\ref{1Dgeometry}. Points 1 and 3 in Theorem~\ref{1Dgeometry} are a consequence of the properties of $\varphi=\varphi_Y$ in Lemma~\ref{extremalcalibration}.

\medskip
\noindent\textsc{Step 2: The case of a general pseudo-metric $\delta$.} By Lemma~\ref{extremalmetric}, $\delta$ writes 
\[
\delta=\sum_{\emptyset\neq Y\subsetneq X}\lambda_Y \delta_Y
\] 
for some parameters $\lambda_Y\geq 0$.
We set
\[
w=\sum_{\emptyset\neq Y\subsetneq X}\lambda_Yw_Y,
\]
with the $w_Y:\R^d\to\R_+$ defined in Step 1. It is easy to check Points 1, 3 and 4 in Theorem~\ref{1Dgeometry}. 
For instance, Point 4 comes from the fact that for every Lipschitz curve $\gamma:[0,1]\to \R^d$ connecting $x$ to $y$, one has
\[
\mathcal{L}_w(\overline{xy})=\sum_{\emptyset\neq Y\subsetneq X}\lambda_Y {\cal L}_{w_Y}(\overline{xy})=\delta(x,y)\leq \sum_{\emptyset\neq Y\subsetneq X}\lambda_Y {\cal L}_{w_Y}(\gamma)=\mathcal{L}_w(\gamma).
\]
\noindent\textsc{Step 3: Reaching Point 2 and improvement to the case when $\delta$ is a metric.} If $w_0$ is the function given by Step 2, we set $w=w_0+w_1$, where $w_1$ is any Lipschitz function such that
\(
w_1=0\text{ on }G:=\cup_{x,y\in X}[x,y]\text{, } w_1>0\text{ on }\R^d\setminus G\text{ and such that }w_1\equiv\sqrt{2}
\)
if $|z|$ is large enough. The line segments between any two points $x,y\in X$ are still optimal with $\mathcal{L}_w$ instead of $\mathcal{L}_{w_0}$ because of the inequality $\mathcal{L}_w(\gamma)\geq {\cal L}_{w_0}(\gamma)$ for all curves $\gamma$ and the equality $\mathcal{L}_w(\overline{xy})=\mathcal{L}_{w_0}(\overline{xy})$. Hence, the function $w$ satisfies Point 4 in Theorem~\ref{1Dgeometry}; the other points are easy to check.

\medskip
\nd \textsc{Step 4}. The symmetry of global minimizers of $E_R$ is a direct consequence of Theorem \ref{strongsym} and the analysis presented in Section \ref{sym_intro} at the paragraph ``Change of variables under rotation".
\end{proof}

\begin{proof}[Proof of Theorem \ref{thm:finite_wells}] It is a direct consequence of Theorem~\ref{1Dgeometry}.
\end{proof}

\paragraph{Entropies with antisymmetric Jacobians ($\mathcal{E}_{asym}$).} If in dimension $2$, we have constructed entropies $\Phi$ satisfying ($\mathcal{E}_{asym}$) that are holomorphic (see Lemma~\ref{lem_entropy_harmonic}), we will show that in dimension $d\geq 3$, the antisymmetry of $\Pi_0\nabla\Phi$ imposed in the criterium ($\mathcal{E}_{asym}$) is very rigid for maps $\Phi$:
\begin{proposition}\label{rigidity_antisym}
Let $d\geq 3$ and $\Phi=(\Phi^1,\dots ,\Phi^d) :\R^d\to \R^d$ be a locally Lipschitz map such that
\begin{equation}
\label{nablaantisym}
\Pi_0\nabla\Phi(z)\text{ is antisymmetric for a.e. }z\in\R^d.
\end{equation}
Then there exist $c=(c_1,\dots , c_d)\in\R^d$ and a linear antisymmetric application $L=(L^1,\dots,L^d)\in\mathcal{L}(\R^d,\R^d)$ such that for all $i\in \{1,\dots ,d\}$ and $z\in\R^d$,
\begin{equation}
\label{quadratic_entropy}
\Phi^i (z)=\Phi^i(0)+L^i z+\sum_{j=1}^d\left\{ c_j z_jz_i - c_i \frac{|z_j|^2}{2}\right\}.
\end{equation}
\end{proposition}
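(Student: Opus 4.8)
The plan is to convert the antisymmetry condition \eqref{nablaantisym} into a system of scalar PDEs for the components $\Phi^i$ and then integrate it. Write $A(z):=\Pi_0\nabla\Phi(z)$, so $A(z)=\nabla\Phi(z)-\frac{1}{d}\operatorname{Tr}(\nabla\Phi(z))I_d$. The hypothesis says $A(z)+A(z)^T=0$ a.e., which reads componentwise as
\[
\partial_j\Phi^i(z)+\partial_i\Phi^j(z)=\frac{2}{d}\operatorname{Tr}(\nabla\Phi(z))\,\delta_{ij}\quad\text{for all }i,j,\text{ a.e. }z.
\]
Denote $g:=\frac{1}{d}\operatorname{Tr}(\nabla\Phi)=\frac1d\sum_k\partial_k\Phi^k$. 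Then the off-diagonal relations ($i\neq j$) give $\partial_j\Phi^i=-\partial_i\Phi^j$, while the diagonal relations give $\partial_i\Phi^i=g$ for every $i$; in particular all diagonal derivatives coincide. First I would exploit the off-diagonal antisymmetry to differentiate: for three distinct indices $i,j,k$ one has $\partial_k\partial_j\Phi^i=-\partial_k\partial_i\Phi^j=\partial_i\partial_j\Phi^k=-\partial_i\partial_k\Phi^j$, and comparing with $\partial_k\partial_j\Phi^i=\partial_j\partial_k\Phi^i=-\partial_j\partial_i\Phi^k$ one extracts rigidity of the second derivatives. (This is where $d\geq3$ enters: we need a third index available.) The regularity needed to differentiate twice is obtained first by noting $\Phi\in\mathrm{Lip}_{loc}$, then bootstrapping: the relations express $\nabla\Phi$ in terms of $g$ and the individual $\partial_i\Phi^j$, and a short argument (e.g. mollification, or observing the system is elliptic-overdetermined) upgrades $\Phi$ to $\mathcal C^\infty$, so all computations below are classical.

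\medskip
The key structural step is to show $g$ is affine and to pin down its gradient. Differentiating $\partial_i\Phi^i=g$ with respect to $\partial_j$ ($j\neq i$) gives $\partial_j g=\partial_j\partial_i\Phi^i=-\partial_i\partial_i\Phi^j$ using the off-diagonal relation with the roles of the pair $(i,j)$; iterating, I would show $\partial_{jj}\Phi^i$ is symmetric in a suitable sense and conclude that each second derivative $\partial_{jk}\Phi^i$ is constant. Concretely, set $c_i:=\partial_i g$ (to be shown constant); then $\partial_j\partial_i\Phi^i=\partial_j g=c_j$, and the chain of equalities above forces, for $i\ne j$, $\partial_{jj}\Phi^i=-c_i$ and $\partial_{ij}\Phi^i=\partial_{ij}\Phi^j=c_j=c_i$? — here one must be careful, and the honest route is: from $\partial_i\Phi^i=\partial_j\Phi^j$ for all $i,j$ we get $\partial_j\partial_i\Phi^i=\partial_j\partial_j\Phi^j$; combined with $\partial_j\Phi^i=-\partial_i\Phi^j$ we obtain $\partial_j\partial_j\Phi^j=\partial_i\partial_j\Phi^i=-\partial_i\partial_i\Phi^j$. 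Feeding these identities into each other over all triples shows the Hessian of each $\Phi^i$ is constant, hence $g$ is affine with constant gradient $c=(c_1,\dots,c_d)$, and each $\Phi^i$ is a quadratic polynomial. Writing $\Phi^i(z)=\Phi^i(0)+L^iz+\frac12 z^T H^iz$ with $H^i$ the (constant, symmetric) Hessian, the relation $\partial_j\Phi^i=-\partial_i\Phi^j$ at $z=0$ gives $L$ antisymmetric, and at general $z$ gives $H^i_{jk}=-H^j_{ik}$ for all $j,k$; together with $\partial_i\Phi^i=g$, i.e. $H^i_{ik}=\partial_kg=c_k$, these linear constraints on the symmetric tensors $H^i$ are solved by exactly $H^i_{jk}=c_j\delta_{ki}+c_k\delta_{ji}-c_i\delta_{jk}$, which upon substitution into the quadratic form yields precisely \eqref{quadratic_entropy}.

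\medskip
So the main steps, in order, are: (1) rewrite \eqref{nablaantisym} as the scalar system $\partial_j\Phi^i=-\partial_i\Phi^j$ ($i\ne j$) and $\partial_i\Phi^i=\partial_j\Phi^j=:g$ for all $i,j$; (2) upgrade regularity of $\Phi$ from Lipschitz to smooth by differentiating the system; (3) use triples of distinct indices (needing $d\ge3$) to show every second partial derivative of every $\Phi^i$ is constant, hence $\Phi$ is quadratic and $g$ affine with constant gradient $c$; (4) impose the first-order constraints ($L$ antisymmetric, $H^i_{jk}=-H^j_{ik}$, $H^i_{ik}=c_k$) and solve this finite-dimensional linear system for the Hessians to recover \eqref{quadratic_entropy}. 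The main obstacle I anticipate is step (3): organizing the combinatorics of the mixed second-derivative identities cleanly so as to conclude constancy of all $\partial_{jk}\Phi^i$ without circular reasoning — once that is in hand, steps (1)(2)(4) are routine linear algebra and regularity theory. One should also verify directly that the polynomial in \eqref{quadratic_entropy} does satisfy \eqref{nablaantisym}, which both closes the logical loop and serves as a sanity check on the Hessian formula.
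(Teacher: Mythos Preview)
Your approach is essentially the same as the paper's: both rewrite \eqref{nablaantisym} as the scalar system $\partial_i\Phi^i=\partial_j\Phi^j=:\alpha$ (your $g$) and $\partial_j\Phi^i=-\partial_i\Phi^j$ for $i\ne j$, reduce to the smooth case by mollification, exploit a third index (this is where $d\ge3$ matters) to force the second derivatives to be constant, and then integrate. Your final linear-algebra step, solving $H^i_{jk}=-H^j_{ik}$ together with $H^i_{ik}=c_k$ to get $H^i_{jk}=c_j\delta_{ki}+c_k\delta_{ji}-c_i\delta_{jk}$, is correct and matches the paper's explicit integration.

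Two remarks on the parts you flagged as uncertain. For regularity, the paper does not bootstrap $\Phi$ itself to $\mathcal C^\infty$; rather it proves the quadratic form for each mollification $\Phi_\varepsilon$ and then passes to the limit, using that $\Phi_\varepsilon$ is locally bounded in $W^{1,\infty}$ so that the coefficients $(c_\varepsilon,L_\varepsilon)$ stay bounded and converge along a subsequence. This is cleaner than invoking an ``elliptic-overdetermined'' structure, which is not obvious here. For the combinatorics in your step (3), the paper's organization may help you: it first shows $\partial_{jk}\Phi^i=0$ whenever $i,j,k$ are pairwise distinct (via $\partial_j\Phi^i_k=-\partial_j\Phi^k_i=\partial_k\Phi^j_i=-\partial_k\Phi^i_j$, hence $\Phi^i_{jk}=-\Phi^i_{jk}$), so $\Phi^i_j$ depends only on $(z_i,z_j)$; then $\Phi^i_{jj}=-\alpha_i$ and $\Phi^i_{ji}=\alpha_j$ force $\alpha_i$ to depend only on $z_i$; finally $-\alpha_{ii}=\alpha_{jj}$ for all $i\ne j$ with a third index gives $\alpha_{ii}=0$, so each $c_i:=\alpha_i$ is constant. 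This sequencing avoids the circularity you were worried about.
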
 
\begin{proof} Up to regularizing $\Phi$ by convolution with a smooth mollifying kernel (thus preserving the algebraic constraint), one can assume that $\Phi$ is smooth. Indeed, for some mollifying kernel $(\rho_\varepsilon)_{\varepsilon>0}$, assume that $\Phi_\varepsilon:=\rho_\varepsilon\ast \Phi$ writes in the preceding form: $\Phi_\varepsilon=\Phi_\varepsilon(0)+L_\varepsilon+q_{\varepsilon}$, where $L_\varepsilon$ is the linear part and $q_{\varepsilon}$ is the quadratic part which depends on the parameter $c_\varepsilon\in\R^d$. Since $(\Phi_\varepsilon)_\varepsilon$ is locally bounded in $W^{1,\infty}$, we know that $(\Phi_\varepsilon(0))_\eps$ and $(L_\varepsilon=\nabla\Phi_\varepsilon(0))_\eps$ are bounded. Thus, $q_\varepsilon=\Phi_\varepsilon-\Phi_\varepsilon(0)-L_\varepsilon$ is also bounded in the space of quadratic forms, which means that the parameter $c_\varepsilon$ is bounded. Thus, there is a subsequence $\varepsilon_i\to 0$ such that $c_{\varepsilon_i}\to c\in\R^d$ and $L_{\varepsilon_i}\to L\in\mathcal{L}(\R^d,\R^d)$ as $i\to \infty$. In the limit, one gets the identity $\Phi=\Phi(0)+L+q$ where $q$ is the quadratic form given by the last term in the RHS of \eqref{quadratic_entropy}.

Moreover, up to replacing $\Phi $ by $\Phi -\Psi$ with $\Psi (z)=\Phi(0)+\nabla\Phi(0)z$, one can assume that $\Phi(0)=0$ and $\nabla\Phi (0)=0$. For the sake of simplicity, we shall write $f_i=\partial_i f $ for the partial derivative w.r.t. $z_i$ of some scalar or vector function $f$ defined on $\R^d$. In particular, writing $\Phi=(\Phi^1,\dots , \Phi^d)$, we have the notation
$$\Phi^i_j=\partial_j\Phi^i \quad \text{for all }i,j\in \{1,\dots,d\}.$$
Now, the algebraic constraint \eqref{nablaantisym} rewrites
\begin{equation*}
\label{cond}
\begin{cases}
\Phi^1_1=\dots=\Phi^d_d=:\alpha\in L^\infty_{loc},\\
\Phi_j^i=-\Phi^j_i\quad\text{for all }i\neq j.
\end{cases}
\end{equation*}
In particular, if $i,j,k\in\{1,\dots ,d\}$ are three distinct indices, then by the Schwarz theorem,
$$\partial_j\Phi_{k}^i=-\partial_j\Phi^k_{i}=\Phi^j_{ki}=-\Phi^i_{jk}\quad \text{and thus,}\quad \Phi_{jk}^i=0.$$
In particular, $\Phi^i_j$ only depends on $z_i$ and $z_j$. Therefore, for the purpose of notation, we afford to write
$$\Phi^i_j (z)=\Phi^i_j (z_i,z_j)\quad\text{for }i\neq j.$$
Then, for every $i$ and $j$ such that $i\neq j$, one has
$$\Phi^i_{jj}=-\Phi^j_{ij}=-\alpha_i \quad\text{and}\quad \Phi^i_{ji}=\alpha_j\ .$$
In particular, $\alpha_i$ only depends on $z_i$ and $z_j$ for all $j\neq i$. Since $d\geq 3$, this means that $\alpha_i$ depends on $z_i$ only:
$$\alpha_i (z)=\alpha_i (z_i) .$$ 
Now, for every $i$ and $j$ with $i\neq j$, one has
$$-\alpha_{ii}=(\Phi^i_{jj})_i=\Phi^i_{ijj}=(\Phi^i_{ij})_j=\alpha_{jj}\ .$$
In particular, $-\alpha_{ii}=\alpha_{jj}$ for all $i\neq j$ which implies for $k\notin\{i,j\}$, $-\alpha_{ii}=\alpha_{kk}=-\alpha_{jj}=\alpha_{ii}$, that is $\alpha_{ii}=0$. As $\alpha_i$ depends only on $z_i$, we deduce that $\alpha_i$ is constant for all $i\in\{1,\dots ,d\}$, i.e.,
$$\alpha_i\equiv :c_i\in\R .$$
Since $\Phi^i_j (z)=\Phi^i_j (z_i,z_j)$ with $(\Phi^i_j)_i=\alpha_j=c_j$ and $(\Phi^i_{j})_j=-\alpha_i=-c_i$, and since $\nabla\Phi (0)=0$, one has
$$\begin{cases}
\Phi^i_j (z)=c_jz_i-c_iz_j \ \text{ when }i\neq j\ ,&\\
\Phi^1_1 (z)=\dots=\Phi^d_d(z)=\alpha (z)=\sum_{i}c_iz_i\ ,
\end{cases}$$
and the proposition follows.
\end{proof}

We will show that in dimension $d\geq 3$, the rigidity \eqref{nablaantisym} imposed on entropies within the criterium ($\mathcal{E}_{asym}$) cannot be compatible with the saturation condition \eqref{satur_cond} for two distinct wells $u^\pm\in S_a$, $a\in\R$ if the geodesic cost $\mathrm{geod}_W^a(u^-, u^+)>0$.
\begin{corollary}\label{cor_pde_opt}
Let $W:\R^d\to\R_+$ be a continuous potential and $u^\pm\in S_a$, with $a\in\R$. Assume that there exists an entropy $ \Phi\in\mathcal{C}^1(\R^d,\R^d)$ satisfying ($\mathcal{E}_{asym}$) and the saturation condition \eqref{satur_cond}. If $u\in\dot{H}_{div}^1(\Omega,\R^d)$ is a global minimizer in ($\mathcal P$) such that either ($u\in L^\infty(\Omega,\R^d)$ and $W\in\mathcal{C}^2(\R^d,\R_+)$) or $W$ satisfies the growth condition \eqref{growth_W} then the $\T^{d-1}$-average $\overline{u}$ of $u$ is constant; in particular, $u^-=u^+$.
\end{corollary}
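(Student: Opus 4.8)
The plan is to combine the algebraic rigidity of the entropy $\Phi$ given by Proposition~\ref{rigidity_antisym} with the first order PDE satisfied by a global minimizer (Proposition~\ref{pde_opt} in the case $(\mathcal{E}_{asym})$), and then to average that PDE over the torus $\T^{d-1}$.

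First I would record the explicit form of $\Pi_0\nabla\Phi$. Since $\Phi\in\mathcal{C}^1(\R^d,\R^d)$ satisfies $(\mathcal{E}_{asym})$, Proposition~\ref{rigidity_antisym} provides $c=(c_1,\dots,c_d)\in\R^d$ and an antisymmetric matrix $L=(L_{ij})\in\R^{d\times d}$ such that $\Phi$ has the quadratic shape \eqref{quadratic_entropy}; differentiating \eqref{quadratic_entropy} and subtracting the trace gives
\[
\bigl(\Pi_0\nabla\Phi(z)\bigr)_{ij}=L_{ij}+c_j z_i-c_i z_j\qquad\text{for all }z\in\R^d.
\]
By Proposition~\ref{pde_opt} (case $(\mathcal{E}_{asym})$, which applies precisely under the standing hypotheses since $\Phi$ also satisfies the saturation condition \eqref{satur_cond}), the global minimizer $u$ solves \eqref{pdeAsym}; writing its second identity $2\Pi^-\nabla u^T=\Pi_0\nabla\Phi(u)$ componentwise yields
\[
\partial_i u_j-\partial_j u_i=L_{ij}+c_j u_i-c_i u_j\qquad\text{a.e. in }\Omega,\quad i,j\in\{1,\dots,d\}.
\]

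Next I would average in $x'$. Fixing $j\in\{2,\dots,d\}$ and taking $i=1$, the identity above reads $\partial_1 u_j-\partial_j u_1=L_{1j}+c_j u_1-c_1 u_j$ a.e.\ in $\Omega$. Integrating over $x'\in\T^{d-1}$, the term $\int_{\T^{d-1}}\partial_j u_1\,\diff x'$ vanishes because $x_j$ is a periodic coordinate, $\int_{\T^{d-1}}\partial_1 u_j\,\diff x'=\frac{\diff}{\diff x_1}\overline{u}_j$, and $\overline{u}_1\equiv a$ by Lemma~\ref{lem_aver}; hence $\overline{u}_j$ solves the scalar linear ODE
\[
\frac{\diff}{\diff x_1}\overline{u}_j=L_{1j}+c_j a-c_1\,\overline{u}_j\qquad\text{for a.e. }x_1\in\R.
\]
By Lemma~\ref{lem_aver} and the boundary condition \eqref{BC}, $\overline{u}_j\in\mathcal{C}^{0,1/2}(\R)$ has finite limits $u_j^\pm$ at $\pm\infty$, so it is bounded on all of $\R$; but a bounded solution of such an ODE must be constant (if $c_1\neq 0$ the general solution $\frac{L_{1j}+c_j a}{c_1}+Ke^{-c_1 x_1}$ is bounded only for $K=0$; if $c_1=0$ then boundedness forces $L_{1j}+c_j a=0$, so $\overline{u}_j$ is constant). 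Thus $\overline{u}_j$ is constant for every $j\ge 2$, and together with $\overline{u}_1\equiv a$ this shows $\overline{u}$ is constant on $\R$; in particular $u^+=\overline{u}(+\infty)=\overline{u}(-\infty)=u^-$.

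I do not expect a real obstacle: the only substantive input is the rigidity Proposition~\ref{rigidity_antisym}, which forces $\Pi_0\nabla\Phi(u)$ to be an affine function of $u$ — it is exactly this special structure (available only because $d\ge 3$) that makes the $x'$-average of \eqref{pdeAsym} a closed scalar ODE for $\overline{u}_j$, from which the conclusion is immediate; everything else is routine integration on the torus and a one-line ODE argument.
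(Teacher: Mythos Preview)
Your proof is correct and follows the same strategy as the paper: apply the rigidity result Proposition~\ref{rigidity_antisym}, plug the resulting affine expression for $\Pi_0\nabla\Phi$ into the first order PDE \eqref{pdeAsym} from Proposition~\ref{pde_opt}, and average over $\T^{d-1}$ to obtain a constant-coefficient linear ODE for the bounded curve $\overline{u}$. Your version is in fact slightly more streamlined than the paper's, since you treat each component $\overline{u}_j$ separately via the scalar ODE $\frac{\diff}{\diff x_1}\overline{u}_j=L_{1j}+c_ja-c_1\overline{u}_j$, whereas the paper writes out the full averaged matrix system and then splits into the cases $c'=0$ and $c'\neq 0$ (using in the latter case the extra algebraic relation coming from the $(i,j)$ entries with $i,j\ge 2$); your direct component-wise argument avoids that detour.
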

\begin{proof}
By Proposition~\ref{rigidity_antisym}, there exist an antisymmetric matrix $A\in\R^{d\times d}$ and $c\in\R^d$ such that $\Pi_0\nabla\Phi (z)=z\otimes c-c\otimes z+A$ for a.e. $z\in\R^d$ and we deduce by Proposition~\ref{pde_opt} that
\[
2\Pi^-\nabla u^{T}=u\otimes c-c\otimes u+A\quad\text{a.e. in $\Omega$.}
\]
By integrating over $x'\in\T^{d-1}$, we obtain the system $2\Pi^-\nabla \overline{u}^T=\overline{u}\otimes c-c\otimes \overline{u}+A$ which rewrites
\begin{equation}
\label{edpA}
\begin{cases}
\frac{\diff}{\diff t}\varphi(t)=ac'-c_1\varphi(t)+A'_1\\
c'\otimes\varphi - \varphi\otimes c'=A'
\end{cases}
\quad\text{for a.e. $t\in\R$,}
\end{equation}
where $\varphi:\R\to\R^{d-1}$, $c'\in\R^{d-1}$ and $A'\in\R^{(d-1)\times (d-1)}$ are determined by $\overline{u}=(a,\varphi)\in\R^d_a$, $c=(c_1,c')\in\R^d$, $A'=(A_{ij})_{i,j\ge 2}$, and $A'_1$ is the first row vector of $A'$.

If $c'=0$, then the only bounded solutions of the ODE $\frac{\diff}{\diff t}\varphi(t)=-c_1\varphi(t)+A'_1$ in $\R$ are the constant solutions; thus, we deduce that $\overline{u}=(a,\varphi)$ is constant.

If $c'\neq 0$, by multiplying the second equation of \eqref{edpA} by $\frac{c'}{|c'|^2}$, we obtain
\begin{equation}
\label{gamma}
\varphi(t)= \Big(\gamma(t) -\frac{A'}{|c'|}\Big)\frac{c'}{|c'|},\quad\text{where }\gamma(t):=\varphi(t)\cdot \frac{c'}{|c'|}\quad \text{for a.e. }t\in\R.
\end{equation}
Moreover, the first equation of \eqref{edpA} yields
\[
\frac{d}{dt}\gamma=-c_1\gamma+b,\quad \text{with}\quad b\in\R.
\]
Again, since the only bounded solution of this ODE in $\R$ are constant, we deduce that $\gamma$ is constant. Then $\varphi$ and $\overline{u}=(a,\varphi)$ are constant as well by \eqref{gamma}.
\end{proof}

\paragraph{Entropies with symmetric Jacobians ($\mathcal{E}_{sym}$).}
This criterium turns out to be more useful than ($\mathcal{E}_{asym}$) in dimension $d\geq 3$, although very restrictive. By Proposition~\ref{diagonal_entropy}, if there exists a map $ \Phi\in{\mathcal C}^1(\R^d,\R^d)$ satisfying the following conditions:
\begin{itemize}
\item
$\nabla\Phi(z)$ is symmetric and satisfies \eqref{33} for all \ $z\in\R^d$,
\item
$|\Pi_0\nabla\Phi(z)|^2\leq 4W(z)$ for all \ $z\in\R^d$,
\item
$\Phi$ satisfies the saturation condition \eqref{satur_cond}, i.e., $\Phi_1(u^+)-\Phi_1(u^-)=\mathrm{geod}_{W}^a(u^-,u^+)$,
\end{itemize}
then one has one-dimensional symmetry of global minimizers of ($\mathcal P$) provided some growth condition on $W$. One can reformulate this result in the following way, where the saturation condition \eqref{satur_cond} is replaced by \eqref{el_nd}.
\begin{proposition}\label{rigidity_sym}
Let $ \Phi\in{\mathcal C}^1(\R^d,\R^d)$ be such that $\nabla\Phi$ is symmetric satisfying \eqref{33} in $\R^d$ and consider the potential $W$ given in 
\eqref{calibW}
and two wells $u^\pm\in\R^d_a\cap\{W=0\}$. If $v\in \dot{H}^1_{ div}(\Omega,\R^d)$ solves the system
\begin{equation}\label{el_nd}
2\,\Pi^+\nabla v=\Pi_0\nabla\Phi(v),
\end{equation}
and if $v(x)=v(x_1)$, $v_1\equiv a$, $v(\pm\infty)=u^\pm$ and $W$ satisfies the growth condition \eqref{growth_W}, then $v$ is a global minimizer of ($\mathcal P$) and any other global minimizer $u$ depends on $x_1$ only.
\end{proposition}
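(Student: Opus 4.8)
The plan is to verify that, under the hypotheses, the map $\Phi$ is an entropy satisfying the saturation condition \eqref{satur_cond} for the potential $W$ defined in \eqref{calibW}, and then to invoke Proposition~\ref{diagonal_entropy}. First I would observe that by the definition \eqref{calibW}, one has exactly
\[
|\Pi_0\nabla\Phi(z)|^2 = 2\sum_{1\le i<j\le d}|\partial_i\Phi_j(z)|^2 + |\Pi_0(\mathrm{diag}(\partial_1\Phi_1,\dots,\partial_d\Phi_d))(z)|^2 = 4W(z),
\]
where the second term vanishes because of \eqref{33} (the diagonal of $\nabla\Phi$ is a multiple of $I_d$, hence its traceless part is zero) and the factor $2$ accounts for both the $(i,j)$ and $(j,i)$ entries of the symmetric matrix $\Pi_0\nabla\Phi$. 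Since $\nabla\Phi$ is symmetric by assumption, Proposition~\ref{criterium_weak} applies and shows $\Phi$ is an entropy. The condition \eqref{33} is precisely the hypothesis \eqref{33} required in Proposition~\ref{diagonal_entropy}, so the criterion ($\mathcal{E}_{sym}$) in \eqref{ent_sym} holds with equality in the inequality $|\Pi_0\nabla\Phi|^2\le 4W$.

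Next I would check the saturation condition. By Proposition~\ref{inf_1D}, the infimum of the $1D$ energy \eqref{energy_1D} between $u^-$ and $u^+$ equals $\mathrm{geod}^a_W(u^-,u^+)$, and the hypothesis is that $v$ is a $1D$ map with $v_1\equiv a$, $v(\pm\infty)=u^\pm$, solving \eqref{el_nd}. For a $1D$ map $v=(a,\varphi(x_1))$, the system \eqref{el_nd} reduces, as noted in the remark following Proposition~\ref{pde_opt}, to the ODE $\dot\varphi(t)=[\partial_2\Phi_1,\dots,\partial_d\Phi_1](a,\varphi(t))$, together with $\partial_1v_1 = \partial_1\Phi_1(v)=0$ (consistent with $v_1\equiv a$). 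Using this ODE and the equality $|\Pi_0\nabla\Phi(v)|^2 = 4W(v)$, a direct computation along the curve $\gamma=v$ gives
\[
\Phi_1(u^+)-\Phi_1(u^-) = \int_\R \partial_1\bigl(\Phi_1(v)\bigr)\,dx_1 = \int_\R \nabla\Phi_1(v)\cdot\dot v\,dx_1 = \int_\R |\dot\varphi|^2\,dx_1,
\]
and the relation \eqref{el_nd} forces $|\dot\varphi|^2 = \tfrac12|\dot\varphi|^2 + W(v)$, i.e. equipartition $|\dot\varphi|^2 = 2W(v)$, whence $\Phi_1(u^+)-\Phi_1(u^-) = \int_\R |\dot\varphi|\sqrt{2W(v)}\,dx_1 = E(v) = \mathrm{geod}^a_W(u^-,u^+)$ by Proposition~\ref{inf_1D}. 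This is the saturation condition \eqref{satur_cond}. (Alternatively, one can argue that \eqref{el_nd} together with $v$ being $1D$ and admissible forces $E(v)$ to equal both sides of \eqref{satur_cond} via the identity \eqref{22}; I would pick whichever bookkeeping is shortest.)

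With $\Phi$ an entropy satisfying ($\mathcal{E}_{sym}$), the saturation condition \eqref{satur_cond}, the condition \eqref{33}, and $W$ satisfying the growth condition \eqref{growth_W}, Proposition~\ref{diagonal_entropy} applies verbatim: any global minimizer $u$ of $(\mathcal P)$ is one-dimensional. Finally, to conclude that $v$ itself is a global minimizer, I would use Proposition~\ref{pde_opt} (the converse direction): $v$ solves \eqref{pdeSym} — indeed \eqref{el_nd} is the second equation of \eqref{pdeSym}, and the first, $W(v)=\tfrac14|\Pi_0\nabla\Phi(v)|^2$, holds everywhere by our identity $|\Pi_0\nabla\Phi|^2=4W$ — so $W$ satisfying \eqref{growth_W} gives that $v$ is a global minimizer of $(\mathcal P)$. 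I expect the only mildly delicate point to be the careful verification that for a $1D$ admissible $v$ the system \eqref{el_nd} indeed reduces to the stated ODE and yields equipartition (one must check the diagonal components of \eqref{el_nd} are consistent with $v_1\equiv a$, which is exactly where \eqref{33} is used again); everything else is a direct substitution into results already established in the excerpt.
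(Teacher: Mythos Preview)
Your approach matches the paper's: verify $|\Pi_0\nabla\Phi|^2=4W$, apply Proposition~\ref{criterium_weak} to get that $\Phi$ is an entropy, establish saturation via the $1D$ solution $v$, then conclude by Proposition~\ref{diagonal_entropy}. One step, however, is incomplete as written. From the computation along $v$ you obtain
\[
\Phi_1(u^+)-\Phi_1(u^-)=E(v)=L_W(v)\ge \mathrm{geod}^a_W(u^-,u^+),
\]
but your appeal to Proposition~\ref{inf_1D} only gives $\mathrm{geod}^a_W=\inf_\gamma E(\gamma)\le E(v)$, i.e.\ the same inequality again; it does not yield the reverse bound $\Phi_1(u^+)-\Phi_1(u^-)\le \mathrm{geod}^a_W(u^-,u^+)$. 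Equipartition alone does not force $v$ to lie on a geodesic. The paper closes this by taking any sequence $\gamma_k\in\mathrm{Lip}([-1,1],\R^d_a)$ with $L_W(\gamma_k)\to\mathrm{geod}^a_W(u^-,u^+)$ and using $\nabla\Phi_1(\gamma_k)\cdot\dot\gamma_k\le\sqrt{2W(\gamma_k)}\,|\dot\gamma_k|$ (which follows from $|\Pi_0\nabla\Phi|^2=4W$) to get $\Phi_1(u^+)-\Phi_1(u^-)\le L_W(\gamma_k)$.

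Your alternative route via \eqref{22} does work and is in fact cleaner: since $v$ solves \eqref{el_nd} and $W=\tfrac14|\Pi_0\nabla\Phi|^2$, both nonnegative integrals in \eqref{22} vanish, giving $E(v)=\Phi_1(u^+)-\Phi_1(u^-)\le E(u)$ for every admissible $u$; hence $v$ is a global minimizer, and then $E(v)=c_W(u^-,u^+)\le\mathrm{geod}^a_W(u^-,u^+)\le L_W(v)=E(v)$ forces saturation. Just be careful that Proposition~\ref{pde_opt} is \emph{stated} with saturation as a standing hypothesis, so formally you should argue directly from \eqref{22} (whose derivation uses only $(\mathcal{E}_{sym})$ and Lemma~\ref{lavrentiev}, not saturation) rather than cite the converse of Proposition~\ref{pde_opt} as a black box.
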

\begin{proof} We first observe that \eqref{calibW} rewrites $ |\Pi_0\nabla\Phi|^2=4W$; hence, by \eqref{calibW} and Proposition \ref{criterium_weak}, we know that $\Phi$ is an entropy. We now show that if $v$ is a $1D$ solution of \eqref{el_nd}, then $\Phi$ satisfies the saturation condition \eqref{satur_cond}. 
 Indeed, note that \eqref{el_nd} reads
\begin{equation*}\label{matrix_el}
2\Pi^+\nabla v=
\left(\begin{matrix}
0&\dot{v}_2&\dots &\dot{v}_d\\
\dot{v}_2&0&\dots&0\\
\vdots&\vdots&&\vdots\\
\dot{v}_d&0&\dots&0
\end{matrix}\right)
=
\Pi_0\nabla\Phi(v)
=
\left(\begin{matrix}
0&&(\partial_j\Phi_i(v))_{j>i}\\
&\ddots&\\
(\partial_j\Phi_i(v))_{j<i}&&0
\end{matrix}\right).
\end{equation*}
Then one has $|\dot{v}|^2=\frac 12 |2\Pi^+\nabla v|^2=\frac 12 |\Pi_0\nabla\Phi(v)|^2\stackrel{\eqref{calibW}}{=}2W(v)$ and since $ \dot{v}=(0,\partial_2\Phi_1(v),\dots,\partial_d\Phi_1(v))$, we compute
\begin{equation*}
\Phi_1(u^+)-\Phi_1(u^-)=\int_\R \nabla\Phi_1(v(t))\cdot \dot{v}(t)\diff t=\int_\R |\dot{v}(t)|^2\diff t=\int_\R \sqrt{2W(v(t))}|\dot{v}(t)|\diff t\stackrel{\eqref{geod_H1}}{\geq} \mathrm{geod}_{W}^a(u^-,u^+).
\end{equation*}
Moreover, the reverse inequality also holds. Indeed, by \eqref{geod_H1}, we can choose a sequence of curves $(\gamma_k)_{k\ge 1}$ in $\mathrm{Lip}([-1,1],\R^d_a)$ such that $\gamma_k(\pm 1)=u^\pm$  and $(L_{W}(\gamma_k))_{k\ge 1}\to \mathrm{geod}_{W}^a(u^-,u^+)$ so that
\[
\Phi_1(u^+)-\Phi_1(u^-)=\int_{-1}^1 \nabla\Phi_1(\gamma_k(t))\cdot \dot{\gamma_k}(t)\diff t\stackrel{\eqref{calibW}}{\leq} \int_{-1}^1 \sqrt{2W(\gamma_k)}|\dot{\gamma_k}|\to \mathrm{geod}_{W}^a(u^-,u^+). 
\]
Thus, the saturation condition $\Phi_1(u^+)-\Phi_1(u^-)=\mathrm{geod}_{W}^a(u^-,u^+)$ follows and, as a by-product, one gets optimality of $v$ since $E(v)=L_W(v)=\Phi_1(u^+)-\Phi_1(u^-)$ is minimal by Proposition \ref{minimal}.
The one-dimensional symmetry of other minimizers $u$ is a consequence of Proposition~\ref{diagonal_entropy}.
\end{proof}

\begin{proof}[Proof of Theorem \ref{thm:rigid_sym}] It is a direct consequence of Proposition~\ref{diagonal_entropy}.
\end{proof}

\medskip

\nd {\it Strategy for constructing entropies}. We now investigate whether \eqref{calibW} provides nontrivial potentials $W$ for which one has one-dimensional symmetry of global minimizers in ($\mathcal P$). We thus look for maps $\Phi\in\mathcal{C}^1(\R^d,\R^d)$ such that for all $z\in\R^d$, $\nabla\Phi(z)$ is symmetric. If so, by the Poincar\'e Lemma, there exists $\Psi\in \mathcal{C}^2(\R^d)$ such that
\[
\Phi(z)=\nabla\Psi (z)\quad\text{for all }z\in\R^d.
\]
In addition, we require  that \eqref{33} holds for $\Phi$, which amounts to imposing 
\eqref{wavePsi} on $\Psi$.
By analogy with the wave equation in $\R^2$, solutions of these equations can be written
$$\Psi(z)=\sum_{\sigma\in\{\pm1\}^d}f_\sigma (\sigma\cdot z) ,$$
where $(f_\sigma)_\sigma$ is a family of scalar functions defined over $\R$ (this form of $\Psi$ follows by an induction argument over the dimension $d$). However, this formula is not so easy to manipulate and we rather use an induction method: the entropy ${\Phi}$ in $\R^d$ will be constructed as an extension of the entropy $\overline{\Phi}$ defined on $\R^{d-1}\sim \{z_d=0\}\cap\R^d$.
More precisely, assume that the map $\overline{\Phi}=\nabla\overline{\Psi}$ (with $\overline{\Psi}\in\mathcal{C}^2(\R^{d-1})$) is an entropy in $\R^{d-1}$ leading to the potential $\overline{W}:=\frac 14|\Pi_0\nabla\overline{\Phi}|^2$. We now look for an entropy $\Phi:\R^d\to\R^d$, of the form $\Phi=\nabla\Psi$ with $\Psi:\R^d\to\R$, such that $\Psi(z_1,\dots,z_{d-1},0)=\overline{\Psi}(z_1,\dots,z_{d-1})$. The function $\Psi$ defined by
\begin{equation}
\label{extensionPsi}
\Psi(z_1,\dots,z_d)=\frac 12 \big(\overline{\Psi}(z_1,\dots,z_{d-2},z_{d-1}+z_d)+\overline{\Psi}(z_1,\dots,z_{d-2},z_{d-1}-z_d)\big)
\end{equation}
is an extension of $\overline{\Psi}$ which solves \eqref{wavePsi} in $\R^d$ provided that $\overline{\Psi}$ solves the same equation \eqref{wavePsi} in dimension $d-1$.

\medskip

\nd {\it Ginzburg-Landau type potential in dimension $d\geq 3$}. We shall build examples of entropies in every dimension $d\ge 3$ by use of the preceding induction method. Let us initialize the induction in dimension $d=2$ with the Ginzburg-Landau potential $W_{d=2}(\cdot):=\frac 12(1-|\cdot|^2)^2$ for which we have the Aviles-Giga entropy
\[
\Phi^{d=2}:=\nabla\Psi_{d=2},\quad\text{where}\quad\Psi_{d=2}(z_1,z_2)=-z_1z_2\left(\frac{z_1^2+z_2^2}{3}-1\right)
\]
and let $(\Psi_d)_{d\ge 2}$ be the unique sequence of scalar functions given inductively by  \eqref{extensionPsi}, where $\Psi=\Psi_d:\R^d\to\R$ and $\overline{\Psi}=\Psi_{d-1}:\R^{d-1}\to\R$ for every $d\ge 3$. Then, an easy computation yields
$$\Psi_d (z)=-z_1z_2\left(\frac{z_1^2+z_2^2}{3}+|z''|^2-1\right)\quad \text{for every } z=(z_1,z_2, z'')\in\R^d, \, z''=(z_3, \dots, z_d).$$
The map $\Phi^d:=\nabla\Psi_d:\R^d\to\R^d$ is an entropy for the following potential\footnote{Note that the growth condition \eqref{growth_W} is valid for $W_d$ only for dimensions $d\leq 4$. }
$$W_d(z):=\frac 14|\Pi_0\nabla\Phi^d(z)|^2=\frac 12(|z|^2-1)^2+2|z''|^2(z_1^2+z_2^2)\quad \text{for every }z\in\R^d.$$

\medskip

\nd {\it Symmetry of global minimizers in the case of the potential $W_d$}. We will follow Proposition \ref{rigidity_sym}. Let us give a detailed study in dimension $d=3$ (the same argument works for $d=4$). The gradient of the entropy $\Phi^3=\nabla \Psi_3$ writes
\begin{equation*}\label{entropy3d}
\nabla\Phi^3 (z)=\nabla^2\Psi_3 (z)=
\left(\begin{matrix}
-2z_1z_2&1-|z|^2&-2z_2z_3\\
1-|z|^2&-2z_1z_2&-2z_1z_3\\
-2z_2z_3&-2z_1z_3&-2z_1z_2
\end{matrix}\right) .
\end{equation*}
We have
\[
\{W_3=0\}=\{z\in \mathbb{S}^2\;:\;z_3=0\quad\text{or}\quad z_1=z_2=0\}=\mathbb{S}^1\cup\{\pm e_3\},
\]
where $\mathbb{S}^2$ is the unit sphere in $\R^3$, $\mathbb{S}^1=\mathbb{S}^2\cap \{z_3=0\}$ and $(e_1,e_2,e_3)$ is the canonical basis of $\R^3$. 

\medskip

\nd {\it Case 1: wells $u^\pm$ in $\mathbb{S}^1$}.
Let $u^\pm$ be two wells in $\mathbb{S}^1$ such that $(u^+-u^-)\cdot e_1=0$:
$$u^\pm = (a,\pm b, 0),$$
where $b>0$ and $a^2+b^2=1$ (see Figure \ref{W=0}). Note that since $W_3$ is invariant by rotation around the axis $\R e_3$, it is not restrictive to take $\nu=e_1$ when considering two wells such that $u^+-u^-$ is orthogonal to $\nu\in\mathbb{S}^1$.
\begin{figure}
\centering
\begin{tikzpicture}[scale=0.7,>=triangle 45,scale=2.5]

   \begin{scope}[canvas is zy plane at x=0]
     \draw  (0,-1) -- (0,1);
   \end{scope}

   \begin{scope}[canvas is zx plane at y=0]
     \draw[thick] (0,0) circle (1cm);
     \draw[dashed] (-1,0) -- (1,0) (0,-1) -- (0,1);
     \draw (0.70710678118,-0.70710678118)--(0.70710678118,0.70710678118);
   \end{scope}
   
  \begin{scope}[canvas is xy plane at z=0]
  \draw (0,1)--(1,0)--(0,-1)--(-1,0)--cycle;
   \end{scope}
 
 \draw (0,1,0) node[above] {$e_3$} node { $\bullet$}; 
 \draw (0,-1,0)node[below] {$-e_3$} node {\large $\bullet$}; 
 \draw (1,0,0) node[right] {$e_2$} node { $\bullet$}; 
  \draw (-1,0,0) node[left] {$-e_2$} node { $\bullet$}; 

  \draw (-0.70710678118,0,0.70710678118) node {$\bullet$} node[below left] {$u^-$}; 
  \draw (0.70710678118,0,0.70710678118) node {$\bullet$} node [below right] {$u^+$}; 
 
 \draw (0,0,1.2) node[below right]  {$e_1$};
 \end{tikzpicture}
\caption{Phase space with the set $\{W_3=0\}=\mathbb{S}^1\cup\{\pm e_3\}$\label{W=0}}
\end{figure}
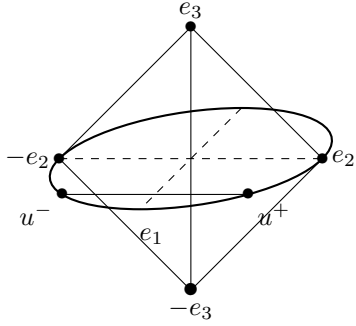
By Proposition \ref{rigidity_sym}, the symmetry of global minimizers in $(\mathcal{P})$ follows from the existence of a $1D$ solution of the system \eqref{el_nd}. Note that a one-dimensional transition $v=(a,v_2(x_1),v_3(x_1))$, with $(v_2,v_3)(\pm\infty)=(\pm b,0)$, satisfies \eqref{el_nd} if and only if
\begin{equation}\label{ode}
(\dot v_2(x_1), \dot v_3(x_1))=\left(b^2-v_2^2(x_1)-v_3^2(x_1)\; ,\; -2v_2(x_1)v_3(x_1)\right)\quad\text{and}\quad a\,v_3(x_1)=0 .
\end{equation}
A solution of this ODE such that $(v_2,v_3)(\pm\infty)=(\pm b,0)$ is given by
\[
(v_2(x_1),v_3(x_1))=(b\, \mathrm{tanh}(b\, x_1),0);
\]
if $a\neq 0$ this is the only solution of \eqref{ode} up to translation (since \eqref{ode} then yields $v_3\equiv 0$), while if $a=0$, i.e. $b=1$, there are also solutions with non vanishing $v_3$ (consider the unique solution of \eqref{ode} such that $(v_2,v_3)(0)=(0,h)$ with $h\in (-1,1)$; it is easy to see that this solution is defined on $\R$ and satisfies $(v_2,v_3)(\pm\infty)=(\pm b,0)$).
Thus, this argument proves both the one-dimensional symmetry of global minimizers in $(\mathcal{P})$ (via Proposition \ref{rigidity_sym}) and  the uniqueness of optimal $1D$ transition layers (up to a translation) when $a\neq 0$.
\medskip

\nd {\it Case 2: wells $u^\pm$ in the set $\{\pm e_2,\pm e_3\}$}. In this case, $a=0$. By symmetry, it is enough to consider transitions from $u^-=e_3$ to $u^+=e_2$ and from $u^-=e_3$ to $u^+=-e_3$. In both cases, \eqref{ode} reads
$$( \dot v_2,\dot v_3)= (1-v_2^2-v_3^2 , -2v_2v_3).$$
For the transition between $e_3$ and $e_2$, it is convenient to use the change of variable $u =(u_1,u_2):=(v_2+v_3,v_2-v_3)$ so that the preceding ODE is equivalent to the decoupled system
$$(\dot u_1, \dot u_2)= (1-u_1^2 \;,\; 1-u_2^2),\quad u (\pm\infty)=(1,\pm 1),$$
whose only solution lies on a straight line (see figure \ref{geodesics3D}), and is given by
$$u_1(x_1)=v_2(x_1)+v_3(x_1)\equiv 1\quad \text{and}\quad u_2(x_1)=v_2(x_1)-v_3(x_1)=\tanh (x_1).$$
As before, this argument proves both the one-dimensional symmetry and uniqueness (up to a translation) of global minimizers in $(\mathcal{P})$.
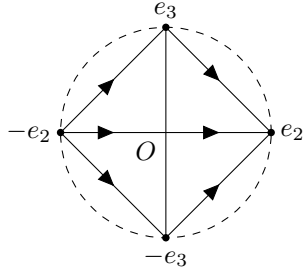
\begin{figure}
\centering
\begin{tikzpicture}[scale=0.7,>=triangle 45,x=2.0cm,y=2.0cm]
\draw (-1,0)--(1,0);
\draw (0,-1)--(0,1);

\draw (1,0)--(0,1)--(-1,0)--(0,-1)--(1,0);

\draw [dashed] (0,0) circle (1);

\draw node[below left] {$O$};
\fill (1,0) circle (2pt) node[right] {$e_2$};
\fill (-1,0) circle (2pt) node[left] {$-e_2$};
\fill (0,1) circle (2pt) node[above] {$e_3$};
\fill (0,-1) circle (2pt) node[below]{$-e_3$};

\draw[->] (.5,0)--+(.01,0);
\draw[->] (-.5,0)--+(.01,0);
\draw[->] (.5,.5)--+(.01,-.01);
\draw[->] (-.5,.5)--+(.01,.01);
\draw[->] (-.5,-.5)--+(.01,-.01);
\draw[->] (.5,-.5)--+(.01,.01);
\end{tikzpicture}
\caption{Geodesics in the plane $\{z_1=0\}$
\label{geodesics3D}}
\end{figure}
\label{fail_satur}

For the transition between $e_3$ and $-e_3$, we remark that the $x_2$-axis $\{z_1=z_3=0\}$ is the reunion of five solutions of \eqref{ode}: two stationary solutions $v\equiv \pm e_2$, one supported on $\{z_2<-1,\, z_1=z_3=0\}$, one on $\{-1<z_2<1,\, z_1=z_3=0\}$, and the other on $\{1<z_2,\, z_1=z_3=0\}$. In particular, by the Cauchy-Lipschitz Theorem, no solution can meet the line $\{z_1=z_3=0\}$ and there is no solution connecting $-e_3$ to $e_3$; hence, there is no global minimizer in $(\mathcal{P})$. Also note that the entropy $\Phi=\Phi^3$ satisfies $\Phi_1(e_3)-\Phi_1(-e_3)=0$ (because $\Phi_1$ is even in $z_3$) and $\mathrm{geod}^a_{W}(-e_3,+e_3)>0$ (because $\pm e_3$ are isolated zeros of $W$); therefore, the saturation condition imposed in Theorem~\ref{thm:rigid_sym} is not always satisfied.

%%%%%%%%%%%%%%%%%%%%%%%%%%%%%%%%%%%%%%%%%
\appendix
\section{Characterization of extremal pseudo-metrics}
Given a finite set $X$ with at least two elements, we define the set $\Delta(X)$ of pseudo-metrics on $X$ by
\[
\Delta(X)=\{\delta\in\R_+^{X\times X}\;:\; \forall x,y,z\in X,\, \delta(x,x)=0,\, \delta(x,y)=\delta(y,x),\, \delta(x,y)\leq \delta(x,z)+\delta(z,y)\}
\]
(see footnote at page \pageref{foot11}) and the set $\Delta_1(X)$ of normalized pseudo-metrics on $X$ by
\[
\Delta_1(X):=\left\{\delta\in\Delta(X)\;:\; {\sigma}(\delta)=1\right\},\quad\text{where }{\sigma}(\delta)=\sum_{x,y\in X}\delta(x,y).
\]
We look for those normalized pseudo-metrics $\delta$ which are extremal in $\Delta_1(X)$ in the following sense
\[
\forall t\in (0,1),\,\forall \delta_1,\delta_2\in\Delta_1(X),\, \big(\delta=t\delta_1+(1-t)\delta_2\Longrightarrow \delta_1=\delta_2\big).
\]

\begin{lemma}\label{extremalmetric}
A pseudo-metric $\delta$ is extremal in the compact convex set $\Delta_1(X)$ if and only if it is of the form $\delta=\frac1{\sigma(\delta_Y)}\delta_Y$ for some $Y\subset X$ (with $Y\neq\emptyset$ and $Y\neq X$), where $\delta_Y$ is defined for all $x,y\in X$ by
\[
\delta_Y(x,y)=
\begin{cases}
0&\text{if }(x,y\in Y)\text{ or }(x,y\notin Y),\\
1&\text{otherwise.}
\end{cases}
\]
In particular, any pseudo-metric $\delta\in \Delta(X)$ writes 
\(
\delta=\sum_{\emptyset \neq Y\subsetneq X}\lambda_Y \delta_Y,\text{ with } (\lambda_Y)_Y\subset\R_+.
\)
\end{lemma}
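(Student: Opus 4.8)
\textbf{Proof plan for Lemma~\ref{extremalmetric}.}

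The plan is to identify $\Delta_1(X)$ as a compact convex polytope and to characterize its vertices (extreme points) by an elementary combinatorial argument. First I would observe that $\Delta(X)$ is a closed convex cone cut out by finitely many linear inequalities (the triangle inequalities and the nonnegativity constraints) together with the linear equalities $\delta(x,x)=0$ and $\delta(x,y)=\delta(y,x)$; intersecting with the hyperplane $\{\sigma(\delta)=1\}$ gives a compact convex polytope $\Delta_1(X)$, so by Minkowski/Krein--Milman it is the convex hull of its finitely many extreme points, which proves the ``in particular'' statement once the extreme points are identified (one normalizes each $\delta_Y$ and absorbs the factor into $\lambda_Y$). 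The key point is therefore to show that the extreme points are exactly the normalized cut pseudo-metrics $\frac{1}{\sigma(\delta_Y)}\delta_Y$ for $\emptyset\neq Y\subsetneq X$.

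For the easy direction, that each $\delta_Y$ is extremal, I would argue as follows: suppose $\delta_Y=t\delta_1+(1-t)\delta_2$ with $t\in(0,1)$ and $\delta_i\in\Delta(X)$ (up to the normalization factor). On every pair $(x,y)$ with $\delta_Y(x,y)=0$, nonnegativity of $\delta_1,\delta_2$ forces $\delta_1(x,y)=\delta_2(x,y)=0$; thus both $\delta_i$ vanish on $Y\times Y$ and on $(X\setminus Y)\times(X\setminus Y)$. Now take any $x\in Y$, $y\notin Y$: by the triangle inequality applied through a third point, and using that $\delta_i$ vanishes within each of $Y$ and $X\setminus Y$, one gets that $\delta_i(x,y)$ is independent of $x\in Y$ and of $y\notin Y$ (pick $x'\in Y$: $\delta_i(x',y)\le\delta_i(x',x)+\delta_i(x,y)=\delta_i(x,y)$ and symmetrically), so $\delta_i$ equals a constant $c_i$ on the ``cut'' $Y\times(X\setminus Y)$, i.e. $\delta_i=c_i\delta_Y$. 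Matching $\sigma(\delta_i)=\sigma(\delta_Y)$ (after normalization) forces $c_1=c_2$, hence $\delta_1=\delta_2$, so $\delta_Y$ is extremal.

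For the converse — that every extreme point is a multiple of some $\delta_Y$ — I expect this to be the main obstacle. The strategy is: given an extreme $\delta$, consider the relation $x\sim y\iff\delta(x,y)=0$, which is an equivalence relation by the triangle inequality; let $Y_1,\dots,Y_m$ be the classes. If $m=1$ then $\delta\equiv 0$, contradicting $\sigma(\delta)=1$, so $m\ge 2$. One then shows that $\delta$ lies in the convex cone generated by the cut pseudo-metrics $\delta_{Z}$ where $Z$ ranges over unions of the classes $Y_j$: indeed $\delta$ descends to a genuine metric $\bar\delta$ on the finite set $\{Y_1,\dots,Y_m\}$, and every finite metric is a nonnegative combination of cut metrics (a classical fact; alternatively, for a metric $\bar\delta$ on $m$ points one can peel off, for the smallest positive value $\varepsilon=\min_{i\neq j}\bar\delta(Y_i,Y_j)$ attained on a pair spanning a cut $(\mathcal{A},\mathcal{A}^c)$, the pseudo-metric $\varepsilon\,\delta_{\mathcal A}$, check $\bar\delta-\varepsilon\delta_{\mathcal A}$ is still a metric, and induct on the number of distinct values). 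Lifting back, $\delta$ is a nonnegative combination $\sum_Z\mu_Z\delta_Z$ of cut pseudo-metrics on $X$; normalizing each term and using extremality of $\delta$ in $\Delta_1(X)$ forces exactly one term to be present, i.e. $\delta=\frac{1}{\sigma(\delta_Y)}\delta_Y$ for some $\emptyset\neq Y\subsetneq X$. The delicate bookkeeping is making sure the decomposition of a finite metric into cuts is carried out cleanly (e.g. via the ``peeling'' induction above), and checking that the intermediate objects remain in $\Delta(X)$; the rest is routine convexity.
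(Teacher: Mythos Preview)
Your easy direction (each $\delta_Y$ is extremal) is correct and cleanly argued. The converse, however, rests on a claim that fails: ``every finite metric is a nonnegative combination of cut metrics'' is precisely the assertion that the metric cone $\mathrm{MET}_m$ coincides with the cut cone $\mathrm{CUT}_m$, and this is well known to be \emph{false} for $m\ge 5$ (for instance the path metric of $K_{2,3}$ lies in $\mathrm{MET}_5\setminus\mathrm{CUT}_5$). Your peeling alternative does not save it either: take $x,y$ on the same side of the cut $\mathcal A$ and $z$ on the other; the triangle inequality for $\bar\delta-\varepsilon\,\delta_{\mathcal A}$ at $(x,y,z)$ reads $\bar\delta(x,y)\le \bar\delta(x,z)+\bar\delta(z,y)-2\varepsilon$, which is \emph{not} implied by the triangle inequality for $\bar\delta$, so $\bar\delta-\varepsilon\,\delta_{\mathcal A}$ need not stay in $\Delta$. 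There is also a circularity: you invoke the cut decomposition of an arbitrary metric in order to identify the extreme points, but that very decomposition is what you announced as the Krein--Milman corollary once the extreme points are known.

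The paper takes a different route for the converse. After the same passage to the quotient metric $\bar\delta$ on the $m$ equivalence classes, it does \emph{not} appeal to any cut decomposition. Instead it shows directly that a genuine metric on $m\ge 3$ points cannot be extremal by exhibiting an explicit one-parameter deformation $\delta^h$ with $\delta=\tfrac12(\delta^h+\delta^{-h})$ and $\delta^h\neq\delta^{-h}$: the points are realised in Euclidean space, two of them determine a line, and one endpoint is slid along that line by $h$, with the finitely many triangle inequalities checked case by case (flat vs.\ non-flat triangles). This is a perturbation argument rather than a decomposition argument, and it does not presuppose the conclusion you are after.
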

\begin{proof} The second part, i.e. the decomposition of any pseudo-metric in terms of extremal pseudo-metrics, is a consequence of the first part of the lemma and the Krein-Milman theorem: $\Delta_1(X)$ is the convex enveloppe of its extremal points. It remains to prove the characterization of extremal pseudo-metrics in $\Delta_1(X)$. 

\medskip
\noindent\textsc{Step 1: from pseudo-metrics to metrics.} For every $\delta\in\Delta(X)$, let $X/\delta$ be the set of equivalence classes in $X$ endowed with the equivalence relation $x\sim y$ defined by ($x\sim y$ iff $\delta(x,y)=0$). $X/\delta$ is endowed with the {\bf metric} $\overline{\delta}$ defined by 
$\overline{\delta}(\xi,\upsilon):=\delta(x,y)$ whenever $x\in\xi$ and $y\in\upsilon$ for every $\xi,\upsilon\in X/\delta$. Note that $\overline{\delta}$ is well defined, thanks to the triangle inequality on $\delta$, and that $X/\delta$ has at least two points when $\delta\not\equiv 0$. Moreover, the first part of the lemma is equivalent to ($\delta\in\Delta_1(X)$ is extremal iff $X/\delta$ has exactly two points). We use the following fact:
\begin{claim}
\label{cl1}
A pseudo-metric $\delta\in\Delta_1(X)$ is extremal if and only if the normalized metric 
\[
\hat{\overline{\delta}}:=\frac1{\sigma(\overline{\delta})}\overline{\delta}\in\Delta_1(X/\delta)
\]
is extremal.
\end{claim}
\begin{proof}[Proof of Claim \ref{cl1}] 
Indeed, first assume that $\delta$ is extremal in $\Delta_1(X)$. It is clear that any pseudo-metric $\alpha\in \Delta_1(X/\delta)$ induces a pseudo-metric $\hat \alpha_\ast\in\Delta_1(X)$ defined by 
$$\hat \alpha_\ast(x,y)=\frac1{\sigma(\alpha_\ast)} \alpha_\ast(x,y), \quad \textrm{ with } \quad \alpha_\ast(x,y)=\alpha(\overline{x},\overline{y}),$$ 
where $\overline{x}$ and $\overline{y}$ stand for the equivalence classes of $x$ and $y$ respectively in $X/\delta$. Assume that $\hat{\overline{\delta}}= t\alpha+(1-t)\beta$ with $\alpha,\beta\in \Delta_1(X/\delta)$ and $t\in (0,1)$. Then, for all $x,y\in X$, one has
\[
\delta(x,y)=\overline{\delta}(\overline{x},\overline{y})=t\sigma(\overline{\delta})\alpha(\overline{x},\overline{y})+(1-t)\sigma(\overline{\delta})\beta(\overline{x},\overline{y})=t\sigma(\overline{\delta})\alpha_*({x},{y})+(1-t)\sigma(\overline{\delta})\beta_*({x},{y});
\]
in particular, this yields \(1=\sigma(\delta)=t\sigma(\overline{\delta})\sigma(\alpha_*)+(1-t)\sigma(\overline{\delta})\sigma(\beta_*)\) and so
\[
\delta(x,y)=s\hat \alpha_\ast(x,y)+(1-s)\hat \beta_\ast(x,y),\quad\text{with }s=t \sigma(\overline{\delta}) \sigma(\alpha_\ast)\in (0,1).
\]
Since $\delta$ is extremal in $\Delta_1(X)$, one has $\hat \alpha_\ast=\hat \beta_\ast$ yielding $\sigma(\alpha_\ast)=\sigma(\beta_\ast)$ (because $\alpha,\beta\in \Delta_1(X/\delta)$) and finally, $\alpha=\beta$.

Conversely, assume that $\hat{\overline{\delta}}$ is extremal in $\Delta_1(X/\delta)$ and that $\delta=t\delta_1+(1-t)\delta_2$ with $\delta_1,\delta_2\in\Delta_1(X)$ and $t\in (0,1)$. For $i\in\{1,2\}$, $\delta_i$ induces a pseudo-metric $\widehat{\overline{\delta_i}}\in\Delta_1(X/\delta)$ defined by $\widehat{\overline{\delta_i}}=\frac1{\sigma(\overline{\delta_i})}\overline{\delta_i}$, where $\overline{\delta_i}(\overline{x},\overline{y})=\delta_i(x,y)$
and $\bar x$, $\bar y$ are the equivalence classes in $X/\delta$ of $x$ and $y$, respectively. It is clear that $\overline{\delta_1}$ and 
$\overline{\delta_2}$ are well defined since $\delta(x,y)=0$ implies that $\delta_1(x,y)=\delta_2(x,y)=0$. Moreover, the extremal pseudo-metric $\hat{\overline{\delta}}$ decomposes into 
$\hat{\overline{\delta}}=s\widehat{\overline{\delta_1}}+(1-s)\widehat{\overline{\delta_2}}$ with $s=\frac{t\sigma(\overline{\delta_1})}{\sigma(\overline{\delta})}=
1-(1-t)\frac{t\sigma(\overline{\delta_2})}{\sigma(\overline{\delta})}$, which by the same argument as above implies $\overline{\delta_1}=\overline{\delta_2}$ and so $\delta_1=\delta_2$.
\end{proof}

\medskip

\noindent\textsc{Step 2: case where $\delta$ is a metric.} By the preceding claim, it is enough to prove that a \textbf{metric} $\delta\in\Delta_1(X)$ (and not only a pseudo-metric) is extremal if and only if $X$ has $2$ points. For the first implication, if $X$ has two points, then any metric $\delta$ is extremal since $\Delta_1(X)$ is reduced to a single point. Conversely, we have:
\begin{claim}
\label{cl2}
Assume that $X=\{x_1,\dots,x_n\}$ has $n\geq 3$ distinct points and that $\delta\in\Delta_1(X)$ is a metric over $X$, then $\delta$ is not extremal.
\end{claim}
\begin{proof}[Proof of Claim \ref{cl2}] First, it is standard to see that the metric space $(X,\delta)$ is isometrically embedded in $\R^{n-1}$ endowed with the euclidean distance. In other words, there exists a subset $Y=\{y_1,\dots,y_n\}\subset \R^{n-1}$ of $n$ points (thought as a polytope) such that $\delta(x_i,x_j)=|y_i-y_j|$ for all $i,j\in\{1,\dots,n\}$. Let $D$ be the straight line $(y_1,y_2)$. Up to reorder the points $x_1,\dots,x_n$ and $y_1, \dots, y_n$, one may assume that 
\[
Y\cap D=\{y_1,\dots,y_m\}\quad\text{with }2\leq m\leq n, 
\]
and that $(y_1,\dots,y_m)$ is an increasing sequence in $D$ ordered by the relation ($x\leq y$ iff $(y-x)\cdot (y_2-y_1)\geq 0$). Given a parameter $h\in\R$ (not necessarily positive) with $|h|$ being small, we now build a small perturbation $\delta^h$ of the metric $\delta$ on $X$ such that $\delta^h(x_i,x_i):=0$ and for all $i,j\in\{1,\dots,n\}$ with $i<j$,
\[
\delta^h(x_i,x_j)=\delta^h(x_j,x_i):=
\begin{cases}
\delta(x_i,x_j)+h&\text{if }i=1\text{ and }2\leq j\leq m,\\
\delta(x_i,x_j)&\text{otherwise.}
\end{cases}
\]
Let us justify that $\delta^h$ is a metric, at least for small values of $|h|$. The idea is that $\delta^h$ corresponds to the euclidian metric in $\R^{n-1}$ by moving the point $y_1$ on the line $D$, keeping $y_2, \dots, y_m$ fixed and moving eventually the other points $y_{m+1}, \dots, y_n$; calling $y_1', \dots, y_n'$ these new points, then $\delta^h$ is a metric on $X$ iff such a (modified) polytope $y'_1, \dots, y'_n$ exists.
The only nontrivial fact is the triangle inequality. Consider a triangle $(x_i,x_j,x_k)$, with $i<j<k$. If $i\ge 2$ or ($i=1$ and $j>m$), the triangle inequality of $\delta$ in $(x_i,x_j,x_k)$ is trivial since $\delta$ is a metric. Otherwise, one has to show that for all $j\in\{2,\dots,m\}$ and $k>j$ (with $i=1$), one has
\begin{equation}
\label{toCheck}\tag{Tr}
|\delta^h(x_1,x_k)-\delta^h(x_k,x_j)| \leq \delta^h(x_1,x_j)\leq\delta^h(x_1,x_k)+\delta^h(x_k,x_j).
\end{equation}
Let us divide the proof of these inequalities according to whether $k>m$ or not, and considering the initial polytope $y_1, \dots, y_n$ corresponding to the metric 
$\delta$: 
\begin{itemize}[leftmargin=*]
\item
If $k>m$, \eqref{toCheck} is equivalent to 
\[
||y_1-y_k|-|y_k-y_j|| \leq |y_1-y_j|+h\leq |y_1-y_k|+|y_k-y_j|. 
\]
Since the triangle $(y_1,y_k,y_j)$ is not flat (as $y_1,y_j\in D$ but $y_k\notin D$), these inequalities are true for small values of $h$.
\item
If $k\leq m$, one has $1<j<k\leq m$ and, since $(y_1,\dots,y_m)\subset D$ is ordered in a monotonous way on the line $D$, one has $|y_k-y_1|=|y_k-y_j|+|y_j-y_1|$. Thus, \eqref{toCheck} is equivalent to the following trivial inequalities for $|h|\le |y_1-y_2|$:
\[
|y_j-y_1|+h\leq |y_1-y_j|+h\leq |y_j-y_1|+h+2|y_k-y_j|.
\]
\end{itemize}
Let us prove that $\delta$ is not extremal. Fix $h>0$ small enough so that $\delta^h$ and $\delta^{-h}$ are two metrics over $X$ and $2(m-1)h<\sigma(\delta)=1$. Since
\[
\delta=\frac 12(\delta^h+\delta^{-h})=\lambda_h\, \frac{\delta^h}{\sigma(\delta^h)}+\lambda_{-h}\, \frac{\delta^{-h}}{\sigma(\delta^{-h})}\quad\text{and}\quad \lambda_{-h}+\lambda_h=1,
\]
where $\lambda_{\pm h}:=\frac 12\sigma(\delta^{\pm h})=\frac 12\sigma(\delta)\pm (m-1)h>0$ and $\sigma(\delta)=1$, we conclude that $\delta$ is not extremal in $\Delta_1(X)$.
\end{proof}
\nd This proves completely Lemma \ref{extremalmetric}.
\end{proof}

\section{Calibration of extremal pseudo-metrics}
\begin{lemma}\label{extremalcalibration}
Let $X=\{x_0,\dots,x_d\}$ be an affine basis of $\R^d$. If $\delta=\delta_Y$ for some $Y\subset X$ then there exists a smooth compactly supported function $\varphi\in\mathcal{C}_c^\infty(\R^d,\R)$ such that:
\begin{enumerate}
\item
for all $x,y\in X$, $|\varphi(x)-\varphi(y)|=\delta(x,y)$,
\item
for all $x,y\in X$ with $(x,y\in Y)$ or $(x,y\notin Y)$, and for all $t\in [0,1]$, $\nabla\varphi(ty+(1-t)x)=0$,
\item
for all $x\in Y$, $y\in X\setminus Y$, and $t\in (0,1)$, $\nabla\varphi(ty+(1-t)x)$ and $y-x$ are positively collinear, i.e. $\nabla\varphi(ty+(1-t)x)=\lambda (y-x)$ with $\lambda=\lambda(x,y,t)>0$.
\end{enumerate}
\end{lemma}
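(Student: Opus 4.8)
The plan is to construct $\varphi$ by prescribing its full Taylor jet along a compact $1$-complex and then invoking the Whitney $\mathcal C^\infty$ extension theorem; the only real trick is a choice of one-variable profile that makes the prescribed jet automatically satisfy Whitney's compatibility conditions.

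Write $Y=\{p_1,\dots,p_k\}$ and $Z:=X\setminus Y=\{q_1,\dots,q_m\}$, so $k+m=d+1$. Since $X$ is an affine basis, $\mathrm{conv}(Y)$ and $\mathrm{conv}(Z)$ are disjoint compact convex sets, and any two of the segments $[p_i,p_{i'}]$, $[q_j,q_{j'}]$, $[p_i,q_j]$ either coincide, meet at a single common endpoint, or are disjoint (an interior intersection would yield a nontrivial affine relation among four of the affinely independent points). Let $G$ be the compact union of all these segments; for a crossing segment $e_{ij}:=[p_i,q_j]$ let $L_{ij}(z):=\frac{(z-p_i)\cdot(q_j-p_i)}{|q_j-p_i|^2}$ be the affine coordinate along it, so that $L_{ij}(p_i)=0$, $L_{ij}(q_j)=1$ and $\nabla L_{ij}\equiv\frac{q_j-p_i}{|q_j-p_i|^2}$. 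I would fix a profile $h\in\mathcal C^\infty(\R,[0,1])$ with $h\equiv0$ on $(-\infty,0]$, $h\equiv1$ on $[1,\infty)$ and $h'>0$ on $(0,1)$ — for instance $h(t)=(1+e^{1/t-1/(1-t)})^{-1}$ on $(0,1)$, which is strictly increasing there — so that $h$ is flat at $0$ and at $1$, meaning $h^{(\ell)}(0)=h^{(\ell)}(1)=0$ for all $\ell\ge1$. Then I prescribe on $G$ the Taylor jet of the constant $0$ along each $[p_i,p_{i'}]$, of the constant $1$ along each $[q_j,q_{j'}]$, and of the smooth function $z\mapsto h(L_{ij}(z))$ along each $e_{ij}$. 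The crucial point is that at each vertex $v$ of $G$ the prescribing functions attached to the incident segments are pairwise flat-equal: $0-0$, $1-1$, $h(L_{ij})-0$ (flat at $p_i$ since $h$ is flat at $0$ and $L_{ij}$ is affine), $h(L_{ij})-1$ (flat at $q_j$ since $h(1)=1$ and $h$ is flat at $1$), and $h(L_{ij})-h(L_{ij'})$ (both flat at $p_i$) all vanish to infinite order at $v$. Since, in addition, two points of $G$ lying on distinct segments and at small mutual distance must both lie near a common vertex, and the prescribing functions are finitely many globally smooth functions, the prescribed data form a Whitney field on the closed set $G$; Whitney's theorem then produces $\widetilde\varphi\in\mathcal C^\infty(\R^d)$ realising this $\infty$-jet along $G$, and multiplying by a cutoff $\chi\in\mathcal C^\infty_c(\R^d)$ with $\chi\equiv1$ on a neighbourhood of $G$ gives $\varphi:=\chi\widetilde\varphi\in\mathcal C^\infty_c(\R^d)$, which agrees with $\widetilde\varphi$ near $G$.

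With this $\varphi$ in hand the three properties are immediate: property $1$ holds because $\varphi(p_i)=0$ and $\varphi(q_j)=1$, so $|\varphi(x)-\varphi(y)|=\delta_Y(x,y)$ for all $x,y\in X$; property $2$ holds because $\varphi$ is flat, hence has vanishing gradient, along every $[p_i,p_{i'}]$ and every $[q_j,q_{j'}]$; and for property $3$, along $e_{ij}$ one has $\nabla\varphi=\nabla(h\circ L_{ij})$, i.e. $\nabla\varphi\bigl((1-t)p_i+tq_j\bigr)=\frac{h'(t)}{|q_j-p_i|^2}\,(q_j-p_i)$ with $h'(t)>0$ for $t\in(0,1)$, which is exactly positive collinearity with $q_j-p_i$. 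The step I expect to require the most care is checking that the prescribed jet really is a Whitney field at the vertices: the geometric obstruction is that the crossing edges issuing from a fixed $p_i\in Y$ point in several distinct directions $q_j-p_i$, so no function of a single affine coordinate (and in particular no closed-form ``$h$ composed with an affine map'') can have its gradient simultaneously point along all of them, which is precisely why one must pass to the jet-prescription approach and why the infinite flatness of $h$ at its endpoints — decoupling the incident edges to infinite order — is the decisive ingredient. I would also verify the degenerate cases $|Y|=1$ or $|Z|=1$, where one family of ``within'' segments is empty; the argument goes through verbatim.
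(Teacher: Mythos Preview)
Your argument is correct and takes a genuinely different route from the paper's proof. The paper constructs $\varphi$ explicitly: it fixes a smooth transition profile $g$ (with $g\equiv 0$ on $(-\infty,0]$, $g\equiv 1$ on $[1,\infty)$, $g'>0$ on $(0,1)$, and the symmetry $g(t)+g(1-t)=1$), builds cone-supported transition functions $g^\lambda_{ij}$ along each segment $[x_i,x_j]$, introduces a partition of unity $(\xi_{ij})_{i\le j}$ localising near each segment and each vertex, and then writes down a closed-form combination
\[
\varphi=\sum_{i\le m<j}\bigl((\xi_{ij}+\xi_{ii})\,g^{r_{ij}}_{ij}-\xi_{jj}\,g^{r_{ji}}_{ji}\bigr)+\sum_{i,j>m}\xi_{ij},
\]
whose restriction and gradient along each segment are then checked by hand. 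Your approach instead prescribes the full $\infty$-jet on the $1$-complex $G$ and appeals to the Whitney $\mathcal{C}^\infty$ extension theorem as a black box, with the decisive observation that infinite flatness of $h$ at its endpoints makes the jets at a vertex coming from distinct incident edges agree to all orders.

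Both constructions hinge on the same geometric fact (affine independence forces distinct segments to meet only at common endpoints, so nearby points on distinct edges are both near a shared vertex), and both resolve the conflict of ``several crossing directions at one vertex'' via flatness of the profile at $0$ and $1$; the paper exploits this implicitly through the cutoff structure of the $g^\lambda_{ij}$, you exploit it explicitly to verify Whitney compatibility. What your approach buys is brevity and conceptual clarity: once one accepts Whitney's theorem, the verification of properties 1--3 is immediate from the prescribed jet. What the paper's approach buys is self-containedness and an explicit formula for $\varphi$, which can be useful downstream (e.g.\ if one later wanted quantitative control on $\nabla\varphi$ away from $G$). One small point you might make fully explicit when writing this up: the Whitney remainder estimate for $x,y$ on distinct edges through a vertex $v$ requires the elementary angle bound $|x-y|\ge c(\theta)\max(|x-v|,|y-v|)$, which together with the $O(|x-v|^N)$ flatness of the jet at $v$ yields the required $o(|x-y|^m)$ for every $m$. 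Also, the trivial case $Y\in\{\emptyset,X\}$ (where $\delta_Y\equiv 0$ and $\varphi\equiv 0$ works) should be mentioned for completeness.
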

\begin{proof}
When $\delta=0$, i.e. $Y=\emptyset$ or $Y=X$, one can take $\varphi=0$. We now assume that $Y\notin\{\emptyset,X\}$ and we reorder the affine basis $(x_0,\dots,x_d)$ in such a way that 
\be
\label{xnote}
Y=\{x_0,\dots,x_m\}\quad\text{with}\quad 0\leq m\leq d-1.
\ee
We shall construct the calibration $\varphi$ step by step. We first need to pick a nonnegative function $g\in\mathcal{C}^\infty(\R,\R_+)$ having the following properties:
\begin{itemize}[leftmargin=*]
\item
for all $t\leq 0$, $g(t)=0$ and for all $t\geq 1$, $g(t)=1$,
\item 
for all $t\in (0,1)$, $g'(t)>0$,
\item
for all $t\in\R$, $g(t)+g(1-t)=1$.
\end{itemize}
{\sc Step 1: smooth transitions $g^\lambda_{ij}$ between $g^\lambda_{ij}(x_i)=0$ and $g^\lambda_{ij}(x_j)=1$.} For every $i,j\in\{0,\dots,d\}$, $i\neq j$, $\lambda\in (0,1)$, and $z\in\R^d$, we set
\[
g^\lambda_{ij}(z)=
\begin{cases}
g\left(\lambda^{-1}p_{ij}(z)\right)g \left(1-\frac{ |z-x_i|-p_{ij}(z)}{\lambda_0 |z-x_i|}\right)&\text{if }z\neq x_i,\\
0&\text{if }z=x_i,
\end{cases}
\]
where $\lambda_0\in (0,1)$ will be fixed later and 
\[
p_{ij}(z)=(z-x_i)\cdot \frac{x_j-x_i}{|x_j-x_i|}.
\] 
If $\lambda<|x_i-x_j|$ for all $i,j$ with $i\neq j$ then the function $g^\lambda_{ij}$ performs a transition between $g^\lambda_{ij}(x_i)=0$ and $g^\lambda_{ij}(x_j)=1$ along the segment $[x_i,x_j]$. Moreover, $g^\lambda_{ij}$ is smooth in $\R^d$ and supported in the cone 
\[
\mathcal{C}^{\lambda_0}_{ij}:=\big\{z\in\R^d\;:\; p_{ij}(z)\ge (1-\lambda_0) |z-x_i|\big \}\supset \big\{tx_j+(1-t)x_i\;:\; t\ge 0\big \}.
\]
{\sc Step 2: partition of unity.} Let us pick $\lambda_0\in (0,1)$ small enough so that the balls $\overline{B}(x_i,\lambda_0)$ with $i\in\{0,\dots,d\}$ are disjoint and two distinct sets in $\{\mathcal{C}_{ij}^{\lambda_0}\cap \mathcal{C}_{ji}^{\lambda_0}\;:\; i<j\}$ can only meet at a point in $X=\{x_0,\dots,x_d\}$ (see Figure \ref{theXij}); define the family of functions $(\xi_{ij})_{i\leq j}\subset\mathcal{C}^\infty(\R^d,\R_+)$ by
\[
\xi_{ij}(z)=
\begin{cases}
g^{\lambda_0}_{ij}(z)g^{\lambda_0}_{ji}(z)&\text{if }0\le i<j\le d\\
g(1-\lambda_0^{-1}|z-x_i|)&\text{if }0\le i=j\le d
\end{cases}
\quad \text{for all }z\in\R^d.
\]
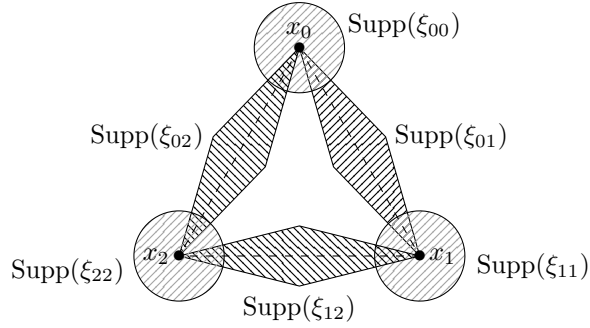
\begin{figure}
\centering
\begin{tikzpicture}[scale=1,>=triangle 45,scale=0.4]
%1
\begin{scope}[scale=1,shift={(0,0,0)}, rotate around={30:(0,4,0)}]
   \begin{scope}[canvas is zy plane at x=0]
   \end{scope}
   
  \begin{scope}[canvas is xy plane at z=0]
  \draw[pattern=north east lines] (0,4) --(1,0) node[right] {$\mathrm{Supp}(\xi_{01})$} --(0,-4) --(-1,0)--cycle;
  \draw[pattern=north east lines, pattern color=gray!70] (0,-4) node[right] {$x_1$} node {$\bullet$} circle (1.5cm);
   \draw (1.2,-5.1) node[right] {$\mathrm{Supp}(\xi_{11})$};
  \draw[pattern=north east lines, pattern color=gray!70] (0,4) node[above] {$x_0$} node {$\bullet$} circle (1.5cm); 
  \draw (1.5,4) node[right] {$\mathrm{Supp}(\xi_{00})$};
  \draw[dashed] (0,4) -- (0,-4);
   \end{scope}
   \end{scope}
  %2
   \begin{scope}[scale=1,shift={(0,0,0)}, rotate around={-30:(0,4,0)}]
   \begin{scope}[canvas is zy plane at x=0]
   \end{scope}
   
  \begin{scope}[canvas is xy plane at z=0]
  \draw[pattern=north west lines] (0,4)--(1,0)--(0,-4)--(-1,0) node[left] {$\mathrm{Supp}(\xi_{02})$}--cycle;
    \draw[dashed] (0,4) -- (0,-4);
   \end{scope}
   \end{scope}
%3
   \begin{scope}[scale=1,shift={(0,0,0)}, rotate around={-30:(0,4,0)}, rotate around={-60:(0,-4,0)}]
   \begin{scope}[canvas is zy plane at x=0]
   \end{scope}
   
  \begin{scope}[canvas is xy plane at z=0]
  \draw[pattern=north west lines] (0,4) --(1,0) node[below] {$\mathrm{Supp}(\xi_{12})$}--(0,-4) --(-1,0)--cycle;
    \draw[pattern=north east lines, pattern color=gray!70] (0,-4) node[left] {$x_2$} node {$\bullet$} circle (1.5cm); 
      \draw (0.5,-5.5) node[left] {$\mathrm{Supp}(\xi_{22})$};
    \draw[dashed] (0,4) -- (0,-4);
   \end{scope}
   \end{scope}
 \end{tikzpicture}
 \caption{The supports of the $\xi_{ij}$ when $d=2$\label{theXij}}
 \end{figure}
If $i<j$, then $\mathrm{Supp}(\xi_{ij})=\mathcal{C}_{ij}^{\lambda_0}\cap \mathcal{C}_{ji}^{\lambda_0}$ while $\mathrm{Supp}(\xi_{ii})=\overline{B}(x_i,\lambda_0)$. In particular, for every $z\notin X$, $\xi_{ij}(z)$ vanishes except at most for two choices of indices: either $\xi_{ij}$ alone, or ($\xi_{ij}$ and $\xi_{ii}$ with $i<j$) or ($\xi_{ij}$ and $\xi_{jj}$ with $i<j$). Moreover, one has
\[
\sum_{i\leq j}\xi_{ij}=1\quad\text{in }\cup_{i,j}[x_i,x_j].
\]
Indeed, for all $\ell\in\{0,\dots,d\}$, $\sum_{i\leq j}\xi_{ij}(x_\ell)=\xi_{\ell\ell}(x_\ell)=1$ and for all $z$ in the open segment $(x_i,x_j)$ with $i<j$, $\sum_{i\leq j}\xi_{ij}(z)=\xi_{ii}(z)+\xi_{jj}(z)+\xi_{ij}(z)$; if $\mathrm{dist}(z,\{x_i,x_j\})>\lambda_0$ then $\xi_{ii}(z)=\xi_{jj}(z)=0$ and $\xi_{ij}(z)=1$; otherwise, if for instance $p_{ij}(z)=|z-x_i|\leq \lambda_0$, then $\xi_{ij}(z)=g(\lambda_0^{-1}p_{ij}(z))$, $\xi_{jj}(z)=0$,
\(
\xi_{ii}(z)=g(1-\lambda_0^{-1}p_{ij}(z))=1-g(\lambda_0^{-1}p_{ij}(z)),
\) 
and thus $\sum_{i\le j}\xi_{ij}(z)=\xi_{ii}(z)+\xi_{ij}(z)=1$.

\medskip
\noindent{\sc Step 3: construction of the calibration $\varphi$.} Recalling the notation \eqref{xnote}, we set for all $z\in\R^d$:
\[
\varphi(z)=\sum_{0\le i\le m<j\le d}\Big(\big(\xi_{ij}(z)+\xi_{ii}(z)\big)\, g^{r_{ij}}_{ij}(z)-\xi_{jj}(z)\, g^{r_{ji}}_{ji}(z)\Big)+\sum_{m<i,j\le d} \xi_{ij}(z),
\]
where $r_{ij}:=|x_i-x_j|$. We observe that each term in both sums indexed by $i,j\in\{0,\dots,d\}$ is supported in $\mathrm{Supp}(\xi_{ij})$ (in particular, $\varphi$ is compactly supported) and we deduce that for all $z\in (x_i,x_j)$ with $i,j\in\{0,\dots,d\}$ and $i\neq j$,
\[
\begin{cases}
\varphi(z)=0&\text{if }i,j\leq m,\\
\varphi(z)=\xi_{ii}+\xi_{ij}+\xi_{jj}=1&\text{if }i,j>m,\\
\varphi(z)=\big(\xi_{ij}+\xi_{ii}\big)\, g^{r_{ij}}_{ij}-\xi_{jj}\, g^{r_{ji}}_{ji}+\xi_{jj}=g^{r_{ij}}_{ij}&\text{if }i\leq m<j,
\end{cases}
\]
since $g^{r_{ji}}_{ji}=1-g^{r_{ij}}_{ij}$ and $\xi_{ij}+\xi_{ii}+\xi_{jj}=1$ on $(x_i,x_j)$. In particular, $\varphi(x_i)=0$ for all $i\in\{0,\dots,m\}$ and $\varphi(x_j)=1$ for all $j\in\{m+1,\dots,d\}$ which proves the conclusion $1.$  in Lemma \ref{extremalcalibration}. The other properties required on $\nabla\varphi$, i.e. the conclusions 2. and 3. in Lemma \ref{extremalcalibration}, are a consequence of the fact that for all $z\in (x_i,x_j)$, with $i,j\in \{0,\dots,d\}$ and $i\neq j$:
\begin{itemize}[leftmargin=*]
\item
the derivatives of $\varphi$ in any direction orthogonal to $x_j-x_i$ vanish, i.e. $\nabla\varphi(z)$ and $x_j-x_i$ are collinear;
\item
if $i,j\leq m$ or $i,j>m$, $\varphi$ is constant in $(x_i,x_j)$, and so $\nabla\varphi(z)=0$;
\item
if $i\leq m<j$, $\varphi(\cdot)=g^{r_{ij}}_{ij}(\cdot)=g(r_{ij}^{-1}p_{ij}(\cdot))$ on $(x_i,x_j)$ and so $\nabla\varphi(z)=\lambda_{ij}(z) (x_j-x_i)$ with $\lambda_{ij}(z):=r_{ij}^{-1}g'(r_{ij}^{-1}p_{ij}(z))\frac{x_j-x_i}{|x_j-x_i|}>0$,
\end{itemize}
where we used that $g$ is increasing in $(0,1)$.
The proof is now complete.

\end{proof}

\paragraph{Acknowledgment.} The authors thank Beno\^it Merlet for very helpful discussions. R.I. acknowledges partial support by the ANR project ANR-14-CE25-0009-01.

\end{document}